\title{The topology of the space of J-holomorphic maps to $\p$}
\author{
 Jeremy Miller \\
                Department of Mathematics\\
        Stanford University\\
        Building 380, Stanford, CA
}
\date{\today}

\interfootnotelinepenalty=10000

\documentclass[12pt]{article}
\usepackage{mathtools}
\usepackage[section]{placeins}

\usepackage {amsmath, amscd, amsbsy, amsfonts,amsthm}
\usepackage{latexsym,amssymb,mathrsfs,bbm}
\usepackage[all]{xy}
\usepackage{graphicx}
\usepackage{color}
\usepackage{tikz}
\usetikzlibrary{arrows,calc,positioning,decorations.pathreplacing}
\usepackage[noadjust]{cite}
\usepackage[left=4cm,right=4cm]{geometry}
\usepackage{mhsetup}
\usepackage{mathtools}

\usepackage[pagebackref,colorlinks,citecolor=blue,linkcolor=blue,urlcolor=blue,filecolor=blue]{hyperref}

\DeclareMathOperator{\Hol}{Hol}
\DeclareMathOperator{\Map}{Map}

\newcounter{prob}
\setcounter{prob}{0}

\newcommand{\N}{\mathbb{N}_0}
\newcommand{\Z}{\mathbb{Z}}
\newcommand{\C}{\mathbb{C}}
\newcommand{\M}{\mathcal{M}}

\newcommand{\p}{\mathbb{C} P^2}
\newcommand{\s}{\mathbb{C} P^1}
\newcommand{\R}{\mathbb{R}}

\newcommand{\m}{\longrightarrow}

\newcommand{\n}{\bar \partial_\nu}
\newcommand{\ft}{Fr\'echet }

\newcommand{\U}{\tilde U}
\newcommand{\V}{\tilde V}
\newcommand{\D}{\mathbb{D}}
\newcommand{\dbar}{\bar \partial}
\newcommand{\z}{\mathfrak z}
\newcommand{\Simga}{\Sigma}

\newcommand{\MM}{\mathfrak M}

\theoremstyle{plain}
\newtheorem{theorem}{Theorem}[section]
\newtheorem{lemma}[theorem]{Lemma}
\newtheorem{proposition}[theorem]{Proposition}
\newtheorem{corollary}[theorem]{Corollary}

\theoremstyle{definition}
\newtheorem{definition}[theorem]{Definition}
\newtheorem{remark}[theorem]{Remark}

\tolerance=5000

\begin{document}
\maketitle

\begin{abstract}

The purpose of this paper is to generalize a theorem of Segal from \cite{Se} proving that the space of holomorphic maps from a Riemann surface to a complex projective space is homology equivalent to the corresponding space of continuous maps through a range of dimensions increasing with degree. We will address if a similar result holds when other almost complex structures are put on a projective space. For any compatible almost complex structure $J$ on $\C P^2$, we prove that the inclusion map from the space of $J$-holomorphic maps to the space of continuous maps induces a homology surjection through a range of dimensions tending to infinity with degree. The proof involves comparing the scanning map of topological chiral homology (\cite{Lu5}, \cite{An}, \cite{Mi2}) with analytic gluing maps for J-holomorphic curves (\cite{MS}, \cite{S}). This is an extension of the work done in \cite{Mi1} regarding genus zero case.

\end{abstract}

\tableofcontents

\section{Introduction}

This paper represents a continuation of the work done in \cite{Mi1} generalizing the results of Segal in \cite{Se}. To state these results, let us first fix some notation.

\begin{definition}

For a complex manifold $M$, complex curve $(\Sigma_g,j)$ of genus $g$ and $A\in H_2(M)$, let $\Hol_A((\Sigma_g,j),M)$ denote the space of holomorphic maps $u:\Sigma_g \m M$ with $u_* [\Sigma_g ]=A$. Fix points $pt \in \Sigma_g$ and $m_0 \in M$. Let $\Hol^*_A((\Sigma_g,j),M)$ denote the subspace of maps $u$ with $u(pt)=m_0$. Likewise, let $\Map_A((\Sigma_g,j),M)$ and $\Map_A^*((\Sigma_g,j),M)$ denote the corresponding spaces of continuous maps.

\end{definition}

The complex structure on $\Sigma_g$ and $\C P^m$ respectively give a preferred choice of fundamental class on $\Sigma_g$ and a natural isomorphism $H_2(\C P^m) = \Z$. The integer corresponding to a homology class is called degree and agrees with the algebraic notion of degree for algebraic maps. In \cite{Se}, Segal proved a theorem comparing the topology of $\Hol_d^*((\Sigma_g,j), \C P^m)$ and  $\Map_d^*((\Sigma_g,j), \C P^m)$.

\begin{theorem}
The inclusion map $i:\Hol_d^*((\Sigma_g,j),\C P^m) \m \Map_d^*(\Sigma_g,\C P^m)$ induces an isomorphism on homology groups $H_i$ for $i<(d-2g)(2m-1)$ and a surjection for $i=(d-2g)(2m-1)$.
\end{theorem}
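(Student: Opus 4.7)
The plan is to follow Segal's original strategy from \cite{Se}, which identifies $Hol_d^*((\Sigma_g,j),\C P^m)$ with a space of algebraic data amenable to a stabilization-plus-scanning argument.

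First, I would give an algebraic description of the space: a based holomorphic map $u:\Sigma_g\m\C P^m$ of degree $d$ corresponds to a degree-$d$ line bundle $L\to\Sigma_g$ together with a framing at $pt$ and an $(m+1)$-tuple of sections $s_0,\ldots,s_m\in H^0(\Sigma_g,L)$ with no common zero, taken modulo the diagonal $\C^*$-action (the basepoint condition normalizes $[s_0(pt):\cdots:s_m(pt)]=m_0$). Equivalently, via $D_i:=s_i^{-1}(0)$, the data are $(m+1)$-tuples of effective divisors of degree $d$ with empty common intersection.

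Second, I would analyze the stabilization map $Hol_d^*\m Hol_{d+1}^*$ obtained by tensoring all sections by one with a new fixed zero, and show it is a homology equivalence through a range growing with $d$. For $d>2g-2$ Riemann--Roch gives $\dim H^0(L)=d-g+1$, so the ambient space $\mathcal S_d$ of all framed section tuples is an affine space whose dimension is linear in $d$. The ``resultant'' subvariety $B_d\subset\mathcal S_d$ of tuples with a common zero has complex codimension $m$ at its generic stratum (one linear condition on each of the $m+1$ sections, minus one dimension for the location of the zero). A Vassiliev-type spectral sequence stratifying $B_d$ by the number of common zeros computes $H_*(\mathcal S_d\setminus B_d)$ in a range, and compatibility of the stratifications under adding a point implements the stabilization.

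Third, I would identify the stable limit $\varinjlim_d Hol_d^*$ with the appropriate component of $Map^*(\Sigma_g,\C P^m)$ via the scanning (``electric field'') construction: locally on $\Sigma_g$, the disjoint divisor data is a labeled configuration of particles in a disk; the May--Segal configuration space model identifies its group completion with a double loop space of the one-point compactification of the label space, which for $\C P^m$ produces a section of a bundle over $\Sigma_g$ whose fiberwise homotopy type is $\C P^m$. The composite of this scan with the inclusion is homotopic to the natural inclusion $i$, so combining the scanning equivalence in the stable range with the stabilization in the second step produces the claimed range for $i_*$. The main obstacle is tracking the exact bound $(d-2g)(2m-1)$: the factor $2m-1$ reflects the real codimension $2m$ of the resultant locus combined with the one ``internal'' degree of freedom per additional common zero, while the shift by $2g$ comes from the Riemann--Roch threshold $d\ge 2g-1$ at which $H^1(L)$ vanishes and the stabilization map becomes injective on sections. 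Fitting the Vassiliev spectral-sequence range, the Riemann--Roch correction, and the scanning identification into a single compatible estimate is the delicate combinatorial step.
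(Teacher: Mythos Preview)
This statement is Segal's theorem, which the paper quotes from \cite{Se} as background and does not itself prove. The paper's entire discussion of its proof is a single sentence in the introduction: ``Segal's proof involves first noting that all holomorphic maps between $\Sigma_g$ and $\C P^m$ are algebraic. Then he relates spaces of polynomials to configuration spaces of labeled particles which can be compared to mapping spaces via scanning maps in a manner similar to \cite{Se3}, \cite{Mc1} and \cite{K3}.'' There is therefore no proof in the paper to compare against.

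Your outline is broadly faithful to Segal's actual argument and consistent with the paper's one-line summary: the divisor/section description, a stabilization in $d$, and a scanning identification of the stable object with the continuous mapping space. Two remarks. First, the phrase ``Vassiliev-type spectral sequence'' is anachronistic for \cite{Se}: Segal's 1979 paper predates that machinery and instead argues by a direct transversality/codimension count on the resultant locus inside symmetric products, together with an explicit comparison of configuration-space models; the Vassiliev reformulation is a later perspective and not what Segal does. Second, your accounting of the range is heuristic rather than a proof: the precise constant $(d-2g)(2m-1)$ in Segal's argument comes from a careful inductive comparison of the divisor spaces over the Jacobian once $d$ is large enough that all the relevant line bundles have vanishing $H^1$, not from a single codimension estimate, so the ``delicate combinatorial step'' you flag is indeed where the real work lies.
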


This theorem has been generalized in many ways. For example, Guest proved a similar result when complex projective space is replaced by an arbitrary toric variety \cite{Gu}. We consider a different generalization motivated by symplectic geometry. In personal communication with Ralph Cohen, Yakov Eliashberg asked the following question:

\paragraph{Question} Is Segal's theorem still true if the standard complex structure on $\C P^m$ is replaced by some other almost complex structure $J$ on $\mathbb{C} P^m$ compatible with the standard symplectic form?
$ \\$

Even for complex manifolds, it is still interesting to consider $J$-holomorphic curves for non-integrable almost complex structures $J$. By allowing the almost complex structure to vary, one can show that various moduli spaces are manifolds or smooth orbifolds. Also, since embedded $J$-holomorphic curves are symplectic submanifolds, the topology of spaces of $J$-holomorphic curves is relevant to the symplectic isotopy problem \cite{S} \cite{ST}. Although the space of almost complex structures compatible with a symplectic form is contractible, the topology of the space of $J$-holomorphic curves can depend on choice of $J$. For example, in \cite{A}, Abreu considered the case of a symplectic form on $S^2 \times S^2$ where the two spheres have different areas. He proved that the space of $J$-holomorphic curves representing the homology class of the larger sphere can be empty or non-empty depending on the choice of compatible almost complex structure. 

The question of generalizing Segal's theorem to the case of $J$-holomorphic curves has been discussed in \cite{CJS}, \cite{Sav} and \cite{Mi1}. In \cite{CJS}, they discuss a possible Morse theoretic approach and in \cite{Sav}, they describe a related problem involving the notion of $q$-complete symplectic manifolds. In \cite{Mi1}, a partial answer to Eliashberg's question was given for genus zero curves in $\p$. Namely, the author proved the following theorem.

\begin{theorem}
Let $J$ be any almost complex structure on $\p$ compatible with the standard symplectic form. The inclusion map $$i : \Hol_d^*(\s,(\p,J)) \m \Map_d^*(\s,\p)$$ induces a surjection on $H_i$ for $i \leq 3d-3$.

\label{main}
\end{theorem}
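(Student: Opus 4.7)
The plan is to identify, up to the stated homological range, both $Map_d^*(\s, \p)$ and $Hol_d^*(\s, (\p, J))$ with a common scanning / topological chiral homology construction, and then to use analytic gluing to realize configurations by genuine $J$-holomorphic curves.

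On the continuous side, non-abelian Poincaré duality identifies $Map^*(\s, \p) = Map^*(S^2, \p)$ with the topological chiral homology $\int_{\R^2} \p$. Restricted to the degree $d$ component and approximated by a May--Milgram style configuration space model, this gives a labeled configuration space $C_d(\R^2; L)$ whose comparison to $Map_d^*(\s, \p)$ is a homology equivalence through degree $3d - 3$, where $L$ is a pointed space recording local degree-one data around a point of $\p$; the coefficient $3 = 2\cdot 2 - 1$ reflects the connectivity of $L$ and is responsible for the numerical range.

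On the $J$-holomorphic side, I would first prove a local comparison: for a small enough Darboux ball $B \subset (\p, J)$ around the basepoint $m_0$, the space of small based $J$-holomorphic disks in $B$ is weakly equivalent to its standard-$J_0$ counterpart. This follows from a parametrized implicit function argument along an isotopy inside $B$ from $J$ to $J_0$, using that compatible almost complex structures form a contractible space and that any such $J$ agrees with $J_0$ to first order at a chosen point. Using the symplectic gluing theorems of McDuff--Salamon and Sikorav, I would then construct a gluing map $G$ from a configuration space $C_d(\R^2; L^J)$ to $Hol_d^*(\s, (\p, J))$: given disjoint points in $\R^2$ each labelled by a small $J$-holomorphic disk in a Darboux ball around $m_0$, one grafts the disks onto the constant map at $m_0$ to produce an honest $J$-holomorphic rational curve of degree $d$. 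Checking that $i \circ G$ is homotopic to the Segal-type scanning map on the continuous side then implies that $i_*$ is surjective on $H_i$ for $i \leq 3d - 3$.

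The main obstacle is the gluing step and its compatibility with scanning. Analytic gluing produces $J$-holomorphic curves only when the labeled disks are supported in sufficiently small, well-separated balls, so one must show that restricting to such configurations loses no homology through degree $3d - 3$ and that the resulting map is well-defined up to homotopy, independent of the cutoff functions and neck parameters used. One must also verify that $i \circ G$ really is homotopic to the classical scanning map, which requires comparing two a priori different scanning procedures. The loss of three degrees relative to Segal's integrable bound $3d$ reflects the need to stay strictly inside the regime where the gluing estimates and local implicit function theorem are uniformly controlled.
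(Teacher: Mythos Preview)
This theorem is not proved in the present paper; it is quoted from the author's earlier work \cite{Mi1}, and the current paper extends it to higher genus. Nonetheless, the paper describes the strategy of \cite{Mi1} in enough detail to compare with your proposal, and there is a genuine gap in what you wrote.

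The problem is your ``local'' step. You propose to take as label space $L^J$ the space of ``small based $J$-holomorphic disks in a Darboux ball $B$ around $m_0$'' and then graft these onto the constant map. But a non-constant $J$-holomorphic map from $S^2$ into a ball in a symplectic manifold does not exist: by compatibility of $J$ with $\omega$, the symplectic area of the image is positive, yet any sphere contained in a ball is null-homologous and hence has zero symplectic area. If instead you mean genuine maps of the disk $\mathbb{D}$ into $B$, then grafting them onto a constant $S^2$ does not produce a closed $J$-holomorphic sphere; the boundary circles do not close up, and any $J$-holomorphic disk whose boundary maps to the single point $m_0$ is again constant by unique continuation. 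Either way the label space you describe is homotopically trivial and carries no degree information, so the configuration model $C_d(\R^2; L^J)$ collapses. Your implicit-function comparison of $J$ and $J_0$ inside a Darboux ball is correct but beside the point: the objects being glued are necessarily global.

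The actual argument, as summarized in this paper and carried out in \cite{Mi1}, uses as building blocks \emph{global} degree-one and degree-two rational $J$-holomorphic maps $\s \to (\p,J)$. The key analytic input---with no local analogue---is that the homotopy type of $Hol_k^*(\s,(\p,J))$ is independent of $J$ for $k\le 2$. This is proved by Gromov compactness together with automatic transversality in real dimension four (Sections~\ref{secauto} and~\ref{secdeg}), not by any Darboux-ball localization. One then builds a partial $D_2$-algebra $R_2$ modeling $\bigsqcup_{k\le 2} Hol_k^*(\s,(\p,J))$, glues configurations of such global spheres to obtain high-degree $J$-holomorphic maps, and checks compatibility with scanning. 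Your outline has the right overall shape (configuration model, gluing, comparison with scanning), but the label space must be the genuine low-degree holomorphic mapping space, and establishing its $J$-independence is where the real work lies.
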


The purpose of this paper is to generalize the above theorem to higher genus $J$-holomorphic curves in $\p$. Specifically, we prove the following theorem.

\begin{theorem}
Let $J$ be any compatible almost complex structure on $\p$. The inclusion map $$i: \Hol_d^*(\Sigma_g,(\p,J)) \m \Map_d^*(\Sigma_g,\p)$$ induces a surjection on homology groups $H_i$ for $i \leq (d-g-1)/3$.\label{theorem:big}
\end{theorem}

Here the symbol $\Hol(\Sigma_g, \cdot)$ denotes the space of all maps from $\Sigma_g$ which are holomorphic for some complex structure on $\Sigma_g$. In Remark \ref{oneJ}, we note that this theorem can be strengthened to show that there are complex structures $j$ on $\Sigma_g$ such that $\Hol_d^*((\Sigma_g,j),(\p,J)) \m \Map_d^*(\Sigma_g,\p)$ is a homology surjection in range. However, our methods do not suffice to establish this strengthened result for all complex structures on $\Sigma_g$.

These results are a generalization of Segal's theorem in \cite{Se} in the sense that general almost complex structures on the range are considered, but also are a weakening of Segal's theorem in the sense that homology isomorphism is replaced by homology surjection. The range of dimensions is also worse than that of Segal and we only consider very special complex structures on the domain whereas Segal's result is true for any fixed complex structure on the domain. 

Segal's proof involves first noting that all holomorphic maps between $\Sigma_g$ and $\C P^m$  are algebraic. Then he relates spaces of polynomials to configuration spaces of labeled particles which can be compared to mapping spaces via scanning maps in a manner similar to \cite{Se3}, \cite{Mc1} and \cite{K3}. However, for general almost complex structures, it is completely unclear how to produce a configuration space model for $J$-holomorphic mapping spaces.

The study of $J$-holomorphic curves is very different in dimensions $2$, $4$, and above. All almost complex structures on real 2 dimensional manifolds are integrable and $\s$ admits a unique complex structure up to diffeomorphism. Thus Segal's original theorem applies directly to all almost complex structures on $\s$. We focus on the case of real $4$ dimensional symplectic manifolds because of the phenomenon of automatic transversality discovered by Gromov in \cite{G}. Using Gromov compactness and automatic tranversality, Gromov proved that the topology of the space of degree one rational $J$-holomorphic maps to $\p$ is independent of choice of almost complex structure. We extend this result to degree two rational $J$-holomorphic maps. Because automatic transversality does not hold for manifolds of dimension $6$ or above, we have so far been unable to make progress generalizing Segal's theorem in higher dimensions.

Even in dimension $4$, complications arise preventing us from simply proving that the topology of $J$-holomorphic mapping spaces is independent of $J$ in higher degrees and genera. However, we leverage these results about low degree genus zero $J$-holomorphic maps to draw conclusions about higher degree and genus mapping spaces using a gluing argument. This is similar in spirit to the arguments used by Atiyah and Jones in  \cite{AJ} to study the topology of spaces of solutions to the Yang-Mills equation. 

For $J_0$ the standard integrable complex structure, Frederick Cohen introduced a little 2-disks ($D_2$) algebra structure on $\Hol^*(\s,(\p,J_0))$ given by juxtaposition of roots of polynomials \cite{BM}. This gives a way of constructing high degree $J_0$-holomorphic rational maps out of low degree holomorphic rational maps. In \cite{Mi1}, this construction was generalized using a gluing construction to construct a partial $D_2$-algebra structure on $\Hol^*(\s,(\p,J))$ for any compatible almost complex structure $J$. To study higher genus mapping spaces, we use ideas from topological chiral homology \cite{Sa} \cite{Lu5} \cite{An} \cite{Mi2}.

Topological chiral homology gives a construction which takes in an $E_n$-algebra in spaces $A$ and parallelized
 $n$-manifold $M$ as inputs and produces a space $\int_{M} A\,$. Intuitively, this space can be thought of as a space of collections of points in $M$ with labels in $A$, topologized in such a way that when points collide, their labels add using the $E_n$ structure. The space $\int_{M} A\,$ comes with a natural map $s: \int_{M} A\, \m \Map^c(M,B^nA)$, with $B^n A$ being the $n$-fold delooping of $A$ and $\Map^c$ denoting the space of compactly supported maps. This map is called the scanning map and is often a homotopy equivalence \cite{Lu5} and always a stable homology equivalence \cite{Mi2}. The term stable homology equivalence is a slight weakening of the condition that a map induces a homology equivalence though a range of dimensions tending to infinity. See Section \ref{secTCH} for a definition. To illustrate how ideas from topological chiral homology could be useful, we give a hypothetical proof of Theorem \ref{theorem:big} under the (still unknown) hypothesis that the topology of the space $\Hol^*(\s, (\C P^m, J))$ is independent of $J$. Afterwards, we will explain what modifications are needed to make this hypothetical proof into an actual proof.

\paragraph{Hypothetical proof}

First suppose that we could prove that the topology of the space $\Hol^*(\s, (\C P^m, J))$ is independent of $J$ and that this space has a natural $D_2$-action. Furthermore, suppose that we could construct a gluing map $g$ making the following diagram homotopy commute: $$
\begin{array}{ccccccccl}
\int_{\Sigma_g -pt} \Hol^*(\s, (\C P^m,J))\,  &\overset{g}{\m} & \Hol^*(\Sigma_g, (\C P^m,J)) \\
  \downarrow s  &   & \downarrow i    &            \\

 \Map^c(\Sigma_g-pt,  B^2 \Hol^*(\s, (\C P^m,J)) & \m  & \Map^*(\Sigma_g, \C P^m) \\

\end{array}$$ If we assume that the topology of $\Hol^*(\s , (\C P^m,J))$ is independent of $J$, then Segal's theorem implies that $B^2 \Hol^*(\s , (\C P^m,J)) \simeq \C P^m$ and hence the bottom row is a homotopy equivalence. Since the scanning map $s$ is a stable homology equivalence, we could conclude that $i$ is stably a homology surjection.
$$ $$

Unfortunately, we have been unable to prove that the topology of the spaces $\Hol^*(\s, (\C P^m, J))$ are independent of choice of almost complex structure $J$. However, it is true that the topology of $\Hol_k^*(\s, (\C P^2, J))$ is independent of $J$ for $k \leq 2$. We will define a partial $D_2$-algebra which we call $R_2$ which naturally maps to a neighborhood of $\bigsqcup_{k \leq 2} \Hol_k^*(\s,(\p,J))$ in $\Map^*(\s,\p)$ and has the property that the $2$-fold delooping of its completion as a $D_2$-algebra is homotopy equivalent to $\p$. Moreover, we show that the scanning map $s:\int_{\Sigma_g-pt} R_2 \m \Map^*(\Sigma_g ,\p)$ induces a homology equivalence on connected components in a range tending to infinity with degree.  Thus, our hypothetical proof becomes an actual proof in the case of $\p$ after replacing $\Hol^*(\s, (\C P^2,J))$ by the yet to be defined partial $D_2$-algebra $R_2$. The definition of the map $R_2 \m \bigsqcup_{k \leq 2} \Map_k^*(\s,\p)$ uses the fact that $\bigsqcup_{k \leq 2} \Hol_k^*(\s,(\p,J))$ is homotopy equivalent to $\bigsqcup_{k \leq 2} \Hol_k^*(\s,(\p,J_0))$. This is the primary difficulty in extending this argument to higher dimensions. 

We prove these properties of $\int_{\Sigma_g-pt} R_2$ by proving that it is homotopy equivalent to the so called bounded symmetric product of $\Sigma_g-pt$. The work of Kallel in \cite{K2} and Yamaguchi in \cite{Y} show that these bounded symmetric products give models of the space of continuous maps from a surface to a complex projective space. These results imply that $H_*(\int_{\Sigma_g} R_2)$ approximates $H_*( \Map^*(\Sigma_g,\p))$.

To construct the gluing map $g$, we follow \cite{S}, which is a generalization of the techniques of Appendix A of \cite{MS}. As in \cite{S}, we use automatic transversality arguments to prove the transversality conditions needed to construct gluing maps.

The organization of the paper is as follows: In Section \ref{secTCH}, we discuss scanning theorems for topological chiral homology. In Section \ref{KY}, we define $R_2$ and compare its topological chiral homology to bounded symmetric products. In Section \ref{secorb}, we review some aspects of the theory of orbifolds which will be needed when analyzing various moduli spaces. In Section \ref{secauto}, we review the basic theory of $J$-holomorphic curves and automatic transversality. In Section \ref{secdeg}, we review Gromov's proof that the space of degree one rational $J$-holomorphic maps is independent of $J$ and extend these arguments to the case of degree two maps. In Section \ref{secgluing}, we construct the gluing map $g$. Finally, in Section \ref{sechomotopy}, we compare $g$ to scanning maps and prove the main theorem.

\paragraph{Acknowledgments}
This paper is based on work done by the author as part of his doctoral dissertation at Stanford University, conducted under the supervision of Ralph Cohen. The author would like to thank him as well as Ricardo Andrade, Eleny Ionel, Alexander Kupers and John Pardon for many helpful discussions. Additionally, I am grateful for the useful and important comments from the referee which, in particular, led to the discovery of a major error in an earlier version of this paper.

\section{Topological chiral homology}\label{secTCH}

Topological chiral homology refers to a family of equivalent constructions introduced by \cite{Sa}, \cite{Lu5}, \cite{An} and others. Given a $D_n$-algebra $A$ and a parallelized $n$-manifold $M$, one constructs a space $\int_M A$ and a map $s : \int_M A \m \Map^c(M,B^n A)$, the space of compactly supported maps. This map is called the scanning map and induces an isomorphism in homology after a procedure called stabilizing which we recall in this section. We will be interested in the case when the manifold is $\Sigma_g-pt$ since we are interested in the space $\Map^*(\Sigma_g,\p) \simeq \Map^c(\Sigma_g-pt,\p) $. In this section we review Andrade's model of topological chiral homology \cite{An} and state a scanning theorem from \cite{Mi2}. This model of topological chiral homology is constructed via May's two-sided bar-construction \cite{M}. See \cite{Mi2} for the proof of the main theorem of this section. See \cite{Fr1} for a proof that Andrade's model of topological chiral homology is equivalent to that of Lurie in \cite{Lu5}.

\subsection{Operads, algebras and modules}\label{secOAM}
In this subsection we review the definition of operads and their modules and algebras. In this paper, we only work in the category of spaces. However, most constructions work more generally, and for other applications, it is interesting to work with chain complexes or spectra. The data of an operad includes the data of $\Sigma$-space (called symmetric sequences in spoken language).

\begin{definition}
A $\Sigma$-space is a collection of spaces $\{ X(k) \}_{k \in \N}$ such that $X(k)$ has an action of the symmetric group $\Sigma_k$. A map between $\Sigma$-spaces $f:X \m Y$ is a collection of equivariant maps $\{f_k\}_{k \in \N}$ with $f_k:X(k) \m Y(k)$.
\end{definition}

The category of $\Sigma$-spaces has a (non-symmetric) monoidal structure defined as follows.

\begin{definition}
For $X$ and $Y$ $\Sigma$-spaces, $X \otimes Y$ is the $\Sigma$-space such that $(X \otimes Y)(k) = \bigsqcup_{j=0}^{\infty} X(j) \times_{\Sigma_j} \bigsqcup_{f \in \Map(k,j)} \prod_{i=1}^j Y(|f^{-1}(i)|)$. Here $\Map(k,j)$ is the set of maps from $\{1, \ldots, k\}$ to  $\{1, \ldots, j\}$ and $\Sigma_k$ acts via precomposition.

\end{definition}

Note that the unit with respect to this product is given by the $\Sigma$-space $\iota$ with:  $$\iota(n) = \begin{cases}
 \{id\} & \text{if }  n=1 \\
\varnothing & \text{if } n \neq 1
\end{cases}$$

\begin{definition}
An operad is a monoid in the category of $\Sigma$-spaces.
\end{definition}

In other words, an operad $\mathcal O$ is a $\Sigma$-space with maps $m : \mathcal O \otimes \mathcal O \m \mathcal O$ and $i: \iota \m \mathcal O$ satisfying the obvious compatibility relations. Denote the image of $\iota$ by $id \in \mathcal O(1)$. For the purposes of this paper, we will be primarily interested in the little $2$-disks operad $D_2$. Let $\D^n$ denote the unit $n$-disk in $\R^n$.

\begin{definition}
Let $D_n$ be the $\Sigma$-space with $D_n(k)$ being the space of disjoint embeddings of $\bigsqcup_{i=1}^k  \D^n$ into $\D^n$ which are compositions of dilations and translations. Topologize this space with the subspace topology inside the space of all continuous maps with the compact open topology. This forms an operad via composition of embeddings.
\end{definition}

\begin{definition}
Let $\mathcal O$ be an operad. A left $\mathcal O$-module is a $\Sigma$-space $\mathcal L$ and a map $p:\mathcal O \otimes \mathcal L \m \mathcal L $ such that the following diagrams commute:

$$
\begin{array}{ccccccccl}
 \mathcal O \otimes \mathcal O \otimes \mathcal L  &\overset{id \otimes p}{\m} & \mathcal O \otimes \mathcal L & & \iota \otimes \mathcal L  &\overset{i}{\m} & \mathcal O \otimes \mathcal L  \\
m \otimes id \downarrow  & & p \downarrow  &       &    &  \searrow & p \downarrow  \\

\mathcal O \otimes \mathcal L  &\overset{p}{\m} & \mathcal L  & &  & & \mathcal L\\

\end{array}$$

\label{definition:module}

\end{definition}

\begin{definition}
A partial left module $\mathcal P$ over an operad $\mathcal O$ is the data of sub $\Sigma$-space $Comp \hookrightarrow \mathcal O \otimes \mathcal P$ and a map $p:Comp \m \mathcal P$ such that $$(m \otimes id)^{-1}(Comp)=(id \otimes p)^{-1}(Comp) \subset \mathcal O \otimes \mathcal O \otimes \mathcal P$$ and the diagrams analogous to the ones above commute (replace $ \mathcal O \otimes \mathcal O \otimes \mathcal L$ with $(m \otimes id)^{-1}(Comp)$).

\end{definition}

The name $Comp$ is short for the space of composable elements and operations. We likewise define right modules and partial right modules over operads. There is a functor from spaces to $\Sigma$-spaces which sends a space $X$ to the $\Sigma$-space with: $$X(n) = \begin{cases}
 X, & \text{if }  n=0 \\
\varnothing & \text{if } n \neq 0
\end{cases}$$ We will ignore the distinction between a space and its image as a $\Sigma$-space.

\begin{definition}
For an operad $\mathcal{O}$, an $\mathcal{O}$-algebra is a space $A$ with the structure of a left $\mathcal{O}$-module.
\end{definition}

Note that if $X$ is a $\Sigma$-space and $Y$ is a space, then the formula for $X \otimes Y$ simplifies to:
$X \otimes Y=\bigsqcup_j X(j) \times_{\Sigma_j} Y^j$. One can build a right $D_n$-module out of embeddings of disks in a parallelizable $n$-manifold as follows. For oriented manifolds $M$ and $N$, let $Emb^+(M,N)$ denote the space of all smooth orientation preserving embeddings of $M$ into $N$.

\begin{definition}
Let $M$ be a parallelized $n$-manifold. Using the parallelization, the derivative of a map $f \in Emb^+(\D^n,M)$ at each point can be identified with a matrix in $GL^+_n(\R)$, the group of matrices with positive determinant. Let $P$ denote the path space of $\Map(\D^n,SL_n(\R))$ based at the map which sends every point in $\D^n$ to the identity matrix. Let $ev:P \m \Map(\D^n,SL_n(\R))$ be the evaluation at $1$ map and let $D: Emb(\bigsqcup_{i=1}^k \D^n,M) \m \Map(\D^n,SL_n(\R))^k$ be the map that records the derivatives of the embeddings at every point divided by the $n$th root of its determinant. Let $D(M)(k)$ denote pull back of the following diagram: $$
\begin{array}{ccccccccl}
D(M)(k) & \m & P^k   \\
\downarrow & & ev^k \downarrow  &   \\

Emb^+(\bigsqcup_{i=1}^k \D^n,M)  & \overset{D}{\m} & \Map(\D^n,SL_n(\R))^k.\\

\end{array}$$ 
\end{definition}

The spaces $D(M)(k)$ assemble to form a $\Sigma$-space denoted $D(M)$. The paths of matrices serve to make $D(M)$ homotopy equivalent to the space of configurations of ordered  distinct points in $M$. That is, the map which sends a collection of embeddings to the image of the centers of each disk induces a homotopy equivalence between $D(M)(k)$ and $ M^k-\Delta_{fat}$ where $\Delta_{fat}$ is the fat diagonal. The space $D(M)$ has the structure of a right $D_n$-module as follows. The embeddings transform by composition of embeddings and the paths of matrices transform by restriction to smaller disks. The reason we divide by the $n$th root of the determinate is to remove the dependence of the derivative of $e:\D^n \m M$ on dilations of the domain\footnote{In \cite{Mi2} a different but homotopic definition of $D(M)(k)$ was given. However, in \cite{Mi2} the description of the the right $D_n$-module structure for $D(M)$ has an error in it. The definition used in this paper corrects that error.}.

\subsection{The two sided bar construction}\label{subsectwosided}

In this section, we will describe a definition of topological chiral homology due to Andrade in \cite{An}. It is based on the two sided bar construction introduced by May in \cite{M}. The purpose of this subsection is to describe May's two-sided bar construction. From now on, we will also assume that all operads $\mathcal O$ have $\mathcal O(0) =\{0\}$. All algebras that we will consider will have a base point $a_0 \in A$ and we require that $p(0) =a_0$. We likewise require that all right modules $\mathcal R$ have $\mathcal R(0) = \{r_0\}$. We will recall the functor on the category of based spaces ($Top_*$) associated to a right module. 

\begin{definition}
For $(X,a_0)$ a based space and $\mathcal R$ a right module over an operad $\mathcal O$, define a functor $R: Top_* \m Top_*$ by $ R X = \mathcal R \otimes X / \sim$. Here $\sim$ is the relation that if $r \in \mathcal R$ and $x_i \in X$, then $(r;x_1, \ldots, a_0, \ldots, x_n) \sim (r';x_1, \ldots,  x_n)$ with $r'$ the composition of $r$ with $(id,\ldots, id,0, id, \ldots, id)$.
\end{definition}

We will use the convention that calligraphy style fonts are for symmetric sequences and regular fonts are for the associated functor. 

\begin{definition}
For an operad $\mathcal O$, let $O : Top_* \m Top_*$ be the associated functor. We define an $\mathcal O$-functor (called right $\mathcal O$-functors elsewhere) to be a functor $R: Top_* \m Top_*$ and natural transformation $\xi : R O \m R$ making the following diagrams commute for every based space $X$:

$$
\begin{array}{ccccccccl}
RX  &\overset{R \eta }{\m} &ROX & & ROOX &\overset{R \mu}{\m} &ROX  \\
 & id \searrow & \xi \downarrow  &         &  \xi \downarrow   &  & \xi \downarrow   \\

  & & RX & &  ROX & \overset{\xi}{\m} & RX.\\

\end{array}$$ Here $\eta$ is the natural transformation induced by the unit of the operad and $\mu$ is the natural transformation induced by operad composition. 

\end{definition}

Note that right $\mathcal O$-modules induce $O$-functors. We now recall May's two-sided bar construction. We give a slight modification that works for partial algebras over an operad. Let $\mathcal P$ be a partial algebra over an operad $\mathcal O$. Let $Comp^k \subset \mathcal O ^{\otimes k} \otimes \mathcal P$ be the subspace of composable morphisms. Let $QComp^k$ be the image of $Comp^k$ in $O^k \mathcal P$. In other words, $QComp^k$ is the quotient of $Comp^k$ under base point relations. 

\begin{definition}
For $\mathcal O$ an operad, $\mathcal P$ a partial $\mathcal O$-algebra and $R$ an $\mathcal O$-functor, define $B_\bullet(R,\mathcal O,\mathcal P)$ to be the following simplicial space. The space of $k$ simplices of $B_\bullet(R,\mathcal O,\mathcal P)$ will be $R (QComp^k)$. The partial algebra composition map $Comp \m \mathcal P$ induces a map $QComp^k \m QComp^{k-1}$ and hence a map $d_0 : B_k(R, \mathcal O,\mathcal P) \m B_{k-1}(R, \mathcal O,\mathcal P)$. The operad composition map $\mathcal O \otimes \mathcal O \m \mathcal O$ gives $k-1$ maps $d_i : B_k(R, \mathcal O,\mathcal P) \m B_{k-1}(R, \mathcal O,\mathcal P)$ for $i=1, \ldots, k-1$ and the $\mathcal O$-functor composition map $R O \m R$ gives another map, $d_k : B_k(R, \mathcal O,\mathcal P) \m B_{k-1}(R, \mathcal O,\mathcal P)$. These maps will be the face map of $B_\bullet(R, \mathcal O,\mathcal P)$. The degeneracies, $s_i :  B_k(R, \mathcal O,\mathcal P) \m B_{k+1}(R, \mathcal O,\mathcal P)$ are induced by the unit of the operad $\iota \m \mathcal O$.

\end{definition}

See \cite{M} for a proof that these maps satisfy the axioms of face and degeneracy maps of a simplicial space. Let $B(R,\mathcal O,\mathcal P)$ denote the geometric realization. 

When $R$ is a functor coming from a right module $\mathcal R$, we now explain how to view elements $B_k(R,\mathcal O, \mathcal P)$ as decorated rooted directed trees. See Figure \ref{figure:RootedTree} for an illustration. One special vertex is called the root and  is labeled by an element of $\mathcal R(m)$ with $m$ being the valence of the root. The univalent verticies are called leaves and are labeled by elements of the partial $\mathcal O$-algebra $\mathcal P$. We say that a vertex is at level $k+1-i$ if it is $i$ edges away from the root. We require that all of the leaves are at level $0$. Internal verticies of valence $m+1$ are labeled by elements of $\mathcal O(m)$. The tree is directed and we require that the directions point away from the root. We order the edges exiting internal vertices and the root. Given a vertex with $m$ outgoing edges, the symetric group $\Sigma_m$ acts in two ways on the tree. It acts on the label since $\mathcal R(m)$ and $\mathcal O(m)$ have $\Sigma_m$ actions. It also acts by permuting the ordering of the $m$ outgoing labels. We mod out by the induced diagonal action. Given a decorated tree $T$ of this type, let $\{ T_i \}$ be the subtrees of $T$ obtained by deleting the root (see Figure \ref{figure:RootedTree}). These determine elements of $O^k \mathcal P$. We require that these elements are in fact in $Comp^k$. We do not label internal verticies by $0 \in \mathcal O(0)$. Indeed the base-point relation precisely states that we can ignore such ``dead ends'' which would have leaves at level $>0$. The face maps can be described by collapsing edges connecting two levels and composing the labels. The degeneracies can be described as inserting a level where all elements are labeled by the unit of the operad.

Given a partial $\mathcal O$-algebra $\mathcal P$, there is a natural way of completing $\mathcal P$ to get an $\mathcal O$-algebra.

\begin{definition}
Let $\mathcal O$ be an operad and $\mathcal P$ be a partial $\mathcal O$-algebra. We call $B(O,\mathcal O, \mathcal P)$ the $\mathcal O$-algebra completion of $\mathcal P$. 
\end{definition}

 Let $A$ be an $\mathcal O$-algebra. The arguments in \cite{M} showing that $B(O, \mathcal O, A)$ is a $\mathcal O$-algebra and that the natural $\mathcal O$-algebra map $B(O, \mathcal O, A) \m A$ is a homotopy equivalence immediately generalize to give the following propositions. 

\begin{proposition}
If $\mathcal P$ is a partial $\mathcal O$-algebra over an operad $\mathcal O$, $B(O,\mathcal O, \mathcal P)$ is an $\mathcal O$-algebra.
\end{proposition}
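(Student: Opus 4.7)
The plan is to follow May's argument from \cite{M} for honest $\mathcal{O}$-algebras, observing that the $\mathcal{O}$-algebra structure on $B(O,\mathcal{O},\mathcal{P})$ comes entirely from operad composition at the leftmost ``root'' copy of $\mathcal{O}$, while the partial algebra composition $Comp \m \mathcal{P}$ only affects the leaves of the tree. Since these two operations act on disjoint parts of the simplicial data, they commute, and the partialness of $\mathcal{P}$ never obstructs the construction.

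First I would define the $\mathcal{O}$-action at each simplicial level. An element of $B_k(O,\mathcal{O},\mathcal{P}) = O(QComp^k)$ is a decorated tree with root label in $\mathcal{O}$, $k$ internal levels labeled in $\mathcal{O}$, and leaves in $\mathcal{P}$, such that the data below the root lies in $QComp^k$. Given $o \in \mathcal{O}(n)$ and trees $x_1,\ldots,x_n \in B_k$ with root labels $r_i \in \mathcal{O}(m_i)$, I set $o \cdot (x_1,\ldots,x_n)$ to be the tree whose root is labeled by the operadic composition $o \circ (r_1,\ldots,r_n) \in \mathcal{O}(\sum m_i)$ and whose subtrees are the concatenation of the subtrees of the $x_i$. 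One must check that the resulting concatenation lies in $QComp^k$; this is automatic, since the composability condition for an iterated operation acting on a disjoint union is inherited termwise from each summand.

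Next I would verify that these simplicial $\mathcal{O}$-actions commute with the face and degeneracy maps, giving a simplicial $\mathcal{O}$-algebra. The faces $d_i$ for $i \geq 1$ and all degeneracies $s_i$ involve only operadic composition or insertion of the unit $\iota \m \mathcal{O}$, and commute with the root-level action by associativity and unitality of the operad. The face $d_0$ applies the partial composition at the leaves, and commutes with the action because the latter modifies only the root label and reorders subtrees; the two operations are literally performed on disjoint parts of the tree. Passing to geometric realization, which commutes with the relevant finite products, then yields an $\mathcal{O}$-algebra structure on $B(O,\mathcal{O},\mathcal{P})$. The main obstacle, such as it is, is the bookkeeping that concatenation preserves $QComp^k$ and that the action descends to the quotient by basepoint relations; once this is checked, the argument reduces to the standard verifications in \cite{M}, none of which require the algebra to be globally defined.
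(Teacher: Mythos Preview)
Your proposal is correct and follows exactly the approach the paper indicates: the paper does not give a proof at all, but simply states that ``the arguments in \cite{M} showing that $B(O, \mathcal O, A)$ is an $\mathcal O$-algebra \ldots\ immediately generalize'' to the partial case. You have spelled out precisely this generalization, correctly isolating that the $\mathcal O$-action lives on the root copy of $O$ while the partialness only enters via $d_0$ at the leaves, so the two never interfere.
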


\begin{proposition}
If $\mathcal P$ is a partial $\mathcal O$-algebra over an operad $\mathcal O$ and $R$ is an $\mathcal O$-functor, then the natural map $B(R,\mathcal O, B(O,\mathcal O, \mathcal P)) \m B(R,\mathcal O, \mathcal P) $ is a weak equivalence.
\label{extradeg}
\end{proposition}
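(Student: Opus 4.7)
The plan is to realize both sides as geometric realizations of a common bisimplicial space and then contract the ``inner'' simplicial direction using May's standard extra degeneracy argument. Specifically, I would form the bisimplicial space $X_{\bullet,\bullet}$ whose $(p,q)$-simplices are two-level decorated rooted trees: an outer skeleton of $p+1$ labeled internal levels (the root by an element of $\mathcal R$, the next $p$ levels by elements of $\mathcal O$) whose outer leaves are the roots of inner trees (one $\mathcal O$-labeled root, $q$ further $\mathcal O$-labeled internal levels, and leaves labeled by $\mathcal P$), subject to the condition that the totally flattened tree gives a composable operation in the partial algebra $\mathcal P$. Face maps contract adjacent levels by operad composition (and, at the bottom, by partial algebra composition), and degeneracies insert the operad unit. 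Realizing $X_{\bullet,\bullet}$ in the outer $p$-direction first and then the inner $q$-direction recovers $B(R,\mathcal O, B(O,\mathcal O,\mathcal P))$, using that $R \circ \mathcal O^p$ commutes with geometric realization.

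For each fixed $q$, the space $B_q(O,\mathcal O,\mathcal P) = O(QComp^q)$ is by construction a free $\mathcal O$-algebra on the based space $QComp^q$. The key input is then the standard May lemma that for any based space $Y$, the natural augmentation $B(R,\mathcal O,OY) \m R(Y)$ is a weak equivalence, proved via an extra degeneracy $s_{-1}$ that inserts the operad unit between the bottom $\mathcal O$ factor and $Y$ and gives a simplicial contraction over the augmentation. Applied to $Y = QComp^q$, this yields a weak equivalence $B(R,\mathcal O,B_q(O,\mathcal O,\mathcal P)) \simeq R(QComp^q) = B_q(R,\mathcal O,\mathcal P)$. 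These equivalences are natural in $q$, so upon realizing the remaining $q$-direction they assemble to a weak equivalence $B(R,\mathcal O,B(O,\mathcal O,\mathcal P)) \simeq B(R,\mathcal O,\mathcal P)$. One then checks that this composite agrees with the natural flattening map from the statement, which sends a two-level tree to the single tree obtained by grafting each inner subtree onto its corresponding outer leaf.

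The main obstacle is verifying that the extra degeneracy argument goes through in the partial-algebra setting, where the simplicial space is cut down to the composable subspaces $Comp^k$ rather than all of $\mathcal O^{\otimes k} \otimes \mathcal P$. The degeneracy inserts the operad unit $id \in \mathcal O(1)$, and one must check that this insertion preserves membership in the composable subspace. Since composition with $id$ is the identity operation in both the operad and the partial algebra, this is straightforward but requires book-keeping: an insertion of $id$ in a composition $p(c_1,\ldots,c_k,x)$ changes neither the flattened operation nor whether it is in the domain of $p$. A secondary technical point is that the interchange of geometric realization with $R$ and $\mathcal O^p$, and the passage from the bisimplicial diagonal to iterated realizations, must be justified; this follows from the cofibrancy of the degeneracies (goodness of the simplicial objects in the sense of Segal), which holds because the degeneracies here are inclusions of summands obtained by inserting units.
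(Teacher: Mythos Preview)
Your approach is correct and is precisely the argument the paper has in mind: the paper does not give its own proof but simply states that ``the arguments in \cite{M} \ldots\ immediately generalize,'' and what you have written is exactly May's bisimplicial/extra-degeneracy argument (Lemma~9.2 and Propositions~9.8--9.9 of \emph{The Geometry of Iterated Loop Spaces}) together with the one extra verification needed in the partial-algebra setting, namely that inserting the unit $id \in \mathcal O(1)$ preserves membership in $Comp^k$, which you correctly note is immediate. Your remark about goodness of the simplicial spaces is also the right technical hypothesis to make the levelwise equivalences pass to realizations; the only small wording slip is in the first paragraph, where the order of realization should read ``inner $q$-direction first, then outer $p$-direction'' to literally recover $B(R,\mathcal O,B(O,\mathcal O,\mathcal P))$, but since the two iterated realizations of a bisimplicial space coincide this does not affect the argument.
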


\begin{figure}[!ht]
\begin{center}\scalebox{.4}{\includegraphics{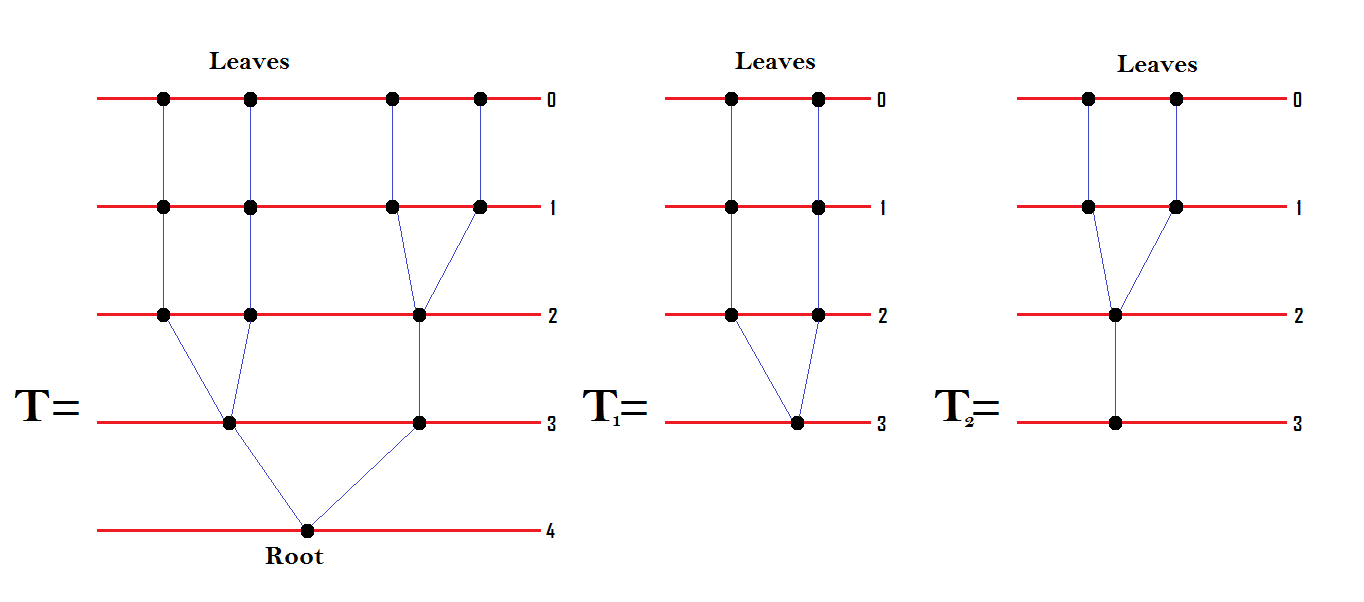}}\end{center}

\caption{A rooted tree and subtrees}
\label{figure:RootedTree}
\end{figure}

\begin{remark}
\label{reedy}
In later sections, we will need to compare simplicial spaces. It is not true that a map of simplicial spaces that induces a weak homotopy equivalence on each simplical level will necessarily induce a weak homotopy equivalence between the geometric realizations. A well known cofibrancy condition called ``properness'' or ``Reedy cofibrancy'' gives a sufficient condition for level-wise weak equivalences to realize to weak equivalences. Using the techniques of Appendix A of \cite{M} (also see Section 2.4 of \cite{Mi2}), it is not hard to show that all simplicial spaces considered in this paper are proper. Essentially the only thing to check is that the inclusion of basepoints are cofibrations. Hence we will be able to deduce that a map of simplicial spaces induces a weak homotopy equivalence if it does level-wise.

\end{remark}

\subsection{Topological chiral homology}

In this subsection we describe a model of topological chiral homology introduced by Andrade in \cite{An} based on May's two-sided bar construction. We also describe a natural map from topological chiral homology to a mapping space called the scanning map and discuss its properties. See \cite{Mi2} for a more detailed discussion of the topics of this subsection.

\begin{definition}
For a partial $D_n$-algebra $\mathcal P$ and parallelizable $n$-manifold $M$, we define the topological chiral homology of $M$ with coefficients in $\mathcal P$ to be $B(D(M),D_n,\mathcal P)$. We denote this by $\int_M \mathcal P$. 
\end{definition}

In this subsection, we only consider $D_n$-algebras as opposed to partial $D_n$-algebras. The goal of the remainder of the subsection is to define a map $s: \int_M A \m \Map^c(M,B^n A)$ and recall its properties. This map is called the scanning map $s$. We use the following model of $B^n A$ introduced in \cite{M}. Recall that the functor $Y \m \Sigma^n Y$ is a $D_n$-functor.

\begin{definition}
For a $D_n$-algebra $A$, let $B^n A =B(\Sigma^n,D_n,A)$.
\end{definition}

If $A$ is a $D_n$-algebra, then $\pi_0(A)$ has a natural monoid structure induced by the map $D_n(2) \times A^2 \m A$.

\begin{definition}
A $D_n$-algebra is called group-like if $\pi_0(A)$ is a group.
\end{definition}

The above definition of $B^n A$ is reasonable because of the following theorem due to May in \cite{M}.

\begin{theorem}
If $A$ is a group-like $D_n$-algebra, then $\Omega^n B^n A$ is weakly homotopy equivalent to $A$. 
\end{theorem}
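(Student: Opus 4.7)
The plan is to mimic May's original recognition principle \cite{M}, adapted to Andrade's presentation via the two-sided bar construction. First I would produce a natural comparison map $\alpha : A \to \Omega^n B^n A$. The standard extra-degeneracy argument shows that the augmentation $B(D_n, D_n, A) \to A$ is a weak equivalence, and the adjoint of the identity on $\Sigma^n$ yields a map of right $D_n$-functors $D_n \to \Omega^n \Sigma^n$ which induces $B(D_n, D_n, A) \to B(\Omega^n \Sigma^n, D_n, A)$. Composed with the natural map into $\Omega^n B(\Sigma^n, D_n, A) = \Omega^n B^n A$ (using that $\Omega^n$ commutes with geometric realization up to weak equivalence for proper simplicial spaces whose $0$-simplex inclusion is a cofibration), this produces $\alpha$. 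The content of the theorem is then that $\alpha$ is a weak equivalence under the grouplike hypothesis.

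Next I would reduce this to the approximation theorem, which asserts that the natural inclusion $D_n X \to \Omega^n \Sigma^n X$ is a weak equivalence whenever $X$ is path-connected and well-pointed. Granting this, I would analyze the simplicial space $B_\bullet(\Omega^n \Sigma^n, D_n, A)$ level-wise: each face map matches the corresponding face map in $B_\bullet(D_n, D_n, A)$ up to the approximation equivalence. When $A$ is path-connected, the $\Sigma$-products $D_n^k A$ and $\Omega^n \Sigma^n D_n^{k-1} A$ are all connected, so the approximation theorem applies term-wise. A realization lemma (weak equivalences of proper simplicial spaces realize to weak equivalences) then gives that $\alpha$ is a weak equivalence for path-connected $A$.

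For the general grouplike case one cannot apply the approximation theorem term-wise because $A$ itself need not be connected, so this will be the main obstacle. The standard fix is a group-completion argument: show that $\pi_0 \Omega^n B^n A$ is the group completion of the monoid $\pi_0 A$, which in the grouplike case is already a group and agrees with $\pi_0 A$, and then run the preceding argument component-wise after translating by elements of $\pi_0 A$. Concretely, for each $\alpha_0 \in \pi_0 A$ one uses the $D_n$-action by $\alpha_0$ to identify each component of $A$ with the basepoint component, and similarly for $\Omega^n B^n A$; once components are matched, the path-connected case applies.

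Finally, I would verify that the simplicial spaces occurring are proper (Reedy cofibrant), so that $\Omega^n$ genuinely commutes with $|-|$. This is where the standing assumptions $\mathcal{O}(0) = \{0\}$, $A$ well-pointed, and the base-point conventions from Subsection \ref{subsectwosided} enter. Together with Andrade's equivalence between this model and Lurie's \cite{Fr1}, the classical May proof transports directly, yielding the stated weak equivalence.
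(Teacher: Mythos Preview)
The paper does not give its own proof of this statement: it simply attributes the theorem to May and cites \cite{M}. Your proposal is a faithful sketch of May's original recognition-principle argument (bar resolution, approximation theorem, group-completion for the non-connected grouplike case, properness of the simplicial spaces), which is exactly what the paper is citing, so there is nothing to compare beyond noting that you have reconstructed the referenced proof rather than diverged from it.
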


\begin{definition}
For an $n$-manifold $M$, let $\Map^c(M, \Sigma^n \cdot)$ be the functor which sends a based space $Y$ to the space of compactly supported maps from $M$ to the $n$-fold suspension of $Y$.
\end{definition}

This is a $D_n$-functor because the functor $Y \m \Sigma^n Y$ is a $D_n$-functor \cite{M}. For $M$ a parallelized $n$-manifold, there is a scanning natural transformation of $D_n$-functors $s : D(M) \m \Map^c(M,\Sigma^n \cdot )$ defined as follows. For a based space $Y$, we define a map $D(M)Y \m \Map^c(M,\Simga^n Y)$. Map a collection of embeddings $e_i: \D^n \m M$ to the function $f:M \m \Sigma^n Y$ defined as follows. Let $f$ be constant outside of the images of the embeddings $e_i$. For $m \in im(e_i)$, the  $(e_i^{-1}(m),y_i)$ defines a point inside $\Simga^n Y$. Define $f(m)$ to be that point. Here we view $\Sigma^n Y$ as $Y$ smashed with the one point compactification of $\D^n$. This is a natural transformation of $D_n$-functors. 

The scanning natural transformation $s$ induces the scanning map $s:B(D(M),D_n,A) \m \Map^c(M,B^n A)$ as follows. The natural transformation induces a map $B(D(M),D_n,A) \m B(\Map^c(M,\Sigma^n \cdot ),D_n,A)$. There is a natural map $B(\Map^c(M,\Sigma^n \cdot),D_n,A) \m  \Map^c(M,B( \Sigma^n \cdot ,D_n,A))$ (for example see Section 12 of \cite{M}). Combining these, we get the scanning map.

\begin{definition}
Let $A$ be a $D_n$-algebra and $M$ a parallelized manifold. Let $s: \int_M A \m \Map^c(M,B^n A)$ be the composition of: $$ \int_M A = B(D(M),D_n,A) \overset{s}{\m} B(\Map^c(M,\Sigma^n \cdot ),D_n,A)$$ $$ \m  \Map^c(M, B(\Sigma^n,D_n,A))= \Map^c(M,B^n A).$$
\end{definition}

In \cite{Lu5} (Theorem 3.8.6), Lurie proves the following theorem.

\begin{theorem}
If $A$ is a group-like $D_n$-algebra, then $s: \int_M A \m \Map^c(M,B^n A)$ is a homotopy equivalence.
\end{theorem}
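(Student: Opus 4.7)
The plan is to verify the scanning map is an equivalence on the model case $M = \R^n$ and then bootstrap to general parallelized $n$-manifolds by showing both sides satisfy a common Mayer--Vietoris style excision property. Both $M \mapsto \int_M A$ and $M \mapsto Map^c(M, B^n A)$ are covariant functors on parallelized $n$-manifolds and open embeddings, and the scanning map $s$ is natural in $M$, so it suffices to check equivalence on a family of objects from which every parallelized $n$-manifold can be reassembled by open gluings.

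First I would handle the base case $M = \R^n$. The right $D_n$-module $D(\R^n)$ is weakly equivalent to $D_n$ itself as a right $D_n$-module: intuitively, any framed configuration of disks in $\R^n$ can be rescaled and concentrated inside a single disk, so $D(\R^n)$ is homotopically free on a $0$-simplex. Consequently the two-sided bar construction admits an extra degeneracy and
\begin{equation*}
\int_{\R^n} A \;=\; B(D(\R^n), D_n, A) \;\simeq\; B(D_n, D_n, A) \;\simeq\; A.
\end{equation*}
On the target side, $Map^c(\R^n, B^n A) = \Omega^n B^n A$. Unwinding the definition of $s$ on this case (the scanning natural transformation sends a framed configuration to the evident map to $\Sigma^n Y$ supported in the disk images), one sees that here $s$ coincides with May's comparison map $A \to \Omega^n B^n A$ from \cite{M}, which is a weak equivalence under the group-like hypothesis by the recognition theorem quoted just above.

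Next I would establish excision. If $M = U \cup V$ with $U \cap V$ diffeomorphic to $N \times \R$ for a parallelized $(n-1)$-manifold $N$, then the $\R$-direction endows $\int_{U \cap V} A$ with an $E_1$-algebra structure acting on $\int_U A$ and $\int_V A$, and one expects
\begin{equation*}
\int_M A \;\simeq\; \int_U A \;\otimes^{\mathbb{L}}_{\int_{U \cap V} A}\; \int_V A,
\end{equation*}
realized as the geometric realization of a relative bar construction. The analogous pushout on the mapping space side follows because $Map^c(-, Y)$ turns open pushouts into homotopy pullbacks of based spaces; after delooping with $Y = B^n A$ and using group-likeness, these pullbacks become derived tensor products over the appropriate loop space, and naturality of $s$ intertwines the two decompositions. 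An induction on a handle decomposition of $M$, starting from the $\R^n$ case, then finishes the argument.

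The central obstacle is proving the excision statement on the topological chiral homology side, i.e.\ showing that $D(M)$ decomposes as a derived tensor product of $D(U)$ and $D(V)$ over $D(U \cap V)$ as right $D_n$-modules. This requires a careful analysis of how framed configurations and their paths of matrices behave when separated by a codimension-one collar, including a cofinality argument ensuring that configurations which straddle the collar can be isotoped into one of the two sides without changing the bar-construction homotopy type. This is the technical heart of Lurie's proof (\cite{Lu5}, Theorem 3.8.6) and is most naturally organized using the $\infty$-categorical formalism of factorization homology together with non-abelian Poincar\'e duality.
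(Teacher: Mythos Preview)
The paper does not actually prove this theorem: it is simply quoted as Theorem~3.8.6 of \cite{Lu5} (Lurie), so there is no in-paper argument to compare against. Your sketch is essentially an outline of the standard non-abelian Poincar\'e duality proof that Lurie gives---verify the base case $M=\R^n$ via May's recognition principle (this is where the group-like hypothesis enters), then propagate along a handle decomposition using an excision/gluing property for both functors---and you correctly identify the excision step for $\int_M A$ as the technical core.

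One minor comment: your description of the mapping-space side of excision is a bit loose. The functor $Map^c(-,Y)$ takes an open cover $M=U\cup V$ to a homotopy \emph{pullback} square of based spaces, not directly to a derived tensor product; the passage to a pushout-style statement on the $\int$ side goes through the equivalence you are trying to prove, so one has to be careful not to argue circularly. In practice this is handled by showing that $s$ is a natural transformation between two functors both of which send handle-attachment squares to homotopy pushout (equivalently, pullback after looping) squares, and then inducting. But as a high-level outline of Lurie's argument your proposal is accurate, and since the paper only cites the result, there is nothing further to reconcile.
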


For our purposes, we will need properties of the scanning map when $A$ is not group-like. In this case, the scanning map is not a homotopy equivalence. However, when $M$ is open, the scanning map is what we call a stable homology equivalence \cite{Mi2}. The word stable is not in the sense of stable homotopy theory but instead means after inverting the following types of maps which we call stabilization maps.

For simplicity, assume that $M$ is a connected manifold which is the interior of a manifold with connected boundary $\partial M$. Fix an embedding $e: \D^n \m (0,1) \times \partial M$ and diffeomorphism $d: M \cup_{\partial M} [0,1) \times \partial M \m M$ such that $d^{-1}$ is isotopic to the inclusion $M \hookrightarrow  M \cup_{\partial M} [0,1) \times \partial M$. For $a \in A$, we shall define a stabilization map $t_a : B(D(M),D_n,A) \m B(D(M),D_n,A)$ as follows. The pair $(e;a)$ defines an element of $D(\partial M \times (0,1))A$. Let $a^k \in D(\partial M \times (0,1)) D_n^k A$ be the image of $(e;a)$  under $k$ degeneracy maps. Sending a configuration of disks to one union $a^k$ gives a map $B_k(D(M),D_n,A) \m B_k(D(M \cup_{\partial M} (0,1) \times \partial M),D_n,A)$. The diffeomorphism $d: M \cup_{\partial M} [0,1) \times \partial M \m M$ induces a map $d_k : B_k( M \cup_{\partial M} [0,1) \times \partial M , D_n, A) \m B_k(D(M),D_n,A) $. Let $t^a_k$ be the composition of these two maps.

\begin{definition}
Let $t_a : \int_M A \m \int_M A$ be the geometric realization of $t^a_\bullet : B_\bullet(D(M),D_n,A) \m  B_\bullet(D(M),D_n,A).$
\label{stab}
\end{definition}

For $a \in A$, let $f_a : (0,1) \times \partial M \m B^n A$ be the image of $(e,a)$ under the scanning map.

\begin{definition}
For $m \in M$ and $f \in \Map^c(M,B^n A)$, let $T_a : \Map^c(M,B^n A) \m \Map^c(M,B^n A)$ be defined by the following formula with:
$$ T_a(f)(m)=
\begin{cases}
f(d^{-1}(m))  & \mbox{if } d^{-1}(m) \in M \\
f_a(d^{-1}(m))  & \mbox{if } d^{-1}(m) \notin M . \\
\end{cases}
$$
\end{definition}

In \cite{Mi2}, the author proved the following theorem regarding the scanning after inverting the maps $t_a$ and $T_a$.

\begin{theorem}
Let $A$ be a $D_n$-algebra and let $M$ be a parallelized $n$-manifold with $\pi_0 (\partial M) \m \pi_0(M)$ onto. Let $a_i$ be representatives of generators of $\pi_0(A)$. The scanning map induces a homology equivalence $s:hocolim_{t_{a_i}} \int_M A \m hocolim_{T_{a_i}} \Map^c(M,B^n A)$.
\label{scanTCH}
\end{theorem}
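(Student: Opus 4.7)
The plan is to reduce the theorem to the classical scanning theorem for labeled configuration spaces via the semi-simplicial bar-construction model. Concretely, I would work with $\int_M A = |B_\bullet(D(M), D_n, A)|$ together with an analogous simplicial resolution of $Map^c(M, B^n A)$ coming from $B^n A = B(\Sigma^n, D_n, A)$, and then apply the scanning natural transformation level-wise before taking realizations.

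The first step is to identify, at each simplicial level $k$, the space $B_k(D(M), D_n, A) = D(M) D_n^k A$ as a labeled configuration space $C(M; A_k)$ with labels in $A_k := D_n^k A$, and similarly to identify the right-hand simplicial level as $Map^c(M, \Sigma^n A_k)$. For labeled configuration spaces on a parallelized manifold with $\pi_0(\partial M) \m \pi_0(M)$ surjective, the group-completion theorem of McDuff, extended to labeled configurations by Segal, Kallel, Salvatore and others, says precisely that after inverting the stabilization maps given by pushing in a fixed element near the boundary, the scanning map is a homology equivalence. Applying this to each generator $a_i$ of $\pi_0(A)$ produces level-wise homology equivalences between $hocolim_{t_{a_i}} B_k(D(M), D_n, A)$ and $hocolim_{T_{a_i}} Map^c(M, \Sigma^n A_k)$.

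The final step is to promote these level-wise equivalences to a homology equivalence of geometric realizations. Using the bar spectral sequence, together with the fact that sequential homotopy colimits commute with geometric realization of semi-simplicial spaces and with taking homology, the level-wise equivalences assemble to the desired homology equivalence $hocolim_{t_{a_i}} \int_M A \m hocolim_{T_{a_i}} Map^c(M, B^n A)$. The main obstacle is verifying that the stabilization maps $t_{a_i}$ are compatible, up to homotopy, with the face and degeneracy maps of the bar construction, so that inverting them on realizations coincides with inverting them level-wise. This is precisely where the placement of the stabilization near the boundary (made possible by the surjectivity of $\pi_0(\partial M) \m \pi_0(M)$) is essential: stabilization inside a boundary collar decouples from the operadic compositions carried out in the interior of $M$, reducing the compatibility check to a direct local computation on each simplicial level.
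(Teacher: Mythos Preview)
The paper does not actually prove this theorem here; it is quoted from the author's companion paper \cite{Mi2} (see the sentence ``In \cite{Mi2}, the author proved the following theorem\ldots'' immediately preceding the statement, and the remark ``See \cite{Mi2} for a more detailed discussion of the topics of this subsection''). So there is no in-paper proof to compare against.

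That said, your outline is the correct one and is essentially the strategy carried out in \cite{Mi2}: resolve both sides simplicially via the bar construction, recognize each level $B_k(D(M),D_n,A)=D(M)(D_n^k A)$ as a labeled configuration space with labels in the based space $D_n^k A$, invoke the classical labeled-configuration scanning theorem (McDuff, B\"odigheimer, Salvatore) level-wise to obtain homology equivalences after stabilization, and then pass to realizations via the bar spectral sequence. Your identification of the crux---that the stabilization maps $t_{a_i}$ must be made strictly simplicial, which is exactly what the boundary-collar construction (enabled by surjectivity of $\pi_0(\partial M)\to\pi_0(M)$) provides---is right on target; in the paper's Definition~\ref{stab} the stabilization is built precisely so that the added point, promoted through degeneracies to $a^k\in D(\partial M\times(0,1))D_n^k A$, commutes on the nose with all face and degeneracy maps. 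One point you pass over lightly: the level-wise scanning theorem requires knowing that stabilizing by each generator $a_i$ of $\pi_0(A)$ suffices to invert the entire monoid $\pi_0(D_n^k A)$ at every level $k$; this follows because $\pi_0(D_n^k A)\cong\pi_0(A)$ as monoids, but it deserves a sentence.
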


Note that the maps $T_a :  \Map^c(M,B^n A) \m  \Map^c(M,B^n A) $ are homotopy equivalences. Thus $hocolim_{T_{a_i}} \Map^c(M,B^n A) \simeq \Map^c(M,B^n A)$. Note that if $\pi_0(A) = \N$ and $M$ is connected, then $\pi_0( \int_M A) =\N$ and $\pi_0(\Map^c(M,B^n A)) =\Z$. In this situation, we denote the $k$th connected component of these spaces by $(\int_M A)_k$ and $\Map_k^*(M,B^n A)$ respectively. Informally, the above theorem can be viewed as saying that the homology of $(\int_M A)_k$ converges to the homology of a component of $\Map^*(M,B^n A))$ in the limit as $k$ tends to infinity.




\section{The partial $D_2$-algebra $R_2$}\label{KY}

The goal of this section is to define a partial $D_2$-algebra called $R_2$ and describe the homotopy type of its topological chiral homology. The desired properties of $R_2$ are that $B^2 R_2 \simeq \C P^2$ and that $R_2$ admits a map of partial $D_2$-algebras to $\Omega^2 \p$ that lands in a subspace of ``approximately'' $J$-holomorphic maps. This map will be described in Section \ref{secR2J}. The subscript $2$ indicates that $R_2$ will map to the components of degree less than or equal to $2$ and the $R$ stands for rational. We begin by recalling some theorems of Kallel from \cite{K2} and Yamaguchi from \cite{Y} regarding configuration space models for $\Map^*(\Sigma_g,\C P^m)$.

\subsection{A configuration space model for $\Map^*(\Sigma_g,\C P^2)$}

The scanning map of topological chiral homology was partially inspired by similar constructions for various configuration spaces. We are interested in the following construction which gives a configuration space model of the space of maps from a surface into a complex projective space. We only consider the case relevant for $\p$ even though basically everything in this section works for $\C P^m$ for all $m$. Let $\D$ be the open unit disk in $\C$ and let $\bar \D$ denote its closure. 

\begin{definition}
For a space $X$, let $Sym^k_{\leq 2}(X)$ denote the subspace of the $k$-fold symmetric product of $X$ of configurations where at most two points occupy the same point in $X$. Let $Sym_{\leq 2}(X) = \bigsqcup\limits_{k=0}^\infty Sym^k_{\leq 2}(X)$ and let $A_2=\bigsqcup_{k \leq 2} Sym^k_{\leq 2}(\D)$. The space $Sym_{\leq 2}(\D)$ has a natural $D_2$-algebra structure and this induces a partial  $D_2$-algebra structure on $A_2$.
 \end{definition}

Kallel proved the following theorem (Theorem 1.7 of \cite{K2}).

\begin{theorem}
Let $S$ be a connected parallelizable surface admitting boundary. There exist stabilization maps $t:Sym_{\leq 2}(S) \m Sym_{\leq 2}(S)$ and a scanning map  $s:Sym_{\leq 2}(S) \m \Map^c(S,\p)$ such that $s$ induces a homology equivalence between $hocolim_t Sym_{\leq 2}(S)$ and $\Map^c(S,\p)$.
\end{theorem}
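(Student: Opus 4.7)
The plan is to derive this theorem directly from the stable scanning theorem stated at the end of Section \ref{secTCH}, by identifying $Sym_{\leq 2}(S)$ with the topological chiral homology $\int_S A_2$ and identifying $\p$ with the double delooping $B^2 A_2$.

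The first step is to produce a weak equivalence $\int_S A_2 \simeq Sym_{\leq 2}(S)$. A simplex of $B(D(S), D_2, A_2)$ is a rooted decorated tree whose leaves carry configurations in $\D_2$ of cardinality at most $2$ and whose operad data records a collection of embedded little disks in $S$. Composing all little disk operations on the leaf data produces a point of $Sym_{\leq 2}(S)$; note that the restriction that leaves be labeled by elements of $A_2$ rather than arbitrary configurations is precisely what ensures the output has no point of multiplicity greater than $2$. Passing to geometric realization gives the desired map, and it is a weak equivalence by a standard argument: filtering both sides by total cardinality, the induced map on associated graded pieces is a map of fiber bundles whose homotopy fibers are spaces of configurations of disks in $S$, which are well-known to be contractible onto the underlying configuration.

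The second step is the identification $B^2 A_2 \simeq \p$. The key geometric input is the classical homeomorphism $Sym^2(S^2) \cong \p$ sending an unordered pair $\{z_1, z_2\}$ to the class of the polynomial $(z-z_1)(z-z_2)$ in projective coordinates. This produces a natural pointed map $A_2 \to \p$ by padding configurations of fewer than two points in $\D_2 \subset S^2$ with the basepoint $\infty \in S^2$. Using Proposition \ref{extradeg} to replace $A_2$ by its $D_2$-completion if needed, and running the scanning construction on the disk $\D_2$ (where $Map^c(\D_2, \p) \simeq \Omega^2 \p$), this extends to a map $B^2 A_2 \to \p$. Verifying that this extension is a weak equivalence is the most delicate point of the proof and I expect it to be the main obstacle: one must check that the group completion of the partial algebra $A_2$, which only records local configurations of cardinality at most two, nevertheless correctly resolves all of $\p$ after double delooping. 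I would attempt this via a homology spectral sequence argument applied to the bar construction, using that $H_*(A_2)$ is explicitly computable as the homology of symmetric products of the disk with multiplicity at most two, and comparing with the known homology of $\Omega^2 \p$.

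With these two identifications in hand, the stable scanning theorem from Section \ref{secTCH} produces a homology equivalence from $\mathrm{hocolim}_{t_a} \int_S A_2$ to $\mathrm{hocolim}_{T_a} Map^c(S, B^2 A_2)$, where $a$ ranges over representatives of generators of $\pi_0(A_2)$. Since each $T_a$ is a homotopy equivalence on $Map^c(S, \p)$, the target is homotopy equivalent to $Map^c(S, \p)$. Transporting through the identifications of the first two steps, the image of $t_a$ under $\int_S A_2 \simeq Sym_{\leq 2}(S)$ defines the stabilization map $t$, and the image of the scanning map $s$ defines the desired map $s: Sym_{\leq 2}(S) \to Map^c(S, \p)$, giving the stated homology equivalence on homotopy colimits.
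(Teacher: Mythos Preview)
The paper does not give its own proof of this statement; it is cited as Theorem 1.7 of Kallel \cite{K2}. Your proposal is therefore not a reconstruction of the paper's argument but an alternate derivation: you deduce Kallel's theorem from the general stable scanning theorem for topological chiral homology (stated at the end of Section \ref{secTCH}, proved in \cite{Mi2}) together with the identifications $\int_S A_2 \simeq Sym_{\leq 2}(S)$ and $B^2 A_2 \simeq \p$. This is essentially the reverse of the paper's logical flow, since the paper takes Kallel's and Yamaguchi's results as input and combines them with those same identifications to obtain Corollary \ref{KYpackage}.

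Your outline is sound, but the two intermediate steps as written are incomplete, and the paper itself supplies much cleaner arguments for both. For Step 1, your filtration sketch (``fiber bundles whose homotopy fibers are spaces of configurations of disks'') is too vague; the actual proof in Section 3.3 shows that $\epsilon: B(D(S),D_2,A_2) \to Sym_{\leq 2}(S)$ is a microfibration with weakly contractible fibers, the contractibility being established via an extra-degeneracy argument. For Step 2, no spectral sequence is needed: the paper identifies $B(D_2,D_2,A_2) \simeq Sym_{\leq 2}(\D_2)$ by the same microfibration technique, and then models $B^2 Sym_{\leq 2}(\D_2)$ concretely as $Sym_{\leq 2}(\bar\D_2,\partial\D_2)$, which deformation retracts onto its second filtration stage $F_2 \cong \p$. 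You correctly note the need to pass to the $D_2$-completion via Proposition \ref{extradeg} before invoking the scanning theorem, since $A_2$ is only a partial algebra; after that substitution your Step 3 goes through. So with the paper's own auxiliary results plugged in for your Steps 1 and 2, your derivation works and gives a genuine alternative to citing \cite{K2}.
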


The stabilization maps are constructed in an analogous manner to those of Definition \ref{stab}. The above theorem was improved by Yamaguchi (Theorem 1.3 and Theorem 3.1 of \cite{Y}).

\begin{theorem} \label{Ytheorem}
Let $S$ be a connected parallelizable surface admitting boundary. The stabilization map $t: Sym^k_{\leq 2}(S) \m  Sym^{k+1}_{\leq 2}(S)$ and scanning map $s: Sym^k_{\leq 2}(S) \m  \Map^{c}_k(S,\p)$ induce isomorphisms on homology groups $H_i$ for $i \leq k/3$.
\end{theorem}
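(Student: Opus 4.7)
The plan is to adapt the classical scanning argument of Segal and McDuff to the truncated symmetric product setting. I would prove the two assertions --- stability for $t$ and equivalence for $s$ --- in tandem, using induction on $k$ together with a careful stratification of $Sym^k_{\leq 2}(S)$ by the number of double points.

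First I would establish homology stability for $t$. Stratify $Sym^k_{\leq 2}(S)$ by the number $j$ of double points: the stratum $\Sigma_j$ fibers over the unordered configuration space of $j$ points in $S$, with fiber the unordered configuration space of $k - 2j$ simple points in the complement. The stratum $\Sigma_j$ has real codimension $2j$ inside the top stratum $C_k(S)/\Sigma_k$. Combining this with McDuff's classical homology stability theorem for unordered configuration spaces (which provides a range of roughly $k/2$) and a stratification spectral sequence comparing $Sym^k_{\leq 2}(S)$ with $Sym^{k+1}_{\leq 2}(S)$ stratum by stratum, the stabilization $t$ becomes a homology isomorphism in the advertised range $i \leq k/3$. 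The slope $1/3$ is characteristic of the $\p$ setting: it matches the factor $2m - 1 = 3$ appearing in Segal's original theorem for degree $d$ holomorphic maps $\s \m \C P^m$, and reflects the tradeoff between point count and codimension in the double-point strata.

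For the scanning map, I would first treat the local case $S = \D_2$. The completed $D_2$-algebra $B(O, D_2, A_2)$ from Section \ref{subsectwosided} has second delooping homotopy equivalent to $\p$. This identification can be verified directly using $Sym^n(\s) \simeq \C P^n$ and the observation that bounding multiplicities by $2$ corresponds to the appropriate skeletal truncation of $\C P^\infty$. With the local identification in hand, the scanning theorem for topological chiral homology from Section \ref{secTCH} produces a stable homology equivalence $\int_S A_2 \m Map^c(S, \p)$, which by naturality of scanning is compatible with the direct scanning map $s$ on each component $Sym^k_{\leq 2}(S)$. Combined with the stability of $t$ established in the previous step, this yields the scanning isomorphism on $H_i$ for $i \leq k/3$.

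The main obstacle I expect is extracting the sharp $k/3$ range rather than just an unspecified linearly growing one. A naive stratification spectral sequence produces only a weaker slope; to reach $1/3$ one must quantify carefully how each double-point stratum contributes to the relative homology of the pair $(Sym^{k+1}_{\leq 2}(S), Sym^k_{\leq 2}(S))$. I would expect the sharpest count to come from modelling $t$ as the cofiber of ``creating a new double point near the boundary disk'' and invoking the classical connectivity estimates of McDuff. This quantitative optimization, rather than any conceptual difficulty, is the technical crux; the remaining ingredients --- stratification, group completion, and scanning --- follow the well-established Segal--McDuff paradigm.
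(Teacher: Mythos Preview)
The paper does not prove this theorem; it is quoted as a result of Yamaguchi (Theorem 1.3 and Theorem 3.1 of \cite{Y}), sharpening an earlier theorem of Kallel \cite{K2}. There is no proof in the paper to compare your proposal against --- the result is imported as a black box and then used to deduce Corollary \ref{KYpackage}.

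Your outline has the right architecture: stratify $Sym^k_{\leq 2}(S)$ by the number of double points, feed McDuff-type stability for configuration spaces into a stratification spectral sequence to get stability for $t$, and then combine that stability range with a general group-completion/scanning theorem to obtain the range for $s$. The logical flow is sound and there is no circularity with the paper's later use of the result. But, as you yourself concede, the entire substance of the theorem is the explicit slope $1/3$, and your sketch does not establish it. The phrases ``quantify carefully how each double-point stratum contributes'' and ``classical connectivity estimates of McDuff'' are promissory notes, not arguments: a naive stratification spectral sequence with the McDuff range $\lfloor k/2\rfloor$ on each stratum does not automatically output slope $1/3$ for the assembled space, and your codimension count alone is not enough to close the gap. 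Extracting the sharp constant requires a more hands-on analysis --- either a direct Vassiliev-style simplicial resolution of the multiplicity-$\leq 2$ locus, or the explicit spectral sequence computations carried out in \cite{Y}. So your proposal correctly identifies where the difficulty lies but leaves the load-bearing estimate unproved.
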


\begin{remark}
It seems to be unknown if the above isomorphism on homology groups can be upgraded to an isomorphism on homotopy groups. In \cite{KS}, it is proven that $s$ and $t$ induce isomorphism on fundamental groups. This is not enough to conclude that $s$ and $t$ are highly connected; for that one would need to know that these maps induce homology equivalences with coefficients in the local coefficient systems given by the higher homotopy groups. In \cite{Se}, Segal does this in the case of $\Hol^*(\s,\s)$ by proving that the fundamental group acts trivially on higher homotopy groups so the relevant local coefficients are in fact untwisted.
\end{remark}

Implicit in the work of Kallel and Yamaguchi is the following proposition. It is well known but a proof does not seem to appear in the literature so we sketch one here. 

\begin{proposition}
The $2$-fold delooping of $Sym_{\leq 2}(\D)$ is homotopy equivalent to $\p$. 

\label{deloop}
\end{proposition}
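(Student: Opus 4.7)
The plan is to use Kallel's scanning map $s: Sym_{\leq 2}(\D_2) \m Map^c(\D_2,\p) = \Omega^2 \p$ together with the scanning map of topological chiral homology to identify $\Omega^2 B^2 Sym_{\leq 2}(\D_2) \simeq \Omega^2 \p$ as $E_2$-algebras, and then deloop twice. By the Kallel--Yamaguchi theorem quoted above, $s$ is a stable homology equivalence. A direct inspection of Kallel's construction shows that $s$ is also a map of $D_2$-algebras: inserting a tuple of configurations via a $D_2$-operation in $\D_2$ and then scanning is the same as scanning each piece separately in its small disk and then combining via loop concatenation in $\Omega^2 \p$, because both the scanning construction and the $D_2$-action are supported on small disks. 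Since $\Omega^2 \p$ is group-like, the universal property of $E_2$-group completion gives a factorization
\[
Sym_{\leq 2}(\D_2) \m \Omega^2 B^2 Sym_{\leq 2}(\D_2) \overset{\tilde s}{\m} \Omega^2 \p,
\]
where the first map is the scanning map of topological chiral homology, after the natural identification $\int_{\D_2} Sym_{\leq 2}(\D_2) \simeq Sym_{\leq 2}(\D_2)$ obtained by viewing $Sym_{\leq 2}(\D_2)$ as the free $D_2$-algebra on a point modulo the multiplicity-$\leq 2$ relation.

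By the scanning theorem of \cite{Mi2} recalled in Section \ref{secTCH}, the first arrow in the display is a stable homology equivalence, and the composite $s$ is a stable homology equivalence by Kallel's theorem, so $\tilde s$ is a stable homology equivalence too. Both source and target of $\tilde s$ are group-like $H$-spaces, hence the stabilization maps on each side are already homotopy equivalences, and $\tilde s$ is therefore a weak homotopy equivalence.

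Finally, $B^2 Sym_{\leq 2}(\D_2)$ is simply connected since the $n$-fold bar construction $B^n$ applied to any $D_n$-algebra yields an $(n-1)$-connected space, and $\p$ is simply connected. For any simply connected space $X$ the counit $B^2 \Omega^2 X \m X$ of May's recognition principle is a weak equivalence, so applying $B^2$ to $\tilde s$ and composing with this counit on the target gives the desired weak equivalence
\[
B^2 Sym_{\leq 2}(\D_2) \overset{\sim}{\m} \p.
\]

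The main obstacle in carrying out this plan is verifying that Kallel's scanning map, as defined in \cite{K2}, is in fact a morphism of $D_2$-algebras and not merely a continuous map, so that the universal property of group completion can be invoked. The compatibility is essentially automatic from the local nature of both constructions (each supported on small disks), but making it precise requires unpacking Kallel's original definition and the identification of the target with $Map^c(\D_2,\p)$.
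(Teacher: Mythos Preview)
Your argument is essentially correct and is a genuinely different route from the paper's. The paper proceeds geometrically: it cites (from \cite{Sa}) that a model for $B^2 Sym_{\leq 2}(\D_2)$ is the relative bounded symmetric product $Sym_{\leq 2}(\bar \D_2,\partial \D_2)$, observes that the second filtration stage $F_2$ of this space is homeomorphic to $\p$, and then argues by a Segal-style retraction (as in Proposition~3.1 of \cite{Se}) that the whole space deformation retracts onto $F_2$. Your approach instead extracts the statement from the \emph{uniqueness of group completion}: Kallel's scanning map exhibits $\Omega^2\p$ as a group completion of $Sym_{\leq 2}(\D_2)$, and May's unit map exhibits $\Omega^2 B^2 Sym_{\leq 2}(\D_2)$ as another, so the two double loop spaces agree, and one deloops using simple connectivity. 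The paper's argument is more elementary and self-contained (no group-completion machinery, no need to check that $s$ is a $D_2$-map), and it gives an explicit identification of $\p$ sitting inside the model. Your argument is more conceptual and explains \emph{why} $\p$ must appear---it is forced by Kallel's theorem---but it leans on heavier input, and as you note, it still requires checking that Kallel's scanning map respects the $D_2$-structure; this is true for the obvious reason you indicate, but does need to be said. One small correction: the identification $\int_{\D_2} A \simeq A$ holds for any $D_2$-algebra $A$ and does not require presenting $Sym_{\leq 2}(\D_2)$ as a quotient of a free algebra.
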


\begin{proof}
It is well known that a model for $B^2 Sym_{\leq 2}(\D)$ is given by $Sym_{\leq 2}(\bar \D, \partial \bar \D)$ (see Page 2 of \cite{Sa}). Here $Sym_{\leq 2}(\bar \D, \partial \bar \D)$ is the quotient of the space $Sym_{\leq 2}(\bar \D)$ by the relation that two configurations are identified if they agree on the open disk $\D$. There is a natural filtration $F_k(Sym_{\leq 2}(\bar \D, \partial \bar \D))$ defined as the image of the natural map $Sym^k_{\leq 2}(\bar \D) \m Sym_{\leq 2}(\bar \D, \partial \bar \D)$. Observe that $F_2(Sym_{\leq 2}(\bar \D, \partial \bar \D))$ is homeomorphic to $\p$. An argument similar to that used in the proof of Proposition 3.1 of \cite{Se} shows that $Sym_{\leq 2}(\bar \D, \partial \bar \D)$ deformation retracts onto $F_2(Sym_{\leq 2}(\bar \D,\partial \bar \D))$. This deformation retraction is constructed by radial expansion around the origin until all but at most two particles in the configuration have been pushed into boundary.

\end{proof}

\subsection{Definition of $R_2$ and map to bounded symmetric products}

In this subsection, we define the partial $D_2$-algebra $R_2$ and compare it to bounded symmetric products. 

\begin{definition}
Let $R_2=\{r_0,r_1\} \cup Cone(D_2(2)/{\Sigma_2})$. The space of composable element in $D_2(2) \otimes  R_2$ is $Comp=\{0\} \cup \{id\} \times R_2 \cup D_2(2) \times_{\Sigma_2} \{r_1\}^2$ with $0 \in D_2(0)$ and $id \in D_2(1)$. The map $p:Comp \m R_2$ is defined as follows. For $\kappa \in R_2$, define $p(id,\kappa)=\kappa$. For $\vec e \in D_2(2)/{\Sigma_2}$, let $p(\vec e;r_1,r_1)=\vec e$ viewed as an element of $D_2(2)/{\Sigma_2}  \subset Cone(D_2(2)/\Sigma_2)$.
\label{defR2}

 \end{definition}

The partial abelian monoid $\{0,1,2\}$ with addition is too small to admit a map of partial $D_2$-algebras  to $\Omega^2 \p$. However, we can replace it with a homotopic partial $D_2$-algebra ($R_2$) which does map. This is the purpose of $R_2$. When you glue two degree one rational maps to $\p$ using an element of $D_2(2)$, the element of $D_2(2)$ matters. However, it does not matter up to homotopy so we can extend this map to the cone.

We now define a map of partial $D_2$-algebras $a:R_2 \m A_2$. Let $a(r_0)$ be the empty configuration. Pick any configuration $a_1 \in Sym^1_{\leq 2}(\D)$ and let $a(r_1)=a_1$. The partial $D_2$-algebra structure of $A_2$ gives a natural map $D_2(2)/{\Sigma_2} \m Sym^2_{\leq 2}(\D)$. Since $Sym^2_{\leq 2}(\D)$ is contractible, this extends to the cone and gives a map of partial $D_2$-algebras $a:R_2 \m A_2$. This map is clearly a homotopy equivalence. Moreover it induces homotopy equivalences between the spaces of composable elements and higher spaces of composable elements. Thus $a: B(D(S),D_2,R_2) \m B(D(S),D_2,A_2)$ is a weak homotopy equivalence for any parallelizeable surface $S$ (see Remark \ref{reedy}). 

There is a natural map $\epsilon : D(S) A_2 \m Sym_{\leq 2}(S)$ constructed by using the embeddings of disks into the surface to map points in the disk to points in the surface. In Figure \ref{figure:Epsilon}, the left-hand side depicts an element of $D(S)A_2$ and the right-hand side row depicts its image under $\epsilon$ in $Sym_{\leq 2}(S)$. It is easy to see that $\epsilon$ is an augmentation of the simplicial space $B_{\bullet}(D(S),D_2,A_2)$ and so it induces a map $\epsilon : B(D(S),D_2, A_2)  \m Sym_{\leq 2}(S)$. The goal of the next section is to prove that $\epsilon$ is a weak homotopy equivalence.

\begin{figure}[!ht]
\begin{center}\scalebox{.2}{\includegraphics{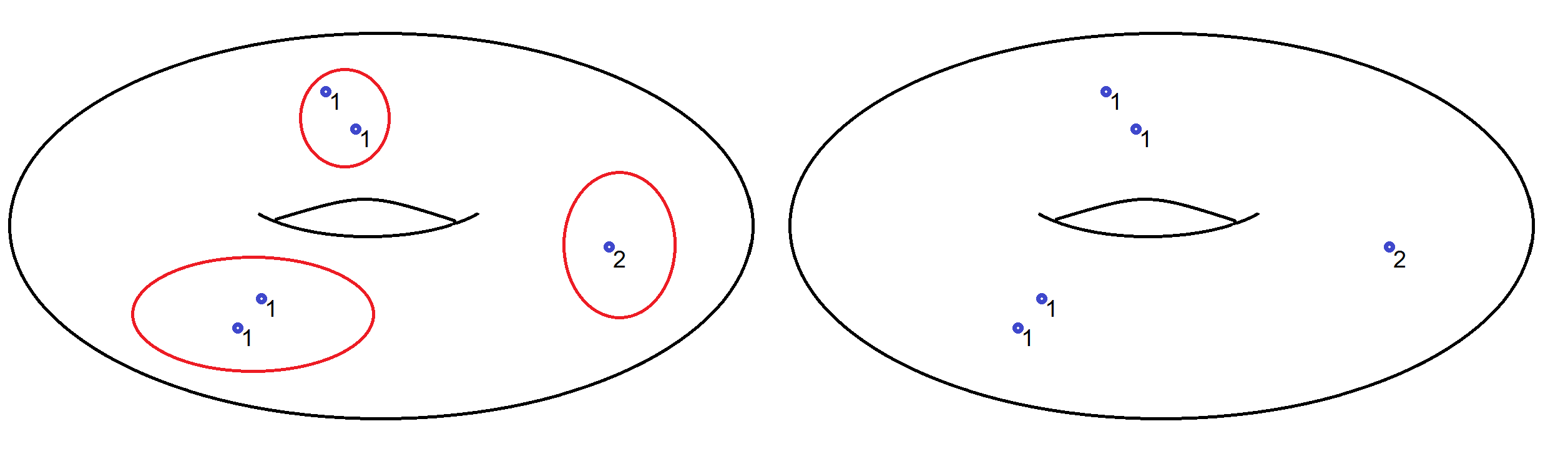}}\end{center}
\caption{The map $\epsilon$}
\label{figure:Epsilon}
\end{figure}

\subsection{Micro-fibrations}

In order to prove that the map $\epsilon : B(D(S),D_2, A_2)  \m Sym_{\leq 2}(S)$ is a weak homotopy equivalence, we first recall the notion of microfibration.

\begin{definition}
A map $\pi: E \m B$ is called a microfibration if the following condition holds. For every one parameter family of maps  $H_t: \bar \D^n \m B$ and lift $\tilde H_0 : \bar \D^n \m E$ of $H_0$, there exists a $t_0>0$ such that we can continuously lift $H_t$ for $t \in [0,t_0)$. 
\end{definition}

This definition is relevant because of the following theorem which follows from Lemma 2.2 of \cite{W}.

\begin{theorem}
Let $\pi: E \m B$ be be a microfibration with weakly contractible fibers. Then $\pi$ is a Serre fibration and hence a weak homotopy equivalence.
\label{microW}
\end{theorem}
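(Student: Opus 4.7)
The strategy is to first show that $\pi$ is a Serre fibration, and then deduce that $\pi$ is a weak homotopy equivalence from the long exact sequence of homotopy groups together with the weak contractibility of the fibers.

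For the homotopy lifting property, I would fix $H: \bar \D_n \times [0,1] \m B$ and a lift $\tilde H_0 : \bar \D_n \times \{0\} \m E$ of $H_0$, and construct a lift $\tilde H$ of $H$ extending $\tilde H_0$. Using compactness of $\bar \D_n \times [0,1]$ together with the microfibration property, I would choose a fine triangulation of $\bar \D_n$ and a partition $0 = t_0 < t_1 < \ldots < t_N = 1$ of $[0,1]$ so that, for every closed simplex $\sigma$ and every index $j$, any lift of $H|_{\sigma \times \{t_j\}}$ admits an extension to a lift of $H|_{\sigma \times [t_j, t_{j+1}]}$. This simplex-wise extension is provided by microfibration applied to the one-parameter family indexed by $[t_j, t_{j+1}]$, noting that a closed simplex is homeomorphic to a closed disk; a uniform choice of $t_{j+1} - t_j$ that works for every simplex exists because the triangulation of $\bar \D_n$ is finite. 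I would then extend the lift one time level at a time.

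The subtle point is that on a given time level, the simplex-wise extensions may disagree on common faces $\partial \sigma \times [t_j, t_{j+1}]$. I would reconcile these discrepancies by induction on the dimension of the simplex $\sigma$: given compatible lifts on the $(k-1)$-skeleton, the obstruction to extending compatibly over a $k$-simplex is a map of a sphere into a fiber of $\pi$, which is nullhomotopic because the fibers are weakly contractible. Thus the local lifts can be homotoped (through lifts of $H$, rel the previous skeleton and rel the time level $t_j$) to agree globally, producing a continuous lift $\tilde H$ on $\bar \D_n \times [0, t_{j+1}]$ extending the lift at time $t_j$. Iterating over $j$ then yields a global lift on all of $\bar \D_n \times [0,1]$.

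The main obstacle is this skeletal patching step: carefully carrying out the obstruction-theoretic gluing of local lifts while remaining consistent with previously constructed portions of $\tilde H$. This is the substantive content of Weiss's Lemma 2.2 cited in the statement. Once $\pi$ is known to be a Serre fibration, the long exact sequence of a fiber $F$ combined with $\pi_n(F) = 0$ for all $n$ forces $\pi_* : \pi_n(E) \m \pi_n(B)$ to be an isomorphism for every $n$ and every basepoint, giving the desired weak homotopy equivalence.
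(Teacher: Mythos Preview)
The paper does not give a proof of this theorem at all; it simply records that the statement follows from Lemma~2.2 of Weiss's paper and moves on. Your proposal is a correct outline of the argument behind that lemma, so in that sense you have supplied more than the paper does and your approach is consistent with the cited source.
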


We now prove that the map $\epsilon : B(D(S),D_2, A_2)  \m Sym_{\leq 2}(S)$ defined in the previous section is a microfibration. This will allow us to prove that it is a homotopy equivalence by proving that its fibers are weakly contractible. 

\begin{proposition}
The map $\epsilon : B(D(S),D_2, A_2)  \m Sym_{\leq 2}(S)$ is a microfibration.
\label{ismicro}
\end{proposition}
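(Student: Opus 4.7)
The plan is to establish microfibration by a local lifting argument, then patch via compactness of $\bar\D_n$. Fix $x_0 \in \bar\D_n$ and view $\tilde H_0(x_0) \in B(D(S), D_2, A_2)$ as a decorated rooted tree in the sense of Figure \ref{figure:RootedTree}: the root is labeled by a disjoint family of embedded disks $e_1, \ldots, e_m : \D_2 \hookrightarrow S$ (together with the auxiliary paths in $SL_2(\R)$ built into $D(S)$), each subtree carries $D_2$-operations at its internal vertices, and the leaves carry elements of $A_2$. Composing all operad actions below $e_i$ gives a total label $\alpha_i \in A_2$, which is a configuration of at most two points in $\D_2$, so that $\epsilon(\tilde H_0(x_0))$ is the union of the $e_i(\alpha_i)$. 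In particular each disk $e_i(\D_2) \subset S$ contains at most two of the points of $H_0(x_0)$.

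For $x$ in a sufficiently small neighborhood $U$ of $x_0$ we may assume, by continuity and by passing to a fixed simplicial level, that $\tilde H_0(x)$ is represented by a tree of the same combinatorial type with continuously varying labels. To lift $H_t$ on $U \times [0, \delta)$ for some $\delta > 0$, we hold the combinatorial tree type and all intermediate internal operad elements rigid, and continuously vary only (i) the root embeddings $e_i$ and (ii) the resulting compositions $\alpha_i \in A_2$. For $t$ sufficiently small, uniformly in $x \in U$, the configuration $H_t(x)$ stays so close to $H_0(x)$ that the partition of its points into the clusters $e_i(\alpha_i)$ is preserved; we then translate and dilate each $e_i$ to track the moving cluster, and let $\alpha_i$ evolve along the continuous path in $Sym^{|\alpha_i|}_{\leq 2}(\D_2)$ obtained by pulling the cluster back through $e_i$. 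This handles the case of a doubled point at $t=0$ splitting into two distinct points for $t>0$, since such a motion is a continuous path in $Sym^2_{\leq 2}(\D_2)$ and remains inside $A_2$.

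Compactness of $\bar\D_n$ then turns these local lifts into a global lift on $\bar\D_n \times [0, t_0)$ for a uniform $t_0 > 0$. The main technical obstacle we expect is checking that the prescribed motion of root embeddings and leaf labels really assembles into a continuous map into the geometric realization $|B_\bullet(D(S), D_2, A_2)|$: one must verify that it respects the simplicial face and degeneracy identifications together with the basepoint relations in the definition of $D(S) A_2$, so that local lifts agree on overlaps and descend from the level of $k$-simplices to the realization. This is a bookkeeping check, made easier by choosing the root disks small enough to isolate each cluster of $H_0$ and by working at a single fixed simplicial level on which each fiber of $\tilde H_0$ over $U$ is represented.
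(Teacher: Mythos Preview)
Your intuition is right—nearby configurations should still fit into the disks—but the execution introduces difficulties the paper's argument avoids, and two of your steps are genuine gaps rather than bookkeeping.

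First, the claim that for $x$ near $x_0$ you may ``pass to a fixed simplicial level'' and assume $\tilde H_0(x)$ is represented by a tree of the same combinatorial type is not justified. A continuous map into a geometric realization need not factor locally through a single simplicial degree; points near the interior of a $k$-simplex can live in adjacent simplices of various dimensions. Second, even granting such local representatives, your local lifts make noncanonical choices (how to translate and dilate the $e_i$), so there is no reason they agree on overlaps; the patching step you flag as ``bookkeeping'' is not routine. Finally, you say you will vary the compositions $\alpha_i \in A_2$, but $\alpha_i$ is derived from the leaf labels through the fixed internal $D_2$-elements; what you actually must vary are the leaf labels themselves, and you must argue that such labels exist (i.e., that the new configuration is still compatible with the innermost disks).

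The paper sidesteps all of this with one observation: you do not need to move the embeddings at all, and you can work globally. Let $E : B(D(S),D_2,A_2) \to B(D(S),D_2,pt)$ forget the $A_2$-labels. Given $\kappa \in B(D(S),D_2,pt)$ and $\xi \in Sym_{\leq 2}(S)$, say they are \emph{compatible} if there exist (necessarily unique) leaf labels $\xi_1,\xi_2,\ldots \in A_2$ with $\epsilon(\kappa;\xi_1,\ldots)=\xi$; write $L(\kappa;\xi)$ for that element. Compatibility is an open condition, so the single global formula $\tilde H_t(a) = L\bigl(E(\tilde H_0(a));\, H_t(a)\bigr)$ defines a continuous lift for all $a$ and all sufficiently small $t$, uniformly by compactness of $\bar\D_n$. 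No choice of simplicial level, no motion of disks, no patching. Your moving of root embeddings to ``track the cluster'' is unnecessary: since the disks are open, a small perturbation of the configuration stays inside them.
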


\begin{proof}
Let $H_t: \bar \D^n \m Sym_{\leq 2}(S)$ be a one parameter family of maps and $\tilde H_0 : \bar \D^n \m B(D(S),D_2, A_2)$ be a lift of $H_0$. To define the lifts $\tilde H_t$, we need to specify a collection of embeddings and a collection of elements of $Sym_{\leq 2}(\D)$.  Let $pt$ denote the one point space viewed as $D_2$ algebra and let $E : B(D(S),D_2,A_2) \m B(D(S),D_2,pt)$ be the map which forgets the labels but remembers the embeddings. Given an element $\kappa \in B(D(S),D_2,pt)$ 
and a configuration $\xi \in Sym_{\leq 2}(S)$, there may not exist $\xi_1,\xi_2, \ldots \in A_2$ such that  $(\kappa;\xi_1, \xi_2 \ldots) \in B(D(S),D_2,A_2)$ is mapped to $\xi$ by $\epsilon$. However, if there do exist $\xi_1,\xi_2, \ldots \in A_2$, they will be unique. In this situation, we say that $\kappa$ and $\xi$ are compatible and define $L(\kappa; \xi)=(\kappa; \xi_1, \xi_2, \ldots)$.

Note that if $\xi$ and $\kappa$ are compatible, so will $\xi'$ and $\kappa$ if $\xi'$ is sufficiently close to $\xi$. For $a \in \bar \D^n$, we define $\tilde H_t(a)=L(E(\tilde H_0(a));H_t(a))$. In other words, we use the embedded disks and simplicial coordinates associated to $\tilde H_0(a)$ and the elements of the symmetric product associated to $H_t(a)$. For $t$ sufficiently small, $L(E(\tilde H_0(a)))$ and $H_t(a)$ will be compatible since $L(E(\tilde H_0(a)))$ and $H_0(a)$ are compatible. By compactness of $\bar \D^n$, we can find a $t_0>0$ such that $\tilde H_t$ is defined for all $t<t_0$.

\end{proof}

This lift is also depicted in Figure \ref{figure:Lift}. On the bottom row is a one parameter family of points in $Sym_{\leq 2}(S)$. This leftmost edge is at time zero and the top left depicts a lift. The middle row depicts a short time in the future where the disks from the upper left are still compatible. The rightmost edge depicts a time well in the future where the disks are no longer compatible with the configuration. 

\begin{figure}[!ht]
\begin{center}\scalebox{.15}{\includegraphics{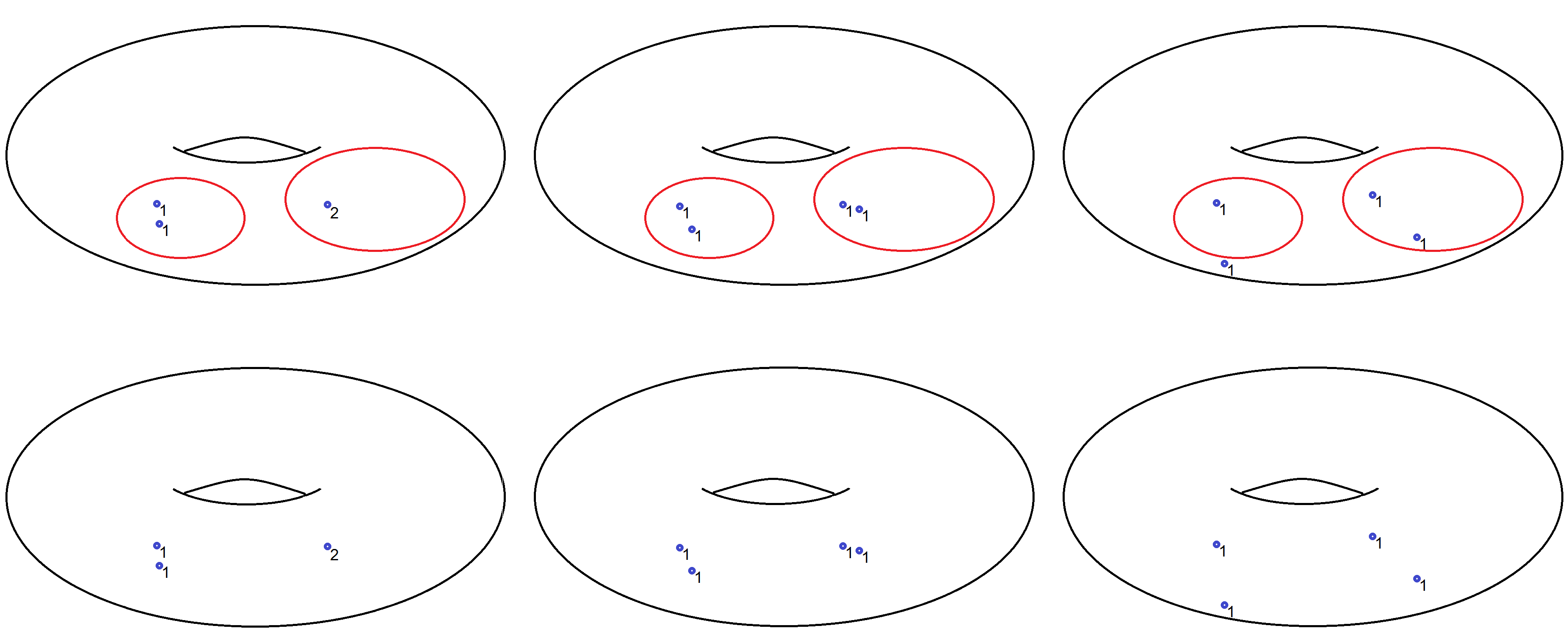}}\end{center}
\caption{A one parameter family and a partial lift}
\label{figure:Lift}
\end{figure}

We now prove that the fibers of $\epsilon$ are contractible. By Theorem \ref{microW} and  Proposition \ref{ismicro}, this will imply that $\epsilon$ is a homotopy equivalence.

\begin{proposition}
The fibers of $\epsilon : B(D(S),D_2, A_2)  \m Sym_{\leq 2}(S)$ are weakly contractible.
\end{proposition}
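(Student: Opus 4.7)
The plan is to prove that $\epsilon^{-1}(\xi)$ is weakly contractible for every $\xi \in Sym_{\leq 2}(S)$. First I will present the fiber as the geometric realization $|F^\bullet_\xi|$ of the simplicial subspace $F^\bullet_\xi \subset B_\bullet(D(S),D_2,A_2)$ whose $k$-simplices are the elements of $D(S)(D_2^k A_2)$ evaluating to $\xi$. Because $\epsilon$ is the geometric realization of an augmentation of simplicial spaces and, by Proposition \ref{ismicro}, is a microfibration, this identification of the fiber with $|F^\bullet_\xi|$ is well-behaved.

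Next, decompose $\xi = m_1 q_1 + \cdots + m_n q_n$ with distinct points $q_i \in S$ and multiplicities $m_i \in \{1,2\}$, and fix pairwise disjoint small open disks $V_i \ni q_i$. A canonical $0$-simplex $x_\xi \in F_\xi$ is then given by taking one root-level embedded disk per $q_i$, with image inside $V_i$, labelled by the obvious configuration in $Sym^{m_i}_{\leq 2}(\D_2) \subset A_2$. The goal is to produce a deformation retraction of $F_\xi$ onto $\{x_\xi\}$.

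The retraction will proceed in two stages. In the first stage, every root-level embedded disk is continuously shrunk toward the centroid of the sub-configuration of $\xi$ which that disk carries; since the $\epsilon$-image is held fixed at $\xi$, those centroids are well defined and the shrinking keeps us inside $F_\xi$. The shrinking is synchronized with a compatible rescaling of the operadic data above each root disk, so that face and degeneracy maps continue to be respected and the homotopy descends to $|F^\bullet_\xi|$. After stage one, every root-level disk lies inside some $V_i$, and the element splits as a product indexed by the $q_i$, reducing the problem to showing that each ``local'' fiber over a single point of multiplicity $1$ or $2$ is contractible. In the second stage, each local fiber is contracted to its canonical label, using that $Sym^1(\D_2)$, $Sym^2_{\leq 2}(\D_2)$, $D_2(1)$, and $Cone(D_2(2)/\Sigma_2)$ are all contractible, so the relevant slice of the bar construction collapses.

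The main obstacle I foresee is the coherence of the shrinking step with the full simplicial structure: the radial shrinking of a root-level embedded disk must be carried out in step with a compatible reparameterization of every operadic composition further up the tree and of the $A_2$ labels at the leaves, so that the resulting family is a well-defined point of $|F^\bullet_\xi|$ for every $t$. I expect this to follow from the naturality of the $D_2(1)$-action of radial contraction on both embedded disks and operadic data, together with the microfibration property of $\epsilon$, which allows the required coherences to be checked locally.
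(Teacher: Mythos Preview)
Your stage one has a genuine gap. You propose to shrink each root-level disk toward the centroid of the sub-configuration of $\xi$ it carries, so that afterwards every root-level disk lies inside a single $V_i$. But a disk at any level of the nesting --- in particular a leaf-level disk labeled by an element of $Sym^2(\D_2)\subset A_2$ --- may carry two \emph{distinct} support points $q_i\neq q_j$ of $\xi$. Concretely, if $\xi=q_1+q_2$ with $q_1\neq q_2$, the $0$-simplex consisting of a single embedding $e:\D_2\to S$ with $q_1,q_2\in e(\D_2)$ and label $\{e^{-1}(q_1),e^{-1}(q_2)\}\in Sym^2(\D_2)$ lies in $F^0_\xi$. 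Shrinking $e$ toward the centroid does not separate $q_1$ from $q_2$: once the radius drops below half their distance, one of them leaves the image and you exit $F_\xi$; if you stop shrinking before that, the disk still meets both $V_1$ and $V_2$. There is no continuous way to split a single disk into several while staying in a fixed simplicial degree, so the reduction you claim cannot be achieved this way. (Also, $Cone(D_2(2)/\Sigma_2)$ belongs to $R_2$, not $A_2$; it plays no role in the fiber under discussion.)

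The paper's proof goes in the opposite direction: rather than shrinking existing disks, it first fixes a collection $\vec f$ of small disjoint embedded disks, one around each $q_i$, and passes to the levelwise-equivalent subsimplicial space $Y(\vec f)_\bullet\subset X(\xi)_\bullet$ of configurations whose innermost disks already contain the relevant $f_i$'s. On $Y(\vec f)_\bullet$ one then \emph{inserts} $\vec f$ as a new innermost $D_2$-level (with the obvious $A_2$ labels), and this is an extra degeneracy of the augmented simplicial space, contracting $|Y(\vec f)_\bullet|$ to the augmentation point. Insertion, unlike shrinking, refines a disk that contains several $q_i$'s into several smaller disks without any continuity problem, which is exactly what your stage one is missing.
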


\begin{proof}

Fix a configuration $\xi \in Sym_{\leq 2}(S)$. The space $\epsilon^{-1}(\xi)$ is naturally the geometric realization of a subsimplicial space of $B_{\bullet}(D(S),D_2, A_2)$. Call this simplicial space $X(\xi)$. This simplicial space can be described as a space of nested disks such that the points in the configuration $\xi$ lie in innermost disks. Let $(f_1,f_2, \ldots) = \vec f \in D(S)$ be a collection of embeddings such that the image of each $f_i$ contains exactly one point (not counted with multiplicity) of $\xi$ and every point of $\xi$ is in the image of one of the embeddings. Let $Y(\vec f)$ denote the subsimplicial space of $X(\xi)$ where the images of the maps $f_i$ are contained in innermost disks and the paths of matrices are compatible. Let $\vec \xi =(\xi_1, \xi_2, \ldots) $ be the elements of $A_2$ associated to the pair $\vec f$ and $\xi$. More precisely but possibly less clearly, a point $\alpha \in X(\xi)_k$ is in  $Y(\vec f)_k$ if there exists $\beta \in X(\xi)_{k+1}$ with $d_0(\beta)=\alpha$ and $d_{k+1}(d_{k}(\ldots d_1(\beta)\ldots)) = (\vec f; \vec \xi)$. Since the space of disks containing some particular disk is homotopy equivalent to the space of disks containing some particular point, the inclusion $Y(\vec f)_k \m X(\xi)_k$ is a homotopy equivalence and so $|Y(\vec f)_\bullet| \m |X(\xi)_\bullet|$ is a weak homotopy equivalence. 

Therefore it suffices to prove that $|Y(\vec f)_\bullet|$ is contractible to prove the theorem. Define $Y_{-1}(\vec f)$ to be the set containing $ \xi$. We can view $Y_{-1}(\vec f)$ as an agumentation of $Y(\vec f)_\bullet$. Let $D:Y(\vec f)_k \m Y(\vec f)_{k+1}$ be the map which inserts the embeddings $\vec f$. That is, $D(\alpha)=\beta$ as above and $D(\xi)= (\vec f; \vec \xi)$. The map $D$ is an extra degeneracy of the augmented simplicial space $Y_{\bullet}(\vec f)$. Thus $|Y_{\bullet}(\vec f)| \simeq Y_{-1}(\vec f)$. Since, $Y_{-1}(\vec f)$ is a point, the claim follows.  

\end{proof}

This shows that $\int_S A_2 =B(D(S),D_2,A_2)$ is homotopy equivalent to $Sym_{\leq 2}(S)$. Using a similar proof, we get the following proposition.

\begin{proposition}
The $D_2$-algebra completion of $A_2$, $ B(D_2,D_2, A_2)$, is homotopy equivalent to $Sym_{\leq 2}(\D)$. Hence, $ B(\Sigma^2,D_2, A_2) \simeq \p$.
\end{proposition}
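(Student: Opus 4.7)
The plan is to imitate the microfibration-plus-augmentation argument that established $\int_S A_2 \simeq Sym_{\leq 2}(S)$, and then bootstrap the second statement from the first via Proposition \ref{extradeg} and the already-recorded identification $B^2 Sym_{\leq 2}(\D_2) \simeq \p$.

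For the first claim I would define an augmentation $\epsilon: B(D_2,D_2,A_2) \m Sym_{\leq 2}(\D_2)$ by exactly the rule of Figure \ref{figure:Epsilon}: the root-level element of $D_2$ is a configuration of disks inside $\D_2$, the leaves are configurations in smaller abstract disks of total cardinality $\leq 2$, and I push the leaf configurations forward along the nested embeddings recorded by the interior of the tree to land in $Sym_{\leq 2}(\D_2)$. Compatibility with face and degeneracy maps is immediate from associativity of embedding composition, so $\epsilon$ descends to the realization. I would then verify that $\epsilon$ is a microfibration by literally copying the proof of Proposition \ref{ismicro}: given a family $H_t$ in $Sym_{\leq 2}(\D_2)$ with a lift $\tilde H_0$, freeze the embedding/simplicial data of $\tilde H_0$ and only let the configurations inside the innermost disks vary with $t$; openness of the compatibility condition together with compactness of $\bar\D_n$ produces a $t_0 > 0$ on which this partial lift is defined. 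Contractibility of the fibers follows from the identical extra-degeneracy argument: for a fixed $\xi \in Sym_{\leq 2}(\D_2)$, pick $\vec f$ with each image containing one point of $\xi$, restrict to the subsimplicial space $Y(\vec f)$ of diagrams whose innermost disks contain the images of $\vec f$ (levelwise equivalent to the full fiber by contractibility of the space of disks containing a fixed disk), and observe that inserting $(\vec f;\vec \xi)$ is an extra degeneracy onto the augmentation point $\xi$. Theorem \ref{microW} then yields $B(D_2,D_2,A_2) \simeq Sym_{\leq 2}(\D_2)$.

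For the second claim I would apply Proposition \ref{extradeg} with $R = \Sigma^2$, $\mathcal O = D_2$ and $\mathcal P = A_2$ to obtain a weak equivalence
$$B(\Sigma^2, D_2, A_2) \simeq B\bigl(\Sigma^2, D_2,\, B(D_2,D_2,A_2)\bigr).$$
The map $\epsilon$ constructed above is built out of operad composition and so is a map of $D_2$-algebras, hence the equivalence $B(D_2,D_2,A_2) \simeq Sym_{\leq 2}(\D_2)$ is compatible with $D_2$-structure and $B(\Sigma^2,D_2,-)$ transports it to a weak equivalence with $B(\Sigma^2, D_2, Sym_{\leq 2}(\D_2)) = B^2 Sym_{\leq 2}(\D_2)$. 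The last space is $\simeq \p$ by the earlier proposition of this subsection, which finishes the proof.

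The main point of friction I anticipate is checking that $\epsilon$ is genuinely a map of $D_2$-algebras (not merely a homotopy equivalence of underlying spaces), so that Proposition \ref{extradeg} gives a useful equivalence after applying $B(\Sigma^2,D_2,-)$. Since the $D_2$-action on $B(D_2,D_2,A_2)$ is implemented by stacking new embedded disks at the root and the $D_2$-action on $Sym_{\leq 2}(\D_2)$ is implemented by the same stacking applied to the already-pushed-forward configurations, strict equivariance is visible from the construction; this is really the only subtlety in the argument, and everything else is a verbatim repetition of the preceding two propositions.
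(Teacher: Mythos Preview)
Your proposal is correct and matches the paper's approach. The paper simply writes ``Using a similar proof, we get the following proposition,'' meaning the first claim is obtained by rerunning the microfibration-with-contractible-fibers argument for $\epsilon$ with $\D_2$ in place of $S$, and the ``Hence'' indicates the second claim follows from the first together with the earlier identification $B^2 Sym_{\leq 2}(\D_2)\simeq\p$; your use of Proposition~\ref{extradeg} and the observation that $\epsilon$ is a strict $D_2$-algebra map is exactly the content the paper leaves implicit in that word ``Hence.''
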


\begin{remark}
Using ideas similar to those of this subsection, one can prove that Salvatore's model of topological chiral homology from \cite{Sa} agrees with the one used in this paper.
\end{remark}

\subsection{Conclusions about scanning maps}

Using Kallel's scanning map, we can define a map  $(\int_S R_2 )_k \m \Map_k^c(\s,\p)$ inducing a homology equivalence in the range $* \leq k/3$. In this section we show that another map also induces such an equivalence. This other map will be more convenient later in the paper when we consider gluing $J$-holomorphic curves. Picking a map of partial $D_2$-algebras $h: R_2 \m \Omega^2 \p$ induces a map $h':(\int_S R_2 )_k \m (\int_S \Omega^2 \p )_k$. In this subsection we prove that the composition of $h'$ with $s: (\int_S \Omega^2 \p )_k \m  \Map^c_k(S,\p)$ is a homology equivalence in the above range under the assumption that $h(r_1) \in \Omega_1^2 \p$. Technically, the scanning map of Section \ref{secTCH} is a map $s: \int_S \Omega^2 \p  \m \Map^c(S,B^2 \Omega^2 \p)$ but there is a natural homotopy equivalence $B^2 \Omega^2 \p \m \p$ and so we also denote the induced map $\int_S \Omega^2 \p \m  \Map^c(S,\p)$ by $s$. This map $\int_S \Omega^2 \p \m \Map^c(S,\p)$ agrees with the one induced by the obvious augmentation $D(S) \Omega^2 \p \m \Map^c(S,\p)$. Note that such a map $h$ exists by a similar argument to the one used to construct the map $R_2 \m A_2$.

\begin{lemma}
\label{lemUnique}
Suppose $h_0,h_1:R_2 \m \Omega^2  \p$  are maps of partial $D_2$-algebras with $h_0(r_1)$ and $h_1(r_1)$ in $\Omega^2_1 \p$. Then $h_1$ and $h_2$ are homotopic through maps of partial $D_2$-algebras.
\end{lemma}

\begin{proof}
We will define a family of maps of partial $D_2$-algebras $h_t:R_2 \m \Omega^2  \p$ interpolating between $h_0$ and $h_1$. By considering the Hopf fibration, one sees that $\pi_i(\Omega^2_k  \p)=\pi_{i+2}(S^5)$. Since $\Omega^2_1  \p$ is connected, we can pick a path from $h_0(r_1)$ to $h_1(r_1)$. Use this path to define $h_t(r_1)$. The partial algebra structure forces the definition of $h_t$ on $D_2(2)/\Sigma_2$. Note this agrees with $h_0$ and $h_1$ since they are also maps of partial $D_2$-algebras.  The maps $h_t$ restricted to $D_2/\Sigma_2$ as well as $h_0$ and $h_1$ restricted to $Cone(D_2/\Sigma_2)$ assemble to form a map: $$H:[0,1] \times D_2(2)/\Sigma_2 \cup \{0,1\} \times Cone(D_2(2)/\Sigma_2) \m \Omega^2_2 \p.$$ Completing the definition of $h_t$ amounts to extending this to a map $$H:[0,1] \times Cone(D_2(2)/\Sigma_2) \m \Omega^2_2 \p.$$ Since $ [0,1] \times D_2(2)/\Sigma_2 \cup \{0,1\} \times Cone(D_2(2)/\Sigma_2) \simeq S^2$ and $\pi_2(\Omega^2_k  \p)=\pi_4( S^5)=0$, this is possible.
\end{proof}

\begin{lemma}
Let $S$ be a connected parallelizable surface admitting boundary. The scanning map (of Section \ref{secTCH}) $s:(\int_S R_2)_k \m  \Map^c_k(S,B^2 R_2)$ induces isomorphisms on homology groups $H_i$ for $i \leq k/3$.
\end{lemma}

\begin{proof}

Although Kallel and Yamaguchi's scanning map is not obviously comparable to the scanning map of Section \ref{secTCH}, their stabilization map is very similar to the stabilization map considered in Section \ref{secTCH}. In fact Kallel and Yamaguchi's stabilization map can be defined as the unique map making the following diagram commute: $$
\begin{array}{cccccccl}
(\int_S A_2)_k    &\overset{t}{\m} & (\int_S A_2)_{k+1}               \\
\downarrow \epsilon & & \downarrow \epsilon                \\

Sym^k_{\leq 2}(S) &\overset{t}{\m} &Sym^{k+1}_{\leq 2}(S). 
\end{array}$$ 

Thus $t: (\int_S A_2)_k \m (\int_S A_2)_{k+1} $ and hence $t: (\int_S R_2)_k \m (\int_S R_2)_{k+1}$  induce isomorphisms on homology groups $H_i$ for $i \leq k/3$. From this we can conclude that the natural inclusions $(\int_S R_2)_{k} \m hocolim_t (\int_S R_2)_{j}$  induce isomorphisms on homology groups $H_i$ for $i \leq k/3$. Since the stabilization maps for the spaces of maps are homotopy equivalences, the natural inclusion $\Map^c_k(S,B^2 R_2) \m hocolim_T \Map^c_j(S,B^2 R_2)$ is a weak homotopy equivalence. Consider the following commuting diagram: $$
\begin{array}{cccccccl}
(\int_S R_2)_k    &\m &  hocolim_t  (\int_S R_2)_{j}          \\
\downarrow s & & \downarrow s                \\

\Map^c_k(S,B^2 R_2)  & \m & hocolim_T \Map^c_j(S,B^2 R_2).
\end{array}$$  By Theorem \ref{scanTCH}, $s:  hocolim_t  (\int_S R_2)_{j} \m hocolim_T \Map^c_j(S,B^2 R_2)$ is a homology equivalence. Thus, $s:(\int_S R_2)_k \m  \Map^c_k(S,B^2 R_2)$ induces isomorphisms on homology groups $H_i$ for $i \leq k/3$.

\end{proof}

We now deduce the main result of this section.

\begin{corollary}\label{KYpackage} 
Let $S$ be a connected parallelizable surface admitting boundary and $h: R_2 \m \Omega^2 \p$ be a map of partial $D_2$-algebras with $h(r_1) \in \Omega^2_1 \p$. Let $h':(\int_S R_2 )_k \m (\int_S \Omega^2 \p )_k$ be the induced map. The composition $s \circ h': (\int_S R_2 )_k \m \Map^c_k(S,\p)$ induce isomorphisms on homology groups $H_i$ for $i \leq k/3$.

\end{corollary}

\begin{proof}
Let $g:B^2 R_2 \m \p$ be any homotopy equivalence such that the map on $H_2$ extends the map $\pi_0(R_2) \m \pi_0(\Omega^2 \p)$. Such a map exists by Proposition \ref{deloop}. Let $\hat g:\Omega^2 B^2 R_2 \m \Omega^2 \p$ be the induced map of iterated loop spaces and $g'':\Map^c(S,B^2 R_2) \m \Map^c(S,\p)$ be the induced map on the space of compactly supported maps. Note that these maps are weak homotopy equivalences. Therefore $g'' \circ s: (\int_S R_2 )_k \m \Map^c_k(S,\p)$ induces isomorphisms on homology groups $H_i$ for $i \leq k/3$. The scanning map gives a map of partial $D_2$-algebras $\hat s: R_2 \m \Omega^2 B^2 R_2$. By Lemma \ref{lemUnique}, $\hat g \circ \hat s$ is homotopic as maps of partial algebras to $h$. Therefore, they induce homotopic maps on topological chiral homology and so the following diagram homotopy commutes: $$
\begin{array}{cccccccl}
(\int_S R_2)_k    &\overset{s }{\m} &(\int_S \Omega^2 B^2 R_2)_{k}               \\
\downarrow h' & & \downarrow g''               \\

(\int_S \Omega^2 \p)_{k}  &\overset{s}{\m} &\Map_k^c(S,\p). 
\end{array}$$ Since traversing the diagram clockwise induces a homology equivalence in a range, so does traversing the diagram counterclockwise.

\end{proof}

\section{Orbifolds}\label{secorb}

In this section, we will review facts about (smooth) orbifolds which will be needed in later sections since the moduli space of $J$-holomorphic curves can have orbifold singularities.  See \cite{KL} or \cite{BK} for more information. 

\subsection{Definitions}

\begin{definition}
An $n$-dimensional orbifold atlas on a paracompact Hausdorff space $X$ is a collection of charts $\mathcal U = \{ \tilde U_i,G_i,\phi_i\}$. Here $G_i$ is a finite group, $\U_i$ is an open subset of a $n$-dimensional $G_i$-representation that contains the origin, and $\phi_i:\tilde U_i \m X$ is a $G_i$ invariant smooth map that descends to a local homeomorphism, $\U_i/G_i \m X$. We also require that $\{U_i/G_i\}$ covers $X$ and the following condition regarding overlaps. An injection $\lambda: (\tilde U_i,G_i,\phi_i) \m (\tilde U_j,G_j,\phi_j)$ is an injective group homomorphism $G_i \m G_j$ and a smooth equivariant embedding $\lambda: \U_i \m \U_j$. We require that for points in two charts, there exists a third chart containing the point injecting into both charts.

\end{definition}

The space $X$ is called the underlying space of the orbifold. 

\begin{definition}
A  map between orbifolds $f:(X,\mathcal U) \m (Y,\mathcal V)$ is a map of spaces $f:X \m Y$ such that for all points $x \in X$, there are charts $(\U,G, \phi)$ and $(\V,H,\psi)$ and a local lift of $f$ to $\tilde f:\U \m \V$ and a group homomorphism $G \m H$ with the following properties. The point $x$ is in $\phi(U)$ and $f(x) \in \psi(V)$, $f$ is equivariant, and $\psi \circ \tilde f= f \circ \phi$.
\end{definition}

We call the map of spaces $f:X \m Y$ the underlying map.

\begin{definition}
Two orbifolds are called equivalent if there are orbifold maps between them whose compositions are the identity map.
\end{definition}

\subsection{Bundles and flows}

In this subsection, we recall the theory of vector and principle bundles, vector fields and flows on orbifolds. Flows on orbifolds will be an important tool for proving that some moduli spaces of $J$-holomorphic curves are independent of $J$. The notion of vector bundle generalizes to orbifolds as follows.

\begin{definition}
A $k$-dimensional vector orbibundle over an orbifold $X$ is a map of orbifolds $\pi:E \m X$ satisfying the following properties. There exist charts  $\mathcal U = \{ \tilde U_i,G_i,\phi_i\}$ on $X$ such that  $\{ \tilde U_i \times \R^k,G_i,\phi_i'\}$ give charts on $E$. The map $\pi$ is the standard projection $\tilde U_i \times \R^k \m \tilde U_i$ in these charts. We also require the groups $G_i$ to act linearly on the second component. If $\lambda: (\tilde U_i,G_i,\phi_i) \m (\tilde U_j,G_j,\phi_j)$ is an injection, we require that there is an injection $\lambda': (\tilde U_i \times \R^k,G_i,\phi_i') \m (\tilde U_j \times \R^k,G_j,\phi_j')$ such that $\lambda'$ is linear on the second factor.
\end{definition}

\begin{definition}
If $\pi:E \m X$ and $\pi':E' \m X$ are vector orbibundle over an orbifold $X$, then $f:E \m E'$ is an equivalence of vector orbibundles if $f$ is an equivalence of orbifolds, $\pi' \circ f =\pi$ and $f$ satisfies the following condition. On charts $\{ \tilde U_i \times \R^k,G_i,\phi_i\}$ on $E$ and $\{ \tilde U'_i \times \R^k,G'_i,\phi'_i\}$ on $E'$ used to define the vector orbibundle structure, $f$ is required to induce a linear map on the second factor.
\end{definition}

We likewise define principle $G$ orbibundles by replacing $\R^k$ with $G$ and the word linear with the phrase multiplication by elements of $G$.

\begin{definition}
A section of a vector or principle bundle $\pi: E \m X$ is a map of orbifolds $\sigma: X \m E$ such that $\pi \circ \sigma = id$.

\end{definition}

\begin{proposition}
Let $X$ be an orbifold with charts  $\mathcal U = \{ \tilde U_i,G_i,\phi_i\}$.  There exists an orbifold $TX$ with charts $\{ T\tilde U_i,G_i,\phi_i'\}$ and a vector bundle map $\pi:TX \m X$ induced by $T\U_i \m \U_i$. Equivalent atlases on $X$ give equivalent atlases on $TX$.

\end{proposition}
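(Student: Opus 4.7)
The plan is to apply the tangent functor chart-by-chart. For each chart $(\U_i, G_i, \phi_i)$, since $\U_i$ is an open subset of an $n$-dimensional $G_i$-representation $V_i$, its tangent bundle is canonically trivial: $T\U_i \cong \U_i \times V_i$ with $V_i \cong \R^n$. The $G_i$-action on $\U_i$ extends via differentiation to an action on $T\U_i$; because the $G_i$-action on $V_i$ is linear, the extended action is just the diagonal $g \cdot (x,v) = (g \cdot x, g \cdot v)$, which in particular is linear on the $V_i$-factor. This provides the local model data $(T\U_i, G_i, \phi_i')$ that we need for a vector orbibundle chart.

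Next I would glue. Each injection $\lambda: (\U_i, G_i, \phi_i) \m (\U_j, G_j, \phi_j)$ is an equivariant smooth embedding covering a group injection $G_i \hookrightarrow G_j$. Its derivative $d\lambda: T\U_i \m T\U_j$ is again a smooth embedding, is equivariant for the same group injection (by naturality of differentiation under equivariant maps), and is linear on fibers. I would define the underlying space $TX$ as the quotient of $\bigsqcup_i T\U_i$ by the equivalence relation generated by these $d\lambda$, let $\phi_i'$ be the resulting maps to $TX$, and define $\pi$ locally by the standard projection $T\U_i = \U_i \times V_i \m \U_i$. Since this projection commutes with every $d\lambda$, it descends to a well-defined orbifold map $\pi: TX \m X$. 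The cocycle and compatibility axioms for the new atlas reduce to those of the original via the chain rule $d(\mu \circ \lambda) = d\mu \circ d\lambda$ and the identity $d(\mathrm{id}) = \mathrm{id}$. Hausdorffness and paracompactness of $TX$ transfer from $X$ since each $\pi^{-1}(\phi_i(\U_i/G_i))$ is locally a product.

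For the final clause, suppose $\mathcal U$ and $\mathcal U'$ are equivalent atlases on $X$, witnessed by orbifold maps whose compositions are identities. Applying $T$ to the local smooth equivariant lifts of these maps, and using naturality of $d$ under the group injections, produces the analogous orbifold maps between $TX$ and $TX'$; by functoriality of the differential their compositions are still identities, so the induced atlases are equivalent. I expect the main obstacle not to be conceptual but bookkeeping: one must verify that the quotient topology on $TX$ is Hausdorff (the key point being that two tangent vectors in overlapping charts are identified iff their basepoints already were) and that the $d\lambda$ satisfy the triple-overlap consistency needed for a valid atlas, both of which follow from naturality of the differential but demand careful notation because injections of charts are only defined locally and compositions only match up to the symmetry groups.
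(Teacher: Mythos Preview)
Your proposal is correct and follows the standard construction of the tangent orbibundle: differentiate every piece of the atlas (the $G_i$-actions and the injections $\lambda$), use the chain rule to verify the cocycle conditions, and obtain the equivalence statement by functoriality of $d$. The only point where I would tighten the exposition is the construction of the underlying space: quotienting $\bigsqcup_i T\U_i$ by the relation generated by the $d\lambda$ only recovers the correct topological space if you make explicit that the $G_i$-action on each $T\U_i$ is itself realized by (self-)injections, so that the $G_i$-orbits are collapsed. It is cleaner to define the underlying set of $TX$ fiberwise over $X$ as $T\U_i/(G_i)_x$ for any chart around $x$ and then check this is independent of the chart; your argument already contains the ingredients for this.

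As for the comparison: the paper does not actually prove this proposition. It simply writes ``See \cite{BK} for a proof'' and moves on. So your write-up supplies strictly more than the paper does, and what you have is essentially the argument one finds in the cited reference.
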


See \cite{BK} for a proof. We call this vector orbibundle the tangent bundle and we call sections of the tangent bundle, vector fields. Just as in the case of manifolds, compactly supported vector fields induce one parameter families of self maps of orbifolds.

\begin{definition}

Let $\sigma$ be a vector field on an orbifold $X$. Let $\sigma_0$ be the zero vector field. Let $supp(\sigma) \subset X$ be the closure of the set of points where the underlying maps of $\sigma$ and $\sigma_0$ disagree.
\end{definition}

\begin{definition}
Let $\sigma$ be a vector field on an orbifold $X$ and $F:\R \times X \m$ be a map of orbifolds. Let $F_t:X \m X$ denote the map induced by restricting to the point $t$ in the first component. The map $F$ is a called a flow if $ F_t \circ F_s=F_{s+t}$. For $x \in X$, let $\gamma_x: \R \m X$ be the map obtained by restricting $F$ to the point $x$. Let $( \tilde U_i,G_i,\phi_i)$ be a chart containing $x$ and $\tilde \sigma$ and $\tilde \gamma_x$ be lifts of $\sigma$ and $\gamma_x$. If for some choice of lifts, we have that the derivative of $\tilde \gamma_x$ at $t=0$ equals the value of $\tilde \sigma$ at $x$ for all points $x \in X$, then we say that $F$ is the flow generated by $\sigma$.

\end{definition}

In \cite{KL}, they make the following observation.

\begin{proposition}
If $\sigma$ is a vector field with compact support, $\sigma$ generates a unique flow.
\end{proposition}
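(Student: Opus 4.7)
The plan is to reduce to the standard manifold statement chart by chart, then glue using uniqueness of ODE solutions, and finally observe that compact support eliminates any issue with global extension in time.

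First I would work in a chart $(\tilde U_i, G_i, \phi_i)$. By the definition of a vector field on an orbifold, $\sigma$ has a local lift $\tilde\sigma_i$ on $\tilde U_i$, and the defining compatibility under injections $\lambda$ forces $\tilde\sigma_i$ to be $G_i$-equivariant (otherwise the identity injection of the chart into itself by a group element would give two different lifts). Standard manifold ODE theory then produces, for each point of $\tilde U_i$, a unique maximal integral curve of $\tilde\sigma_i$. By uniqueness of ODE solutions applied to $g \cdot \tilde\gamma_x(t)$ and $\tilde\gamma_{gx}(t)$, the resulting local flow $\tilde F^i_t$ on $\tilde U_i$ is $G_i$-equivariant, so it descends to a partially defined flow $F^i_t$ on $\phi_i(\tilde U_i)\subset X$ which is a map of orbifolds (the chart data on $\R\times X$ is furnished by the product with an interval).

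Next I would glue. Given two charts whose images overlap at a point $x$, take a third chart $(\tilde U_k, G_k, \phi_k)$ containing $x$ with injections $\lambda_{ki}, \lambda_{kj}$ into the other two; compatibility of $\sigma$ gives $(\lambda_{ki})_*\tilde\sigma_k = \tilde\sigma_i|_{\lambda_{ki}(\tilde U_k)}$, and so by uniqueness of ODE solutions the local flows agree after applying $\lambda_{ki}$ and $\lambda_{kj}$. Hence the $F^i_t$ patch to a well-defined partial flow $F_t$ on a neighborhood of $\{0\}\times X$ in $\R\times X$, and this partial flow is automatically a map of orbifolds and satisfies $F_t\circ F_s = F_{t+s}$ wherever both sides are defined. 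Uniqueness of the full flow, assuming it exists, follows the same way from uniqueness of ODE solutions in every chart.

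For global existence in time I would use the hypothesis that $\sigma$ has compact support. Outside $\mathrm{supp}(\sigma)$ the vector field is zero, so every point is a fixed point of any flow generated by $\sigma$; hence on the complement of the support we can declare $F_t$ to be the identity for all $t\in\R$, and this obviously matches the local flow on the boundary of the support. Inside the support, cover $\mathrm{supp}(\sigma)$ by finitely many charts $(\tilde U_{i_1},G_{i_1},\phi_{i_1}),\dots,(\tilde U_{i_N},G_{i_N},\phi_{i_N})$ whose images have compact closure in $X$; in each such chart the escape-in-finite-time phenomenon cannot occur, because a trajectory that left the support would then continue by the identity, contradicting escape. A standard argument — take $\varepsilon>0$ smaller than the minimum existence time in any of the finitely many charts — shows the partial flow extends from time $(-\varepsilon,\varepsilon)$ to all of $\R$ by iterating $F_t = F_{t/n}^{\,n}$.

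The main obstacle in writing this rigorously is step two, the gluing: one must verify that the locally defined orbifold maps $F^i_t$ really assemble into a single orbifold map in the sense of the definition given earlier, which requires checking that the chart data on source and target intertwine correctly under every injection. Everything else is routine transcription of the manifold case, using $G_i$-equivariance of the local ODE and compactness of support. Uniqueness of the resulting flow is immediate from uniqueness of solutions to the ODE in any single chart, since two flows generated by $\sigma$ must agree locally there and hence globally by a connectedness argument.
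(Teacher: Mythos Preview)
Your proof sketch is essentially correct and follows the standard approach to this statement. However, you should know that the paper does not actually prove this proposition at all: it simply attributes the observation to \cite{KL} and states it without argument. So there is nothing in the paper to compare your proof against.

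Your outline---lift to $G_i$-equivariant vector fields in charts, invoke manifold ODE theory to get $G_i$-equivariant local flows, glue via the injection axioms, and use compact support for global-in-time existence---is exactly the argument one expects and is what the cited reference carries out. The one place where you correctly flag a subtlety is the gluing step: checking that the locally defined flows assemble into a genuine orbifold map requires verifying compatibility under all injections, not just identifying the underlying continuous maps. This is routine but does require care, and in a full write-up you would want to spell out that the injections $\lambda$ intertwine the lifted vector fields and hence (by ODE uniqueness) the lifted flows. Everything else in your sketch is sound.
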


\begin{definition}
Let $X$ be an orbifold and let $U_\alpha$ be an open cover of the underlying space of $X$. A collection of orbifold maps $f_{\alpha}:X \m \R $ are called an orbifold partition of unity subordinate to the open cover if they induce a partition of unity  subordinate to the open cover on the underlying space.
\end{definition}

To see that orbifold partitions of unity always exist, see \cite{KL}. The theory of classifying spaces for principle bundles also carries over to orbifolds after one replaces the orbifold with its homotopy type. 

\begin{theorem}
To each orbifold $X$, there is a space $BX$ called the classifying space or homotopy type of $X$ such that the following properties hold:

i) If $f:X \m Y$ is an equivalence of orbifolds, there is a natural map $f_*:BX \m BY$ that is a homotopy equivalence.

ii) Equivalence classes of principle $G$ orbibundles over $X$ are in bijection with homotopy classes of maps from $BX$ to $BG$.

\label{theorem:BG}

\end{theorem}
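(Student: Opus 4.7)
The plan is to associate to each orbifold $X$ with atlas $\mathcal U = \{(\tilde U_i, G_i, \phi_i)\}$ a proper étale topological groupoid $\mathcal G(\mathcal U)$ and to set $BX := |N_\bullet \mathcal G(\mathcal U)|$, the geometric realization of its nerve. The object space is $\bigsqcup_i \tilde U_i$, and a morphism from $x \in \tilde U_i$ to $y \in \tilde U_j$ is a germ at $x$ of a chart injection, namely an equivalence class of data $(\tilde W, H, \psi, \lambda_i, \lambda_j)$ with $\lambda_i : (\tilde W, H, \psi) \hookrightarrow (\tilde U_i, G_i, \phi_i)$ and $\lambda_j : (\tilde W, H, \psi) \hookrightarrow (\tilde U_j, G_j, \phi_j)$, satisfying $x \in \lambda_i(\tilde W)$ and $\lambda_j \circ \lambda_i^{-1}$ sending $x$ to $y$. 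Composition is composition of germs; the finite isotropy groups $G_i$ sit inside the automorphism groups of individual points.

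The first task is Morita invariance. I would verify that a refinement of atlases induces a morphism of topological groupoids which is essentially surjective and fully faithful, and then apply the standard result that such a morphism induces a weak homotopy equivalence on the geometric realizations of nerves (see e.g.\ Moerdijk--Mr\v cun). Since any two orbifold atlases on $X$ admit a common refinement, $BX$ is well-defined up to canonical homotopy equivalence. For property (i), an orbifold equivalence $f : X \to Y$ lifts, after passage to suitable atlas refinements, to a morphism of the associated groupoids admitting a Morita-equivalence inverse, and hence induces a homotopy equivalence $f_* : BX \to BY$.

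For property (ii), I would identify equivalence classes of principal $G$ orbibundles over $X$ with isomorphism classes of principal $G$-bundles over the groupoid $\mathcal G(\mathcal U)$, that is, $G$-torsors on $\bigsqcup_i \tilde U_i$ equipped with compatible descent data along chart injections. This identification is essentially tautological from the definition of orbibundle: the local trivializations $\tilde U_i \times G$ together with their $G_i$-equivariance and the transition isomorphisms along injections constitute exactly the data of a groupoid bundle. Then I would invoke the classical fact that principal $G$-bundles over a topological groupoid $\mathcal G$ are classified by homotopy classes of maps $|N_\bullet \mathcal G| \to BG$, which one proves by pulling back $EG \to BG$ simplicially through the nerve and passing to realization.

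The main obstacle is the Morita-invariance step: one must carefully check that a common refinement of two atlases produces a genuine weak equivalence of topological groupoids (not merely a levelwise bijection), and that the induced map on the realizations $BX$ is an honest homotopy equivalence. This is controlled by the fact that nerves of proper étale groupoids arising from orbifold atlases realize to spaces with the homotopy type of CW complexes, so that weak equivalences upgrade to homotopy equivalences. A secondary technicality is arranging for the bijection in (ii) to be natural with respect to pullback, which requires a careful setup of the category of orbibundle isomorphisms over a fixed base using the injections between charts already encoded in the definition of an atlas.
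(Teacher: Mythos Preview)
The paper does not actually prove this theorem: immediately after the statement it simply says ``See \cite{BK} for a proof and the construction.'' So there is nothing in the paper to compare your argument against beyond a citation.

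Your outline is the standard groupoid-theoretic construction of the orbifold classifying space (associate to an atlas a proper \'etale groupoid, take the geometric realization of its nerve, and use Morita invariance), together with the standard identification of principal $G$ orbibundles with principal $G$-bundles over the groupoid and hence with homotopy classes of maps $BX \to BG$. This is essentially the approach one finds in the references the paper points to (Haefliger, Moerdijk, and presumably \cite{BK}), so your proposal is in line with what the paper is invoking. The obstacles you flag --- genuine Morita invariance of the nerve realization and the CW-type issue needed to upgrade weak equivalences to homotopy equivalences --- are exactly the technical points that the cited references handle, and your identification of them is accurate.
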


See \cite{BK} for a proof and the construction.

\section{$J$-Holomorphic curves and automatic transversality}\label{secauto}

In this section, we review the basic theory of $J$-holomorphic curves. We recall the definitions of several $\dbar$-operators and relate these operators to tangent spaces of various moduli spaces and mapping spaces. We state Gromov's compactness theorem. See \cite{MS} for more details, especially regarding the genus zero case. We also review the theory of automatic transversality as developed by \cite{G}, \cite{HLS}, \cite{IS} and \cite{S}. This is a tool which allows one to prove that various moduli spaces are smooth when the target symplectic manifold is a real $4$-manifold. Automatic transversality is the primary reason our results only apply to $\p$ as opposed to higher dimensional complex projective spaces. We also recall McDuff's result from \cite{Mc2} that the adjunction formula from algebraic geometry applies to $J$-holomorphic curves in symplectic $4$-manifolds. This theorem allows one to bound the number of singularities that $J$-holomorphic curves can have in terms of topological data.

\subsection{Linearized $\bar \partial$-operators}

In this subsection we review several linearized $\dbar$-operators. These will be relevant since, when onto, the null spaces of these operators will be naturally isomorphic to the tangent spaces of various $J$-holomorphic mapping spaces and moduli spaces. Surjectivity of these operators will also be relevant for the existence of gluing maps in later sections.

\begin{definition}

An almost complex structure $J$ is a section of $Hom_\R(TM,TM) \m M$ such that $J^2=-id$. If $(M,\omega)$ is a symplectic manifold, an almost complex structure $J$ is called compatible with $\omega$ if $\omega$ and $J$ pair to form a Riemannian  metric. Let $\mathcal{J}$ denote the space of smooth almost complex structures compatible with $\omega$.

\end{definition}

\begin{proposition}
The space $\mathcal{J}$ is a contractible infinite dimensional \ft manifold.

\end{proposition}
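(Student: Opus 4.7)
The plan is to establish both claims — contractibility and the Fr\'echet manifold structure — simultaneously by exhibiting a single global chart identifying $\mathcal{J}$ with an open convex subset of a Fr\'echet space of sections. The geometric input is the classical fiberwise fact that, at each point $p \in M$, the space of $\omega_p$-compatible linear complex structures on $T_pM$ is diffeomorphic to the Siegel upper half-space $Sp(2n,\mathbb{R})/U(n)$, a contractible homogeneous space.

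First I would establish non-emptiness of $\mathcal{J}$ by the standard polar decomposition argument: choose any Riemannian metric, form the skew-adjoint endomorphism $A$ defined by $\omega(\cdot,\cdot)=g(A\cdot,\cdot)$, and take $J = (\sqrt{-A^2})^{-1}A$. This gives a canonical $J \in \mathcal{J}$ from any choice of metric, so pick and fix a reference $J_0 \in \mathcal{J}$ with associated Riemannian metric $g_0$.

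Next I would build a global chart around $J_0$. Let $E \to M$ be the vector bundle whose fiber at $p$ consists of endomorphisms $S$ of $T_pM$ that are symmetric with respect to $g_0$ and anticommute with $J_0$. The space $\Gamma(E)$ of smooth sections is a Fr\'echet space in the standard $C^\infty$ topology (seminorms given by $C^k$-norms over a finite atlas, using that $M$ is compact). Using a Cayley-type transform as in McDuff--Salamon, the assignment
\[
S \longmapsto J_0 \bigl(\mathbf{1}-S\bigr)\bigl(\mathbf{1}+S\bigr)^{-1}
\]
defines a homeomorphism between the open convex subset $U=\{S \in \Gamma(E) : \mathbf{1}+S \text{ is pointwise invertible}\}$ and all of $\mathcal{J}$. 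The verification that this is a bijection is pointwise linear algebra: the anticommutation condition is exactly what is needed for the output to be an almost complex structure, and the symmetry condition plus invertibility of $\mathbf{1}+S$ translate into compatibility with $\omega$.

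Since $U$ is an open convex subset of the Fr\'echet space $\Gamma(E)$, it is automatically a Fr\'echet manifold (with a single global chart) and it is contractible via the straight-line homotopy $S \mapsto (1-t)S$, which retracts $U$ onto the point $S=0$ corresponding to $J_0$. The one place to be careful — and the only nontrivial step — is checking that the Cayley transform above lands in $\mathcal{J}$ and is surjective onto it; this reduces to a pointwise computation identifying the image with $Sp(T_pM,\omega_p)/U(T_pM,\omega_p,J_0)$ in its standard realization as a bounded symmetric domain. No infinite-dimensional analysis is required beyond recognizing $\Gamma(E)$ as a Fr\'echet space of smooth sections of a finite-rank bundle over a compact manifold.
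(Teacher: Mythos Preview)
The paper does not give its own proof of this proposition; it simply refers the reader to \cite{G} or \cite{MS}. Your approach is essentially the standard Cayley-transform argument found in McDuff--Salamon, so in spirit you are reproducing the cited proof.

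There is, however, a concrete gap in your execution. The set $U=\{S \in \Gamma(E) : \mathbf{1}+S \text{ is pointwise invertible}\}$ is \emph{not} convex: already for a single real symmetric variable the set $\{s : 1+s \neq 0\}$ is disconnected. More importantly, invertibility of $\mathbf{1}+S$ together with symmetry does \emph{not} translate into compatibility with $\omega$. The bilinear form associated to your $J$ is $g_J(v,w)=g_0\bigl(v,(\mathbf{1}-S)(\mathbf{1}+S)^{-1}w\bigr)$; since $S$ is $g_0$-symmetric this is automatically symmetric, but positive definiteness requires each eigenvalue $\lambda$ of $S$ to satisfy $(1-\lambda)/(1+\lambda)>0$, i.e.\ $|\lambda|<1$. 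The correct domain of the chart is therefore the open unit ball $\{S \in \Gamma(E): \|S(p)\|_{\mathrm{op}}<1 \text{ for all } p\}$, which \emph{is} convex, and to which your straight-line retraction $S\mapsto (1-t)S$ applies without change. With this correction the argument is complete and matches the referenced proof.
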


See \cite{G} or \cite{MS} for a proof. Since $\mathcal{J}$ is contractible, it is path connected. Throughout the paper, all almost complex structures are required to be compatible with the symplectic form and of class $C^{\infty}$. In this paper, the symbol $\s$ will denote the pair $(S^2,j_0)$ with $j_0$ the standard almost complex structure on the space $S^2$. However, for higher genus surfaces, there is no preferred almost complex structure. Eventually we will want to consider all almost complex structures on the domain, but for now, fix an almost complex structure $j$ on a surface $\Sigma_g$.

\begin{definition}
Let $C^{\infty}(\Sigma_g,M)$ denote the space of smooth maps between $\Sigma_g$ and $M$. Consider $C^{\infty}(\Sigma_g,M)$ as an infinite dimensional \ft manifold topologized with the $C^{\infty}$ topology. If $E \m \Sigma_g$ is a smooth vector bundle, let $\Gamma(E)$ denote the \ft manifold of smooth sections of $E$.
\end{definition}

\begin{definition}
Fix an almost complex structure $J$ on $M$. Let $\Upsilon \m C^{\infty}(\Sigma_g,M)$ be the infinite dimensional \ft bundle whose fiber over a map $u$ is $\Omega^{0,1}(u^*TM)$. Here $\Omega^{0,1}(u^*TM)$ denotes the space of smooth anti-complex linear one forms with values in the pullback of the tangent bundle of $M$.
\end{definition}

\begin{definition}

Let $u: \Sigma_g \m M$ be smooth. The non-linear $\bar \partial$-operator $\dbar^{nl}$ is defined by the formula $\dbar ^{nl}(u)=\frac{1}{2}(Du+J \circ Du \circ j) \in \Omega^{0,1}(u^*TM)$. The map $u$ is said to be $J$-holomorphic if $\dbar^{nl}(u)=0$.

\end{definition}

The operator $\dbar^{nl}$ can be viewed as a section of $\Upsilon$.

\begin{definition}
For $A\in H_2(M)$, let $\Hol_A((\Sigma_g,j),(M,J))$ denote the subspace of $C^{\infty}(\Sigma_g,M)$ of maps $u$ with $\dbar^{nl} u=0$ and $u_* [\Sigma_g ]=A$. Let $Z=\{z_i\} \subset \Sigma_g$ be a finite set of points and let $\z:Z \m M$ be an injection. Let $\Hol^\mathfrak{z}((\Sigma_g,j),(M,J))$ denote the subspace of $\Hol((\Sigma_g,j),(M,J))$ of maps with $u(z_i)=\mathfrak{z}(z_i)$.

\end{definition}

When $|Z|=1$, we denote $\Hol^\mathfrak{z}((\Sigma_g,j),(M,J))$ by $\Hol^*((\Sigma_g,j),(M,J))$. We recall the definition of two linearizations of the equation $\dbar^{nl} =0$ at a map $u: \Sigma_g \m M$. The first will correspond to solving the equation by only varying $u$ while the other will correspond to solving the equation by varying $u$ and the almost complex structure on the domain. We shall define: $$D_u: \Gamma(u^*TM) \m \Omega^{0,1}(u^*TM)$$ $$\text{and}$$ $$\tilde D_u: \Omega^{0,1}(T\Sigma_g) \times \Gamma(u^*TM) \m \Omega^{0,1}(u^*TM).$$

The operator $\dbar^{nl}$ defines a section of the bundle $\Upsilon$ defined above. Thus the derivative $D \dbar^{nl}$ is a bundle map $D \dbar ^{nl}:TC^{\infty}(\Sigma_g,M) \m T\Upsilon$. The tangent bundle to the \ft  manifold $C^\infty({\Sigma_g,M})$ is a bundle whose fiber at a map $u$ is $\Gamma(u^*TM)$. Let $V \in \Upsilon_u$. There is a non-canonical splitting of $T\Upsilon_{V}$ into $\Upsilon_u$ and $T_u C^{\infty}(\Sigma_g,M)$. In \cite{MS}, McDuff and Salamon use a Hermitian connection to construct a continuous family of projections $\pi_V: T\Upsilon_{V} \m \Upsilon_u$. This construction allows us to define the following linearized $\dbar$-operator. Also see Formula 3.2 of \cite{MS}.

\begin{definition}
For $u:\Sigma_g \m M$ smooth, let $D_u: \Gamma(u^*TM) \m \Omega^{0,1}(u^*TM)$ be given by the formula $D_u = \pi_{\dbar^{nl} u} \circ D \dbar ^{nl}_u$.

\end{definition}

\begin{definition}

For $u:\Sigma_g \m M$ smooth, let $\tilde D_u: \Omega^{0,1}(T\Sigma_g) \times \Gamma(u^*TM) \m \Omega^{0,1}(u^*TM)$ be given by the formula $\tilde D_u(v, \xi)=J du(v) + D_u(\xi)$ for $v \in  \Omega^{0,1}(T\Sigma_g)$ and $\xi \in \Gamma(u^*TM) $.
\end{definition}

If the map $u$ is a non-constant $J$-holomorphic map to a symplectic four manifold, Ivashkovich and Shevchishin in \cite{IS} defined a divisor $D$ which is the divisor of zeros of $du$ counted with multiplicity. In particular, if $u$ is an immersion, $D$ is the empty divisor. Let $\mathcal L(D)$ be the holomorphic line bundle associated to $D$. That is, $\mathcal L(D)$ is a vector bundle such that its associated sheaf of holomorphic sections is naturally isomorphic to the sheaf of meromorphic sections of the trivial complex line bundle with poles only on $D$ with order less than or equal to the order of the divisor at that point. In \cite{IS}, Ivashkovich and Shevchishin also constructed a normal bundle (generalizing that of \cite{HLS}) called $N$ and proved the following propositions (also see \cite{S}).

\begin{proposition}

There is a linear operator $\n: \Gamma(N) \m \Omega^{0,1}(N)$ making the following diagram of short exact sequences commute.

$$
\begin{array}{lcccccccl}
0  &\m &\Gamma(T\Sigma_g \otimes \mathcal L(D)) &\m  & \Gamma(u^* TM) & \m & \Gamma(N) & \m & 0                \\
\downarrow  & & \bar{\partial}_\tau \downarrow  &    & D_u \downarrow &  & \n \downarrow &  & \downarrow              \\

0 & \m  &\Omega^{0,1}(T\Sigma_g \otimes \mathcal L(D)) &\m & \Omega^{0,1}(u^*TM) & \m &\Omega^{0,1}(N) &\m & 0
\end{array}$$ Here $ \bar{\partial}_\tau =D_{id}$ is the linearized  $\bar{\partial}$-operator at the identity map $id: \Sigma_g \m \Sigma_g$. 

\label{theorem:commuteOperators}

\end{proposition}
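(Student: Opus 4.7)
The plan is to construct the top exact row from $du$, verify that $D_u$ intertwines with $\bar\partial_\tau$ under the resulting inclusion, and then descend $D_u$ to the quotient bundle $N$ to obtain $\n$.

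First, since $u$ is $J$-holomorphic, the differential $du \colon T\Sigma_g \m u^*TM$ is complex linear and, by definition of $D$, vanishes to order $k$ at each point $z_0$ where $D$ has multiplicity $k$. In a holomorphic coordinate $z$ at $z_0$, we may write $du(\partial_z) = z^k \eta$ for a smooth nonvanishing local section $\eta$ of $u^*TM$. Tensoring with the local trivialization $z^{-k}$ of $\mathcal{L}(D)$ extends $du$ to an injective smooth bundle map $\iota \colon T\Sigma_g \otimes \mathcal{L}(D) \hookrightarrow u^*TM$; the cokernel is by definition the normal bundle $N$. Passing to smooth sections (the sequence of bundles splits smoothly) yields the top exact row of the statement, and tensoring with $\Omega^{0,1}$ is exact on bundles, giving the bottom row.

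Next I would establish the intertwining identity $D_u \circ \iota = \iota \circ \bar\partial_\tau$ on sections. Away from $D$, $\iota$ coincides with $du$, and the identity reduces to the standard reparameterization formula $D_u(du(v)) = du(\bar\partial_\tau v)$ for smooth vector fields $v$ on $\Sigma_g$, obtained by differentiating $\dbar^{nl}(u\circ\phi_t)$ at $t=0$ along the flow $\phi_t$ of $v$ and using $\dbar^{nl} u = 0$ together with the Leibniz rule for $D_u$. For a local section of the form $f\cdot\partial_z \otimes z^{-k}$, the Leibniz rule $D_u(f\eta)=(\dbar f)\eta + f D_u(\eta)$ combined with the computation $D_u(\eta) = z^{-k}D_u(du(\partial_z)) = z^{-k}du(\bar\partial_\tau\partial_z) = 0$ on $\Sigma_g\setminus D$ gives the identity on sections, and since $\eta$ is smooth across $D$ the identity extends by continuity. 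The operator $\n$ is then defined by descent of $D_u$ along the quotient map $u^*TM \m N$; stability of $\iota$ and its analogue on $(0,1)$-forms ensures this is well-defined, and the diagram commutes tautologically.

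The main obstacle is handling the intertwining at the points of $D$. Off $D$ everything is a direct combination of the Leibniz rule and the reparameterization identity, but at the singular locus $\iota$ is defined via an algebraic local trivialization of $\mathcal{L}(D)$, and one must check that the analytic operator $D_u$ respects this extension. A density-continuity argument suffices: both sides of $D_u \circ \iota = \iota \circ \bar\partial_\tau$ are continuous first-order differential operators from smooth sections of $T\Sigma_g \otimes \mathcal{L}(D)$ to smooth $(0,1)$-forms valued in $u^*TM$, and they agree on the dense open subset $\Sigma_g \setminus D$. Alternatively, a direct pointwise verification in Hermitian coordinates adapted simultaneously to $u^*TM$ and the trivializing frame $\eta$ confirms the identity on $D$ itself.
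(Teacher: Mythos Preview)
The paper does not actually prove this proposition; it is quoted from Ivashkovich--Shevchishin \cite{IS} (see also \cite{S}), so there is no in-paper argument to compare against. Your sketch is the standard one and is essentially correct: build the bundle sequence from the extended differential $\iota$, verify $D_u\circ\iota=\iota\circ\bar\partial_\tau$ via the reparameterization identity $D_u(du(v))=du(\bar\partial_\tau v)$, and descend to $N$.

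One genuine caveat: the Leibniz rule you invoke, $D_u(f\eta)=(\bar\partial f)\eta+fD_u\eta$, is only guaranteed for \emph{real}-valued $f$. In general $D_u$ has a $\C$-antilinear part (the Nijenhuis term), so for complex $f$ one has $D_u(f\eta)=(\bar\partial f)\eta+fD_u'\eta+\bar f D_u''\eta$, and your step $D_u(\eta)=z^{-k}D_u(du(\partial_z))$ is not literally valid. This is easily repaired: apply the reparameterization identity to \emph{both} $\partial_z$ and $j\partial_z$ to obtain $D_u(z^k\eta)=0$ and $D_u(iz^k\eta)=0$, which together force $D_u'\eta=D_u''\eta=0$ on $\Sigma_g\setminus D$. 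Alternatively, and as you already note, the density/continuity argument on $\Sigma_g\setminus D$ suffices on its own and bypasses the local Leibniz computation entirely. With either fix your argument goes through.
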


Although $\n$ is Fredholm and $\tilde D_u$ is not, the two operators are closely related. In \cite{S}, Sikorav proved the following theorem.

\begin{proposition}
If $u$ is a non-constant $J$-holomorphic map to a symplectic 4-manifold, the operator $\n: \Gamma(N) \m \Omega^{0,1}(N)$ is surjective if and only if $\tilde D_u: \Omega^{0,1}(T\Sigma_g) \times \Gamma(u^*TM) \m \Omega^{0,1}(u^*TM)$ is onto. Moreover, $coker(\n) = coker(\tilde D_u)$.
\label{theorem:1}
\end{proposition}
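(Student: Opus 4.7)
The strategy is to exploit the commutative diagram of Proposition \ref{theorem:commuteOperators} by running the snake lemma, and then to account for the extra $\Omega^{0,1}(T\Sigma_g)$ factor appearing in $\tilde D_u$. First I would verify that the two rows of the diagram are genuine short exact sequences of Fr\'echet spaces (this uses that $u$ is non-constant, so $du$ is nonzero and the quotient bundle $N$ is well-defined) and then apply the snake lemma to obtain the six-term exact sequence
$$0 \m \ker(\dbar_\tau) \m \ker(D_u) \m \ker(\n) \m \mathrm{coker}(\dbar_\tau) \m \mathrm{coker}(D_u) \m \mathrm{coker}(\n) \m 0.$$
In particular, $\mathrm{coker}(\n) = \mathrm{coker}(D_u) / \mathrm{image}(\mathrm{coker}(\dbar_\tau))$, where by abuse of notation the denominator is the image of the connecting map into $\mathrm{coker}(D_u)$.

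Next, unwinding the definition of $\tilde D_u$, one has
$$\mathrm{coker}(\tilde D_u) \;=\; \mathrm{coker}(D_u) \,\Big/\, \mathrm{image}\bigl(Jdu:\Omega^{0,1}(T\Sigma_g) \m \Omega^{0,1}(u^*TM)\bigr),$$
where the image is taken modulo $\mathrm{image}(D_u)$. The plan is therefore to identify this quotient with $\mathrm{coker}(\n)$ by showing that the two subspaces of $\mathrm{coker}(D_u)$ that we are quotienting by coincide. One direction is easy: since $u$ is $J$-holomorphic, $du$ is complex linear, and it factors as $T\Sigma_g \xrightarrow{s_D} T\Sigma_g \otimes \mathcal{L}(D) \hookrightarrow u^*TM$, where $s_D$ is the canonical holomorphic section of $\mathcal{L}(D)$ cutting out the zero divisor of $du$. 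Consequently $Jdu$ lands in $\Omega^{0,1}(T\Sigma_g \otimes \mathcal{L}(D))$, so its image in $\mathrm{coker}(D_u)$ is automatically contained in the image of $\mathrm{coker}(\dbar_\tau)$.

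The essential content is the reverse inclusion, which amounts to proving that the composition
$$\Omega^{0,1}(T\Sigma_g) \xrightarrow{\; Jdu\;} \Omega^{0,1}(T\Sigma_g \otimes \mathcal{L}(D)) \twoheadrightarrow \mathrm{coker}(\dbar_\tau)$$
is surjective. Via Dolbeault's theorem this translates to surjectivity of the induced map $H^1(\Sigma_g, T\Sigma_g) \m H^1(\Sigma_g, T\Sigma_g \otimes \mathcal{L}(D))$. I would prove this by considering the short exact sheaf sequence
$$0 \m T\Sigma_g \xrightarrow{\;s_D\;} T\Sigma_g \otimes \mathcal{L}(D) \m \mathcal{S} \m 0,$$
where $\mathcal{S}$ is a skyscraper sheaf supported on $D$, and invoking the long exact sequence in sheaf cohomology; surjectivity is then immediate from $H^1(\Sigma_g, \mathcal{S}) = 0$. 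This identifies the two subspaces, and one concludes $\mathrm{coker}(\tilde D_u) = \mathrm{coker}(\n)$, so the two operators are simultaneously surjective.

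The delicate point is the surjectivity of $\Omega^{0,1}(T\Sigma_g) \m \mathrm{coker}(\dbar_\tau)$: one must be careful that the $C^\infty$ bundle map $du$ really corresponds to the holomorphic section $s_D$ under the identification $T\Sigma_g \otimes \mathcal{L}(D) \cong \mathrm{image}(du)$, and that the snake-lemma diagram chase preserves this identification at the level of Dolbeault representatives rather than only abstract cohomology. Once one is willing to use the holomorphic/meromorphic description of $T\Sigma_g \otimes \mathcal{L}(D)$ furnished by the almost complex (and hence genuinely complex) structure on the one-dimensional domain $\Sigma_g$, the rest of the argument is a formal consequence of the snake lemma.
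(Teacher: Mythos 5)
The paper does not actually prove this proposition: it is quoted from Sikorav \cite{S} (building on Ivashkovich--Shevchishin \cite{IS}), so there is no in-paper argument to compare against. Your proof --- snake lemma applied to the diagram of Proposition \ref{theorem:commuteOperators}, the identification $\mathrm{coker}(\tilde D_u)=\mathrm{coker}(D_u)/\mathrm{im}(Jdu)$, and the skyscraper-sheaf computation showing that $\mathrm{im}(Jdu)$ and the image of $\mathrm{coker}(\bar\partial_\tau)$ coincide in $\mathrm{coker}(D_u)$ --- is correct and is essentially the standard argument of those references; the one point you rightly flag, that the Dolbeault identification of $\mathrm{coker}(\bar\partial_\tau)$ with $H^1(\Sigma_g,T\Sigma_g\otimes\mathcal L(D))$ is legitimate, holds because the domain complex structure is integrable, so $\bar\partial_\tau$ (unlike $D_u$ and $\bar\partial_\nu$, which are only generalized $\bar\partial$-operators) is an honest holomorphic $\bar\partial$-operator.
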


\subsection{Moduli spaces, holomorphic mapping spaces and their tangent spaces}

In this subsection, we discuss various relevant moduli spaces and holomorphic mapping spaces. We recall the relationship between the kernels of various $\dbar$-operators and the tangent spaces of holomorphic mapping spaces and moduli spaces. Let $Z=\{z_i\} \subset \Sigma_g$ be a finite set of points and $\mathfrak{z}:Z \m M$ an injection. For a bundle $E$ over $\Sigma_g$, let $\Gamma(E)_Z \subset \Gamma(E)$ denote the subspace of sections which vanish on $Z$. Let $D^{Z}_u: \Gamma(u^*TM)_Z \m \Omega^{0,1}(u^*TM)$ be the restriction of $D_u$ to $\Gamma(u^*TM)_Z$. Likewise define $\n^z$ and $\tilde D_u^Z$. Recall that $\Hol^\mathfrak{z}((\Sigma_g,j),(M,J))$ denotes the subspace of $\Hol((\Sigma_g,j),(M,J))$ of maps with $u(z_i)=\mathfrak{z}(z_i)$. 

\begin{theorem}
Let $u \in \Hol^\mathfrak{z}((\Sigma_g,j),(M,J))$. If $D^Z_u: \Gamma(u^*TM) \m \Omega^{0,1}(u^*TM)$ is surjective, then a neighborhood of $u$ in $\Hol^\mathfrak{z}((\Sigma_g,j),(M,J))$ is homeomorphic to a neighborhood of zero in $ker(D_u^Z)$. Thus the tangent space of $\Hol^\mathfrak{z}((\Sigma_g,j),(M,J))$ at the point $u$ can be identified with $ker D_u^Z$. If $D_u^Z$ is onto for all $u\in \Hol^\mathfrak{z}((\Sigma_g,j),(M,J))$, then $\Hol^\mathfrak{z}((\Sigma_g,j),(M,J))$ is a smooth manifold. 
\end{theorem}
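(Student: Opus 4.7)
The plan is to apply the implicit function theorem for smooth sections of Banach bundles to the nonlinear operator $\dbar^{nl}$, then use elliptic regularity to transfer the local structure back to the smooth category. The \ft setup described in the excerpt is convenient for stating the result but not directly suited for the implicit function theorem, so the first step is to replace $C^{\infty}(\Sigma_g,M)$ by its Sobolev completion $W^{k,p}(\Sigma_g,M)$ for some $k\geq 1$ and $p>2$ with $kp>2$, which is a Banach manifold. Similarly complete $\Upsilon$ to a Banach bundle $\Upsilon^{k-1,p}$ with fibers $W^{k-1,p}(\Omega^{0,1}(u^*TM))$. The operator $\dbar^{nl}$ extends to a smooth section of $\Upsilon^{k-1,p}$, and $Hol_A((\Sigma_g,j),(M,J))$ is contained in its zero locus.

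Next, I would incorporate the marked-point condition. Since $kp>2$, $W^{k,p}$ maps are continuous, so pointwise evaluation at each $z_i$ is a smooth submersion $W^{k,p}(\Sigma_g,M)\to M$, and the preimage $W^{k,p}_{\mathfrak z}(\Sigma_g,M)$ of $\mathfrak z$ under the product evaluation is a closed Banach submanifold. Its tangent space at $u$ is precisely $W^{k,p}(u^*TM)_Z$. Restrict $\dbar^{nl}$ to this submanifold; its linearization at $u$ is $D_u^Z$ with target $W^{k-1,p}(\Omega^{0,1}(u^*TM))$. The operator $D_u$ is a zeroth-order perturbation of a Cauchy--Riemann operator and is therefore Fredholm; cutting the domain by the finite-codimension subspace $\Gamma(u^*TM)_Z$ preserves Fredholmness, so $D_u^Z$ is Fredholm with finite-dimensional kernel. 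The surjectivity hypothesis then makes $D_u^Z$ split, and the implicit function theorem for Banach manifolds yields a local homeomorphism (in fact diffeomorphism) between a neighborhood of $u$ in the $W^{k,p}$-zero set and a neighborhood of $0$ in $\ker D_u^Z$.

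The remaining step is elliptic regularity for $\dbar^{nl}$: any $W^{k,p}$ solution of $\dbar^{nl}u=0$ is automatically smooth, by a bootstrap using the fact that $\dbar^{nl}$ differs from a genuine $\dbar$-operator by smooth zeroth-order terms depending on $u$ and $J$. This shows that the $W^{k,p}$-zero set near $u$ coincides with the $C^\infty$-zero set, so the local model $\ker D_u^Z$ for $Hol^{\mathfrak z}((\Sigma_g,j),(M,J))$ near $u$ is independent of the choice of Sobolev completion and lies inside the smooth sections. The identification of the tangent space with $\ker D_u^Z$ follows by differentiating the parameterization. If $D_u^Z$ is onto at every $u\in Hol^{\mathfrak z}((\Sigma_g,j),(M,J))$, the charts constructed this way have smooth transition maps, giving the claimed smooth manifold structure.

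The main obstacle is bookkeeping rather than a single deep step: one must verify that the constructions are independent of the auxiliary choices (the Hermitian connection used to define $\pi_u$, the Sobolev exponents $k,p$, and the trivialization of the bundle $u^*TM$ used to cut out $W^{k,p}_{\mathfrak z}$), and one must make sure the zero locus of $\dbar^{nl}$ in $W^{k,p}$ is mapped by the implicit function theorem chart \emph{onto} an open subset of $\ker D_u^Z$ and not merely a subset. These are addressed in \cite{MS} for the unpointed genus zero case, and the modifications needed to accommodate a finite set $Z$ of constraints and arbitrary genus $g$ are formal once the evaluation map is shown to be a submersion with the required finite-codimension kernel.
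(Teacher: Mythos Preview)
Your proposal is correct and is precisely the standard argument: the paper does not give its own proof of this statement but simply writes ``See \cite{MS} for a proof,'' and the Sobolev completion / implicit function theorem / elliptic regularity outline you give is exactly the McDuff--Salamon argument being cited. There is nothing to compare here beyond noting that you have supplied the details the paper chose to suppress.
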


See \cite{MS} for a proof. Recall that a sufficient condition for a map between manifolds to have manifolds as fibers is that the map is a submersion. The above theorem regarding a space being smooth can be strengthened to a statement about a map being a submersion. Fix a path of compatible almost complex structures $J_t$. See \cite{MS} for a description of a natural topology on $\bigcup_t \Hol^\mathfrak{z}((\Sigma_g,j),(M,J_t))$. There is a natural map $$\pi:\bigcup_t \Hol^\mathfrak{z}((\Sigma_g,j),(M, J_t)) \m \R$$ such that the fiber over $t \in \R$ is $\Hol^\mathfrak{z}((\Sigma_g,j),(M,J_t))$. The above theorem can be strengthened to the fact that if $D_u$ is onto, then there is a neighborhood of $u$ in $\bigcup_t \Hol^\mathfrak{z}((\Sigma_g,j),(M, J_t))$ that is a smooth manifold and $\pi$ is a submersion at $u$ \cite{MS} \cite{S}. All of the theorems about smoothness of moduli spaces or mapping spaces in this section can be rephrased in terms of the corresponding projection maps being submersions.

The operator $D_u$ will be primarily of interest in genus zero. In higher genus, $D_u$ is generally not onto. However, $\tilde D_u$ is often onto. This operator is not Fredholm as its kernel is infinite dimensional. Therefore,  $ker \tilde D_u$ will not correspond to the tangent space of a finite dimensional mapping space. Let $\mathcal{J}_g$ denote the space of smooth almost complex structures on $\Sigma_g$ compatible with a fixed orientation form. Consider the following infinite dimensional $J$-holomorphic mapping space.

\begin{definition}
Let $\Hol^{\mathfrak z}(\Sigma_g,(M,J)) \subset C^{\infty}(\Sigma_g,M) \times \mathcal{J}_g$ be the subset of maps and almost complex structures $(u,j)$ such that $u$ is $(j,J)$-holomorphic and $u(z_i)=\mathfrak z(z_i)$. For $A \in H_2(M)$, let $\Hol_A^{\mathfrak z}(\Sigma_g,(M,J))$ be the subspace of maps such that $u_*([\Sigma_g])=A$.
\end{definition}

This is the space of all maps that are $(j,J)$-holomorphic for some $j$ on $\Sigma_g$.

\begin{definition}
The moduli space of maps, denoted $\mathcal M^{\z}_g(M,J,A)$, is the quotient of $\Hol_A^{\z}(\Sigma_g,(M,J))$ by $Diff_Z(\Sigma)$. Here $Diff_Z(\Sigma_g)$ is the group of diffeomorphisms point-wise fixing $Z$ acting on functions by precomposition and on almost complex structures by pullback.
\end{definition}

Note that for $g=0$ and $Z=\varnothing$, we could also  define $\mathcal M_0(M,J,A)$ as $\Hol_A(\s) (M,J))/PSL_2(\C)$. Here $PSL_2(\C)=\Hol_1((\s,\s)$ acts by precomposition. Also note that in genus zero, $ker \dbar_{\tau}$ is a model for the tangent space of $PSL_2(\C)$ at the identity. The operator $ \dbar_{\tau}$ depends on a choice of divisor and here we choose that divisor to be the empty divisor since every automorphism is an immersion.  In general, the action by $Diff_Z(\Sigma_g)$ is not free. If $2g\geq 3-|Z|$, however, the action has finite stabilizers since the action of $Diff_Z(\Sigma_g)$ on $\mathcal J_g$ has finite stabilizers. The action also has finite stabilizers whenever $u$ is not constant \cite{MS}. In fact, the only non-constant maps with non-trivial stabilizer groups are multiply covered maps. 

\begin{definition}
A non-constant $J$-holomorphic map $u: \Sigma_g \m M$ is called a multiple cover if there is a holomorphic branched cover $c:\Sigma_g \m \Sigma_{g'}$ of degree at least two and a $J$-holomorphic map $v:\Sigma_{g'} \m M$ such that $u=c \circ v$.
\end{definition}

\subsection{Nodal curves and Gromov compactness}

None of the mapping spaces and few of the moduli spaces described in the previous section are compact. However, the moduli space of maps has a useful compactification introduced by Gromov in \cite{G}. We will only need the case of genus zero. The proofs of all of the statements in this subsection are contained in \cite{MS}.  One compactifies this moduli space by adding what are called nodal curves.

\begin{definition}
A nodal surface $\Sigma$ is a finite collection of surfaces $\{ \Sigma_{g_i} \}$ and finite collection of pairs of distinct points $\{(x_k,y_k)\}$ on the surfaces such that the quotient space, $\bigsqcup \Sigma_{g_i} / \sim$ is connected. Here $\sim$ is the relation $x_k \sim y_k$. The surfaces $\Sigma_{g_i}$ are called the irreducible components of $\Sigma$. A complex nodal curve is a nodal surface with choice of almost complex structure $j_i$ on each irreducible component.
\end{definition}

The pairs of points $x_k$ and $y_k$ are called nodes. A $J$-holomorphic map $u$ from a nodal curve $\Sigma$ is a collection of $(j_i,J)$-holomorphic maps $u_i:\Sigma_{g_i} \m M$ such that $u_i(x_k)=u_j(y_k)$. Let the symbol $u_*[\Sigma]$ denote the sum $\sum_i u_{i*}[\Sigma_{g_i}]$. We fix a finite subset of $Z \subset \Sigma$ of size $N$ that does not include any nodes. Let $Diff_Z(\Sigma)$ denote the subgroup of $Diff(\bigsqcup  \Sigma_i)$ that sends pairs of nodes to pairs of nodes and point-wise fixes $Z$. A map $u:\Sigma \m M$ is called stable if for each $i$ with $u_i$ constant, we have $2 g_i \geq 3 - |(\{\text{nodes}\} \cup Z) \cap \Sigma_{g_i} | $. A nodal curve is called rational if the genus of every surface is zero and if the quotient space $\bigsqcup \Sigma_{g_i} / \sim$ is simply connected.

\begin{definition}
As a set, let $\bar{ \mathcal M}^{\z}_0 (M,J,A)$ denote the set of rational stable nodal curves $(\Sigma,j)$ and $J$-holomorphic maps $u:\Sigma \m M$ such that $u_*([\Sigma])=A$ and $u(z_i)=\z(z_i)$, modulo the action of the groups $Diff_Z(\Sigma)$.
\end{definition}

\begin{definition}
As a set, let $\overline {\mathfrak{M} }^{\z}_0(M,A) = \bigcup_J \bar{ \mathcal M_0^{\z}}(M,J,A)$.
\end{definition}

Initially, these two sets are stratified by sets with a natural topology. To topologize these sets, see \cite{G} or \cite{MS}. In \cite{G}, Gomov proved the following theorem known as Gromov's compactness theorem.

\begin{theorem}
The natural map $\pi: \overline {\mathfrak{M} }^{\z}_0(M,A) \m \mathcal{J}$ is proper. In particular, each $\bar{ \mathcal M}^{\z}_0(M,J,A)$ is compact.
\label{theorem:gromovcompactness}
\end{theorem}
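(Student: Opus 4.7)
The plan is to reduce properness to the following: any sequence $(u_n, \Sigma_n, j_n) \in \overline{\mathfrak M}_0^{\z}(M, A)$ with $u_n$ being $J_n$-holomorphic and $J_n \to J_\infty$ in a compact subset of $\mathcal J$ has a subsequence converging to a point of $\overline{\mathfrak M}_0^{\z}(M, A)$. Two a priori ingredients drive everything. First is the \emph{energy identity}: for a $(j,J)$-holomorphic map one has $|du|_j^2 = 2\, u^*\omega$ pointwise, since the Riemannian metric is $g_J = \omega(\cdot, J\cdot)$, so the Dirichlet energy equals the $\omega$-area and in particular $E(u_n) = \langle [\omega], A\rangle$ is uniformly bounded in $n$. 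Second is $\varepsilon$-regularity, or the mean value inequality for $J$-holomorphic curves: there is $\varepsilon_0 > 0$, depending continuously on $J$ near $J_\infty$, such that whenever a $(j,J)$-holomorphic disk $u : \D_r \m M$ has energy less than $\varepsilon_0$, one has an interior gradient bound $|du|(0) \leq C/r$ with $C$ uniform.

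With these in hand I would run the standard bubbling argument. After stabilizing the domain $\Sigma_n$ (collapsing unstable rational bridges, using the finiteness of components forced by $A$ and the marked-point condition) one may pass to a subsequence with $j_n$ converging in a fixed compact slice of the Deligne--Mumford space of marked rational nodal curves. Identify the finite set of points $p_1, \ldots, p_K$ where $|du_n|$ blows up. Away from these points, Arzel\`a--Ascoli together with elliptic bootstrapping applied to the nonlinear $\dbar$-equation gives $C^\infty_{\mathrm{loc}}$ convergence on a subsequence to a $J_\infty$-holomorphic map $u_\infty$. Near each $p_k$, rescale $u_n$ by the inverse of its gradient to produce a nonconstant entire $J_\infty$-holomorphic map $\mathbb C \m M$ of finite energy; the removable singularity theorem extends it to a map $S^2 \m M$, yielding a bubble.

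The bubble extraction is iterated at each $p_k$. Energy quantization, namely that every nonconstant $J_\infty$-holomorphic sphere has $\omega$-area at least $\hbar > 0$ (where $\hbar$ is the infimum of positive values of $\omega$ on spherical classes, which is strictly positive by compatibility of $\omega$ and $J$), together with the total energy bound, forces the iteration to terminate after at most $\langle[\omega],A\rangle / \hbar$ stages. The outputs assemble into a rational nodal $J_\infty$-holomorphic map, with nodes at the $p_k$ and at successive rescaling centers; the constraints $u(z_i) = \z(z_i)$ pass to the limit. Finally, contract any irreducible component violating $2 g_i \geq 2 - |(\{\text{nodes}\} \cup Z) \cap \Sigma_{g_i}|$ to obtain a stable limit in $\overline{\mathfrak M}_0^{\z}(M, A)$.

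The main technical obstacle is the \emph{no energy loss}, equivalently \emph{no class loss}, statement: in the annular neck regions between $u_n$ and each bubble one must ensure that all the $\omega$-area is accounted for, so that $\sum_i u_{\infty,i*}[\Sigma_{g_i}] = A$ and no hidden bubble is missed. The standard route is an annulus lemma: a $(j,J)$-holomorphic annulus of large conformal modulus $R$ and energy at most $\varepsilon_0$ has image of diameter $O(e^{-cR})$, so neck regions collapse to single points of $M$ in the limit. Combined with an isoperimetric inequality controlling the $\omega$-mass of small disks around each $p_k$, this closes the argument and shows the limit has the correct homology class, establishing that $\pi$ is proper.
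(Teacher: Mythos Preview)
The paper does not supply its own proof of this theorem: it is stated as Gromov's compactness theorem with attribution to \cite{G}, and the surrounding section directs the reader to \cite{MS} for details. Your outline is essentially the standard argument found in those references (energy identity, $\varepsilon$-regularity, bubbling via rescaling, removable singularities, energy quantization, and the annulus/neck lemma to rule out energy loss), so there is nothing substantive to compare.

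One small imprecision worth flagging: your justification for the quantum $\hbar > 0$ (``the infimum of positive values of $\omega$ on spherical classes, which is strictly positive by compatibility of $\omega$ and $J$'') is not quite right as stated. There is no a priori reason the infimum of $\langle [\omega], B\rangle$ over nonzero spherical $B$ is positive for an arbitrary symplectic manifold. What one actually uses is that any nonconstant $J_\infty$-holomorphic sphere has energy at least some $\hbar(J_\infty) > 0$, which follows from the mean value inequality together with the removable singularity theorem (or equivalently a monotonicity argument), not from the cohomology class of $\omega$ alone. This is how \cite{MS} sets it up, and it is what makes the iteration terminate.
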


The $\dbar$-operators from the previous section generalize to the case of nodal curves.

\begin{definition}
Let $(\Sigma,j)$ be a nodal curve. If $u:\Sigma \m M$ is a $J$-holomorphic map, let $\Gamma(u^*TM)$ denote the subspace of $\prod_i \Gamma(u_i^*(TM))$ of tuples of sections $(\xi_1, \xi_2, \ldots )$ that agree at the nodes. Let $\Omega^{0,1}(u_i^*(TM)) = \prod_i \Omega^{0,1}(u_i^*(TM))$ and let $\Omega^{0,1}(T \Sigma) = \prod_i \Omega^{0,1}(T\Sigma_{g_i})$. Define $\Gamma(u^*TM)_Z$ to be the subspace of sections vanishing on $Z$.
\end{definition}

\begin{definition}
If $u:\Sigma \m M$ is a $J$-holomorphic map from a nodal curve, let $D^Z_u: \Gamma(u^*TM)_Z \m \Omega^{0,1}(u_i^*(TM))$ denote the restriction of $D_{u_1} \times D_{u_2} \times \ldots$ to sections vanishing on $Z$ and agreeing on the nodes. Likewise define $\tilde D^Z_u:\Omega^{0,1}(T\Sigma) \times \Gamma(u^*TM)_Z \m \Omega^{0,1}(u_i^*(TM))$.
\end{definition}

The part of the following theorem regarding the operator $D_u$ was proved in \cite{RT1}, \cite{RT2}, \cite{FO}, \cite{LT} and \cite{Si} and the part regarding the operator $\tilde D_u$ was proved in \cite{S}.

\begin{theorem}
Let $u \in \overline {\mathfrak{M} }^{\z}_g(M,A)$, with $K$ pairs of nodes and $u_*[\Sigma]=A$. Let $I=<c_1(TM),A>+(dim_{\R}(M)-6)(1-g-|Z|)$. Let $G \subset Diff_Z(\Sigma)$ be the stabilizer group of the map $u$. Assume that $D^Z_u$ or $\tilde D^Z_u$ is onto. Then there is a local homeomorphism: $$\phi : \overline {\mathfrak{M} }^{\z}_g(M,A) \m (\C^K \times C^{I-K}/G) \times \mathcal{J}.$$ Moreover, the group $G$ is finite and acts linearly and the map $\phi$ commutes with the two natural maps to $\mathcal J$. Let $\mu$ be $\phi$ composed with the projection onto $\C^K$. Then for $v \in \overline {\mathfrak{M} }_g(M,A)$ near $u$, the number of components of the vector $\mu(v)$ which are zero equals the number of pairs of nodes of the domain of $v$. Additionally, these coordinate charts induce orbifold charts on the spaces $\bar{\mathcal M}_g(M,J,A)$.
\label{theorem:zero}
\end{theorem}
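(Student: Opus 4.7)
The plan is to combine standard elliptic deformation theory for smooth $J$-holomorphic curves with a gluing analysis at the nodes, everything organized around the surjectivity hypothesis on $D_u^Z$ (or $\tilde D_u^Z$). First, I would pass to a suitable Sobolev / Banach manifold completion of $C^\infty(\Sigma_i,M)$ for each irreducible component, viewing the nonlinear Cauchy--Riemann operator as a smooth section of a Banach bundle $\Upsilon$. Surjectivity of $D_u^Z$ then allows the implicit function theorem to identify a neighborhood of $u$ in the space of $J$-holomorphic maps with fixed nodal domain and passing through $\mathfrak z$ with a neighborhood of $0$ in $\ker(D_u^Z)$. When $D_u^Z$ is not onto but $\tilde D_u^Z$ is, one instead varies the almost complex structure $j$ on $\Sigma$ simultaneously and runs the same implicit function argument with $\tilde D_u^Z$, which recovers the missing directions on the moduli space side.

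Next, I would handle the smoothing of the $K$ nodes with the standard pregluing / Newton iteration scheme (Appendix A of \cite{MS}, adapted for nodal domains). For each pair of nodes, choose local holomorphic coordinates $(x,y)$ and introduce a gluing parameter $\mu_k \in \C$ by identifying the two branches via the model $xy=\mu_k$, cutting off with bump functions and pregluing the map. Surjectivity of $D_u^Z$ (resp.\ $\tilde D_u^Z$) on the nodal domain implies, via a uniform right inverse that extends across the neck for small $|\mu_k|$, that the preglued approximation can be perturbed to a genuine $J$-holomorphic map; this produces the $\C^K$ factor in the local chart and defines the projection $\mu$. The remaining parameters are governed by the Fredholm index of $D_u^Z$ on the nodal curve, which by Riemann--Roch equals $I-2K$ real dimensions, i.e.\ $C^{I-K}$, so after gluing we obtain a total parameter space $\C^K\times C^{I-K}$. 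The stabilizer $G \subset Diff_Z(\Sigma)$ is finite by the stability hypothesis and acts linearly on $\ker D_u^Z$ and on the gluing parameters through its action on the local coordinates at the nodes, so dividing by $G$ gives the desired local model. Compatibility with the projection to $\mathcal{J}$ is automatic because the construction is fiberwise in $J$.

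The key step I expect to be the main obstacle is showing that $\mu$ has the stated zero-counting property uniformly in a neighborhood of $u$, rather than merely at $u$ itself: one must check that the count of vanishing components of $\mu(v)$ coincides with the geometric number of node pairs of the domain of $v$, and that all of this can be made compatible with the $G$-action to yield an honest orbifold chart. This amounts to verifying that the chart coordinates can be chosen $G$-equivariantly (possible because the right inverse can be averaged over $G$) and that distinct choices of gluing data, connections, and cutoff functions produce equivalent charts. Once these compatibility issues are dispensed with, the orbifold chart structure on $\bar{\mathcal M}_g^{\mathfrak z}(M,J,A)$ follows by restricting to fixed $J$. For the detailed analytic estimates underlying the gluing step I would cite \cite{MS}, \cite{RT1}, \cite{RT2}, \cite{FO}, \cite{LT}, \cite{Si}, and \cite{S}.
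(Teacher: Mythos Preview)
The paper does not give its own proof of this theorem: it is stated as a result from the literature, with the $D_u$ case attributed to \cite{RT1}, \cite{RT2}, \cite{FO}, \cite{LT}, \cite{Si} and the $\tilde D_u$ case to \cite{S}. So there is no proof in the paper to compare your proposal against.

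That said, your sketch is a reasonable outline of the standard argument in those references: Banach manifold setup, implicit function theorem using surjectivity of the linearized operator, pregluing with model $xy=\mu_k$ at each node, uniform right inverse across the neck, Riemann--Roch for the index count, and finiteness of the stabilizer to get an orbifold chart. The references you cite at the end are exactly the ones the paper invokes. One small caution: your index bookkeeping (``$I-2K$ real dimensions, i.e.\ $C^{I-K}$'') should be checked carefully against the precise conventions in the statement, since the matching condition at the nodes and the marked points both enter the Riemann--Roch computation and it is easy to be off by a factor; but this is a detail, not a structural gap.
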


\subsection{Automatic transversality in dimension 4}

Often one is interested in proving that linearlized $\bar \partial$-operators are surjective. For example, we noted that surjectivity of linearized $\dbar$-operators is related to whether or not various moduli and mapping spaces are manifolds/orbifolds. Although linearlized $\dbar$-operators are not always surjective, one can often prove that for a generic choice of almost complex structure, certain linearlized $\bar \partial$-operators are surjective. However, for linearlized $\bar \partial$-operators acting on sections of complex line bundles, there is a topological criterion for proving surjectivity. This is the phenomena called automatic transversality. It has applications to the study of $J$-holomorphic maps to $4$-manifolds since $u^*TM$ often splits into the direct sum of two complex line bundles (normal and tangent bundles; see for example Proposition \ref{theorem:commuteOperators}). In \cite{HLS}, they introduced the following definition and proved the following theorem in the case of $Z=\emptyset$.

\begin{definition}

Let $E$ be a holomorphic line bundle. Let $\nabla^{0,1}: \Gamma(E) \m \Omega^{0,1}(E)$ be the anti-holomorphic part of $\nabla$, a Hermitian connection. We call a first order differential operator $L: \Gamma(E) \m  \Omega^{0,1}(E)$ a generalized $\bar \partial$-operator if $L=\nabla^{0,1} + a$ with $a \in \Omega^{0,1}(End_{\R} E)$.

\end{definition}

\begin{theorem}

If $E$ is a holomorphic line bundle on $\Sigma_g$, $L: \Gamma(E) \m  \Omega^{0,1}(E)$ a generalized $\bar \partial$-operator, and $c_1(E) > 2g-2$, then $L$ is surjective. For any finite collection points $Z$, let $L^Z$ denote the restriction of $L$ to $\Gamma(E)_Z$. If $c_1(E) > 2g -2 + |Z|$, then $L^Z$ is surjective.
\label{theorem:auto}
\end{theorem}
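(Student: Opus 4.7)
The plan is to analyze the cokernel of $L$ (and of $L^{Z}$) by pairing against the line bundle $K_{\Sigma} \otimes \overline{E}$ and applying Carleman's similarity principle, reducing surjectivity to a degree-counting obstruction on this dual bundle.

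First I would observe that $L = \nabla^{0,1} + a$ is a zeroth-order (hence compact) perturbation of the complex-linear elliptic operator $\nabla^{0,1}$, so $L$ is Fredholm between suitable Sobolev completions. Fix a Hermitian metric on $E$ and a compatible Riemannian metric on $\Sigma_g$. Using the natural pairings $E \otimes \overline{E} \to \mathbb{C}$ and $\overline{K_{\Sigma}} \otimes K_{\Sigma} \to \Lambda^{1,1}T^{*}\Sigma_g$, elements of $\mathrm{coker}(L)$ correspond bijectively to smooth sections $\eta \in \Gamma(K_{\Sigma} \otimes \overline{E})$ satisfying $\int_{\Sigma_g} \langle L\xi,\eta\rangle = 0$ for all $\xi \in \Gamma(E)$. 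Integration by parts (Stokes on the closed surface $\Sigma_g$) converts this into a generalized $\bar\partial$-equation $L^{\vee}\eta = 0$ on $K_{\Sigma} \otimes \overline{E}$, where $L^{\vee}$ is of the same form: its principal symbol is $\bar\partial$ and its zeroth-order term is the pointwise real-linear adjoint of $a$ under the Hermitian pairings. Carleman's similarity principle now applies: $\eta$ is locally a nowhere-vanishing continuous multiple of a holomorphic section, so either $\eta \equiv 0$ or $\eta$ has only isolated zeros of strictly positive local index summing to $c_{1}(K_{\Sigma} \otimes \overline{E}) = 2g - 2 - c_{1}(E)$. Under the hypothesis $c_{1}(E) > 2g - 2$ this number is negative, forcing $\eta \equiv 0$, so $L$ is surjective.

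For the pointed version I would repeat the argument with $\xi \in \Gamma(E)_{Z}$: the same integration by parts now yields $L^{\vee}\eta = \sum_{i} c_{i}\delta_{z_{i}}$ as a distribution on $\Sigma_g$. Elliptic regularity off $Z$, together with the fact that the principal symbol of $L^{\vee}$ has Cauchy-kernel fundamental solution $1/\zeta$ in holomorphic local coordinates, shows that $\eta$ is smooth away from $Z$ and has at worst a simple pole at each $z_{i}$. Multiplying locally by the canonical vanishing section of $\mathcal{O}(-Z)$ then extends $\eta$ to a smooth section of the twisted line bundle $K_{\Sigma} \otimes \overline{E} \otimes \mathcal{O}(Z)$, on which $L^{\vee}$ conjugates (via the holomorphic trivialization) to another generalized $\bar\partial$-operator. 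The first Chern class of the enlarged bundle is $2g - 2 - c_{1}(E) + N$, which is negative by the hypothesis $c_{1}(E) > 2g - 2 + N$; the similarity principle again forces $\eta \equiv 0$, and thus $L^{Z}$ is surjective.

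The main obstacle will be the singular analysis at $Z$ in the pointed case. Justifying that a distributional solution of $L^{\vee}\eta = \sum c_{i}\delta_{z_{i}}$ with $\eta$ a priori only in $L^{p}$ actually has at worst a simple pole at each $z_{i}$, and hence extends to a smooth section of $K_{\Sigma} \otimes \overline{E} \otimes \mathcal{O}(Z)$, requires a parametrix or weighted-Sobolev analysis at each puncture combined with the Carleman-style unique-continuation estimates that underlie the similarity principle. Once the pole structure has been pinned down, the rest of the argument is routine Riemann-Roch-type degree counting on the relevant line bundle.
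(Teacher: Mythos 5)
Your argument is correct, but note that the paper does not actually prove this statement: it is quoted verbatim from Hofer--Lizan--Sikorav \cite{HLS}, and the proof you have reconstructed is essentially theirs. Identifying the cokernel of $L$ (respectively $L^Z$) with solutions of the formal adjoint equation on $K_{\Sigma}\otimes\overline{E}$ (respectively on $K_{\Sigma}\otimes\overline{E}\otimes\mathcal{O}(Z)$ after the pole analysis at $Z$), observing that the adjoint is again of generalized Cauchy--Riemann type because $a$ is only real-linear, and then invoking the Carleman similarity principle to force isolated zeros of positive index, contradicting the negativity of $2g-2-c_1(E)$ (respectively $2g-2-c_1(E)+N$), is exactly the mechanism of automatic transversality in dimension four. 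Two minor points of hygiene: a cokernel element is a priori only of $L^{q}$ regularity, so you should record that elliptic regularity for the adjoint equation upgrades it to a continuous (indeed smooth away from $Z$) section before counting zeros; and in the pointed case the step you flag as the main obstacle --- showing that $L^{\vee}\eta=\sum_i c_i\delta_{z_i}$ with $\eta\in L^{q}$ forces at worst a simple pole at each $z_i$ --- is genuinely where the work lies, and is handled in \cite{HLS} by exactly the parametrix comparison with the Cauchy kernel that you describe, so your outline matches the cited proof rather than offering an alternative to it.
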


Generalizing this to the case where $Z \neq \emptyset$ is straightforward and is implicit in the work of \cite{S}.

Theorem \ref{theorem:auto} is what is referred to as automatic transversality. It is automatic in the sense that one can prove that a linearized $\bar \partial$-operator is onto without perturbing the almost complex structure. It pertains to transversality since it can be used to prove that $\bar \partial^{nl}$ is transverse to the zero section of $\Upsilon$.

In \cite{HLS}, they also proved that $\n$ and $\bar \partial_{\tau}$ are generalized $\bar \partial$-operators. Using Proposition \ref{theorem:1}, this gives a topological criterion for proving that $\tilde D_u$ is surjective. Since $c_1(T\Sigma_g)=2-2g$, $\bar \partial_{\tau}$ associated to immersions is onto when $g=0$. Combining this with Theorem \ref{theorem:auto} and Theorem \ref{theorem:commuteOperators}, we get a topological criterion for proving that $D_u$ is surjective. More explicitly we have the following corollaries.

\begin{corollary}
If $u:\Sigma_g \m M$ is a non-constant $J$-holomorphic curve, $M$ a symplectic 4-manifold, then $\tilde D_u$ and $\n$ are onto if $c_1(u^*TM) >|D|$ where $D$ is the divisor of zeros of the derivative of $u$. These operators are onto when restricted to the subspace of sections vanishing on $Z$ if $c_1(u^*TM) >|D|+|Z|$.
\end{corollary}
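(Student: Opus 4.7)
The plan is to deduce surjectivity of $\tilde D_u$ from surjectivity of $\n$, and then prove the latter by applying the automatic transversality theorem (Theorem~\ref{theorem:auto}) to $\n$ viewed as a generalized $\dbar$-operator on the complex line bundle $N$. Since $M$ is a symplectic $4$-manifold and $u$ is non-constant, $N$ is indeed a complex line bundle, and by Proposition~\ref{theorem:1} the operator $\tilde D_u$ is surjective if and only if $\n$ is. The identical reduction applies after restricting all sections to vanish on $Z$, so $\tilde D_u^Z$ is onto if and only if $\n^Z$ is.

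It remains to translate the hypothesis on $c_1(u^*TM)$ into the numerical condition required by Theorem~\ref{theorem:auto} for $\n$ on $N$. From the short exact sequence of complex vector bundles on $\Sigma_g$ provided by Proposition~\ref{theorem:commuteOperators},
$$0 \m T\Sigma_g \otimes \mathcal{L}(D) \m u^*TM \m N \m 0,$$
additivity of the first Chern class in short exact sequences of complex bundles yields
$$c_1(u^*TM) \;=\; c_1(T\Sigma_g) + c_1(\mathcal{L}(D)) + c_1(N) \;=\; (2-2g) + |D| + c_1(N),$$
where I am using that $c_1(T\Sigma_g) = 2-2g$ and that the divisor line bundle $\mathcal{L}(D)$ has degree $|D|$ by definition. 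Rearranging, $c_1(N) = c_1(u^*TM) + 2g - 2 - |D|$.

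Now apply Theorem~\ref{theorem:auto} to the generalized $\dbar$-operator $\n$ on the holomorphic line bundle $N$. The hypothesis $c_1(N) > 2g-2$ becomes precisely $c_1(u^*TM) > |D|$, giving surjectivity of $\n$ and hence of $\tilde D_u$. Similarly the hypothesis $c_1(N) > 2g-2 + |Z|$ for the $Z$-restricted operator becomes $c_1(u^*TM) > |D| + |Z|$, giving surjectivity of $\n^Z$ and hence of $\tilde D_u^Z$. The only minor point to check is that the restriction to sections vanishing on $Z$ is compatible with the reduction of the first paragraph; this is immediate because the inclusion $\Gamma(\cdot)_Z \hookrightarrow \Gamma(\cdot)$ intertwines the maps in the short exact sequence of Proposition~\ref{theorem:commuteOperators}, so the Sikorav identification of cokernels in Proposition~\ref{theorem:1} restricts termwise. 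No step here presents a real obstacle; the content of the corollary is simply the Chern class bookkeeping combined with the two nontrivial inputs already established in the section.
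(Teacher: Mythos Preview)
Your proof is correct and follows essentially the same approach as the paper: compute $c_1(N)$ from the short exact sequence (equivalently, the identity $c_1(u^*TM)=c_1(N)+2-2g+|D|$), apply Theorem~\ref{theorem:auto} to $\n$, and then invoke Proposition~\ref{theorem:1} to transfer surjectivity to $\tilde D_u$. Your write-up is simply more explicit about the Chern class bookkeeping and the compatibility with the $Z$-restriction than the paper's terse three-line version.
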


\begin{proof}
Note that $c_1(u^*TM) = c_1(N) +2-2g+|D|$. Thus $c_1(N)> 2g-2 +|Z|$ and by Theorem \ref{theorem:auto}, $\n^Z$ is onto. By Theorem \ref{theorem:1}, $\tilde D^Z_u$ is also onto.
\end{proof}

\begin{corollary}
If $u:\Sigma_g \m M$ is a non-constant $J$-holomorphic curve, $M$ a symplectic 4-manifold, then $D^Z_u$ is onto if $4-4g+|D|-|Z|>0$ and $c_1(u^*TM) >|D|+|Z|$.

\label{corollary:DuOnto}

\end{corollary}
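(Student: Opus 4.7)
My plan is to run a snake-lemma-style diagram chase on the commutative diagram of short exact sequences supplied by Proposition \ref{theorem:commuteOperators}. First I would pass to the $Z$-vanishing subcomplex on the top row: restricting sections of $u^*TM$, $T\Sigma_g \otimes \mathcal{L}(D)$, and $N$ to vanish on $Z$ still yields a short exact sequence (right-exactness is checked by a partition-of-unity lift from $\Gamma(N)_Z$ to $\Gamma(u^*TM)_Z$), giving restricted vertical operators $\bar\partial_\tau^Z$, $D_u^Z$, $\bar\partial_\nu^Z$ mapping into the (unrestricted) $\Omega^{0,1}$ bottom row. If the two outer operators are surjective, the standard diagram chase forces surjectivity of the middle: given $\eta \in \Omega^{0,1}(u^*TM)$, solve the quotient equation using $\bar\partial_\nu^Z$, lift a solution to $\Gamma(u^*TM)_Z$, subtract to land in the image of $\Omega^{0,1}(T\Sigma_g \otimes \mathcal{L}(D))$, and correct using $\bar\partial_\tau^Z$.

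Surjectivity of $\bar\partial_\nu^Z$ is the easy part and is essentially copied from the preceding corollary: the identity $c_1(u^*TM) = c_1(N) + (2-2g+|D|)$ combined with the hypothesis $c_1(u^*TM) > |D|+|Z|$ yields $c_1(N) > 2g-2+|Z|$, which is precisely the input needed for Theorem \ref{theorem:auto}. Surjectivity of $\bar\partial_\tau^Z$ would come from applying Theorem \ref{theorem:auto} to the generalized $\bar\partial$-operator $\bar\partial_\tau$ on the line bundle $T\Sigma_g \otimes \mathcal{L}(D)$, which has first Chern class $2-2g+|D|$; the hypothesis $2-2g+|D| > -2$ is the intended automatic transversality threshold for this piece.

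The main obstacle will be justifying the precise form of the hypothesis $2-2g+|D| > -2$ as input to Theorem \ref{theorem:auto}, which naively demands $c_1 > 2g-2 + |Z|$ rather than simply $c_1 > -2$. I expect to reconcile this either by exploiting the special structure of $\bar\partial_\tau = D_{\mathrm{id}}$ (it is the linearization at the identity map, carrying additional holomorphic structure) to upgrade automatic transversality on the tangential piece, or by not requiring $\bar\partial_\tau^Z$ to be surjective outright but only that the connecting homomorphism $\ker \bar\partial_\nu^Z \to \mathrm{coker}\,\bar\partial_\tau^Z$ from the snake sequence be surjective, which provides the additional freedom coming from the many possible lifts of sections of $N$ into $u^*TM$.
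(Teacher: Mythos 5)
Your proposal takes essentially the same route as the paper: the paper's proof applies the snake lemma to the $Z$-restricted diagram of Proposition \ref{theorem:commuteOperators} to obtain the exact sequence $\mathrm{coker}\,\dbar^Z_\tau \m \mathrm{coker}\,D^Z_u \m \mathrm{coker}\,\n^Z \m 0$ and then invokes Theorem \ref{theorem:auto} to assert that both $\dbar^Z_\tau$ and $\n^Z$ are onto, exactly as you outline. The ``obstacle'' you flag about reconciling the hypothesis $2-2g+|D|>-2$ with the threshold $c_1>2g-2+|Z|$ is a looseness in the paper's own statement rather than a gap in your argument --- the paper does not address it, and in its actual genus-zero applications (e.g.\ Proposition \ref{theorem:deg2orbifold}) it verifies the stronger inequality $c_1(T\s\otimes\mathcal L(D))>2g-2+|Z|$ directly.
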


\begin{proof}
For a divisor $Q$, let $\mathcal L (Q)$ denote the associated line bundle. Let $D$ denote the divisor of zeros of the derivative of $u$ and view the set $Z$ as a divisor. By a generalization of Theorem \ref{theorem:commuteOperators} to the case $|Z|>0$, we have a commuting diagram of short exact sequences:

$$
\begin{array}{lcccccccl}
0  &\m &\Gamma(T\Sigma_g \otimes  \mathcal L (D-Z)) &\m  & \Gamma(u^* TM)_Z & \m & \Gamma(N)_Z & \m & 0                \\
\downarrow  & & \bar{d}^Z_\tau \downarrow  &    & \bar{d}^Z_u \downarrow &  & \n^Z \downarrow &  & \downarrow              \\

0 & \m  &\Omega^{0,1}(T\Sigma_g \otimes \mathcal L (D)) &\m & \Omega^{0,1}(u^*TM) & \m &\Omega^{0,1}(N) &\m & 0.
\end{array}$$

Here $N$ is the normal bundle of the curve $u$. By the snake lemma, we have the following exact sequence: $coker \dbar^Z_{\tau} \m coker D^Z_u \m coker \n^Z \m 0$. By Theorem \ref{theorem:auto}, $ \dbar^Z_{\tau}$ and $\n^Z$ are onto so $coker D^Z_u  =0$.

\end{proof}

In particular, the hypotheses of the above theorem are satisfied if $u$ is an immersion, $g=0$, $c_1(u^*TM) >|Z|$ and $|Z|<4$. This allows us to prove many relevant holomorphic mapping spaces are smooth manifolds and relevant moduli spaces are orbifolds.

In \cite{Mc2}, McDuff proved that the adjunction formula from algebraic geometry applies to $J$-holomorphic curves in symplectic 4-manifolds. This allows one to get a topological bound for the size of $D$, the divisor of zeros of a $J$-holomorphic map to a symplectic 4-manifold. This is relevant since the size of this divisor has appeared in several formulas in this subsection regarding when linearized $\dbar$-operators are surjective. Since we are only interested in the case of $\p$, we only state the formula in that case.

\begin{theorem}
If $u: \Sigma_g \m \C P^2$ is a non-multiply-covered $J$-holomorphic map of degree $d$, then the number of points where $u$ is not an embedding (counted with multiplicity) is equal to $(d-1)(d-2)/2-g$. Hence $|D| \leq (d-1)(d-2)/2-g$.
\end{theorem}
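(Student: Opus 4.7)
The plan is to invoke the adjunction formula for $J$-holomorphic curves in symplectic $4$-manifolds, which is the main content of \cite{Mc2} and is the direct analogue of the classical genus formula for plane algebraic curves. The only real work is the numerical specialization to $\p$ and the observation that $|D|$ is bounded by the total count of singular points.

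First, I would recall McDuff's formulation. For a simple (not multiply covered) $J$-holomorphic map $u: \Sigma_g \m M$ into a symplectic $4$-manifold, the adjunction formula states
$$2\delta(u) \;=\; u_*[\Sigma_g] \cdot u_*[\Sigma_g] \;-\; c_1(TM)\cdot u_*[\Sigma_g] \;-\; (2-2g),$$
where $\delta(u)$ is a non-negative integer combining two kinds of contributions: the local intersection indices at self-intersection points of $u$ and the orders of vanishing of $du$ at critical points of the parametrization. The crucial input is McDuff's positivity of intersections theorem from \cite{Mc2}: any two distinct local branches of a $J$-holomorphic curve meet with strictly positive local intersection number, and every zero of $du$ contributes positively to $\delta(u)$.

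Next, I would specialize to $M = \p$. With $u$ of degree $d$ one has $u_*[\Sigma_g] = d\cdot H$ for $H$ the hyperplane class, so $u_*[\Sigma_g] \cdot u_*[\Sigma_g] = d^2$ and $c_1(T\p)\cdot u_*[\Sigma_g] = 3d$. Substituting gives
$$2\delta(u) \;=\; d^2 - 3d - 2 + 2g \;=\; (d-1)(d-2) - 2g,$$
so $\delta(u) = (d-1)(d-2)/2 - g$, which is the claimed count of points where $u$ fails to be an embedding.

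Finally, the bound $|D| \leq (d-1)(d-2)/2 - g$ is immediate: by construction $D$ is the divisor of zeros of $du$, and its degree $|D|$ is precisely the contribution to $\delta(u)$ from critical points of the parametrization; since every other local contribution (self-intersections) is also non-negative by positivity of intersections, one has $|D| \leq \delta(u)$. The substantive obstacle lies entirely inside the cited McDuff theorem; once that is granted, the rest is pure bookkeeping.
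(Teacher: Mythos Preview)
Your approach matches the paper's exactly: the paper does not prove this statement at all but simply attributes it to McDuff \cite{Mc2} and refers the reader there for the precise definition of multiplicity, so invoking the adjunction formula and specializing the numerics to $\p$ is precisely what is intended.

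That said, there is a sign slip in your display of the adjunction formula. The correct version reads
\[
2\delta(u) \;=\; u_*[\Sigma_g]\cdot u_*[\Sigma_g] \;-\; c_1(TM)\cdot u_*[\Sigma_g] \;+\; (2-2g),
\]
i.e.\ the Euler characteristic enters with a plus sign (equivalently, $A\cdot A - c_1\cdot A = 2g_a - 2$ for the arithmetic genus $g_a = g + \delta$). With your sign you would get $d^2 - 3d - 2 + 2g$, which is not $(d-1)(d-2)-2g$; with the correct sign you get $d^2 - 3d + 2 - 2g = (d-1)(d-2) - 2g$ as desired. You should also make explicit that the formula in this form applies to simple (somewhere injective) curves, which is the setting of \cite{Mc2}; the paper's later applications only use the bound on $|D|$ in situations where this is not an issue. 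The final inequality $|D|\le\delta(u)$ is correct for the reason you give, though strictly speaking a zero of $du$ of order $m$ contributes $m$ to $|D|$ but at least $m(m+1)/2\ge m$ to $\delta(u)$, so the inequality can be strict even in the absence of self-intersections.
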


See \cite{Mc2} for a precise statement of how to count singularities with multiplicity. Note that $(d-1)(d-2)/2-g$ measures both points of self intersection and points of non-immersion. Therefore $|D|$ is not necessarily equal to $(d-1)(d-2)/2-g$.

\section{Degree one and two holomorphic spheres in $\C P^2$} \label{secdeg}

In this section we describe automatic transversality and Gromov compactness arguments which show that the topology of the space of degree one and two $J$-holomorphic maps to $\p$ is independent of the choice of almost complex structure $J$. The results about degree one maps are a small extension of the results in \cite{G} and appear in \cite{Mi1}. The case of degree two maps does not seem to appear in the literature in this form. The moduli space of degree two $J$-holomorphic curves is an orbifold. We use the theory of flows and principle bundles on orbifolds as reviewed in Section \ref{secorb}.

    \subsection{Degree one holomorphic spheres in $\C P^2$}\label{secdeg1}

In this subsection we recall the results of \cite{G} and \cite{Mi1} on the topology of the space of degree one $J$-holomorphic maps. In \cite{Mi1}, the author proved the following propositions.

\begin{proposition}
For any compatible almost complex structure $J$, $\Hol^*_1(\s,(\p,J))$ is diffeomorphic to $\Hol_1^*(\s,(\p,J_0))$. Moreover, there is such a diffeomorphism $\Phi$ making the following diagram homotopy commute.

$$
\begin{array}{ccccccccl}
\Hol_1^*(\s, (\C P^2,J_0))  &\overset{\Phi}{\m} & \Hol_1^*(\s, (\C P^2,J)) \\
    &  \searrow i & \downarrow i    &            \\

&  & \Omega^2 \p \\

\end{array}$$

\label{theorem:deg1}
\end{proposition}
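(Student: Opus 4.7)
The strategy is to construct $\Phi$ as the time-1 map of a flow associated to a smooth path $J_t$ of compatible almost complex structures joining $J_0$ to $J$ (such a path exists because $\mathcal J$ is path connected). The essential analytic input is automatic transversality. For a degree-one $J$-holomorphic map $u: \s \m \p$, McDuff's adjunction formula (with $d=1$, $g=0$) gives $(d-1)(d-2)/2 - g = 0$, so $u$ has no singular points and is an embedding; in particular the divisor of zeros of $du$ is empty, i.e.\ $|D|=0$. Since also $c_1(u^*T\p) = 3d = 3 > 1 = |Z|$ for the single marked-point constraint $u(pt)=m_0$, Corollary \ref{corollary:DuOnto} implies that $D^Z_u$ is surjective at every $u$. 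Hence $Hol_1^*(\s,(\p,J))$ is a smooth finite-dimensional manifold for every compatible $J$, and the same reasoning applied to the parameterized $\dbar$-problem shows that the total space $\mathcal H := \bigsqcup_{t \in [0,1]} Hol_1^*(\s,(\p,J_t))$ is a smooth manifold on which the projection $\pi: \mathcal H \m [0,1]$ is a submersion.

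To produce $\Phi$, I would lift $\partial/\partial t$ to a vector field $\tilde \xi$ on $\mathcal H$ using a partition of unity and integrate. Completeness of the flow is the nontrivial point, since the group $G = \mathrm{Stab}_{PSL_2(\C)}(pt) \cong \mathrm{Aff}(\C)$ acts non-properly on each fiber by reparameterization. However, $\mathcal H / G$ fibers over $[0,1]$ with fiber over $t$ equal to the moduli space of $J_t$-holomorphic lines through $m_0$, which is a $\C P^1$. By Gromov compactness (Theorem \ref{theorem:gromovcompactness}), this quotient fibration is proper: the crucial observation is that $[\s] \in H_2(\p)$ cannot be written as a sum of two nonzero effective classes, so there are no nontrivial bubble degenerations of degree-one stable maps. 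Choosing $\tilde \xi$ to be $G$-equivariant (for instance by averaging over a maximal compact subgroup times a slice construction, or by lifting a vector field from the compact quotient), the flow is then defined for all $t \in [0,1]$, and its time-1 map is the desired diffeomorphism $\Phi: Hol_1^*(\s,(\p,J_0)) \m Hol_1^*(\s,(\p,J))$.

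For the homotopy commutativity, the flow provides, for each $u \in Hol_1^*(\s,(\p,J_0))$, a continuous path $\{u_t\}_{t \in [0,1]} \subset C^{\infty}(\s,\p)$ with $u_0 = u$ and $u_1 = \Phi(u)$. Interpreted as a map $[0,1] \m Map^*_1(\s,\p) \simeq \Omega^2 \p$, this is an explicit homotopy between the inclusion $i$ and the composition $i \circ \Phi$.

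The main obstacle is completeness of the lifted vector field in the presence of the non-compact reparameterization action; everything else is handed to us by automatic transversality and standard submersion theory. The resolution is to pass to the compact moduli quotient, where Gromov compactness rules out degenerations for the combinatorial reason that degree one is minimal. This is exactly why the argument must be extended by separate work in the next section to handle degree two, where the moduli space is merely an orbifold (accounting for double covers of lines) rather than a manifold, and the necessary flow arguments must be carried out in the orbifold category recalled in Section \ref{secorb}.
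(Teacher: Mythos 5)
Your proposal is correct, and its skeleton is the same one the paper relies on: automatic transversality plus adjunction to make the parametrized space of degree-one maps a manifold submersing onto the interval of a path $J_t$, Gromov compactness together with indecomposability of the line class to rule out bubbling and compactify the quotient moduli space, and a flow argument along the path (the paper defers the degree-one statement itself to \cite{Mi1} and writes out this scheme in Section \ref{secdeg2} for degree two). Where you genuinely diverge is the passage from the moduli space back to the based mapping space: you propose lifting the flow $\mathrm{Aff}(\C)$-equivariantly to $Hol_1^*(\s,(\p,J_t))$ and taking its time-one map, whereas the paper flows only on the compact(ified) moduli space and then transports $Hol^*$ as a principal $Hol_1^*(\s,\s)$-(orbi)bundle, using the classification of such bundles by homotopy classes of maps to $BG$ (Theorem \ref{theorem:BG}) and connectedness of the parameter interval. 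Your route buys the homotopy commutativity with the inclusion into $\Omega^2 \p$ essentially for free, since the flow itself is the homotopy; the paper's route sidesteps completeness questions for lifted flows and, crucially, is the version that survives in degree two, where double covers force one into orbifolds and orbibundles. One caveat on your implementation: averaging over the maximal compact subgroup cannot produce $\mathrm{Aff}(\C)$-equivariance, since the group is noncompact; the correct realization of your second suggestion is to choose a principal connection (automatically $G$-invariant, built from local trivializations and a partition of unity on the compact quotient) and use the fact that horizontal lifts, or flows of $G$-invariant lifts of complete base fields, in a principal bundle exist for all time because the fiberwise ODE is of right-translation type on the structure group.
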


\begin{definition}
Let $ev:\Hol_1(\s,(\p,J)) \m \p$ be the map defined by $ev(u)=u(\infty)$.
\end{definition}

The spaces $\Hol_1^*(\s,(\p,J))$ are the fibers of the evaluation at $\infty$ map $ev:\Hol_1(\s,(\p,J)) \m \p$. The previous proposition shows that the fibers are all diffeomorphic because they are for the standard almost complex structure on $\p$. In fact, in \cite{Mi1} it was shown that the evaluation map  $ev:\Hol_1^*(\s,(\p,J)) \m \p$ is a smooth fiber bundle.

\begin{proposition}
For any compatible almost complex structure $J$, the map $ev:\Hol_1(\s,(\p,J)) \m \p$ is fiber bundle.
\label{theorem:evaluation1}
\end{proposition}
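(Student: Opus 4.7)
The plan is to first show that $Hol_1(\s,(\p,J))$ is a smooth manifold on which $ev$ is a submersion, and then to upgrade this to a fiber bundle using the compactness of the moduli space of $J$-lines.

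For smoothness, I would invoke automatic transversality: for $d = 1$ and $g = 0$ the adjunction formula gives $(d-1)(d-2)/2 - g = 0$, so every degree one $J$-holomorphic sphere is embedded and $|D| = 0$. Combined with $c_1(u^*T\p) = 3$, Corollary \ref{corollary:DuOnto} shows that $D_u$ and $D_u^{\{\infty\}}$ are both surjective, so $Hol_1(\s,(\p,J))$ is a smooth manifold with tangent space $\ker D_u$ at $u$. To see that $ev$ is a submersion at $u_0$ with $u_0(\infty) = p_0$, I would apply the snake lemma to the short exact sequence
\[
0 \m \Gamma(u_0^*T\p)_{\{\infty\}} \m \Gamma(u_0^*T\p) \m T_{p_0}\p \m 0
\]
paired with the operators $D_{u_0}^{\{\infty\}}$ and $D_{u_0}$; since both cokernels vanish, the induced map $\ker D_{u_0} \m T_{p_0}\p$, which is precisely $d(ev)_{u_0}$, is surjective.

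To promote the submersion to a fiber bundle, I would use the compact moduli space of $J$-lines. By the degree one case of Gromov's theorem from \cite{G}, the unparameterized moduli space $\mathcal M_1(J) = Hol_1(\s,(\p,J))/PGL_2(\C)$ is diffeomorphic to $\p^\vee$, and through any two distinct points of $\p$ passes a unique $J$-line. Form the incidence variety $\mathcal I_J = \{(p,[L]) \in \p \times \mathcal M_1(J) : p \in L\}$; the projection $\pi \colon \mathcal I_J \m \p$ is a proper smooth submersion with $\s$-fibers, hence a fiber bundle by Ehresmann's theorem. The map $ev$ factors as $Hol_1(\s,(\p,J)) \xrightarrow{q} \mathcal I_J \xrightarrow{\pi} \p$, where $q(u) = (u(\infty), [\mathrm{image}(u)])$ and the fibers of $q$ are the orbits of the free action of $\mathrm{Aff}(\C) = \mathrm{Stab}_{PGL_2(\C)}(\infty)$ on $Hol_1(\s,(\p,J))$ by reparameterizations fixing $\infty$. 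Local trivializations of $q$ come from smoothly chosen reference parameterizations of nearby $J$-lines. A composition of fiber bundles is a fiber bundle, so $ev$ is a fiber bundle.

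The main obstacle is verifying that the map $q$ is locally trivial as a principal $\mathrm{Aff}(\C)$-bundle, which amounts to showing that the complex structure induced on a $J$-line by $J$ varies smoothly in families; this is essentially the content of the proof of Proposition \ref{theorem:deg1} cited from \cite{Mi1}, where the corresponding smoothness for based maps was established by flowing along a path of almost complex structures.
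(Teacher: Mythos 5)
Your proposal is correct and takes essentially the same route as the paper, which defers this statement to \cite{Mi1} and describes that proof's structure in the proof of Proposition \ref{theorem:evaluation2}: automatic transversality gives smoothness and submersivity, and $ev$ is factored through the quotient of $Hol_1(\s,(\p,J))$ by $Hol_1^*(\s,\s)\cong\mathrm{Aff}(\C)$, with the first map a principal bundle and the second a proper submersion to $\p$, hence a bundle by Ehresmann. Your identification of the intermediate space as the incidence variety of $J$-lines is only a cosmetic difference.
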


    \subsection{Degree two holomorphic spheres in $\C P^2$} \label{secdeg2}

We will now prove that the topology of degree two $J$-holomorphic mapping spaces is independent of $J$. Some degree two curves are multiply covered and hence the moduli spaces have orbifold points. Also, because degree two curves can degenerate into two degree one curves, the moduli space and the compactification are not equal. Let $Z=\{\infty\} \subset \s$ and $\z(\infty)=p_0 \in \p$. There are four different types of elements of $\bar \M^{\z}_0(\p,J,2)$ that we will treat separately.

\paragraph{Type 1}

These curves are represented by a map $u: \s \m \p$ with $u(\infty) = p_0$. The map $u$ is an embedding and non-multiply covered. These curves are elements of $\M^{\z}_0(\p,J,2)$ and have trivial automorphism groups.

\paragraph{Type 2a}

These curves are represented by a map $u: \s \m \p$ with $u(\infty) = p_0$. Here the map $u$ is of the form $c \circ v$. Here $c: \s \m \s$ is a degree two branched cover and $v:\s \m \p$ is a degree one embedding. We assume that $\infty$ is not a branch point of $c$. These curves are elements of $\M^{\z}_0(\p,J,2)$ and have trivial automorphism groups.

\paragraph{Type 2b}

These curves are identical to those of type 2a but we assume that $\infty$ is a branch point of $c$. The automorphism groups of these curves are $\Z_2$. These curves are elements of $\M^{\z}_0(\p,J,2)$.

\paragraph{Type 3}

These curves are represented by two degree one maps $u_1,u_2: \s \m \p$ with $u_1(\infty) = p_0$ and $u_1(0)=u_2(\infty)$. We require that $u_1$ and $u_2$ have distinct images. The automorphism groups of these curves are trivial. These curves are elements of $\partial \bar \M^{\z}_0(\p,J,2)$.

\paragraph{Type 4a}

These curves are represented by two degree one maps $u_1,u_2: \s \m \p$ and a degree zero (constant) map $u_3: \s \m \p$ with the constraint that $u_1(\infty)=u_3(0)$, $u_2(\infty)=u_3(1)$ and $u_3(\infty)=p_0$. Note that this condition is really just that $u_1(\infty)=u_2(\infty)=p_0$; however, we add in the extra genus zero curve to make it a stable map. We require that the images of $u_1$ and $u_2$ are different.  These curves are elements of $\partial \bar \M^{\z}_0(\p,J,2)$ and have trivial automorphism groups.

\paragraph{Type 4b}

These curves are identical to those of type 2a but we instead require that the images of $u_1$ and $u_2$ are the same.  These curves are elements of $\partial \bar \M^{\z}_0(\p,J,2)$ and have automorphism groups $\Z_2$.

$$ $$ By Theorem \ref{theorem:zero}, we can show that a moduli space is a smooth orbifold if we can prove that the relevant linearized $\bar \partial$-operators are surjective. This is the case for rational degree two curves in $\p$.

\begin{proposition}
Let $J$ be any compatible almost complex structure on $\p$. If $u \in \bar \M^{\z}_0(\p,J,2)$, then $D^Z_u$ is onto. In particular, this gives a smooth orbifold structure on $\bar \M^{\z}_0(\p,J,2)$.
\label{theorem:deg2orbifold}
\end{proposition}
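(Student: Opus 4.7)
My plan is to verify surjectivity of $D^Z_u$ separately on each of the six types of curves listed above, and then invoke Theorem \ref{theorem:zero} to conclude that $\bar\M^{\z}_0(\p,J,2)$ is a smooth orbifold with isotropy matching the automorphism groups already recorded. Throughout, the basic input is Corollary \ref{corollary:DuOnto}: since every component has genus zero, the hypothesis $2-2g+|D|>-2$ is automatic, and surjectivity of $D^{Z_i}_{u_i}$ on a non-constant component of degree $d_i$ reduces to the single inequality $3d_i=c_1(u_i^*T\p)>|D_i|+|Z_i|$.

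For the irreducible Types 1 and 2 I would apply the corollary directly to $u$. In Type 1, $u$ is an embedded degree-$2$ curve, so $|D|=0$ and $6>0+1$. In Type 2, $u=c\circ v$ with $v:\s\to\p$ a degree-$1$ embedding and $c:\s\to\s$ a degree-$2$ branched cover; Riemann-Hurwitz produces exactly two simple branch points of $c$, and since $dv$ is nowhere vanishing the chain rule yields $|D|=2$, whence $6>2+1$.

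For the nodal Types 3--6 I would argue componentwise and glue at the nodes. Let $Z_i=Z\cap\Sigma_i$; this is $\{\infty\}$ on whichever component carries the marked point and empty on the others. Consider the short exact sequence
\[ 0\to\Gamma(u^*T\p)_Z\to\bigoplus_i\Gamma(u_i^*T\p)_{Z_i}\xrightarrow{\mathrm{ev}}\bigoplus_{\text{nodes}}T_{u(\text{node})}\p\to 0, \]
where $\mathrm{ev}$ records the discrepancy of a tuple at each identified pair of node preimages. A snake-lemma diagram chase against the operator $\bigoplus_i D^{Z_i}_{u_i}$ identifies $\mathrm{coker}\,D^Z_u$ with the cokernel of the induced evaluation $\bigoplus_i\ker D^{Z_i}_{u_i}\to\bigoplus_{\text{nodes}}T_{u(\text{node})}\p$, so surjectivity of $D^Z_u$ reduces to (a) surjectivity of each $D^{Z_i}_{u_i}$ and (b) surjectivity of that induced evaluation. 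Every non-constant $u_i$ is a degree-$1$ embedded $\s$, so Corollary \ref{corollary:DuOnto} gives (a) whenever $|Z_i|\leq 1$. For the ghost component $u_3$ present in Types 5 and 6, $u_3^*T\p$ is trivial of rank $2$ and $D_{u_3}=\bar\partial$; the full operator is obviously surjective, and its restriction $D^{\{\infty_3\}}_{u_3}$ is still surjective because any solution can be adjusted by the constant section equal to its value at $\infty_3$. For (b), a one-point snake lemma on each non-constant $u_i$ combines the surjectivity of $D_{u_i}$ with that of $D^{\{z\}}_{u_i}$ to yield that $\ker D_{u_i}\to T_{u_i(z)}\p$ is onto at every $z\in\s$; this settles Types 3 and 4 (a single node), and in Types 5 and 6 the two node discrepancies live in independent summands of $T_{p_0}\p\oplus T_{p_0}\p$ and are corrected independently using $\ker D_{u_1}$ and $\ker D_{u_2}$.

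The delicate step is Types 5 and 6: the ghost component forces the base-point constraint and both node discrepancies to take values in copies of the single tangent space $T_{p_0}\p$, so one must check that the relevant evaluation hits all of $T_{p_0}\p\oplus T_{p_0}\p$ rather than just a diagonal subspace. This works precisely because the $\infty_3$-constraint kills $\ker D^{\{\infty_3\}}_{u_3}$, forcing the corrections at the two nodes to come from $\ker D_{u_1}$ and $\ker D_{u_2}$, which are independent. Once $D^Z_u$ is surjective on each of the six strata, Theorem \ref{theorem:zero} packages the local models into the claimed orbifold structure.
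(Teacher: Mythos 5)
Your proposal is correct and follows essentially the same route as the paper: a case-by-case check over the six types, with automatic transversality (via Riemann--Hurwitz giving $|D|=2$ in Type 2) handling the irreducible curves, and the ghost components in Types 5 and 6 treated by the vanishing of $H^1$ of the twisted trivial bundle together with node-matching using the degree-one components. Your normalization sequence and snake-lemma bookkeeping is just a repackaging of the paper's explicit solve-on-the-ghost-then-correct construction, and the inputs (surjectivity of $D^{Z_i}_{u_i}$ componentwise and of evaluation at the nodes) are identical.
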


\begin{proof}

The proof involves analyzing all four cases. In all cases $|Z|=1$. The symmetry groups do not affect transversality so we can address  Type 2a and Type 2b curves or Type 4a and Type 4b curves at the same time. 

\textbf{Type 1:} Since the map $u$ is an embedding, $|D|=0$. Note that $c_1(u^*T\p)=3deg(u)=6>|D|+|Z|=1$ and $4-4g+|D|-|Z|=3>0$ so Corollary \ref{corollary:DuOnto} implies that $D^Z_u$ is onto.

\textbf{Type 2:} The map $u$ is the composition of an embedding and a degree two branched cover. By the Riemann Hurwitz formula, degree two branched covers have two simple branch points. Thus, the map $u$ is an immersion except at $2$ points and $|D|=2$.  Note that $c_1(u^*T\p)=6>|D|+|Z|=3$ and $4-4g+|D|-|Z|=5>0$ so Corollary \ref{corollary:DuOnto} implies that $D^Z_u$ is onto.

\textbf{Type 3:} Note that all degree one curves are embeddings. In this case, $\Gamma(u^*T\p)_Z$ is the subspace of $$\Gamma(u_1^* T\p) \times \Gamma(u_2^* T\p)$$ consisting of pairs of sections $(\xi_1,\xi_2)$ with $\xi_1(\infty) = 0$ and $\xi_2(\infty) = \xi_1(0)$. Also recall that $$\Omega^{0,1}(u^*T\p) =\Omega^{0,1}(u_1^* T\p) \times \Omega^{0,1}(u_2^* T\p) $$ and that $ D_u^Z: \Gamma(u^*T\p)_Z \m \Omega^{0,1}(u^*T\p)$ is the restriction of $D_{u_1} \times D_{u_2}$ to $\Gamma(u^*T\p)_Z $. 

Let $Z_1=\{0,\infty\}$ viewed as a divisor on the domain of $u_1$ and let $Z_2=\{\infty\}$ viewed as a divisor on the domain of $u_2$. The map $D_{u_1}:\Gamma(u_1^* T\p)_{Z_1} \m \Omega^{0,1}(u_1^* T\p) $ is surjective by Corollary \ref{corollary:DuOnto} since $c_1(u_1^*T\p)=3>|D|+|Z_1|=2$ and $4-4g+|D|-|Z_1|=2>0$. The map $D_{u_2}:\Gamma(u_2^* T\p)_{Z_1} \m \Omega^{0,1}(u_2^* T\p) $ is surjective by Corollary \ref{corollary:DuOnto} since $c_1(u_2^*T\p)=3>|D|+|Z_1|=1$ and $4-4g+|D|-|Z_1|=3>0$. Let $V \subset \Gamma(u^*T\p)_Z$ be the subspace of sections where $\xi_2(\infty) = \xi_1(0)=0$. The subspace $V$ is isomorphic to $\Gamma(u_2^* T\p)_{Z_1} \times \Gamma(u_2^* T\p)_{Z_2}$ and the restriction of $D_u^Z$ to $V$ is $D_{u_1}^{Z_1} \times D_{u_2}^{Z_2}$. Since $D_u^Z$ is surjective when restricted to $V$, it is surjective.

\textbf{Type 4:}  In these cases, $\Gamma(u^*T\p)_Z$ is the subspace of $$\Gamma(u_1^* T\p) \times \Gamma(u_2^* T\p) \times \Gamma(u_3^* T\p)$$ consisting of sections $(\xi_1,\xi_2,\xi_3)$ with $\xi_1(\infty) = \xi_3(z_1)$ and $\xi_2(\infty) = \xi_3(z_2)$ and $\xi_3(\infty)=0$. We have $$\Omega^{0,1}(u^*T\p) =\Omega^{0,1}(u_1^* T\p) \times \Omega^{0,1}(u_2^* T\p) \times \Omega^{0,1}(u_1^* T\p)$$ and that $ D_u^Z: \Gamma(u^*T\p)_Z \m \Omega^{0,1}(u^*T\p)$ is the restriction of $D_{u_1} \times D_{u_2} \times D_{u_3}$ to $\Gamma(u^*T\p)_Z $.

Let $(\eta_1,\eta_2,\eta_3) \in \Omega^{0,1}(u^*T\p)$ be arbitrary. Since $u_3$ is constant, $u_3 ^* T\p$ is the trivial complex plane bundle $\mathcal O \oplus \mathcal O$. Let $\mathcal L(-\infty)$ be the line bundle associated to the sheaf of sections of $\mathcal O$ vanishing at $\infty$. Since $H^1((\mathcal O \oplus \mathcal O) \otimes \mathcal L (-\infty))=0$, we can find a section $\xi_3 \in \Gamma(u_3^*T\p)$ with $D_{u_3} (\xi_3) = \eta_3$ and $\xi_3(\infty)=0$. For $i=1$ or $2$, let $\alpha_i$ be a section of $u_i T\p$ with $\alpha_i(\infty) = \xi_3(z_i)$. For $i=1$ or $2$, let $Z_i=\{\infty\}$ viewed as a subset of the domain of $u_i$. As we observed when considering Type 3 curves, the $D^{Z_i}_{u_i}$ are surjective and so we can find a section $\xi_i \in \Gamma(u_i^*T\p)_{Z_i}$ with $D_{u_i} \xi = \eta_i +D_{u_i} \alpha_i$. Thus $D_{u_i}(\xi_i+\alpha_i) =\eta_i$ and $(\xi_i+\alpha_i)(\infty) =  \xi_3(z_i)$. Hence $(\xi_1+\alpha_1, \xi_2+\alpha_2, \xi_3) \in \Gamma(u^* T \p)_Z$ and $D_u^Z(\xi_1+\alpha_1, \xi_2+\alpha_2, \xi_3)= (\eta_1,\eta_2,\eta_3)$. This completes the proof that $D_u^Z$ is onto for $u$ of Type $4$.

\end{proof}

Let $J_t$ be a family of almost complex structures on a manifold $\p$. Let $\bar{\mathfrak{M}} = \bigcup_t \bar \M^{\z}_0(\p,J_t,2)$ and $\mathfrak{M} = \bigcup_t  \M^{\z}_0(\p,J_t,2)$.  There is a natural map $\pi: \bar{ \mathfrak{M} }\m \R$ which sends elements of $\bar \M^{\z}_0(\p,J_t,2)$ to $t$. Theorem \ref{theorem:zero} and Proposition \ref{theorem:deg2orbifold} combine to show that $\pi$ is a submersion. Additionally, Theorem \ref{theorem:zero} gives special charts on $\bar \MM $ and describes the local structure of the inclusion $\MM \subset \bar \MM$ in these charts. Using these charts, we can prove the following theorem.

\begin{theorem}
The orbifold equivalence class of $\M^{\z}_0(\p,J,2)$ is independent of $J$.
\label{flow}

\end{theorem}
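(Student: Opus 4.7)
The plan is to construct, for any smooth path $J_t$ of compatible almost complex structures from $J_0$ to $J_1$, a flow $F_t$ on $\bar{\mathfrak{M}}$ whose time-one map intertwines the projection $\pi$ with translation by $t$ on $\R$ and preserves the stratification of $\bar{\mathfrak{M}}$ by number of nodes. Restricting the resulting orbifold equivalence $F_1 : \pi^{-1}(0) \to \pi^{-1}(1)$ to the open stratum of smooth (non-nodal) curves will then give the desired equivalence $\M^{\z}_0(\p,J_0,2) \simeq \M^{\z}_0(\p,J_1,2)$; since $\mathcal{J}$ is path-connected, this suffices.

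By Proposition~\ref{theorem:deg2orbifold} and Theorem~\ref{theorem:zero}, $\bar{\mathfrak{M}}$ is a smooth orbifold with local orbifold charts of the form $\phi : \bar{\mathfrak{M}} \to (\C^K \times \C^{I-K}/G) \times \mathcal{J}$ near any point $u$ with $K$ pairs of nodes, and $\pi$ corresponds in each chart to projection onto the $\mathcal{J}$-factor composed with the parameter of the path. The final assertion of Theorem~\ref{theorem:zero} identifies the subset of curves with exactly $k$ nodes as $\{|\{i : \mu_i = 0\}| = k\}$, so $\mathfrak{M} \subset \bar{\mathfrak{M}}$ is the open stratum, each stratum of fixed node-count is a smooth sub-orbifold, and $\pi$ is an orbifold submersion.

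Next I would use an orbifold partition of unity subordinate to a refinement of the cover by these charts (Section~\ref{secorb}) to lift $\partial/\partial t$ to a smooth vector field $\sigma$ on $\bar{\mathfrak{M}}$ whose local expression in every chart has vanishing $\C^K$-component. Such a local lift exists trivially: take zero $\mu$-component and $\partial/\partial t$ in the $\mathcal{J}$-factor. The condition of having vanishing $\C^K$-component is linear, so any convex combination of such local lifts is again tangent to every stratum. By Gromov compactness (Theorem~\ref{theorem:gromovcompactness}) the preimage $\pi^{-1}([0,1])$ is compact, so after multiplying $\sigma$ by a smooth cutoff equal to $1$ on this set the resulting vector field has compact support and generates a global orbifold flow $F_t$ by the theory of flows on orbifolds reviewed in Section~\ref{secorb}. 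By construction $F_t$ preserves each stratum and satisfies $\pi \circ F_t = \pi + t$, so $F_1$ restricts to an orbifold equivalence of the open strata, with inverse $F_{-1}$.

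The main obstacle is arranging tangency of $\sigma$ to every stratum simultaneously while still lifting $\partial/\partial t$; this reduces to the observation that in the charts of Theorem~\ref{theorem:zero} the $\mu$-coordinates and the $\mathcal{J}$-coordinate are independent, so purely $\mathcal{J}$-directional local lifts always exist and patch using partition of unity without disturbing the tangency condition. Compactness of the fibers via Gromov compactness is equally essential: without it, trajectories of $\sigma$ could escape to infinity before time one and no global flow would exist.
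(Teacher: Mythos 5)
Your proposal is correct and follows essentially the same route as the paper: patch, via an orbifold partition of unity on the charts of Theorem \ref{theorem:zero}, local lifts of $\partial/\partial t$ that vanish in the node ($\C^K$) directions, use Gromov compactness to cut off and obtain a global orbifold flow, and observe that the flow preserves the nodal strata so its time-$t$ maps restrict to equivalences $\M^{\z}_0(\p,J_0,2)\to\M^{\z}_0(\p,J_t,2)$. The only cosmetic difference is that the paper phrases stratum-preservation as the vector field being parallel to the singular locus $\bar{\MM}-\MM$ and extends the path $J_t$ slightly beyond $[0,1]$ to avoid orbifolds with boundary, while you phrase it via vanishing $\mu$-components and a cutoff equal to $1$ on $\pi^{-1}([0,1])$.
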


\begin{proof}
Assume that we are interested in comparing the moduli spaces $J_0$ and $J_1$ holomorphic curves. Let $J_t$ be a path connecting $J_0$ and $J_1$. In order not to discuss orbifolds with boundary, we will define $J_t$ for $t<0$ and $t>1$.

 Let $\{ (U_i,G_i,\phi_i) \}$ be the orbifold charts on $\bar{ \mathfrak M}$ coming from Theorem \ref{theorem:zero}. Let $\{ \rho_i \}$ be a partition of unity subordinate to $\{ (U_i,G_i,\phi_i) \}$. Let $\chi_i$ be the vector field on $U_i$ induced from the product of the zero vector field on  $\C^m \times \C^d/Aut(\Sigma)$ and the unit speed vector field on $\R$. Let $b:\bar \MM  \m \R$ be a function such that $b(v)=0$ if $\pi(v) \notin [-1,2]$ and  $b(v)=1$ if $\pi(v) \in [0,1]$. Let $\chi=b \Sigma \rho_i \chi_i$. By Gromov Compactness, $\chi$ has compact support.  Thus, $\xi$ induces a flow $\phi_t : \R \times \bar \MM \m \bar \MM$. Since $\chi$ projects to a unit length vector field on $[0,1]$, for $t \in [0,1]$, $\phi_t$ restricts to a map $\phi_t: \bar \M^{\z}_0(\p,J_0,2) \m \bar \M^{\z}_0(\p,J_t,2)$. The flow induces orbifold equivalences. Since each vector field $\chi_i$ is parallel to the loci of singular curves $\bar \MM -\MM$, so is $\chi$. Hence, $\phi_t$ restricts to an equivalence between $\M^{\z}_0(\p,J_0,2)$ and $\M^{\z}_0(\p,J_t,2)$.

\end{proof}

\begin{theorem}
The diffeomorphism type of $\Hol^*_2(\s,(\p,J))$ is independent of $J$. Moreover, there is a diffeomorphism $\Phi : \Hol^*_2(\s,(\p,J_0)) \m \Hol^*_2(\s,(\p,J)$ making the following diagram homotopy commute:

$$
\begin{array}{ccccccccl}
\Hol_2^*(\Sigma_g, (\C P^2,J_0))  &\overset{\Phi}{\m} & \Hol_2^*(\Sigma_g, (\C P^2,J)) \\
    &  \searrow i & \downarrow i    &            \\

&  & \Omega^2 \p. \\

\end{array}$$

\label{theorem:deg2}
\end{theorem}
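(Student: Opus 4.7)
The strategy is to upgrade the moduli space equivalence of the previous theorem to the level of parametrized maps, by lifting the flow generated by $\chi$ to the universal parametrized space $\bigsqcup_t Hol^*_2(\s,(\p,J_t))$. The key observation is that $Hol^*_2(\s,(\p,J))$ carries a smooth precomposition action of $\mathrm{Aff}(\C)$, the subgroup of $PSL_2(\C)$ fixing $\infty$, with finite stabilizers (trivial on Type 1 maps, $\Z/2$ on Type 2 maps), and the resulting orbit space recovers the open smooth locus of the orbifold $\M^{\z}_0(\p,J,2)$ consisting of Type 1 and Type 2 curves.

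First I would verify that $\Pi : \bigsqcup_t Hol^*_2(\s,(\p,J_t)) \m \R$ is a smooth submersion. This is the parametrized version of the standard argument: surjectivity of $D^Z_u$ for every $J_t$-holomorphic $u$ of Type 1 or 2, provided by Proposition \ref{theorem:deg2orbifold}, is equivalent to the parametrized linearization of $\dbar^{nl}$ being onto at $u$, hence to $\Pi$ being a submersion at $u$ (see \cite{MS} or \cite{S}). In particular every fiber $Hol^*_2(\s,(\p,J_t))$ is a smooth submanifold.

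Next I would lift $\chi$. Restrict $\chi$ to the smooth locus $\bigsqcup_t \M^{\z}_0(\p,J_t,2)$; this is legitimate because $\chi$ was constructed to be parallel to the nodal stratum, so the flow $\phi_t$ preserves the smooth locus $\MM$. The quotient map $q : \bigsqcup_t Hol^*_2(\s,(\p,J_t)) \m \bigsqcup_t \M^{\z}_0(\p,J_t,2)$ admits local slices transverse to the $\mathrm{Aff}(\C)$-orbits, on which the submersion property from the previous step produces smooth horizontal lifts of $\chi$. An $\mathrm{Aff}(\C)$-equivariant partition of unity patches these into a global $\mathrm{Aff}(\C)$-equivariant vector field $\tilde\chi$ projecting to $\chi$. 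Setting $\Phi = \tilde\phi_1$ for $\tilde\phi_t$ the flow of $\tilde\chi$ yields the required diffeomorphism, and the triangle with $\Omega^2\p$ commutes up to homotopy via the path $t\mapsto i\circ \tilde\phi_t$ in $\mathrm{Map}(Hol^*_2(\s,(\p,J_0)),\Omega^2\p)$, the inclusion $i$ into continuous maps being independent of the almost complex structure.

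The main obstacle is showing that $\tilde\phi_t$ is defined on all of $Hol^*_2(\s,(\p,J_0))$ for $t\in[0,1]$. Downstairs, $\phi_t$ is complete because $\chi$ has compact support in $\bar{\MM}$, which is compact by Gromov compactness (Theorem \ref{theorem:gromovcompactness}). However, $q$ is not proper since $\mathrm{Aff}(\C)$ is noncompact, so a priori a lifted trajectory could escape to infinity along an orbit in finite time. To prevent this I would construct $\tilde\chi$ as an honest horizontal lift with respect to a fixed local slice decomposition, so that $\tilde\chi$ has no component along the $\mathrm{Aff}(\C)$-orbit directions; a trajectory then lies in a compact union of slices (because its projection to the moduli space is compact), and $\mathrm{Aff}(\C)$-equivariance extends the lift across each orbit, producing the required complete flow.
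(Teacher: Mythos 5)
Your argument is correct in outline, but it takes a genuinely different route from the paper's. The paper also starts from the flow $\phi_t$ of the preceding theorem and from the observation that $Hol_2^*(\s,(\p,J_t)) \m \M^{\z}_0(\p,J_t,2)$ is a principal $Hol_1^*(\s,\s)$-orbibundle; however, instead of lifting $\chi$, it pulls the family of orbibundles back along the equivalences $\phi_t$ to obtain a one-parameter family of principal orbibundles over the fixed base $\M^{\z}_0(\p,J_0,2)$ and then invokes the classification of principal $G$-orbibundles by homotopy classes of maps $BX \m BG$ (Theorem \ref{theorem:BG}): since $\R$ is connected the bundles in the family are all equivalent, and since the total spaces are honest manifolds (not orbifolds) the resulting orbibundle equivalence is a diffeomorphism. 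The homotopy commutativity of the triangle is proved exactly as you propose, using the inclusion of $\bigcup_t Hol_2^*(\s,(\p,J_t))$ into $\Omega^2 \p$. Your connection-theoretic lifting of $\chi$ buys an explicit diffeomorphism covering $\phi_1$ and avoids the classifying-space machinery, at the cost of handling slices at the $\Z/2$-stabilizer points (Type 2 curves) and the completeness of the lifted flow, which is exactly what the paper outsources to Theorem \ref{theorem:BG}. One loose point in your write-up: ``no component along the orbit directions with respect to a fixed local slice decomposition'' is not globally meaningful, since the horizontal complement only exists after the equivariant patching, and a patched field need not have vanishing fiber component in any given slice chart. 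The clean repair is the standard parallel-transport argument for principal connections: because $\tilde\chi$ is $\mathrm{Aff}(\C)$-equivariant and projects to $\chi$, in a local trivialization over a slice the fiber component of a lifted trajectory satisfies a time-dependent right-invariant ODE on the group, which is solvable on compact time intervals; combined with compactness of the projected trajectory for $t\in[0,1]$ (and working $\Z/2$-equivariantly in the local model $(\mathrm{slice}\times \mathrm{Aff}(\C))/(\Z/2)$ near Type 2 points) this yields completeness despite the noncompact structure group, so the escape-to-infinity issue you flag does close.
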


\begin{proof}
Take a one parameter family of almost  complex structures $J_t$ as in the proof of the previous theorem. The space $\Hol^*_2(\s,(\p,J_t))$ is a principle $\Hol^*_1(\s, \s)$ orbibundle over $\MM = \bigcup_t \M^{\z}_0(\p,J_t,2)$. Pulling back along the equivalence $\phi_t$ gives a one parameter family of orbibundles over $\M^{\z}_0(\p,J_0,2)$. Equivalence classes of $G$-orbibundles over an orbifold $X$ are in bijection with homotopy classes of map from $BX$ to $BG$ (Theorem \ref{theorem:BG}). Since $\R$ is connected, the orbibundle type and hence the orbifold equivalence class of $\Hol^*_2(\s,(\p,J_t))$ is independent of $t$. Since the category of smooth manifolds is a full subcategory of the category of smooth orbifolds, and since the total spaces of these orbibundles are manifolds not orbifolds, we have that the diffeomorphism type of $\Hol^*_2(\s,(\p,J))$ is independent of $J$. 

To see that the above diagram homotopy commutes, consider the space $\bigcup_t \Hol^*_2(\s,(\p,J_t))$. The restriction to $t=0$ or $t=1$ are the two inclusion maps in the above diagram. The fact that the inclusion map is defined over all of $\bigcup_t \Hol^*_2(\s,(\p,J_t))$ gives the homotopy.

\end{proof}

\begin{proposition}
 The evaluation at infinity map $ev: \Hol_2(\s, (\p,J)) \m \p$ is a smooth fiber bundle map.
\label{theorem:evaluation2}
\end{proposition}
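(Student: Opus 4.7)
The plan is to reprise the strategy of Proposition \ref{theorem:evaluation1} and Theorem \ref{theorem:deg2}, with the varying parameter now being the image of the marked point in $\p$ rather than the almost complex structure $J$. Concretely, I would introduce the universal moduli space
$$\bar{\MM}_{\p} := \bigcup_{p\in\p} \bar{\M}^{\z_p}_0(\p,J,2),$$
where $\z_p$ is the map sending $\infty \in \s$ to $p \in \p$, together with its obvious projection $\pi: \bar{\MM}_{\p} \m \p$. The goal is to show that $\pi$ is a smooth orbifold fiber bundle; once this is established, the proposition will follow by pulling back along the principal orbibundle $Hol_2(\s,(\p,J)) \m \MM_{\p}$, whose structure group is the affine subgroup $Hol^*_1(\s,\s) \subset PSL_2(\C)$ fixing $\infty$.

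Both the orbifold smoothness of $\bar{\MM}_{\p}$ and the submersivity of $\pi$ would follow directly from Proposition \ref{theorem:deg2orbifold}. The surjectivity of $D^Z_u$ proved there, together with the snake lemma applied to the short exact sequence
$$0 \m \Gamma(u^*T\p)_Z \m \Gamma(u^*T\p) \m T_{u(\infty)}\p \m 0,$$
shows simultaneously that $D_u$ is surjective (so Theorem \ref{theorem:zero} endows $\bar{\MM}_{\p}$ with an orbifold structure of the expected dimension) and that the evaluation map $\ker D_u \m T_{u(\infty)}\p$ is surjective (so $\pi$ is a submersion of orbifolds).

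To upgrade $\pi$ to a fiber bundle I would repeat the flow construction from the proof of Theorem \ref{theorem:deg2}: pick orbifold charts $\{(\U_i,G_i,\phi_i)\}$ on $\bar{\MM}_{\p}$ coming from Theorem \ref{theorem:zero}, pick an orbifold partition of unity $\{\rho_i\}$ subordinate to them, and for each smooth vector field $v$ on $\p$ assemble a lift $\tilde v = \sum_i \rho_i \tilde v_i$ by lifting $v$ horizontally inside each chart using the product structure given by Theorem \ref{theorem:zero}. Gromov's compactness theorem (Theorem \ref{theorem:gromovcompactness}) ensures that $\pi$ is proper, so $\tilde v$ has compact support over any compact subset of $\p$ and therefore integrates to a flow that provides local trivializations of $\pi$. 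Because the local vector fields can be chosen tangent to the nodal strata (as in Theorem \ref{theorem:deg2}), these trivializations restrict to trivializations of $\MM_{\p} \m \p$. Pulling back along the principal orbibundle $Hol_2(\s,(\p,J)) \m \MM_{\p}$ then yields the desired fiber bundle structure on $ev$; the total space is a genuine smooth manifold (not merely an orbifold) since orbifold isotropy in $\MM_{\p}$ comes entirely from reparametrization automorphisms, which are resolved upon remembering the parametrization.

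The main obstacle is the flow step — extending the one-parameter deformation argument of Theorem \ref{theorem:deg2} to the four-parameter family over $\p$ — since one must verify that the charts of Theorem \ref{theorem:zero} split off a $\p$-factor compatibly with the stratification by nodal curves. This compatibility is essentially automatic because the charts of Theorem \ref{theorem:zero} are built to be product-like with respect to the ambient parameter space ($\mathcal J$ in the original formulation, $\p$ here) and the vector fields constructed inside each chart are parallel to the singular loci by construction.
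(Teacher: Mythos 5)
Your proposal is correct and takes essentially the same route as the paper: the paper's proof factors $ev$ as $Hol_2(\s,(\p,J)) \m Hol_2(\s,(\p,J))/Hol_1^*(\s,\s) \m \p$, a principal orbibundle over a proper submersion of orbifolds, with transversality supplied by the automatic-transversality arguments of Proposition \ref{theorem:deg2orbifold} and the fiber-bundle structure obtained by the degree-one argument of \cite{Mi1} and the orbifold flow technique of Theorem \ref{theorem:deg2} --- exactly the factorization, transversality source, and Ehresmann-type flow argument you describe. The details you add (the snake-lemma derivation of submersivity of the evaluation and the partition-of-unity lift of vector fields tangent to the nodal strata) are precisely the steps the paper delegates to \cite{Mi1} and to Section \ref{secdeg2}.
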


\begin{proof}

This proof is exactly the same as the proof of Theorem \ref{theorem:evaluation1} given in \cite{Mi1}, except one needs to use arguments similar to those of  Theorem \ref{flow} and Theorem \ref{theorem:deg2} to deal with the lack of properness and orbifold points. 

Since the group of holomorphic automorphisms of $(\p,J_0)$ acts transitively on $\p$, $ev: \Hol_2(\s, (\p,J_0)) \m \p$ is a smooth fiber bundle map. We will use automatic transversality arguments to show  $ev: \Hol_2(\s, (\p,J)) \m \p$ is fiberwise diffeomorphic to $ev: \Hol_2(\s, (\p,J_0)) \m \p$ and hence is also a smooth bundle map.

Let $\M_0^{(\infty)}(\p,J,2) = \Hol_2(\s,(\p,J))/\Hol^*(\s,\s)$ with the base point condition defining $\Hol^*(\s,\s)$ being $u(\infty)=\infty$ and let $\bar \M_0^{(\infty)}(\p,J,2)$ be its compactification by stable maps. Let $J_t$ be a path of almost complex structures connecting $J_0$ to $J$ and let: $$\mathfrak H=\cup_t \Hol_2(\s,(\p,J_t))$$ $$\mathfrak M= \cup_t \M_0^{(\infty)}(\p,J_t,2) \text{ and}$$ $$\bar {\mathfrak M}=\cup_t \bar \M_0^{(\infty)}(\p,J_t,2).$$ 

The map $\mathfrak H \m \mathfrak M$ is a one parameter family of principle orbibundles with fiber $\Hol_1^*(\s,\s)$. The transversality condition needed to establish this is the same as that needed to show $\Hol_1^*(\s,\s)$ is a manifold, namely that $c_1(T\s)=2>0$. By the same arguments as those in the proof of Theorem \ref{theorem:deg2}, we see that the isomorphism type of the bundle $\Hol_2(\s, (\p,J_t)) \m \M_0^{(\infty)}(\p,J_t,2)$ is independent of $t$. 

By Gromov compactness, the map $\bar{\mathfrak M} \m \p \times \R$ is proper. It is a submersion, by the automatic transversality calculation done in Theorem \ref{theorem:deg2orbifold}. Using the arguments of Theorem \ref{flow}, we conclude that the  isomorphism types of the bundles $\bar \M_0^{(\infty)}(\p,J_t,2) \m \p$ and $ \M_0^{(\infty)}(\p,J_t,2) \m \p$ are independent of $t$. 

The rest of the arguments follow those of the proof of Theorem 2.17 of \cite{Mi1}. We use the above results to show that the fiber diffeomorphism type of $\Hol_2(\s, (\p,J_t)) \m \p$ is independent of $t$ and use this to conclude that $ev:\Hol_2(\s, (\p,J)) \m \p$ is a fiber bundle.

\end{proof}

 \section{Construction of a holomorphic gluing map}\label{secgluing}

In Section \ref{secTCH}, we discussed a scanning map which, in particular, gives a map $s: \int_{\Sigma_g-pt} R_2 \m \Map^*(\Sigma_g, B^2 B(D_2,D_2,R_2)).$ In Section \ref{KY}, we noted that $B^2 B(D_2,D_2,R_2) \simeq \p$. Moreover, we showed that a map $h:R_2 \m  \Omega^2 \p$ satisfying certain conditions induces a map: $$ \int_{\Sigma_g-pt} R_2 \m \Map^*(\Sigma_g,\p)$$ which is a homology equivalence in an explicit range. In Section \ref{secR2J}, we will define a map $h:R_2 \m \Map^*(\s,\p)$ which lands in a neighborhood of $\Hol^*(\s,(\p,J))$. In Section \ref{seccompair} we describe how to interpret $h$ a map of partial $D_2$-algebras up to homotopy $h:R_2 \m \Omega^2 \p$. Given this, it is natural to ask, is there a gluing map: $$g : \int_{\Sigma_g-pt} R_2  \m \Hol^*(\Sigma_g,(\p,J)).$$  This is the goal of this section. We will need some modifications primarily because of two issues. Due to details involving analysis, we will not be able to construct such a gluing map on all of $\int_{\Sigma_g-pt} R_2 $ but only on compact subsets. Due to transversality issues, we will need to use a construction that involves bundles of $D_2$-algebras over a manifold instead of just one fixed $D_2$-algebra.

\subsection{Approximate gluing maps and the implicit function theorem}\label{secapp}

In this subsection, we discuss a gluing construction from \cite{S} which is a generalization of a construction which appears in \cite{MS}. Let $\Sigma \in \bar{ \M}_g$ be a nodal curve. Suppose that $K$ is a compact set and $f: K \m \Hol(\Sigma,(M,J))$. If certain linearized $\dbar$-operators are surjective, Sikorav in \cite{S} described how to construct a map $\hat f: K \m \Hol(\Sigma_g,(M,J))$ which can be thought of as a desingularization of the map $f: K \m \Hol(\Sigma,(M,J))$. This is done by first constructing a map $f_a: K \m \Map(\Sigma_g,M)$ such that $f_a(\kappa)$ is almost a $J$-holomorphic map in an appropriate sense. One proves that $coker D_{f_a(\kappa)}=coker D_{f(\kappa)}$ and $coker \tilde D_{f_a(\kappa)}=coker \tilde D_{f(\kappa)}$. In our situation, the surjectivity of these operators can be checked via automatic transversality arguments. Then one applies an implicit function theorem to correct $f_a$ and build the map $\hat f: K \m \Hol(\Sigma_g,(M,J))$. We start off by stating implicit function theorems for $D_u$ \cite{MS} and $\tilde D_u$ \cite{S}.

We will state a version of the implicit function theorem that not only describes when a map $u$ can be corrected to a $J$-holomorphic map $\hat u$, but also gives an estimate for the distance between $u$ and $\hat u$. Estimates for the distance between $\hat u$ and $u$ depend on bounds for the norm of the partial right inverse of $\tilde D_u$. Extend $\tilde D_u$ to a map $\tilde D_u : W^{1,p}(\Gamma(u^*TM)) \m L^p(\Omega^{0,1}(u^*TM))$. Here $W^{1,p}(\cdot)$ or $L^p(\cdot)$ denotes the completion of the vector space with respect to the indicated norm. In our situation, this requires picking a metric on the surface. Suppose $Q_u: L^p(\Omega^{0,1}(u^*TM)) \m W^{1,p}(\Gamma(u^*TM))$ is a right inverse to $D_u$. A bound on the norm of $Q_u$ measures how surjective $D_u$ is in the following sense. Given two Banach spaces $X$ and $Y$, let $\mathcal L(X,Y)$ denote the Banach space of bounded linear transformation with the operator norm. By a Neumann series argument, one can show that if $A \in\mathcal L(X,Y)$ and $B \in \mathcal L(Y,X)$ with $A \circ B =id_Y$, then if $\lVert C - A \rVert <  \lVert B\rVert$, then $C$ is also surjective. Also, note that if $D_u$ is surjective, then it will have a bounded partial right inverse. In \cite{MS} (Theorem 3.3.4), they prove the following theorem in the case that $Z = \emptyset$. The general case follows by similar arguments.

\begin{theorem}
Fix a finite set $Z \subset \Sigma_g$ and let $p>2$. For every $c_0>0$, there exists $\delta>0$ and $c>0$ (only depending on $c_0$) such that the following hold. Fix a metric on $\Sigma_g$ with volume less than $c_0$. Fix an almost complex structure $j$ on $\Sigma_g$  and let $u: \Sigma_g \m M$ be smooth. Assume that $D^Z_u: W^{1,p}( \Gamma(u^*TM)_Z) \m L^p (\Omega^{0,1}(u^*(TM))$ is surjective and let $Q_u:L^{1,p} (\Omega^{0,1}(u^*(TM)) \m  W^{1,p}(\Gamma(u^*TM)_Z) $ be a right inverse. Assume that $ \lVert Q_u \rVert \leq c_0$, $ \lVert du \rVert_{L^p} \leq c_0$ and $\lVert \dbar^{nl} u \rVert_{L^p} \leq \delta$.  Under these circumstances, there exists $\xi \in \Gamma(u^* TM)_Z$ such that the function $\hat u$ defined by the following formula is $(j,J)$-holomorphic: $$\hat u(z) = exp_u(z)(\xi_z).$$ Also, $ \lVert \xi \rVert_{W^{1,p}} \leq c \lVert \dbar^{nl} u \rVert_{L^p} \leq \delta c_0 $. 
 \label{theorem:implicitquantR}
\end{theorem}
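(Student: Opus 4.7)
The plan is to recast the equation $\dbar^{nl}(\hat u) = 0$ as a Newton--Picard fixed-point problem and apply the Banach contraction principle through $Q_u$. First I would use $J$-commuting parallel transport along the geodesics $t \mapsto \exp_{u(z)}(t\xi_z)$ to build, for each $\xi$, a bundle isomorphism $\Phi_\xi : \Omega^{0,1}(u^*TM) \m \Omega^{0,1}(\exp_u(\xi)^*TM)$. This allows me to define $F : W^{1,p}(\Gamma(u^*TM))_Z \m L^p(\Omega^{0,1}(u^*TM))$ by $F(\xi) = \Phi_\xi^{-1}\,\dbar^{nl}(\exp_u(\xi))$. By the definition of $D_u$ as the linearization of $\dbar^{nl}$ computed via exactly such a connection (as in \cite{MS}), a Taylor expansion at $\xi = 0$ gives $F(\xi) = \dbar^{nl}(u) + D_u\xi + N(\xi)$ with $N(0) = 0$ and $DN(0) = 0$, so the task becomes solving $F(\xi) = 0$ for small $\xi$.

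The core analytic step is the quadratic estimate $\|N(\xi_1) - N(\xi_2)\|_{L^p} \leq C(\|\xi_1\|_{W^{1,p}} + \|\xi_2\|_{W^{1,p}})\,\|\xi_1 - \xi_2\|_{W^{1,p}}$ for $\|\xi_i\|_{W^{1,p}}$ small, with $C$ depending only on $c_0$ and on universal bounds for the injectivity radius and curvature of $M$. Given this, I would rewrite $F(\xi) = 0$ as the fixed-point equation $\xi = T(\xi) := -Q_u(\dbar^{nl}(u) + N(\xi))$ on the closed ball $B_r \subset W^{1,p}(\Gamma(u^*TM))_Z$ of radius $r := 2c_0 \|\dbar^{nl}(u)\|_{L^p}$. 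The bound $\|Q_u\| \leq c_0$ combined with the quadratic estimate shows that for $\delta$ sufficiently small in terms of $c_0$, the map $T$ sends $B_r$ into itself and is a contraction with ratio $\tfrac12$. The Banach fixed-point theorem then produces a unique $\xi \in B_r$ satisfying $\|\xi\|_{W^{1,p}} \leq 2c_0 \|\dbar^{nl}(u)\|_{L^p}$; elliptic regularity for $\dbar$-type operators upgrades $\xi$ from $W^{1,p}$ to $C^\infty$, and by construction $\hat u(z) = \exp_{u(z)}(\xi_z)$ is $(j,J)$-holomorphic and agrees with $u$ on $Z$.

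The main obstacle will be making the quadratic estimate uniform in $u$ and $j$ with constants depending only on $c_0$ and the bounded geometry of $M$. Expanding $\dbar^{nl}(\exp_u(\xi))$ yields nonlinear terms schematically of the form $\nabla\xi \cdot \xi$, $du \cdot \xi^2$, and $R \cdot \xi \cdot \xi$, and each must be arranged so that one factor is estimated in $L^p$ or $W^{1,p}$ while the remaining pointwise factors are absorbed by the $C^0$-norm via Sobolev embedding. Here the hypothesis $p > 2$ is used crucially through $W^{1,p}(\Sigma_g) \hookrightarrow C^0(\Sigma_g)$, whose operator norm is controlled by the volume of $\Sigma_g$ and therefore by $c_0$. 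The hypothesis $\|du\|_{L^p} \leq c_0$, rather than an $L^\infty$-bound, is exactly what allows the bilinear-in-$du$-and-$\xi$ terms to be estimated via H\"older's inequality, which is the delicate point that justifies the precise formulation of the theorem.
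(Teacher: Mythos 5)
Your proposal is correct and is essentially the argument the paper relies on: the paper does not prove this statement itself but quotes it from \cite{MS}, where the proof is exactly the Newton--Picard/contraction scheme you describe (parallel-transport trivialization $\Phi_\xi$, quadratic estimate with constants controlled by $\lVert du \rVert_{L^p}$ and the Sobolev embedding $W^{1,p}\hookrightarrow C^0$ for $p>2$, fixed point via the bounded right inverse $Q_u$, then elliptic regularity). Nothing further is needed.
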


The analogous theorem for the operator $\tilde D_u^Z$ is implicit in Step 4 of the proof of Theorem 1 of \cite{S}.

\begin{theorem}
Fix a finite set $Z \subset \Sigma_g$ and let $p>2$. For every $c_0>0$, there exists $\delta>0$ and $c>0$ (only depending on $c_0$) such that the following hold. Fix a metric on $\Sigma_g$ with volume less than $c_0$. Fix an almost complex structure $j$ on $\Sigma_g$  and let $u: \Sigma_g \m M$ be smooth. Assume that $\tilde D^Z_u: W^{1,p}( \Omega^{0,1}(T^*\Sigma_g)_Z \times \Gamma(u^*TM)_Z) \m L^p (\Omega^{0,1}(u^*(TM))$ is surjective and let $Q_u:L^{1,p} (\Omega^{0,1}(u^*(TM)) \m  W^{1,p}(\Omega^{0,1}(T^*\Sigma_g)_Z \times \Gamma(u^*TM)_Z) $ be a right inverse. Assume that $ \lVert Q_u \rVert \leq c_0$, $ \lVert du \rVert_{L^p} \leq c_0$ and $\lVert \dbar^{nl} u \rVert_{L^p} \leq \delta$.  Under these circumstances, there exists $\xi \in \Gamma(u^* TM)_Z$ and almost complex structure $j'$ on $\Sigma_g$ such that the function $\hat u$ defined by the following formula is $(j',J)$-holomorphic: $$\hat u(z) = exp_u(z)(\xi_z).$$ Also, $ \lVert \xi \rVert_{W^{1,p}} \leq c \lVert \dbar^{nl} u \rVert_{L^p} \leq \delta c_0 $. The almost complex structure $j'= exp_j (\digamma)$ for some $\digamma \in \Omega^{0,1}(T \Sigma_g)$.

 \label{theorem:implicitquant}
\end{theorem}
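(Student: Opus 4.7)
The plan is to follow the Newton-iteration strategy that proves Theorem \ref{theorem:implicitquantR} in \cite{MS}, absorbing the extra freedom in the almost complex structure $j$ on $\Sigma_g$ by enlarging the space of unknowns. The only essentially new ingredient compared to \cite{MS} is that we correct the complex structure on the domain simultaneously with the map; once the problem is set up on a fixed pair of Banach spaces, the Newton step and the quadratic estimate are formally identical.

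Concretely, for $\xi \in W^{1,p}(\Gamma(u^*TM)_Z)$ and $\digamma \in W^{1,p}(\Omega^{0,1}(T\Sigma_g))$ of small norm, I would set $\hat u_\xi(z) = \exp_{u(z)}(\xi_z)$ and $j'_\digamma = \exp_j(\digamma)$. The condition that $\hat u_\xi$ be $(j'_\digamma, J)$-holomorphic is a nonlinear equation whose value $\dbar^{nl}_{j'_\digamma, J} \hat u_\xi$ lives a priori in a $(\xi, \digamma)$-dependent Banach space of $(0,1)$-forms along $\hat u_\xi$. Using a Hermitian connection on $TM$ together with the splitting $\Omega^1(\Sigma_g) = \Omega^{1,0}_j \oplus \Omega^{0,1}_j$ (which remains transverse to the splitting for $j'_\digamma$ whenever $\digamma$ is $C^0$-small, as Sobolev embedding $W^{1,p} \hookrightarrow C^0$ ensures since $p > 2$), I would transport everything back to the fixed space $L^p(\Omega^{0,1}_j(u^*TM))$. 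This yields a $C^1$ map $F$ on a neighborhood of $(0,0)$ satisfying $F(0,0) = \dbar^{nl} u$ and $DF(0,0) = \tilde D_u^Z$, with locally Lipschitz derivative whose Lipschitz constant depends only on $c_0$.

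The rest is the standard Newton iteration. Starting at $(\xi_0, \digamma_0) = (0,0)$, set $(\xi_{n+1}, \digamma_{n+1}) = (\xi_n, \digamma_n) - Q_u F(\xi_n, \digamma_n)$; Neumann series shows that $DF$ is invertible with uniformly bounded right inverse throughout the iteration. The quadratic estimate $\|F(\xi_{n+1}, \digamma_{n+1})\|_{L^p} \le C \|F(\xi_n, \digamma_n)\|_{L^p}^2$ with $C$ depending only on $c_0$ then gives geometric convergence to a fixed point $(\xi, \digamma)$, provided $\|\dbar^{nl} u\|_{L^p} \le \delta$ with $\delta$ small enough (again depending only on $c_0$). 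Telescoping the corrections yields $\|(\xi, \digamma)\|_{W^{1,p}} \le 2 \|Q_u\| \, \|\dbar^{nl} u\|_{L^p} \le c c_0$, from which the stated bound $\|\xi\|_{W^{1,p}} \le c c_0$ follows.

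The main obstacle will be controlling the Lipschitz constant of $DF$ uniformly in terms of $c_0$ alone, rather than in terms of auxiliary norms of $u$ or $j$. Varying $j$ changes the splitting $\Omega^1 = \Omega^{1,0}_j \oplus \Omega^{0,1}_j$ pointwise over $\Sigma_g$, so both the trivialization of the codomain and the explicit formula for $\dbar^{nl}_{j'_\digamma, J}$ depend nonlinearly on $\digamma$, and the coupling between $\xi$ and $\digamma$ is genuinely new compared to \cite{MS}. The bookkeeping heart of the argument is checking that by restricting to $\digamma$ small in $C^0$, all the resulting transition maps have operator norms controlled by universal functions of $c_0$. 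Once this is done, the Newton step, the convergence argument, and the first-order estimate on $\|\xi\|_{W^{1,p}}$ all reduce to the corresponding steps in \cite{MS}.
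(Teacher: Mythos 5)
The paper gives no proof of this statement: it is quoted from Sikorav \cite{S} as the analogue for $\tilde D_u$ of the McDuff--Salamon implicit function theorem (Theorem \ref{theorem:implicitquantR}). Your outline --- enlarging the unknowns to $(\xi,\digamma)$, transporting $\dbar^{nl}_{j',J}\hat u_\xi$ back to the fixed space $L^p(\Omega^{0,1}(u^*TM))$ via parallel transport and the smallness of $\digamma$ in $C^0$, and running the quasi-Newton iteration with the fixed right inverse $Q_u$ together with a quadratic estimate whose constants depend only on $c_0$ --- is essentially the argument carried out in \cite{S} (extending Appendix A of \cite{MS}), so your approach coincides with the proof the paper relies on, with the uniform-Lipschitz bookkeeping you flag being exactly the content supplied there.
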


Next we turn towards constructing gluing maps. Consider the following situation. Let $C_i=(\Sigma_{g_i},j_{i})$ be complex curves for $i=1$ and $2$. Let $z_i \in \Sigma_{g_i}$ and $u_i:C_i \m M$ be $J$-holomorphic maps with $u_1(z_1)=u_2(z_2)$. Let $e_i: \D \m C_i$ be holomorphic embeddings with $e_i(0)=z_i$. Here $\D$ is the open unit disk in $\C$ with the standard complex structure. Fix metrics on each surface $\Sigma_{g_i}$. In \cite{S}, they describe how to construct a smooth complex curve $\hat C$ of genus $g_1+g_2$ with a $J$-holomorphic map $\hat u:\hat C \m M$. For the remainder of this subsection, we will describe their construction. We consider gluing just two curves for simplicity of notation and clarity but the construction generalizes to allow the gluing of an arbitrary finite number of curves in compact families. In future sections we will only be interested in the case where all but one of the curves are rational.

Let $\D_t \subset \C$ be the disk of radius $t$ centered at the origin. We will always assume $t \in (0,1)$. Let $A_t \subset \D^2$ be the subspace of points $(x,y)$ with $xy=t$. The space $A_t$ is topologically an annulus with a natural complex structure and metric induced by $\C^2$. Let $C_t = \left( ( C_1-e_1(\D_{\sqrt t}) )\cup ( C_2-e_2(\D_{\sqrt t}) ) \cup A_t \right)  / \sim$. Here $\sim$ is the relation that $(x,y) \sim e_1(x)$ if $|x|>|y|$ and $(x,y) \sim e_2(y)$ if $|y|>|x|$. Denote the inclusion map $A_t \m C_t$ by $\psi$. The space $C_t$ is a surface of genus $g_1+g_2$ and has a natural complex structure coming from the $C_i$ and $A_t$.

To construct a $J$-holomorphic map $u_t:C_t \m M$, we first construct an approximate $J$-holomorphic map $u^t_a:C_t \m M$ such that $lim_{t \m 0} \lVert \dbar u_a^t \rVert_{L^p} =0$. The function $u^t_a$ will agree with $u_i$ on the image of $C_i-e_i(\D_1)$ and we will use cutoff functions to define the function on the rest of the surface. Fix $p : [0,\infty) \m [0,1]$ a smooth function with $p(s)=0$ for $s \leq 1$ and $p(s) = 1$ for $s \geq 2$. Let $C$ be the nodal curve which is the wedge of $C_1$ and $C_2$ at the points $z_1=e_1(0)$ and $z_2=e_2(0)$ and let $u:C \m M$ be the map induced by the $u_i$'s. Let $P_t:C_t \m C$ be defined as follows. For points outside of $e_i(\D)$, define $P_t$ by the map $C_t- \bigcup e_i(\D) \m C_1 \cup C_2 \m C$. For $(x,y) \in A_t$, define $P_t(\psi(x,y))$ by the formula: $$P_t(\psi(x,y))=
\begin{cases}
e_1(p(t^{-1/4}|x|)x)  & \mbox{if } |x| \geq |y| \\
e_2(p(t^{-1/4}|y|)y)  & \mbox{if } |y| \geq |x|. \\
\end{cases}
$$ Define the map $u^t_a:C_t \m M$ by the formula $u^t_a =u \circ P_t$. The map $u^t_a$ is not $J$-holomorphic, but in Section 3.1 of \cite{S} they proved the following.

\begin{theorem}

Fix metrics on the curves $C_1$ and $C_2$ and consider $C_t$ to have these metric outside of the images of $e_i$ and on $A_t$ we use the metric induced by $\C^2$. For any maps $u_i:C_i \m M$ with $u_1(z_1)=u_2(z_2)$, we have $\lVert  \dbar^{nl}  u_a^t \rVert_{L^p} = \lVert  \dbar^{nl} u_1 \rVert_{L^p}+ \lVert  \dbar^{nl} u_2 \rVert_{L^p}+ O(t^{\frac{1}{2p}})$. Moreover, this estimate is uniform in families whose first derivate is uniformly point-wise bounded within the images of the embeddings. 

\label{theorem:smalldbar}
\end{theorem}

In particular, if $u_1$ and $u_2$ are $J$-holomorphic, then $\lVert \dbar^{nl} u_a^t \rVert_{L^p}$ can be made arbitrarily small. This is the sense in which $u^t_a$ is approximately $J$-holomorphic.

\begin{remark}
\label{iteratesmalldbar} 

Later in this section we will want to iterate this gluing construction. To apply the above theorem, we need to observe that in fact the first derivative of $u^t_a$ is bounded as $t$ goes to zero. The function $u_a^t$ is equal to one of the $u_i$'s or constant except on the annulus $t^{1/4} \leq |x| \leq 2t^{1/4}$ and on the annulus with $x$ replaced with $y$. On this annulus, $Du_a^t = O(t^{-1/4}|x|)=O(1)$. This bound depends on the cutoff function $p$ and on the $u_i$ but not on $t$.

\end{remark}

In order to apply the implicit function theorem, Theorem \ref{theorem:implicitquantR} or Theorem \ref{theorem:implicitquant}, we also need a way to compute $coker D_{u_a}$ or $coker \tilde D_{u_a}$. In \cite{S}, they proved the following theorem.

\begin{theorem}
For any finite subset $Z \subset C_t -A_t$, there is an isomorphism between the cokernels of $\tilde D^Z_{u_a}$ and $\tilde D^Z_{u}$ for sufficiently small $t$.
\label{theorem:aprxcoker}
\end{theorem}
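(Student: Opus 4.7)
The plan is to adapt the standard linear gluing analysis of McDuff--Salamon (Appendix A of \cite{MS}) and Sikorav's extension to $\tilde D$ in \cite{S}. The central observation is that away from the gluing annulus $A_t$, the smooth curve $(C_t, j_t)$ together with $u_a^t$ is canonically identified with $(C, j)$ together with $u$, so sections and $(0,1)$-forms on either side can be compared via cutoff-based linear maps that approximately intertwine the two $\tilde D^Z$-operators. Since $Z \subset C_t - A_t$, the vanishing conditions at $Z$ transfer canonically under these comparisons.

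First, I would fix a $t$-dependent cutoff $\beta_t$ on $C_t$ equal to $1$ away from $A_t$ and to $0$ inside a smaller subannulus, with its derivative controlled in the weighted norms adapted to the plumbing coordinates $xy=t$. Using $\beta_t$ I would build bounded linear pregluing maps
\[
\Phi_t : \Omega^{0,1}(T^*C)_{\mathrm{nod}} \times W^{1,p}(u^*TM) \longrightarrow \Omega^{0,1}(T^*C_t) \times W^{1,p}((u_a^t)^*TM)
\]
and
\[
\Psi_t : L^p(\Omega^{0,1}(u^*TM)) \longrightarrow L^p(\Omega^{0,1}((u_a^t)^*TM))
\]
with uniformly bounded norms and uniformly bounded left inverses. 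Here the space $\Omega^{0,1}(T^*C)_{\mathrm{nod}}$ is enlarged by the one-dimensional nodal smoothing direction, which $\Phi_t$ realizes as an element supported in $A_t$ via the standard plumbing formula. This is the new ingredient compared to the pregluing for $D_u$ alone, and is necessary because the deformation of complex structure on the domain enters $\tilde D$.

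Second, I would show that the diagram
\[
\tilde D^Z_{u_a^t} \circ \Phi_t \;=\; \Psi_t \circ \tilde D^Z_u \;+\; E_t
\]
commutes up to an error $E_t$ whose operator norm satisfies $\|E_t\| \to 0$ as $t \to 0$. The support of $E_t$ is concentrated in $A_t$, and its norm is controlled by the same type of estimate as in Theorem \ref{theorem:smalldbar} combined with standard Sobolev inequalities on the shrinking annulus. Both operators are Fredholm with equal index by Riemann--Roch for nodal curves (and Proposition \ref{theorem:1} ensures their cokernels agree with those of the genuinely Fredholm normal operators $\n^Z$). I would then choose a finite-dimensional complement $K \subset L^p(\Omega^{0,1}(u^*TM))$ with $K \oplus \operatorname{Im} \tilde D^Z_u = L^p(\Omega^{0,1}(u^*TM))$, so $K \cong \operatorname{coker} \tilde D^Z_u$, and set $K_t = \Psi_t(K)$. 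Using the approximate intertwining together with the index equality, a standard Neumann-series perturbation argument shows that for sufficiently small $t$, $K_t$ is a direct-sum complement to $\operatorname{Im} \tilde D^Z_{u_a^t}$, so $\Psi_t$ induces the desired isomorphism $\operatorname{coker} \tilde D^Z_u \cong \operatorname{coker} \tilde D^Z_{u_a^t}$.

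The hard part will be the analytic bookkeeping in the gluing annulus: choosing the cutoffs with the right conformal weights so that $\Phi_t$ and $\Psi_t$ stay uniformly bounded with uniformly bounded left inverses even as $A_t$ shrinks, and incorporating the nodal smoothing direction correctly via the plumbing construction so that the approximate intertwining truly has vanishing error in the limit. Once those uniform estimates are in place, the rest of the argument is formal Fredholm/perturbation theory.
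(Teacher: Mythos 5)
Your overall strategy is the standard one, and it is worth noting that the paper itself gives no proof of this statement: it quotes it from Sikorav \cite{S}, whose argument (like Appendix A of \cite{MS}) is built from exactly the ingredients you list --- cutoff pregluing maps, an approximate intertwining relation with error controlled as in Theorem \ref{theorem:smalldbar}, and uniform bounds on the shrinking neck. So the architecture of your proposal is the right one.

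There is, however, a genuine gap in the step where you conclude that $K_t=\Psi_t(K)$ is a direct-sum complement of $\operatorname{Im}\tilde D^Z_{u_a^t}$. You justify it by ``both operators are Fredholm with equal index by Riemann--Roch,'' but $\tilde D^Z_u$ and $\tilde D^Z_{u_a^t}$ are \emph{not} Fredholm: their domains contain all of $\Omega^{0,1}(T\Sigma)$, so their kernels are infinite dimensional (the paper states this explicitly just before Proposition \ref{theorem:1}). Your parenthetical fallback to the Fredholm operators $\n^Z$ does not repair this, because the normal operator of Ivashkovich--Shevchishin is defined only for non-constant $J$-holomorphic maps, and $u_a^t$ is merely approximately holomorphic, so there is no $\n^Z_{u_a^t}$ whose index you can compare. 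Without an index count, the Neumann-series argument only yields $\operatorname{Im}\tilde D^Z_{u_a^t}+K_t=L^p$ (i.e.\ $\dim\operatorname{coker}\tilde D^Z_{u_a^t}\leq\dim\operatorname{coker}\tilde D^Z_u$); the reverse inequality, equivalently $\operatorname{Im}\tilde D^Z_{u_a^t}\cap K_t=0$, needs a separate mechanism, for instance a comparison map in the opposite direction obtained by restricting forms on $C_t$ to $C_t-A_t$ together with an estimate showing that cokernel representatives cannot concentrate in the neck. Relatedly, even the surjectivity of your augmented operator requires handling $(0,1)$-forms supported in $A_t$, which lie outside the image of $\Psi_t$; this is part of the annulus analysis you defer, but it should be flagged as essential rather than bookkeeping. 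As written, then, the proposal proves at best ``$\tilde D^Z_u$ onto $\Rightarrow$ $\tilde D^Z_{u_a^t}$ onto'' (which is all the paper actually uses), not the stated isomorphism of cokernels.
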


When $\dim_{\R} M =4$, we can often use automatic transversality techniques (Theorem \ref{theorem:auto}) to prove that $\tilde D_u$ is onto. This then implies that $\tilde D_{u_a^t}$ is onto and thus, by Theorem \ref{theorem:implicitquant}, the map $u^t_a$ can be corrected to produce an actual $J$-holomorphic map. A similar theorem is proved in \cite{MS} relating $D_u$ and $D_{u^t_a}$. If $\tilde D_u$ is surjective, then in \cite{S}, they construct a uniform bound on the norms of partial right inverses of $\tilde D_{u_t}$.

\begin{theorem}
If $\tilde D_u$ is onto, then for sufficiently small $t$, there exists a number $c_0$ such that $\tilde D_{u_t}$ has a right inverse with norm less than $c_0$.
\label{theorem:uniformrightinverse}
\end{theorem}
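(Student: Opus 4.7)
The plan is to build the bounded right inverse in two steps: first construct a uniformly bounded right inverse $Q_a^t$ of $\tilde D_{u_a^t}$ on the approximately glued map, and then transfer this right inverse to $\tilde D_{u_t}$ using the $W^{1,p}$-closeness of $u_t$ to $u_a^t$ provided by Theorem \ref{theorem:implicitquant}. Both transitions are Neumann-series perturbation arguments once the functional-analytic setup is in place, so the real work is in arranging all constants to be uniform as $t \to 0$.

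For the first step, fix a right inverse $Q_u$ of $\tilde D_u$ with $\|Q_u\| \le c_0$, which exists by the hypothesis of surjectivity. Given $\eta \in L^p(\Omega^{0,1}(u_a^{t*}TM))$, cut $\eta$ using a smooth partition of unity subordinate to the decomposition $C_t = (C_1 - e_1(\D_{t^{1/4}})) \cup A_{t^{1/2}} \cup (C_2 - e_2(\D_{t^{1/4}}))$; each piece can be regarded as a section over one irreducible component of the nodal curve $C$, to which $Q_u$ applies. Reassembling the images by a second cutoff defines an approximate inverse $P^t : L^p \to W^{1,p}$ whose operator norm is bounded independently of $t$. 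The error $E^t := \tilde D_{u_a^t} P^t - \operatorname{id}$ is supported in the annular region $A_{t^{1/2}} \setminus A_{t^{1/4}}$, and the estimates underlying Theorem \ref{theorem:aprxcoker}, together with the explicit metric construction on $C_t$ from \cite{S}, give $\|E^t\|_{L^p \to L^p} \to 0$ as $t \to 0$. The intermediate scale $t^{1/4}$ is precisely what forces this decay, since the $L^p$ norm of the cutoff derivative is controlled by the annular width. For $t$ sufficiently small, $Q_a^t := P^t (\operatorname{id} + E^t)^{-1}$ is then an honest right inverse of $\tilde D_{u_a^t}$ with $\|Q_a^t\| \le 2\|P^t\|$.

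For the second step, use parallel transport along the geodesics $s \mapsto \exp_{u_a^t(z)}(s \xi_z)$ to identify $\Gamma(u_t^* TM)$ with $\Gamma(u_a^{t*} TM)$, where $\xi$ is the section produced by Theorem \ref{theorem:implicitquant}. Since $\|\xi\|_{W^{1,p}} = O(\|\dbar^{nl} u_a^t\|_{L^p}) = O(t^{1/(2p)})$ by Theorems \ref{theorem:implicitquant} and \ref{theorem:smalldbar}, this identification is bi-Lipschitz with constants tending to $1$, and under it the conjugate of $\tilde D_{u_t}$ differs from $\tilde D_{u_a^t}$ by a bounded operator of norm $O(\|\xi\|_{W^{1,p}})$, which goes to zero. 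A second Neumann series turns $Q_a^t$ into a right inverse of $\tilde D_{u_t}$ whose norm is bounded by, say, $4\|P^t\|$ for all sufficiently small $t$, which is the required uniform $c_0$. The main obstacle is establishing the uniformity across $t$ of the cutoff estimates and of the Sobolev embedding constants for the $t$-dependent family of metrics on $C_t$ from \cite{S}; once these are in place, the two Neumann-series steps close the proof.
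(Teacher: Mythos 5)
The paper does not actually prove this statement: it is quoted from Sikorav \cite{S} (the analogue for $D_u$ is Appendix A of \cite{MS}), so there is no in-paper argument to compare with. Your two-step architecture --- a uniformly bounded approximate right inverse for $\tilde D_{u_a^t}$ built by cut-and-paste from $Q_u$, corrected by a Neumann series, then transferred to $\tilde D_{u_t}$ by parallel transport using $\lVert \xi \rVert_{W^{1,p}} \to 0$ --- is indeed the architecture of the proofs in those references, and your second step is essentially the same perturbation argument the paper itself runs in Corollary \ref{corollary:surjectiveopen}.

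There is, however, a genuine gap in your first step, at exactly the point where the real work lies. You justify $\lVert E^t \rVert_{L^p \to L^p} \to 0$ by saying the $L^p$ norm of the cutoff derivative is controlled by the annular width. For $p>2$ this is false: for a cutoff $\beta$ interpolating over the neck annulus with radii between $t^{1/2}$ and $t^{1/4}$, a linear profile gives $|d\beta| \sim t^{-1/4}$ on a region of area $\sim t^{1/2}$, hence $\lVert d\beta \rVert_{L^p} \sim t^{(2-p)/(4p)} \to \infty$, and a logarithmic profile still diverges. So the naive ``cut $\eta$, apply $Q_u$, reassemble with a second cutoff'' scheme produces an error containing a term $d\beta \otimes Q_u\eta$ whose operator norm is not even uniformly bounded, let alone small. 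What makes the standard argument work is the step you omit: the two components of $Q_u\eta$ agree at the node, and one glues them by interpolating through this common nodal value, so the error only involves $d\beta \otimes \bigl(\xi_i - \xi_i(\mathrm{node})\bigr)$; the Morrey embedding $W^{1,p} \hookrightarrow C^{0,\,1-2/p}$ gives $|\xi_i(z) - \xi_i(\mathrm{node})| \lesssim |z|^{1-2/p} \lVert \xi_i \rVert_{W^{1,p}}$, and with a logarithmically chosen cutoff this yields $\lVert E^t \rVert \lesssim (\log (1/t))^{(1-p)/p} \to 0$, and likewise the uniform bound on $\lVert P^t \rVert$. Two further points need attention: the error is not supported only where the cutoffs vary, since on the neck $\tilde D_{u_a^t}$ is compared with the operators of the holomorphic maps $u_i$ while $u_a^t$ itself is only approximately holomorphic (this is where Theorem \ref{theorem:smalldbar}-type estimates enter), and the patching must also be carried out on the $\Omega^{0,1}(T\Sigma)$-summand of the domain of $\tilde D$, since the complex structure of the glued curve is part of the data. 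Your second step is fine in outline, modulo the uniformity of constants over the degenerating metrics, which you already flag.
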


 Combining Theorems \ref{theorem:implicitquant}, \ref{theorem:smalldbar}, \ref{theorem:aprxcoker} and \ref{theorem:uniformrightinverse}, we get the following theorem.

\begin{theorem}
If $C$ is a nodal curve, $u:C \m M$ is $J$-holomorphic and $\tilde D_u$ is onto, then for sufficiently small $t$, there exists an almost complex structure $j'$ on $C_t$ and section $\xi \in \Gamma(u_a^{t*} TM)$ such that $u_t : C_t \m M$ defined by $u_t(z) = exp_{u_a^t(z)}(\xi_z)$ is $(j',J)$-holomorphic.
\label{theorem:gluing}
\end{theorem}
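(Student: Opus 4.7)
The plan is to reduce the statement to a direct application of the quantitative implicit function theorem for $\tilde D_u$ (Theorem \ref{theorem:implicitquant}), using the approximate map $u_a^t$ as the input. The three ingredients I need in place before invoking that theorem are: a small $L^p$ bound on $\dbar^{nl} u_a^t$, surjectivity of $\tilde D^Z_{u_a^t}$ together with a uniform bound on a right inverse, and a uniform bound on $\lVert du_a^t\rVert_{L^p}$. Once all three hold with a single constant $c_0$, I can apply Theorem \ref{theorem:implicitquant} at small enough $t$ to obtain the desired $\xi$ and $j'$.

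First I would fix the metrics and a $p>2$, and equip $C_t$ with the metric and complex structure constructed in \cite{S}; these satisfy a uniform bound on the volumes of $C_t$ independent of $t$, so the constant $c_0$ in Theorem \ref{theorem:implicitquant} can be taken uniform in $t$. Since $u$ is already $J$-holomorphic, $\lVert \dbar u_i \rVert_{L^p} = 0$, so Theorem \ref{theorem:smalldbar} gives $\lVert \dbar u_a^t \rVert_{L^p} = O(t^{1/(2p)})$, which tends to zero as $t \to 0$. The bound $\lVert du_a^t \rVert_{L^p} \le c_0$ follows because away from the gluing annulus $u_a^t$ coincides with $u$, and on the annulus $A_t$ the factor $p(t^{-1/4}|x|)$ in the definition of $P_t$ is arranged precisely so that the derivative contributions stay controlled as $t\to 0$ (this is where one uses the $L^p$ estimates on cutoff functions worked out in \cite{S}).

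The main obstacle, and the step that deserves the most care, is the construction of a uniformly bounded right inverse $Q_{u_a^t}$ of $\tilde D^Z_{u_a^t}$. Surjectivity itself is immediate from Theorem \ref{theorem:aprxcoker}, which identifies $\operatorname{coker} \tilde D^Z_{u_a^t}$ with $\operatorname{coker} \tilde D^Z_u$; since the latter vanishes by hypothesis, so does the former for small enough $t$. To get a bound on the norm of a right inverse that is uniform in $t$, I would follow the standard approach used in \cite{MS} and \cite{S}: pick a fixed right inverse $Q_u$ of $\tilde D^Z_u$ on the nodal curve, transport it via the gluing construction (using the projection $P_t$ and a pre-gluing of sections supported away from the annulus $A_t$) to an approximate right inverse $\tilde Q_t$ of $\tilde D^Z_{u_a^t}$, and then show by a Neumann-series argument that $\tilde D^Z_{u_a^t}\circ \tilde Q_t \to \mathrm{id}$ in the operator norm as $t\to 0$. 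Inverting this correction yields a genuine right inverse $Q_{u_a^t}$ whose operator norm is bounded by $2\lVert Q_u \rVert$, uniformly in $t$.

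With those three uniform bounds in hand (taking the larger of the constants to serve as $c_0$), I pick $\delta$ and $c$ as in Theorem \ref{theorem:implicitquant}, then shrink $t$ once more so that $\lVert \dbar^{nl} u_a^t \rVert_{L^p} < \delta$. The implicit function theorem then produces $\xi \in \Gamma((u_a^t)^*TM)_Z$ and a perturbed almost complex structure $j' = \exp_j(\digamma)$ on $C_t$ such that $u_t(z) = \exp_{u_a^t(z)}(\xi_z)$ is $(j',J)$-holomorphic, which is precisely the conclusion of the theorem. The $W^{1,p}$-bound on $\xi$ provided by the same theorem will be useful later when one needs to show that $u_t$ varies continuously in compact families of inputs, but it is not needed for the existence statement itself.
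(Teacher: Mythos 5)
Your proposal is correct and follows essentially the same route as the paper, which obtains this statement exactly by combining Theorem \ref{theorem:smalldbar}, Theorem \ref{theorem:aprxcoker} and the quantitative implicit function theorem (Theorem \ref{theorem:implicitquant}), deferring the analytic details to Sikorav \cite{S}. Your explicit treatment of the uniformly bounded right inverse of $\tilde D^Z_{u_a^t}$ (pre-gluing a fixed right inverse of $\tilde D^Z_u$ and correcting it by a Neumann series) supplies precisely the step the paper leaves to \cite{S}, and it is the standard argument given there and in \cite{MS}.
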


In \cite{S}, Sikorav proved this theorem and generalized this theorem to compact families of maps and embeddings as well as to the case of the intersection of more than two curves. If $Z \subset C$ is a finite set and $\tilde D^Z_u$ is onto, we additionally require that the section $\xi$ be in $\Gamma(u_a^{t*} TM)_Z$ and thus $u_t(z)=u(z)=u_a(z)$ for all $z \in Z$.

Combining Theorems \ref{theorem:implicitquant}, \ref{theorem:aprxcoker} and \ref{theorem:uniformrightinverse}, we get the following corollary.

\begin{corollary}
If $u$ is a $J$-holomorphic map from a nodal curve and $\tilde D_u$ is onto, then there is a $t_0>0$ such that for all $t<t_0$ and for all $s \leq 1$, $\tilde D_{ exp_{u_a^t}(s\xi)}$ is onto. In particular, $\tilde D_{u_t}$ is onto.

\label{corollary:surjectiveopen}
\end{corollary}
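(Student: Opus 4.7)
The plan is a standard Neumann-series perturbation argument, using the family of intermediate maps $v_s := \exp_{u_a^t}(s\xi)$ for $s \in [0,1]$, which interpolates between $u_a^t$ at $s=0$ and $u_t$ at $s=1$. The strategy is: show surjectivity holds at $s=0$ with a uniformly bounded right inverse, show the whole family stays close to $v_0$ (because $\xi$ is small), and then conclude surjectivity persists for all $s \in [0,1]$.

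First I would handle surjectivity at $s=0$. Since $\tilde D_u$ is onto by hypothesis, Theorem \ref{theorem:aprxcoker} gives that $\tilde D_{u_a^t}$ is also onto for all sufficiently small $t$. More importantly, an argument parallel to the one Sikorav uses to prove Theorem \ref{theorem:uniformrightinverse} produces a constant $c_0$, independent of $t$ for small $t$, such that $\tilde D_{u_a^t}$ admits a right inverse $Q^t$ with $\lVert Q^t \rVert \leq c_0$. This is the analogue of Theorem \ref{theorem:uniformrightinverse} applied to the approximately holomorphic map $u_a^t$ rather than to $u_t$, and uses the same neck-region analysis.

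Next I would control the size of the correction. Since $u$ is $J$-holomorphic, Theorem \ref{theorem:smalldbar} gives $\lVert \bar\partial^{nl} u_a^t \rVert_{L^p} = O(t^{1/(2p)})$, and then the quantitative estimate in Theorem \ref{theorem:implicitquant} yields $\lVert \xi \rVert_{W^{1,p}} = O(t^{1/(2p)})$, which tends to zero as $t \to 0$. In particular, for $s \in [0,1]$, the maps $v_s$ satisfy $\lVert v_s - u_a^t \rVert_{W^{1,p}} = O(t^{1/(2p)})$ uniformly in $s$; by the Sobolev embedding $W^{1,p} \hookrightarrow C^0$ (valid since $p>2$), they are also uniformly $C^0$-close to $u_a^t$, so the exponential map is well-defined on them.

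Then I would use continuity of the linearized operator: the map $v \mapsto \tilde D_v$ is continuous in operator norm when $v$ varies in $W^{1,p}$, because $\bar\partial^{nl}$ and the projection $\pi_v$ of Section \ref{secauto} depend smoothly on $v$ in the Hermitian-connection trivialization. Hence there is a constant $K$ with $\lVert \tilde D_{v_s} - \tilde D_{u_a^t} \rVert \leq K \lVert s\xi \rVert_{W^{1,p}} \leq K \lVert \xi \rVert_{W^{1,p}}$ for all $s \in [0,1]$. Combined with the previous step, this operator-norm difference is $O(t^{1/(2p)})$, uniformly in $s$.

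Finally, the standard Neumann-series criterion concludes the proof: choose $t_0$ so small that for all $t<t_0$, $\lVert \tilde D_{v_s} - \tilde D_{u_a^t} \rVert < 1/c_0$ for every $s \in [0,1]$. Then $\tilde D_{v_s} \circ Q^t = \mathrm{id} + E_s$ with $\lVert E_s \rVert < 1$, so $\mathrm{id} + E_s$ is invertible and $Q^t (\mathrm{id}+E_s)^{-1}$ is a right inverse of $\tilde D_{v_s}$ of norm at most $2c_0$. In particular, $\tilde D_{u_t} = \tilde D_{v_1}$ is surjective.

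The main obstacle is Step~1, namely producing a uniform bound $c_0$ on the right inverse of $\tilde D_{u_a^t}$. The surjectivity itself is free from Theorem \ref{theorem:aprxcoker}, but uniformity of the right inverse as $t \to 0$ requires an $L^p$ analysis on the degenerating neck $A_t$; the cokernel isomorphism must be realized by explicit approximate-inverse operators whose norms do not blow up. This is precisely the analytic content of Sikorav's proof of Theorem \ref{theorem:uniformrightinverse} in \cite{S}, and I would adapt it essentially verbatim from $u_t$ to $u_a^t$. Everything else in the argument is formal functional analysis.
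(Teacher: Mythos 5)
Your proposal is correct and follows essentially the same route as the paper's proof: surjectivity and a uniform right-inverse bound for $\tilde D_{u_a^t}$, smallness of $\xi$ from Theorem \ref{theorem:implicitquant}, continuity of $v \mapsto \tilde D_v$ via parallel transport, and a Neumann-series perturbation to handle all $s \in [0,1]$. The only difference is bookkeeping: you source the uniform bound on the right inverse of $\tilde D_{u_a^t}$ from an adaptation of Theorem \ref{theorem:uniformrightinverse}, while the paper cites Theorem \ref{theorem:aprxcoker} for it; your reading is arguably the more careful one, but the argument is the same.
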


\begin{proof}
By Theorem \ref{theorem:aprxcoker}, there is a $t_0>0$ and $R>0$ such that for all $t<t_0$, the operator $\tilde D_{u^t_a}$ is onto and $\tilde D_{u_t}$ has a right inverse of norm less than $R$. Here the norm is the operator norm with the domain of $\tilde D_{u^t_a}$ completed with respect to the $W^{1,p}$ norm and the range completed with respect to the $L^p$ norm and $p>2$. Using parallel transport, for $v$ near $u^t_a$, we can view $v$ and $u^t_a$ as linear transformations between the same Banach spaces \cite{MS}. The operator $\tilde D_v$ depends continuously on $v$ with respect to the $W^{1,p}$ topology. Thus, for $v$ sufficiently close to $u_t$, $\tilde D_v$ is onto. By Theorem \ref{theorem:implicitquant}, by choosing a possibly smaller $t_0$, we can make $\xi$ small enough so that $\lVert \tilde D_{u_a^t} - \tilde D_{ exp_{u_a^t}(s\xi)} \rVert < 1/R$ for any $t<t_0$ and $0 \leq s \leq 1$. Thus $\tilde D_{ exp_{u_a^t}(s\xi)}$ is also onto since $\tilde D_{u_a^t}$ has a right inverse with norm less than $R$.
 
\end{proof}

All of these theorems above are generalizations of theorems from \cite{MS}. To get the original theorem of \cite{MS}, replace the assumption that $\tilde D_u$ is onto with $D_u$. In this situation, one can use the implicit function theorem for $D_u$ which does not involve perturbing the almost complex structure. For example, Theorem \ref{theorem:gluing} is a reformulation of the following theorem from \cite{MS}.

\begin{theorem}
Let $C$ be a complex nodal curve and let $C_t$ be the associated smooth complex curve with complex structure. If $D^Z_u$ is onto for a finite set $Z \subset C$ disjoint from the nodes, then for sufficiently small $t$, there exists a section $\xi \in \Gamma(u_a^{t*} TM)_Z$ such that $u_t : C_t \m M$ defined by $u_t(z) = exp_{u_a^t(z)}(\xi_z)$ is $J$-holomorphic.
\label{theorem:gluingMS}
\end{theorem}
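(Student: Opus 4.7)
The plan is to mimic the proof strategy of Theorem \ref{theorem:gluing}, substituting the implicit function theorem for $D_u$ (Theorem \ref{theorem:implicitquantR}) for the one for $\tilde D_u$ (Theorem \ref{theorem:implicitquant}). The crucial simplification is that when $C$ is rational, each smoothed curve $C_t$ is also a rational smooth curve, and $\s$ admits a unique complex structure up to biholomorphism. So I may fix a complex structure on $C_t$ once and for all and need not vary it on the domain; this is why the weaker surjectivity hypothesis on $D^Z_u$, as opposed to $\tilde D^Z_u$, is enough.

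First I would construct the approximate map $u_a^t : C_t \m M$ exactly as in the paragraph preceding Theorem \ref{theorem:smalldbar}, using the cutoff function $p$ and the projection $P_t : C_t \m C$. Since $u$ is $J$-holomorphic on each irreducible component, Theorem \ref{theorem:smalldbar} gives $\lVert \dbar^{nl} u_a^t \rVert_{L^p} = O(t^{1/(2p)})$, so this quantity can be made smaller than any prescribed $\delta$ by shrinking $t$. The explicit cutoff formula also gives a uniform bound on $\lVert d u_a^t \rVert_{L^p}$ independent of $t$, since on the thick parts $C_i - e_i(\D_1)$ the derivative equals $du_i$, and on the neck $A_t$ the derivative of the cutoff only contributes a bounded amount (the scaling $t^{-1/4}$ in the cutoff is tuned precisely to make this work in $L^p$).

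Next I need a uniformly bounded right inverse $Q_{u_a^t}$ of $D^Z_{u_a^t}$. The rational analog of Theorem \ref{theorem:aprxcoker}, proved in Appendix A of \cite{MS}, identifies $coker\, D^Z_{u_a^t}$ with $coker\, D^Z_u$ for small $t$; by hypothesis the latter vanishes, so $D^Z_{u_a^t}$ is surjective. A pregluing/Neumann-series argument (again as in \cite{MS}) constructs $Q_{u_a^t}$ by patching together right inverses of the operators $D^Z_{u_i}$ on the irreducible components using the cutoffs, and produces a bound $\lVert Q_{u_a^t} \rVert \leq c_0$ with $c_0$ independent of $t$ for $t$ sufficiently small.

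With these three uniform bounds in hand, Theorem \ref{theorem:implicitquantR} applies to produce $\xi \in \Gamma(u_a^{t*} TM)_Z$ such that $u_t(z) = exp_{u_a^t(z)}(\xi_z)$ is $J$-holomorphic, and the constraint $\xi|_Z = 0$ built into the implicit function theorem ensures $u_t(z) = u(z)$ for $z \in Z$. The main obstacle is the uniformity of the bound on $\lVert Q_{u_a^t} \rVert$: while surjectivity itself is free from the cokernel identification, obtaining an $L^p$ bound that does not blow up as the neck pinches ($t \to 0$) requires careful patching of the component-wise right inverses together with estimates controlling the error introduced by the cutoffs, following the strategy of Appendix A of \cite{MS}.
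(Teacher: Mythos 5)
Your outline is correct and matches the route the paper intends: the paper does not prove this statement itself but presents it as the rational-curve theorem from \cite{MS}, obtained exactly as you describe by combining the approximate gluing estimate (Theorem \ref{theorem:smalldbar}), the $D_u$-versions of the cokernel identification and uniform right-inverse bound from Appendix A of \cite{MS}, and the implicit function theorem for $D_u$ (Theorem \ref{theorem:implicitquantR}), with no perturbation of the domain complex structure needed. Your identification of the uniform bound on $\lVert Q_{u_a^t}\rVert$ as the substantive analytic point is also where the real work lies in \cite{MS}.
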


\subsection{A genus zero approximate gluing map}\label{secR2J}

The goal of this subsection is to construct a map  $h : R_2 \m \Map^*(\s,\p)  \times Met^{pc}(\s)$. Here $Met^{pc}(S)$ denotes the space (topologized with the trivial topology) of piecewise continuous (continuous off the image of a finite number of smoothly embedded arcs) metrics on a surface $S$.  Let $h^m$ denote the projection of $h$ onto $Met^{pc}(\s)$ and let $h^f$ be the projection of $h$ onto $\Map^*(\s,\p)$. The desired property of the map $h$ is that for all $\kappa \in R_2$, $h^f(\kappa)$ will be approximately $J$-holomorphic so that in Section \ref{correct} one can correct this and build a map that lands in the space of $J$-holomorpic maps to $\p$. We measure the failure of $h^f(\kappa)$ to be holomorphic using the metric $h^m(\kappa)$. Recall that the symbol $\s$ denotes the sphere with the standard complex structure $j_0$. Using an identification of $\Map^*(\s,\p)$ with $\Omega^2 \p$, in Section \ref{seccompair} we will see that $h^f$ is a map of partial $D_2$-algebras up to homotopy so Corollary \ref{KYpackage} will apply to $h^f$.

\paragraph{Modified definition of $R_2$:} Recall that $R_2$ was defined in Definition \ref{defR2}. We describe some modifications to $R_2$ which do not change the homotopy type but facilitate the construction of gluing maps. First, we will pick a subset $D'_2(2) \subset D_2(2)$ such that:

-$D'_2(2)$ is $\Sigma_2$-invariant

-the inclusion $D'_2(2) \m D_2(2)$ is an equivariant homotopy equivalence

-$D'_2(2)$ is compact.

\noindent  For concreteness, we take  $D'_2(2)$ to be the family of pairs of disks of radius $1/5$ with centers a distance of $1/2$ away from the origin and distance of $1$ away from each other. From now on, we slightly modify the definition of $R_2$ replacing $D_2(2)$ with $D_2'(2)$ both in the definition of the space and in the definition of the space of compositions. The compactness of $D_2'(2)$ will be relevant for analysis later. Since the inclusion $D_2'(2) \m D_2(2)$ is a homotopy equivalence, all of the results of Section \ref{KY} still apply.

\paragraph{Definition of $h$ on $\{r_0,r_1\}$:}

We send $r_0$ to the constant map with value the base point and round metric $\gamma_0$ on $\s$. Once and for all, fix an element $q_1 \in \Hol_1^*(\s,(\p,J)) \subset \Map^*_1(\s,\p)$ and let $h(r_1)=(q_1,\gamma_0)$.

\vspace{12pt}

View the cone $Cone(D_2'(2)/\Sigma_2)$ as the quotient of $D_2'(2)/\Sigma_2 \times [0,1]$ where we identify all points of the form $(\kappa,1)$. To define $h$ on the $Cone(D_2'(2)/\Sigma_2)$ we will define $h^f$ on $D'_2(2)/\Sigma_2 \times \{0\}$ to be an approximate gluing map using the formulas of the previous section. In the interval $[0,1/3]$, we will correct this approximate gluing map to an actual gluing map so that $D'_2(2)/\Sigma_2 \times \{1/3\}$ is mapped to the subspace of holomorphic functions. Since the space of degree two holomorphic functions is simply connected, the map $D'_2(2)/\Sigma_2 \times \{1/3\} \m \Hol_2^*(\s,(\p,J))$ can be extended to the rest of the cone.

\paragraph{Definition of $h$ on $D'_2(2)/\Sigma_2 \times \{0\}$:}

Let $p_0 \in \p$ be the base point. An element of $D'_2(2)/{\Sigma_2}$ is two embeddings $e_i:\D \m \D \subset \s$. We write these embeddings as if they were ordered; however, our construction will never use this and thus will be well defined on the quotient. Let $u^{const}: \s \m \p$ be the constant map. Let $C$ be the nodal curve which is a union of three copies of $\s$ attached at two points. Call the $\s$ with two copies of $\s$ attached to it the root and call the other copies of $\s$ the leaves. Attach $\infty$ in the $i$th leaf to $e_i(0)$ in the root. The maps $q_1,q_1,u^{const}$ assemble to form a map $u: C \m \p$. Let $e': \D \m \s$ be the map $z  \m 1/z$ and let $e'_i: \D \m \s$ be given by the same formula as $e'$ but we now view $\s$ as the $i$th root. The collection of embeddings $e_1,e_2,e'_1,e'_2$ give neighborhoods of the nodes. Equip the three spheres with the metric $\gamma_0$.

We can build a surface with complex structure and metric $C_t$ and a map $u_a: C_t \m \p$ as in the previous subsection. The curve $C_t$ is genus zero and hence is diffeomorphic to $\s$. In fact there is a natural holomorphic isomorphism $d(e_1,e_2):\s \m C_t$ which we will describe shorty. Then we will define the approximate gluing map $h^a:D_2'(2)/\Sigma_2 \m \Map_2^*(\s,\p)$ via the formula $h^a(e_1,e_2)=u_a \circ d(e_1,e_2)$.

\begin{figure}[!ht]
\begin{center}\scalebox{.4}{\includegraphics{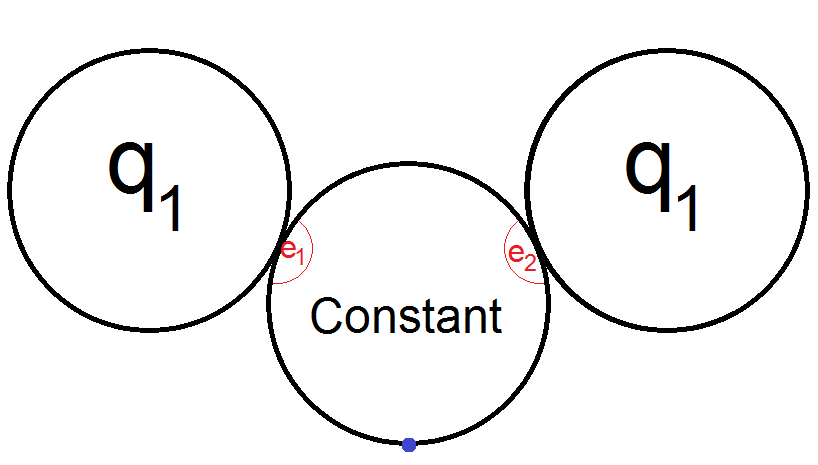}}\end{center}
\caption{A nodal curve associated to an element of $D_2'(2)/\Sigma_2$}
\label{figure:mickey}
\end{figure}

Recall that $A_t$ is the annulus in $\D^2$ of points $(x,y)$ with $xy=t$ and $\D_r$ is the disk of radius $r$ around the origin. The curve $C_t$ is defined to be the quotient of the space: $$(\s - e_1(\D_{\sqrt t}) -e_2(\D_{\sqrt t}) \cup ( \bigsqcup_{i=1}^2 \s -e'_i(\D_{\sqrt t})) \cup  \bigsqcup_{i=1}^2 A_t$$ by the following relation. For $(x,y)$ in the $i$th $A_t$, we identify $(x,y)$ with $e_i(x)$ in the root if $|x|>|y|$ and we identify $(x,y)$ with $e_i'(y)$ if $|y|>|x|$. 

Let $\alpha_i: \D_{\sqrt t} \m C_t$ be the map $z \m z/t$ composed with the inclusion of $\C$ into the $i$th leaf (that is, $\alpha_i$ has image the $i$th copy of $\s -e'_i(\D_{\sqrt t})$). Let $\iota:\s - e_1(\D_{\sqrt t}) -e_2(\D_{\sqrt t}) \m C_t$ be the inclusion of the root and define $d(e_1, e_2):\s \m C_t$ by the following formula:  $$d(e_1,e_2)(z) =
\begin{cases}
\iota(z),  & \mbox{if } z\notin e_i(\D_{\sqrt t}) \text{ for each } i \\

\alpha_i(e_i^{-1}(z)),  & \mbox{if } z \in e_i(\D_{\sqrt t}). \\

\end{cases}
$$ 

Note that this is a holomorphic isomorphism. The approximate gluing map $h^a(e_1,e_2)$ sends points outside the disks $e_i(\D_{t^{3/4}})$ to the base point and points inside the disks to rescaled versions of the map $q_1$. The image of $h^a(e_1,e_2)$ does not depend on $t$.

On $D'(2)/\Sigma_2 \times \{0\}$, define $h^f((e_1,e_2),0)$ to be $h^a(e_1,e_2)$. We define $h^m((e_1, e_2),0)$ to be the pullback of the metric on $C_t$ under the map $d(e_1, e_2)$. Here $C_t$ is equipped with the metric induced from $\C^2$ on the annuli and the metric $\gamma_0$ on the spheres minus the disks.

\paragraph{Definition of $h$ on $D'_2(2)/\Sigma_2 \times [0,1/3]$:}

For $s \leq 1/3$ and $\kappa \in D'_2(2)/\Sigma_2$, let $h^m(\kappa,s)=h^m(\kappa,0)$. The definition of $h^f(\kappa,s)$ involves correcting $h^a$ to land in the subspaces of holomorphic maps.

By Theorem \ref{theorem:gluingMS}, we can perturb $h_a$ to a map to $ \Hol^*(\s , (\p, J))$ if the operator $D^{Z}_u$ is onto.  Here $u$ is the map from the nodal curve induced by $q_1$ on the leaves and the constant map on the root and $Z$ is the set containing $\infty$ in the root $\s$. By Proposition \ref{theorem:deg2orbifold}, the operator $D^Z_u$ is onto for these types of curves. Thus, we have the following proposition.

\begin{proposition}
\label{correcth}
For $t$ sufficiently small, there is a map $\xi_t : D_2'(2)/\Sigma_2 \m \Gamma( T C^{\infty}(\s,\p))$ with the property that for $\kappa \in D_2'(2) / \Sigma_2 $, the following map is $J$-holomorphic: $$z \m \exp_{h^a(\kappa)(z)} (\xi_t(\kappa)(z)).$$
\end{proposition}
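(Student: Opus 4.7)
The plan is to apply the quantitative implicit function theorem \ref{theorem:implicitquantR} fiberwise over the compact parameter space $D_2'(2)/\Sigma_2$ and then deduce the existence of a continuous correction $\xi$ from uniqueness in the implicit function theorem. First, I would verify surjectivity of the relevant linearized operator on the fixed underlying nodal curve. The nodal curve $C$ built from the three spheres with maps $q_1, q_1, u^{\text{const}}$, together with the marked point $Z = \{\infty\}$ in the root, is a curve of Type 5 in the classification of Section \ref{secdeg2}, and Proposition \ref{theorem:deg2orbifold} therefore shows that $D^Z_u$ is surjective. By the rational-case analogue of Theorem \ref{theorem:aprxcoker} (proved in \cite{MS}), it follows that for $t$ sufficiently small the operator $D^Z_{h^a(\kappa)}$ is also surjective for every $\kappa \in D_2'(2)/\Sigma_2$.

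Next, I would establish the uniform estimates needed to invoke Theorem \ref{theorem:implicitquantR}. Compactness of $D_2'(2)/\Sigma_2$ — which is the very reason we introduced the lower bound $1/10$ on the radii of the disks — together with continuous dependence of the gluing data on $\kappa$, gives: a uniform upper bound $c_0$ on the operator norm of a right inverse $Q_{h^a(\kappa)}$ of $D^Z_{h^a(\kappa)}$; a uniform upper bound on $\|d\,h^a(\kappa)\|_{L^p}$; and a uniform upper bound on the volume of $(S^2, h^m(\kappa))$. Theorem \ref{theorem:smalldbar} then controls the $\bar\partial$-defect: since the three constituent maps are $J$-holomorphic, $\|\bar\partial^{\text{nl}} h^a(\kappa)\|_{L^p} = O(t^{1/(2p)})$ uniformly in $\kappa$. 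Fixing $p > 2$, for any $\delta > 0$ we may therefore choose $t$ small enough that $\|\bar\partial^{\text{nl}} h^a(\kappa)\|_{L^p} < \delta$ simultaneously for every $\kappa$.

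With these uniform bounds in hand, I would then apply Theorem \ref{theorem:implicitquantR} pointwise. Choosing $\delta$ as in that theorem for the constant $c_0$ above, it yields, for each $\kappa$, a section $\xi(\kappa) \in \Gamma(h^a(\kappa)^* T\p)_Z$ of $W^{1,p}$-norm at most $c\,c_0\,\|\bar\partial^{\text{nl}} h^a(\kappa)\|_{L^p}$ such that $z \mapsto \exp_{h^a(\kappa)(z)} \xi(\kappa)(z)$ is $(h^j(\kappa), J)$-holomorphic. Because the implicit function theorem produces $\xi(\kappa)$ via a Newton iteration starting at the zero section and remaining in a controlled $W^{1,p}$-ball, $\xi(\kappa)$ is the unique solution in that ball, and continuous dependence of the data $(h^a(\kappa), h^j(\kappa), Q_{h^a(\kappa)})$ on $\kappa$ transfers to continuous dependence of $\xi(\kappa)$ on $\kappa$, giving the desired map.

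The main obstacle is securing the uniformity across the family: a single $t_0$ that works for every $\kappa \in D_2'(2)/\Sigma_2$ together with a uniform constant $c_0$ controlling the right inverses. This is precisely where the modification from $D_2(2)$ to the compact $D_2'(2)$ is essential, since it allows a standard compactness argument to replace what would otherwise be a delicate analytic uniformity statement; once uniformity is in hand, the pointwise implicit function theorem assembles into the continuous family $\xi$.
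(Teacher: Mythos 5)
Your argument is correct and is essentially the paper's own proof, just unpacked: the paper simply cites the rational gluing theorem (Theorem \ref{theorem:gluingMS}, i.e.\ the combination of Theorems \ref{theorem:implicitquantR}, \ref{theorem:smalldbar} and the rational analogue of Theorem \ref{theorem:aprxcoker}) together with surjectivity of $D^Z_u$ from Proposition \ref{theorem:deg2orbifold}, with compactness of $D_2'(2)$ supplying the uniformity exactly as you describe. The only quibble is that the nodal curve here is of Type $6$ rather than Type $5$ (both leaves carry the same map $q_1$, so the two degree one components have equal images), but Proposition \ref{theorem:deg2orbifold} treats both types by the same argument, so nothing in your proof changes.
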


We define $h^f(\kappa,s)$ by $h^f(\kappa,s)=\exp_{h^a(\kappa)(z)} (3s\xi_t(\kappa)(z))$. This agrees with our previous definition when $s=0$. When $s=1/3$, $h^f(\kappa,s)$ is $J$-holomorphic by Proposition \ref{correcth}.

\paragraph{Definition of $h$ on $D'_2(2)/\Sigma_2 \times [1/3,2/3]$:}

We would like our construction to depend on continuously $t$. Moreover, we would like some explicit control as $t$ tends to $0$. Thus, we cannot just arbitrarily extend the function to the rest of the cone. Instead, we define $h$ on  $D'_2(2)/\Sigma_2 \times [1/3,2/3]$ so that it does not depend on $t$ on  $D'_2(2)/\Sigma_2 \times \{2/3 \}$ and then once and for all define $h$ on $D'_2(2)/\Sigma_2 \times [2/3,1]$ by extending arbitrarily using simply connectivity of $\Hol^*_2(\s,(\p,J))$. 

Pick $t_0$ sufficiently small so that the conclusions of Proposition \ref{correcth} apply. View $t_0$ as fixed once and for all as opposed to $t$ which we will send to zero later.  For $s$ in $[1/3,2/3]$ and $\kappa \in D'_2(2)/\Sigma_2 $, define $h^f(\kappa,s)$ by $h^f(\kappa,s)=\exp_{h^a(\kappa)(z)} (  \xi_{\tau(s)}(\kappa)(z))$ with $\tau(s)=(2-3s)t+(3s-1)t_0$. When $s=1/3$, this agrees with the definition already given. When $s=2/3$, this is identical to the case $s=1/3$ except with $t$ replaced by $t_0$.  

For $s \in [1/3,2/3]$, we define the metric $h^m(\kappa,s)$ to be identical to the metric $h^m(\kappa,0)$ except with the gluing parameter $t$ replaced with $\tau(s)$ in all formulas.

\paragraph{Definition of $h$ on the image of $D'_2(2)/\Sigma_2 \times [2/3,1] \}$:}

We now describe how to extend $h$ to the rest of the cone. The following proposition appears in the proof of Lemma 3.4 of \cite{Se}.

\begin{proposition}
The spaces $\Hol^*_k(\s,(\p,J_0))$ are simply connected for all $k$.
\end{proposition}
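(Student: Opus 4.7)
The plan is to use the classical algebraic-geometric description of $Hol_k^*(\s,(\p,J_0))$ as an open subvariety of an affine space, and then argue that its complement has sufficiently high codimension to guarantee simple connectivity.

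First I would identify $Hol_k^*(\s,(\p,J_0))$ explicitly. A based holomorphic map $f:\s\m\p$ of degree $k$ with $f(\infty)=[1:0:0]$ can be written uniquely as $z\mapsto [p_0(z):p_1(z):p_2(z)]$ where $p_0$ is monic of degree $k$, $p_1$ and $p_2$ have degree strictly less than $k$, and the three polynomials have no common complex root. Counting coefficients, the space of such triples (forgetting the no-common-root condition) is naturally $\C^{3k}$, so $Hol_k^*(\s,(\p,J_0))$ is identified with the complement $\C^{3k}\setminus V$, where $V$ is the \emph{resultant locus} of triples with a common zero.

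Next I would bound the codimension of $V$. Consider the incidence variety
\[
I=\{(z,p_0,p_1,p_2)\in\C\times\C^{3k}\mid p_0(z)=p_1(z)=p_2(z)=0\}.
\]
For each fixed $z\in\C$, the three vanishing conditions cut out a linear subspace of $\C^{3k}$ of complex codimension exactly $3$ (the evaluation-at-$z$ maps on the three polynomial spaces are independent linear functionals), so $I$ is a vector bundle of rank $3k-3$ over $\C$ and has complex dimension $3k-2$. Since $V$ is the image of $I$ under the projection to $\C^{3k}$, we conclude $\dim_\C V\leq 3k-2$, i.e.\ $V$ has complex codimension $\geq 2$ in $\C^{3k}$, or equivalently real codimension $\geq 4$.

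Finally I would conclude simple connectivity. The ambient space $\C^{3k}$ is contractible, and a complex algebraic subvariety of complex codimension $\geq 2$ has real codimension $\geq 4$, so by a standard general-position argument any loop and any nullhomotopy of it in $\C^{3k}$ can be perturbed to avoid $V$. Hence the inclusion $\C^{3k}\setminus V\hookrightarrow\C^{3k}$ induces a surjection on $\pi_1$, which forces $\pi_1(Hol_k^*(\s,(\p,J_0)))=0$.

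The only step that requires any real work is the codimension estimate in Step 2, and even there the main subtlety is merely verifying that the three evaluation functionals at a point $z$ are linearly independent on the product $\C\{p_0\}\times\C\{p_1\}\times\C\{p_2\}$ of polynomial spaces with the chosen normalizations; this is immediate because the three polynomial slots are independent. Everything else is formal. Note that the analogous argument fails for $\C P^1$ (where the common-root locus has codimension $1$), which is consistent with $Hol_k^*(\s,\s)$ being connected but not simply connected in general, and it is precisely the presence of the third homogeneous coordinate on $\p$ that provides the extra codimension needed.
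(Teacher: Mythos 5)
Your argument is correct, and it is worth noting that the paper does not give a proof of this proposition at all: it simply points to the proof of Lemma 3.4 of Segal's paper. Your writeup therefore supplies a self-contained argument, and it is the standard one: identify $Hol_k^*(\s,(\p,J_0))$ with the complement in $\C^{3k}$ of the locus $V$ of triples with a common root (the monic normalization of $p_0$ both fixes the scalar ambiguity and rules out a common zero at infinity), bound $\dim_\C V\leq 3k-2$ by projecting the incidence variety, and finish by general position; the extra homogeneous coordinate of $\p$ is indeed exactly what pushes the codimension to $2$, as your closing remark says. Two small points of hygiene. First, $V$ is a singular variety, so the transversality step should be run stratum by stratum (or you can simply quote the standard fact that deleting a closed algebraic subset of complex codimension $\geq 2$ from a smooth complex variety changes neither $\pi_0$ nor $\pi_1$). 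Second, your concluding sentence is misphrased: surjectivity of $\pi_1(\C^{3k}\setminus V)\to\pi_1(\C^{3k})$ is vacuous because the target is trivial. What your perturbation argument actually delivers is that any loop in the complement bounds a disk in $\C^{3k}$, and since $V$ has real codimension $\geq 4$ the disk can be pushed off $V$, so the loop is nullhomotopic in the complement; combined with connectivity of the complement (which follows from the same codimension count) this is precisely simple connectivity. With that wording fixed, the proof is complete.
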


By Theorem \ref{theorem:deg2}, $\Hol^*_2(\s,(\p,J))$ is simply connected for all $J$. Since $D_2'(2 ) \cong S^1$,  we can extend extend $h^f$ over the rest of the cone in such a way that $h^f(\kappa,s)$ is $J$-holomorphic  for $s \geq 2/3$ (Note it is also $J$-holomorphic for $s \in [1/3,2/3]$). 

We extend $h^m$ to the rest of the cone as follows. We keep the metric constant outside the images of the embeddings $e_i$. Inside the images of the embeddings we change the metrics so that for $s=1$, the metric on $\s$ is $\gamma_0$. We require that this path of metrics consists of continuous metrics when restricted to the regions $e_i(\D)-e_i(\D_t)$ and  $e_i(\D_t)$. We also ask that the metrics vary continuously with respect to the usual topology on the space of metrics in these regions. Additionally, we require that this path of metrics stays commensurable to $\gamma_0$. In other words, there should be a constant $R>0$ such that for all tangent vectors $v,w$, $|h^m(\kappa,s)(v,w)| \leq R |\gamma_0(v,w)|$ and $|h^m(\kappa,s)(v,w)| \geq |\gamma_0(v,w)|/R$. We allow $R$ to depend on $t_0$. Note that such a family of metrics exist since $D_2'(2)$ is compact.

\paragraph{Bundle of partial algebras:}

Note that the map $h$ depends on a base point $p_0 \in \p$ and a choice of $q_1 \in \Hol^*_1(\s,\p)$. Due to transversality issues, we will not be able to glue rational maps onto the constant map from $\Sigma_g$ to $\p$. Instead, we will pick a $J$-holomorphic map $w:\Sigma_g \m \p$ and glue rational $J$-holomorphic maps to $w$. In order to glue two curves, they need to intersect. Thus, we need to let the base point vary. For $z \in \Sigma_g-pt$ and $\kappa \in R_2$, we denote the associated element of $\Map(\s,(\p,J)$ by $h^f_z(\kappa)$. The desired properties of $h$ are that $h^f_z(\kappa)(\infty)=w(z)$. That is, for each point $z$ in the curve, we want $R_2$ to map into the space of maps based at $w(z)$.

First, we define $h^f_z(r_0)$ to be the constant map at $w(z)$.

Note that all maps $w:\Sigma_g-pt \m \p$ are null-homotopic since $\Sigma_g-pt$ is homotopy equivalent to a wedge of circles and $\p$ is simply connected. Thus all bundles on $\Sigma_g-pt$ pulled back from $\p$ along $w$ are trivializable.

Given a map $u \in \Hol_1(\s,(\p,J))$, let $f_u:D_2'(2)/\Sigma_2 \m \Hol_2(\s,(\p,J))$ be the map which sends $(e_1,e_2) \in D_2'(2)/\Sigma_2$ to the element of $\Hol_2(\s,(\p,J))$ defined using the same procedure as was used to define $h^f((e_1,e_2),2/3)$ except with $q_1$ replaced with $u$. Let $ev:\Hol_1(\s,(\p,J)) \m \p$ be the bundle described in Theorem \ref{theorem:evaluation1} and let $k:K \m \p$ be a subbundle of $\Hol_1(\s,(\p,J)) \m \p$ such that $k$ is proper and the inclusion of the fibers of $k$ into the fibers of $ev$ are homotopy equivalences. Let $E \m \p$ be the bundle whose fiber over a point $p \in \p$ is the space of pairs $(u,g)$ with $u \in k^{-1}(p)$ and $g: Cone(D_2'(2)/\Sigma_2) \m \Hol_2(\s,(\p,J))$ satisfying:

-$g(e_1,e_2,s)(\infty)=p$

-$g(e_1,e_2,0)=f_u(e_1,e_2)$ 

\noindent for all $(e_1,e_2) \in D_2'(2)/\Sigma_2$ and $s \in [0,1]$. That is, the fiber at $p$ consists of degree one holomorphic maps $u$ based at $p$ as well as an extensions of $f_u$ to the cone. Note that the map $f_u$ depends on the gluing parameter $t_0$ and is only defined for $t_0$ sufficiently small. Since $\p$ is compact and $k$ is proper, we can pick one $t_0$ that works for all $u \in K$ and hence make sense of $E$. 

Fix a trivialization of $w^*E$ on $\Sigma_g-pt$ and use this trivialization to pick a section of $w^*E$. This in particular gives a section $\sigma$ of $w^*\Hol_1^*(\s,(\p,J))$. This allows us to define $h^f_z$ for $s \leq 2/3$ as was done in the non-parameterized setting. We have $f_{\sigma(z)}(e_1,e_2)=h^f_z(e_1,e_2,2/3)$. For $s >2/3$, we use the chosen extension of $f_{\sigma(z)}$ to the cone to define $h^f_z$. 

We do not need to have the metric depend on the point in $\Sigma_g-pt$.

\subsection{A holomorphic version of topological chiral homology}\label{secholTCH}

In this subsection, we slightly modify the space $\int_{\Sigma_g-pt} R_2$ to construct a new space $\int\limits^{hol}_{\Sigma_g -pt} (R_2,\bold j)$. We will refer to the space  $\int\limits^{hol}_{\Sigma_g -pt} (R_2,\bold j)$ as the holomorphic topological chiral homology of the manifold $\Sigma_g-pt$ with coefficients in $R_2$. Due to issues involving analysis, we will not be able to construct a gluing map on all of $\int\limits^{hol}_{\Sigma_g -pt} (R_2,\bold j)$ but only over any compact subset.

Since we are attempting to build a map to $\Hol^*(\Sigma_g,(\p,J))$, the space $D(\Sigma_g-pt)$ of smoothly embedded disks is not the correct space to consider. Instead we will be interested in a space of holomorphic embeddings of disks. Fix a complex structure $\bold j$ on $\Sigma_g$.

\begin{definition}
Let $\D(\Sigma_g -pt)$ be the sub-$\Sigma$-space of $D(\Sigma_g-pt)$ consisting of $\bold j$-holomorphic embeddings.
\end{definition}

Since elements of $D_2(k)$ are holomorphic, $\D(\Sigma_g -pt)$ has the structure of a right $D_2$-module. Note that for all $k$, the obvious map $\pi:\D(\Sigma_g -pt)(k) \m D(\Sigma_g-pt)(k)$ is a weak equivalence. We will define $\int\limits^{hol}_{\Sigma_g -pt} (R_2,\bold j)$ to be $B(\D(\Sigma_g -pt),D_2,R_2)$.

Just as in Section \ref{subsectwosided}, we can view elements of $B_k(\D(\Sigma_g -pt),D_2,R_2)$ as labeled rooted trees. This is done as follows:

1. The root of $T$ is labeled by an element of $\D(\Sigma_g-pt)$. In other words, the label of the root is a collection of $\bold j$-holomorphic embeddings $e_i: \D \m \Sigma_g-pt$ and paths of matrices.

2. The internal verticies are labeled by elements of $D_2$.

3. The leaves are labeled by elements of $R_2$.

Given a tree $T$, we will alway denote the components of $T$ minus the root by $T_i$. Each $T_i$ is a tree and has an associated embedding $e_i : \D \m \Sigma_g$. A labeled tree being an element of $B(\D(\Sigma_g -pt),D_2,R_2)$ is equivalent to the following condition. The labels of all of the internal verticies of $T_i$ along with the labels of the leaves form an element of $D_2^{\otimes k} \otimes R_2$ and we require that this element is in $Comp^k$. 

We see that one of two possible situations occurs for a subtree $T_i$. We could have that $T_i$ has one leaf, the valency of each internal vertex is two, and each internal vertex is labeled by $id \in D_2(1)$. In this case, the leaves are labeled by arbitrary elements of $R_2$. Otherwise, $T_i$ has two leaves. In this case, there is a unique trivalent internal vertex labeled by an element of $D_2'(2)$ and both leaves are labeled by $r_1$.

\vspace{12pt}

The space $\int\limits^{hol}_{\Sigma_g -pt} (R_2,\bold j)$ does not depend on the map $w:\Sigma_g \m \p$. However, using the map $w$ and the maps $h_z$, we can view the labels of the leaves of the trees of $\int\limits^{hol}_{\Sigma_g -pt} (R_2,\bold j)$ as elements of $\Map(\s,\p)$.

\subsection{Construction of an approximate gluing map for holomorphic topological chiral homology}\label{secJTCH}

In the previous subsection, we constructed a space $\int\limits^{hol}_{\Sigma_g -pt} (R_2,\bold j)$ which is the geometric realization of a simplicial space involving holomorphic embeddings of disks in $\Simga_g$ and elements of $R_2$ which we identify with degree one and two approximately $J$-holomorphic maps to $\p$ using the function $h$. In this subsection, we will describe a map: $$g_a : \int\limits^{hol}_{\Sigma_g -pt} (R_2,\bold j)\m \Map^*(\Sigma_g,(\p,J)) \times Met^{pc}(\Sigma_g)$$ which we call the approximate gluing map.  In the final subsection of this section, we will perturb this map using the implicit function theorem to produce a map to $\Hol^*(\Simga_g,(\p,J))$.
 
Before we proceed, we fix a metric $\gamma_g$ on $\Sigma_g$.

\begin{definition}
For $N \in \N$, let $F_N ( \int\limits^{hol}_{\Sigma_g -pt} (R_2,\bold j))$ denote the $N$-skeleton of $\int\limits^{hol}_{\Sigma_g -pt} (R_2,\bold j)$.

\end{definition}

As in the Section \ref{secR2J}, the approximate gluing map $g_a$ depends on a gluing parameter $t\in (0,1]$ which we suppress from the notation. On compact sets, we will be able to choose $t$ sufficiently small such that $g_a$ can be corrected to yield a map into the space of $J$-holomorphic maps. First we will define $g_a$ on the zero skeleton. For purposes of clarity we will define $g_a$ on the one skeleton before giving the general definition of the map. We will denote the restriction of $g_a$ to the $N$-skeleton by $g_N$. We denote the projection of $g_N$ to the space of maps by $g^f_N$ and the projection to $Met^{pc}(\Sigma_g)$ by $g^m_N$.

\textbf{Definition of $g_a$ on $F_0( \int\limits^{hol}_{\Sigma_g -pt} (R_2,\bold j))$}:

A point in the space $F_0(\int\limits^{hol}_{\Sigma_g -pt} (R_2,\bold j))$ can be viewed as a collection of holomorphically embedded disks labeled with rational degree 1 or 2 approximately $J$-holomorphic functions $u_i$ such that $w$ maps the center of the $i$th disk to $u_i(\infty)$ along with metrics on the domains of the maps $u_i$. To denote a collection of holomorphic embeddings $e_i: \D \m \Sigma_g$ and maps $u_i : \s \m \p$, we will use the shorthand notation $(\vec e, \vec u)$. Let $e':\D \m \s$ be the map $z \m 1/z$ and let $e'_i$ denote the map $e'$ where we view the range as the $i$th copy of $\s$. Given this data and the fixed metric $\gamma_g$ on $\Sigma_g$, in Section \ref{secapp} we described a construction of a surface of genus $g$ with complex structure and metric, $C_t$ and an approximate gluing map $u_a(\vec e, \vec u): C_t \m \p$. The diffeomorphism type of $C_t(e,\vec u)$ is always the same as $\Sigma_g$. Moreover, one can define a holomorphic isomorphism $d(\vec e): \Sigma_g \m C_{t}$ in the exact same way as in Section \ref{secR2J}. We use the obvious inclusion $\iota: \Sigma_g -\bigcup_{i \leq n} e_i(\D_{\sqrt t}) \m C_t$ outside the images of the disks (as well as a little into the images) and scale by a factor of $t$ to define it inside the disks (see Section \ref{secR2J}).

Let $g^f_0(\vec e,\vec u)=u_a(\vec e, \vec u) \circ d(\vec e)$. Define $g^m_0$ to be the pullback under $d(\vec e)$ of the metric on $C_t$ used in Section \ref{secapp}. This gives a map $g_0: F_0(\int\limits^{hol}_{\Sigma_g -pt} (R_2,\bold j)) \m \Map^*(\Sigma_g, \p) \times Met^{pc}(\Sigma_g)$. The reason that the map $u_a(\vec e, \vec u) \circ d(\vec e)$ is based is because the disks are embedded in $\Sigma_g -pt$ so $u_a(e, \vec u) (d(\vec e)(pt))=w(pt)$.

\textbf{Definition of $g_a$ on $F_1( \int\limits^{hol}_{\Sigma_g -pt} (R_2,\bold j))$}:

Let $A_N=B_N(\D(\Sigma_g -pt),D_2,R_2)$ be the space of $N$ simplicies of $ \int\limits^{hol}_{\Sigma_g -pt} (R_2,\bold j)$. The space $F_1( \int\limits^{hol}_{\Sigma_g -pt} (R_2,\bold j))$ is the homotopy coequalizer of two face maps $d_0, d_1 : A_1 \m A_0$. Extending $g_a$ to $F_1( \int\limits^{hol}_{\Sigma_g -pt} (R_2,\bold j))$ is the same data as a homotopy between $g_0 \circ d_0$ and $g_0 \circ d_1$. We will pick a homotopy through maps which are ``close'' to holomorphic maps. Therefore, in Section \ref{correct}, we will be able to perturb $g_a$ to a map to $\Hol^*(\Sigma_g,(\p,J))$. There are many other choices for homotopies; however, some choices may involve maps which cannot be corrected to $J$-holomorphic maps using the techniques of Section \ref{secapp}.

A point in $A_1$ is a rooted tree $T$ of depth $3$ with the following data. The root is labeled by disjoint embeddings $\vec{e} = (e_1, \ldots , e_k) \in \D(\Sigma_g-pt)$. The valence of the root is $k$. The internal verticies are labeled by elements of $\vec{e_i} = (e_{i1}, \ldots , e_{ik_i}) \in D_2'(k_i)$ and these have valency $k_i+1$. The leaves are labeled by elements of $R_2$ which can be thought of as maps $u_{ij}:\s \m \p$ with $u_{ij}(\infty) = w(e_i(0))$ and metrics $\gamma_{ij}$ on $\s$. We require that all subtrees of $T$ minus the root be labeled by composable elements. This translates to the following conditions. Let $\{T_i\}$ be the set of subtrees of $T$ minus the root. Either, the internal vertex of $T_{i}$ is labeled by the identity and there is only one leaf labeled by $(u_i,\gamma_i)$, or the internal vertex of $T_{i}$ is labeled by $(e_{i1},e_{i2}) \in D'_2(2)$ and there are two leaves labeled by $r_1$. Depending on the simplical coordinate, we will think of the label $r_1$ as being the degree one map $h^f_{e_i(0)}(r_1)$, $h^f_{e_i (e_{ij}(0))}(r_1)$, or $h^f_{z}(r_1)$ for some $z$ interpolating between $e_i(0)$ and $e_i (e_{ij}(0))$. Reorder so that $T_i$ has trivalent internal verticies for $i \leq l$ and has all internal verticies labeled by $id$ for $i >l$. Let $\vec e_{\leq l}=(e_1,\ldots e_l)$  and let $\vec e \circ \vec e_{**}=(e_1 \circ e_{11}, e_1 \circ e_{12}, \ldots , e_l \circ e_{l2}, e_{l+1}, \ldots e_k)$.


We now recall the effects of the face map $d_0:  A_1 \m A_0$. The map $d_0$ does not change the embeddings $e_i : \D \m \Sigma_g$. It replaces the trees $T_i$ with the composition of all of the elements labeling verticies of $T_i.$ Let $z_i=e_i(0) \in \Sigma_g$. In this notation, it replaces $T_i$ with a leaf labeled by $h_{z_i}(e_{i1},e_{i2},0)$ for $i \leq l$ and replaces $T_i$ with a leaf labeled by $(u_i,\gamma_i)$ for $i >l$. Here the number $0$ in $(e_{i1},e_{i2},0)$ is the cone coordinate and recall that $h(e_{i1},e_{i2},0)$ is equal to the approximate gluing map $h^a(e_{i1},e_{i2})$. Thus $d_0(T) =$ 
$$\vec e  \times (h_{z_1}(e_{11},e_{12},0), \ldots, h_{z_l}(e_{l1},e_{l2},0), (u_{l+1};\gamma_{l+1}), \ldots , (u_{k};\gamma_{k})).$$

In contrast, the face map $d_1: A_1 \m A_0$ does change the embeddings of disks into $\Sigma_g-pt$ by composing embeddings. In this notation, we have that $d_1(T)=$ $$\vec e \circ \vec e_{**} \times (h_{z_{11}}(r_1), \ldots,  h_{z_{l2}}(r_1), (u_{l+1},\gamma_{l+1}), \ldots, (u_{k},\gamma_{k}))$$ with $z_{ij}=e_i(e_{ij}(0))$.

To construct the homotopy between $g_0 \circ d_0$ and  $g_0 \circ d_1$, we will need to introduce some more notation. The map $g^f_0$ depends implicitly on a map $w : \Sigma_g \m \p$. For another map $w':  \Sigma_g \m \p$, collection of embeddings $e_i: \D \m \Sigma_g -pt$, and maps $u_i : \s \m \p$ with $u_i(\infty)$ equal to the image of the center of the $i$th disk under the map $w'$, let $G^f_{w'}(\vec e, \vec u): \Sigma_g \m \p$ be the function defined by the same formulas used to define $g^f_0$, after replacing $w$ by $w'$ in all of the formulas. We will add a subscript $t$ when we want to indicate the dependence on the gluing parameter.

Let $\gamma'$ be a metric on $\Sigma_g$ and $\vec \gamma=(\gamma_1, \ldots \gamma_k)$ be metrics on the $\s$'s. Likewise let $G^m_{\gamma'}(\vec e, \vec \gamma)$ be the metric on $\Sigma_g$ defined using the same formula as $g^m_0$, after replacing $\gamma_g$ by $\gamma'$ and using the metrics $\gamma_i$ on the $\s$'s. Given a metric $\gamma$ and a positive real number $R$, let $R \gamma$ denote the metric $\gamma$ scaled by $R$.

Given a collection of embeddings $e_i: \D \m \Sigma_g -pt$, let $\vec u^{const}$ denote the collection of constant maps with $u^{const}_i$ mapping $\s$ to the image under $w$ of the center of the $i$th disk. Let $w_{\vec e,s}=w$ for $s=0$ and let $w_{\vec e, s}=G_w (\vec e; \vec u_{const})_{st}$. Recall that the subscript $st$ indicates that we have replaced the gluing parameter $t$ with $st$. The maps $w_{\vec e,s}$ give a one parameter family of maps connecting $w$ with one which is constant inside of $e_i(\bigsqcup \D_{t^{1/4}})$ for $i \leq l$.

Let $C_t^{\leq l}$ be genus $g$ surface constructed by gluing $l$ copies of $\s$ to $\Sigma_g$ using the embeddings $\vec e_{\leq l}$. Recall that the first $l$ embeddings correspond to trees with trivalent verticies. The embeddings $e_{ij}$ for $i \leq l$ naturally can be viewed as maps to $C_t^{\leq l}$. Recall that there is natural holomorphic isomorphism $d(\vec e_{\leq l}): \Simga_g \m C_t^{\leq l}$. Let $E^t_{ij}: \D \m \Sigma_g$ be given by $E^t_{ij}=d(e_1, \ldots e_l)^{-1} \circ e_{ij}$ with $e_{ij}$ being viewed as embeddings of $\D$ into $C^{\leq l}_t$. Note that $E^1_{ij}=e_i \circ e_{ij}$.  Let $\vec E^t = (E^t_{11},E^t_{12},\ldots, E^t_{l2},e_{l+1},\ldots,e_k)$. Let $z^s_{ij} \in \Sigma_g -pt$ be points such that $w(z^s_{ij})=w_{e,s} (E^t_{ij}(0))$. These points can be picked continuously using the diffeomorphism $d(\vec e_{\leq l})$ and the bump function $p$. Since $w_{e,0}=w$, $z^0_{ij}=E^t_{ij}(0)$. Since $w_{e,1}$ is constant in a disk containing the images of each $E^t_{ij}$, $z^1_{ij}=z_i=e_i(0)$. Let $\vec h_s =$ $(h_{z^s_{11}}(r_1), \ldots,  h_{z^s_{l2}}(r_1), (u_{l+1};\gamma_{l+1}), \ldots, (u_{k};\gamma_{k})).$

We will describe the homotopy $H_s$ between $g_0 \circ d_0$ and $g_0 \circ d_1$ in two steps. As before, we let superscripts $f$ and $m$ denote respectively the projections onto the space of functions and metrics.  There are two main differences between $g_0^f \circ d_0$ and $g_0^f \circ d_1$. The map $g_0^f \circ d_0$ involves gluing degree one maps to a constant map and then gluing that to $w$ whereas $g_0^f \circ d_1$ involves gluing degree one maps directly to $w$. In the interval $[0,1/2]$ we will correct this difference. Thus, $H_{1/2}^m$ will not involve gluing in any constant maps. It will differ from $g^f_0 \circ d_1$ in that the degree one maps are glued in using the embeddings $E^t_{ij}$ instead of $e_i \circ e_{ij}$. In the interval $[1/2,1]$, this difference will be corrected. See Figure \ref{Hs} for a schematic depiction of this homotopy.

Let $C:[0,1]^2 \m [0,1]$ be a function with $C(t,0)=0$ and $C(t,1)=1$.  In the following subsection, we will put more requirements on the function $C$ to ensure that the approximate gluing map can be corrected.

For $s$ in the interval $[0,1/2]$, we define $H^f_s$ by the formula: $$H^f_s(T)= G_{w_{\vec e_{\leq l},1-2s}}(\vec E^t;\vec h^f_{1-2s})$$ with $\vec h^f_s$ the projection onto the space of functions. For $s$ in the interval $[0,1/2]$, we define $H^m_s$ as follows. Let $\gamma^s=$ $$G^m_{\gamma_g}(\vec e_{\leq l};C(t,s)\gamma_0,\ldots, C(t,s)\gamma_0)_{st}$$ for $s \in [0,1]$. As before, the subscript $st$ indicates that the gluing parameter $t$ has been replaced by $st$ in the formula for $G^m$ and the notation $C(t,s) \gamma_0$ means we have scaled the metric $\gamma_0$ by $C(t,s)$. As $s$ approaches $0$, the region where the function $w_{\vec e_{\leq l},s}$ is constant shrinks to a point because the gluing parameter is tending to zero. However, since we have modified the metric on the copies of $\s$, the areas of the disks $e_i(\D_{st})$ do not stay roughly constant but instead tend to zero. Let $H^m_s$ be given by $G_{\gamma_{1-2s}}(\vec E^t;\vec h^m_{1-2s})$.

Note that $H_0=g_0 \circ d_0$. It is not the case that $H^f_{1/2}$ equals $g_0 \circ d_1$. They would be equal if you were to replace the embeddings $e_1 \circ e_{ij}: \D \m \Sigma_g$ with $E^t_{ij}$ and replace the metric $\gamma_g$ with $\gamma^0$ in the definition of $d_1$. For $\tau \in (0,1]$, and $\gamma'$ a metric on $\Sigma_g$, let $d^{\tau,\gamma'}:A_1 \m A_0 $ be given by the same formula as $d_1:A_1 \m A_0 $ except with $e_1 \circ e_{ij}$ replaced with $E^\tau_{ij}$ and $\gamma_g$ replaced with $\gamma'$. Pick a path $\gamma'^s$ with $\gamma'^0=\gamma^0$ and $\gamma'^1=\gamma_g$. We require that the path $\gamma'^s$ satisfies identical continuity and commensurability requirements to the path used in Section \ref{secR2J}. For $s \in [1/2,1]$, define $H_s$ by the formula: $$H^f_s(T)=g_0 \circ d^{(2-2s)t+(2s-1),\gamma'^{2s-1}}_1.$$  Qualitatively this portion of the homotopy can be described as making the embeddings $E^t_{ij}$ larger and making the metric agree with $\gamma_g$ outside of the image of $\vec e \circ \vec e_{**}$. This completes the definition of $H_s$.

\begin{figure}[!ht]
\begin{center}\scalebox{.3}{\includegraphics{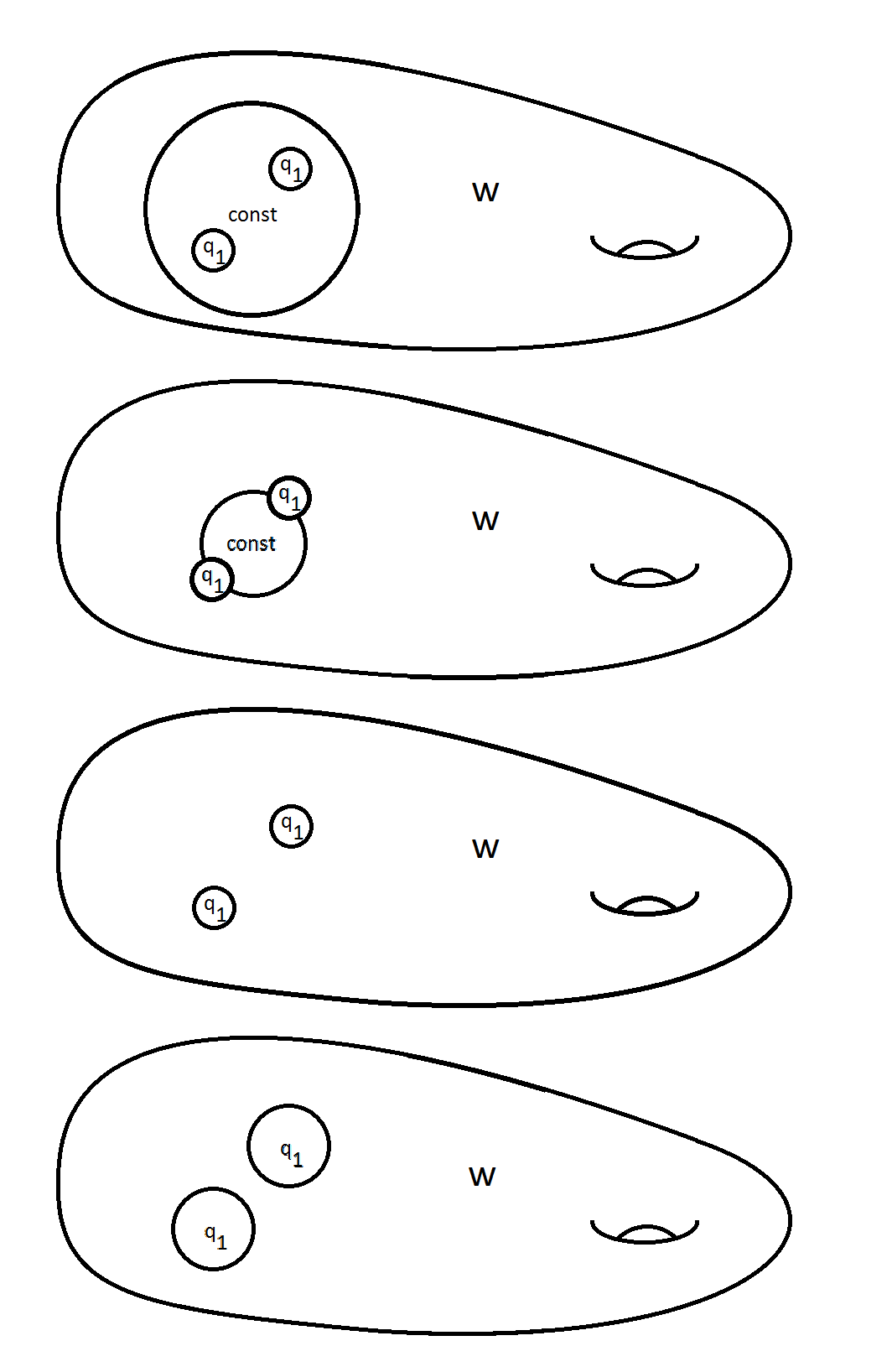}}\end{center}
\caption{The path $H_s$ at $s=0,1/4,1/2$ and $1$.}
\label{Hs}
\end{figure}

\textbf{Definition of $g_a$ on all of $ \int\limits^{hol}_{\Sigma_g -pt} (R_2,\bold j)$}:

We will now define the gluing map on $A_N \times \Delta^N$. View the $N$-simplex $\Delta^N$ as $\{(x_0,\ldots x_N) \in [0,1]^{N+1} \text{ with } \sum x_i=1\}$. Recall that for $T \in A_N$, $T$ can be described as a rooted tree with internal verticies labeled by elements of $D_2'$, root labeled by an element of $\D(\Sigma_g-pt)$ and leaves labeled by elements of $R_2$. Let $\{T_i\}$ be the subtrees of $T$ minus the root. Since we require that all of the labels of each $T_i$ be composable, the tree $T_i$ can have at most two leaves. Let $T_1, \ldots, T_l$ be the trees with two leaves each labeled by $r_1$. Let $\delta_i$ be the depth of the trivalent vertex. See Section \ref{secTCH} for the definition of depth or note that it is defined so that the facemap $d_N$ will split the subtree $T_i$ into two trees if and only if the depth of the trivalent vertex is $N$. Likewise, the composition $d_{N-1} \circ d_N$ will split $T_i$ into two trees if the depth is $N-1$ but will not if the depth is less than $N-1$. 


Note that the homotopy $H_s$ used to define the approximate gluing map only involved changing the metric and function inside the images of the embeddings $e_i$. For a collection of numbers $\vec s= (s_1, \dots, s_l)$ labeling the subtrees with trivalent verticies, let  $H^{\vec s} \in \Map^*(\Sigma_g, \p)  \times Met^{pc}(\Sigma_g)$ be the function and metric defined using the same formulas as those that were used to define $H^s$ except, in the $i$th disk, we replace $s$ with $s_i$. Let $\vec x=(x_0,\ldots x_N) \in \Delta^N$. Let $\delta(\vec x)=(x_{\delta_1}, \ldots,x_{\delta_l})$. We define $g$ on the space of $N$-simplicies by the formula: $$g_N(T,\vec{x})= H^{\delta(\vec x)}(T).$$ Note that these maps of spaces of simplicies assemble to a map on the geometric realization. The definition of $g_N$ is similar to the definition of $g^f_1$ except we have $N$ instead of $1$ variables parameterizing inserting in constant maps into disks and homotoping $E_{ij}^t$ to $e_i \circ e_{ij}$.  In particular, this definition agrees with the definition already given on the $1$-skeleton.

\subsection{Correcting the approximate gluing map}\label{correct}

The approximate gluing map $g^f_a : \int\limits^{hol}_{\Sigma_g -pt} (R_2,\bold j) \m \Map^*(\Sigma_g,\p) $ does not land in $\Hol^*(\Sigma_g,(\p,J))$. The purpose of this subsection is to correct this problem.  In this subsection, whenever we take a norm of $g^f_a(\kappa)$, the underlying metric on $\Simga_g$ will be $g^m_a(\kappa)$. To correct $g^f_a$ to a map to the space of $J$-holomorphic functions, we need to prove a few lemmas. First we need to show that $$lim_{t \m 0} \lVert \dbar^{nl}  g^f_a(\kappa) \rVert_{L^p}  = 0$$ for all $\kappa \in \int\limits^{hol}_{\Sigma_g -pt} (R_2,\bold j)$. Then we need to show that $$\tilde D^Z_{g_a(\kappa)} : \Gamma(g^f_a(\kappa)^*T \p) \m \Omega^{0,1}(g^f_a(\kappa)^*T\p)$$ is onto for $Z = \{ pt\}$. We will also need to show that for any compact set $K \subset \int\limits^{hol}_{\Sigma_g -pt} (R_2,\bold j)$, one can bound (independent of $t$) the volume of $g^m(\kappa)$, the norm of a right inverse of $\tilde D^Z_{g_a(\kappa)} :\Omega^{0,1}(T\Sigma_g) \times  \Gamma(g^f_a(\kappa)^*T \p) \m \Omega^{0,1}(g^f_a(\kappa)^*T\p)$ as well as $\lVert Dg^f_a(\kappa) \rVert_{L^p}$. Combining these results with Theorem \ref{theorem:implicitquant} will allow us to construct a map $g:K \m \Hol^*(\Sigma_g,(\p,J))$ for any compact subset $K \subset \int\limits^{hol}_{\Sigma_g -pt} (R_2,\bold j)$.

 Note that in Section \ref{secR2J}, the existence of these bounds followed from compactness and Theorem \ref{theorem:uniformrightinverse}; however, in this context, one needs to say more. 

In the previous subsection, we did not use that $w:\Sigma_g \m \p$ was $J$-holomorphic. However, in this subsection we always assume this.

\begin{lemma}
For any $p>2$ and $\kappa \in \int\limits^{hol}_{\Sigma_g -pt} (R_2,\bold j)$, $lim_{t \m 0} \lVert \dbar^{nl} g^f_a(\kappa) \rVert_{L^p}  = 0$.  
\label{lemma:small}
\end{lemma}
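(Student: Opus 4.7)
The strategy is to apply Sikorav's estimate (Theorem \ref{theorem:smalldbar}) at each ``level'' of the iterated approximate gluing construction defining $g^f_a$. Since the complex structure $g^j_a(\kappa)$ and metric $g^m_a(\kappa)$ on $\Sigma_g$ are defined as the pullback via the diffeomorphism $d(\vec e)$ of the corresponding data on the glued surface $C_t$, the norm $\lVert \dbar^{nl} g^f_a(\kappa) \rVert_{L^p}$ computed with respect to $(g^j_a(\kappa), g^m_a(\kappa))$ equals the norm of the underlying approximate gluing on $C_t$ with its natural complex structure and metric. This reduces the problem to estimating $\dbar^{nl}$ of a Sikorav-type approximate gluing on $C_t$ itself.

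First I would treat the $0$-skeleton case $\kappa \in F_0$. Here $\kappa$ is given by a holomorphic disk configuration $\vec e$ together with maps $u_i$ coming from the labels via $h$, and $g^f_0(\kappa) = u_a(\vec e, \vec u) \circ d(\vec e)$ is exactly the approximate gluing from Section \ref{secapp} applied to the nodal curve consisting of $\Sigma_g$ (with the map $w$) wedged with copies of $\s$ (each carrying an element of $Hol^*(S^2, (\p, J))$). The map $u$ on this nodal curve is genuinely $J$-holomorphic on every irreducible component, since $w$ is $J$-holomorphic by hypothesis and each $u_i$ is produced by $h$. Therefore $\dbar^{nl} u = 0$ and Theorem \ref{theorem:smalldbar} immediately gives $\lVert \dbar^{nl} g^f_0(\kappa) \rVert_{L^p} = O(t^{1/(2p)})$, which vanishes as $t \m 0$.

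For higher skeleta, $g^f_N(T, p)$ has the form $G_{w_{\vec s(p)}}(e_{\vec s(p)}; \vec u_{\vec s(p)})$, which is again an approximate gluing in Sikorav's sense but whose building blocks are no longer all literally $J$-holomorphic. I would bound the $\dbar$-norm of each building block separately, then apply Theorem \ref{theorem:smalldbar} once more to the outer gluing. The background map $w_{\vec s(p)}$ is itself defined as $G_w$ applied to $w$ and a tuple of constant maps, so a first application of Theorem \ref{theorem:smalldbar} yields $\lVert \dbar^{nl} w_{\vec s(p)} \rVert_{L^p} = O(t^{1/(2p)})$. The interpolation $h_{z,s} = \exp_{h^a(\kappa)}((1-s)\xi(\kappa))$ between $h^a$ and $h^f$ satisfies $\lVert \dbar^{nl} h_{z,s} \rVert_{L^p} \leq \lVert \dbar^{nl} h^a \rVert_{L^p} + C \lVert \xi \rVert_{W^{1,p}}$ by differentiating through $\exp$, and the implicit function theorem estimate of Theorem \ref{theorem:implicitquant} gives $\lVert \xi \rVert_{W^{1,p}} \leq c \lVert \dbar^{nl} h^a \rVert_{L^p} = O(t^{1/(2p)})$ uniformly in $s \in [0,1]$; the remaining leaf labels come from $h$ and are $J$-holomorphic by construction.

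The hardest part will be keeping all constants uniform through this iterated procedure. Differentiating $\exp$ introduces a constant depending on bounds for $h^a$ and its first derivatives, and repeated applications of Theorem \ref{theorem:smalldbar} compound such constants. Fortunately, each trivalent internal vertex of the tree $T$ lives in the compact space $D'_2(2)/\Sigma_2$, the choice of $\kappa$ fixes all remaining combinatorial and embedding data, and only the gluing parameter $t$ varies, so every intermediate constant can be bounded uniformly in the parameters. A final application of Theorem \ref{theorem:smalldbar} to the outermost gluing then yields $\lVert \dbar^{nl} g^f_a(\kappa) \rVert_{L^p} = O(t^{1/(2p)})$, which tends to zero as $t \m 0$.
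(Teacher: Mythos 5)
Your proof follows essentially the same route as the paper: an iterated application of Theorem \ref{theorem:smalldbar}, first bounding $\lVert \dbar^{nl} w_{\vec s}\rVert_{L^p}$ and $\lVert \dbar^{nl} h_{z,s}\rVert_{L^p}$, then applying the theorem once more to the outer gluing $G_{w_{\vec s}}(e_{\vec s};\vec u_{\vec s})$. The only minor deviations are that the paper controls $w_{\vec s}$ by citing Lemma \ref{lemma:cutoffconverg} and controls the interpolation by asserting $\lVert \dbar^{nl} h_{s}(e)\rVert_{L^p} \leq \lVert \dbar^{nl} h^f_a(e)\rVert_{L^p}$, whereas you obtain these from Theorem \ref{theorem:smalldbar} directly and from the $W^{1,p}$ estimate on $\xi$ in Theorem \ref{theorem:implicitquant}; both are sound, and your treatment of the interpolated maps is, if anything, more explicitly justified.
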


\begin{proof}

We will first show that $lim_{t \m 0} \lVert \dbar^{nl} h(r) \rVert_{L^p} =0$ for all $r \in R_2$. We have that $\dbar^{nl} r=0$ unless $r=(e_1,e_2,\sigma)$ with $\sigma \in [0,1/3]$ and $(e_1,e_2) \in D_2'(2)$. We have $lim_{t \m 0}  \lVert  \dbar^{nl} h(e_1,e_2,0) \rVert_{L^p}=0$ by Theorem \ref{theorem:smalldbar}. For $\sigma \in [0,1/3]$, by  Theorem \ref{theorem:implicitquantR}, we can estimate $ \lVert  \dbar^{nl} h(e_1,e_2,\sigma) \rVert_{L^p}$ by $ \lVert  \dbar^{nl} h(e_1,e_2,0) \rVert_{L^p}$. Thus, $ lim_{t \m 0} \lVert  \dbar^{nl} h(e_1,e_2,\sigma) \rVert_{L^p}=0$ for $\sigma \in [0,1/3]$ as well. 

Next we will show that, in the the image of $e'$, $h(e_1,e_2,\sigma)$ has bounded first derivate as $t$ goes to zero. For $\sigma \geq 2/3$, $h(e_1,e_2,\sigma)$ does not depend on $t$ so this is trivial. For $\sigma=0$, this is addressed in Remark \ref{iteratesmalldbar}. By construction, $h(e_1,e_2,1/3)$ is an approximate gluing map with gluing parameter going to zero. By the arguments of A.5 of \cite{MS}, $h(e_1,e_2,\sigma)$ converges in the sense of Section 4.4 of \cite{MS} to a nodal curve of Type 4. In particular, $h(e_1,e_2,1/3)$ converges to a constant map uniformly with all derivatives in any compact set not containing $e_1(0)$ and $e_2(0)$. This shows that the first derivatives of $h(e_1,e_2,1/3)$ tend to zero uniformly on the image of $e'$ and are thus bounded. For $\sigma \in [1/3,2/3]$, $h(e_1,e_2,\sigma)$ is defined in exactly the same way as $h(e_1,e_2,\sigma)$ except the gluing parameter $t$ is replaced by a number depending both on $t$ and $\sigma$. Thus, the first derivative of $h(e_1,e_2,\sigma)$ is also bounded as $t$ goes to $0$. For $\sigma \in (0,1/3)$, the derivatives are bounded because $h(e_1,e_2,\sigma)$ is defined as an interpolation between $h(e_1,e_2,0)$ and $h(e_1,e_2,1/3)$. 

The rest of the proof is just an iterated application of Theorem \ref{theorem:smalldbar} and Remark \ref{iteratesmalldbar}.  Let $w_{\vec e,\vec s}$ be the function defined in exactly the same way as $w_{\vec e,s}$ except we use the number $s_i$ to scale the gluing parameter in the $i$th disk. By Theorem \ref{theorem:smalldbar} and Remark \ref{iteratesmalldbar}, $w_{\vec e, \vec s}$ has bounded first derivative and $ lim_{t \m 0}  \lVert  \dbar^{nl} w_{\vec e,\vec s} \rVert_{L^p} =0$. Applying Theorem \ref{theorem:smalldbar} to maps constructed by gluing maps of the form $h(r)$ to $w_{\vec e, \vec s}$ shows $lim_{t \m 0} \lVert \dbar^{nl} g^f_a(\kappa) \rVert_{L^p}  = 0$.  

\end{proof}

At this point, we need to add more restrictions to our fixed $J$-holomorphic map $w:\Sigma_g \m \p$. For arbitrary $w:\Sigma_g \m \p$, it is not true that $\tilde D_{g_a(k)} : \Omega^{0,1}(T \Sigma_g) \times \Gamma(g^f_a(\kappa)^*T \p) \m \Omega^{0,1}(g^f_a(\kappa)^*T\p)$ is onto.  For example, we cannot take $w$ to be constant. Taking $w$ to be constant would have simplified many things. For example, we would not need to let the base point for $h_z$ vary. The property of $w$ that we need is the property that we prove often occurs in the following lemma.

\begin{lemma}
For all $g \in \N$, there exists $w:\Sigma_g \m \p$ and $pt \in \Sigma_g$ such that $w$ is $J$-holomorphic and $\tilde D^{pt}_w : \Omega^{0,1}(T\Sigma_g) \times \Gamma(w^*T \p)_{pt} \m \Omega^{0,1}(w^*T\p)$ is onto. Moreover, the degree of $w$ can be taken to be $g+1$. 

\label{gplusone}

\end{lemma}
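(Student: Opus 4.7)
The plan is to reduce surjectivity of $\tilde D^{pt}_w$ to a numerical bound via automatic transversality, and then to exhibit such a $w$ by smoothing a reducible $J$-holomorphic nodal curve of the correct arithmetic genus.

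First, I would combine Sikorav's reduction (Proposition \ref{theorem:1}) with the automatic transversality theorem (Theorem \ref{theorem:auto}) applied to the Ivashkovich--Shevchishin normal bundle $N$: the operator $\tilde D^{pt}_w$ is onto provided $c_1(N) > 2g - 1$. From the short exact sequence of Proposition \ref{theorem:commuteOperators} and $c_1(w^*T\p) = 3d$, a direct computation gives
\[
c_1(N) = 3d + 2g - 2 - |D|,
\]
so for $d = g+1$ the condition reduces to the elementary inequality $|D| < 3g + 2$. In particular, any $J$-holomorphic embedding has $|D|=0$ and satisfies the bound immediately.

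Next, I would construct $w$ by smoothing a suitable reducible nodal $J$-holomorphic curve $u: C \m \p$ whose irreducible components are rational $J$-holomorphic curves of low degree (lines and conics supplied by Sections \ref{secdeg1} and \ref{secdeg2}). The components are chosen so that the total degree is $g+1$ and the arithmetic genus $\delta - n + 1$, with $n$ components and $\delta$ simple transverse nodes, equals exactly $g$. For $g = 3$, four $J$-holomorphic lines in general position have $\binom{4}{2} = 6 = 2g$ transverse nodes and produce such a $C$ directly. On each rational component the normal bundle has strictly positive degree, so Theorem \ref{theorem:auto} applied componentwise, together with the surjectivity of the node-matching and point-incidence constraints, shows that $\tilde D^{pt}_u$ is onto. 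Corollary \ref{corollary:surjectiveopen} then produces, for a sufficiently small gluing parameter $t$, a smooth $J$-holomorphic map $w = u_t: \Sigma_g \m \p$ of degree $g+1$ for which $\tilde D^{pt}_w$ remains surjective.

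The main difficulty is the combinatorial arrangement problem: for $g \geq 4$ the required node count $2g$ is strictly smaller than $\binom{g+1}{2}$, so $g+1$ lines in general position give too many nodes, and the configuration must mix lines with rational curves of higher degree carrying their own self-nodes (generic rational degree-$d'$ curves contribute $(d'-1)(d'-2)/2$ self-nodes by McDuff's adjunction formula). Producing such components for an arbitrary compatible $J$ with a controlled number of self-nodes requires an inductive application of the gluing construction itself, which is the technical heart of the argument. The exceptional small cases $g = 0, 1, 2$, where no smooth embedded curve of degree $g+1$ and genus $g$ exists in $\p$, are treated individually: a line for $g = 0$, a degree-$2$ branched cover of a line for $g = 1$ (which satisfies $|D| = 4 < 5$ by Riemann--Hurwitz), and a specialized construction for $g = 2$ exploiting a degree-$3$ multiply covered map.
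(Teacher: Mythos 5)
Your overall strategy—build a nodal $J$-holomorphic curve of arithmetic genus $g$ and total degree $g+1$ out of low-degree rational components, check surjectivity of the linearized operator on the nodal map by componentwise automatic transversality, and then invoke the gluing results (Corollary \ref{corollary:surjectiveopen}) to smooth it—is exactly the strategy of the paper. But your execution has a genuine gap, and it comes from an unnecessary hypothesis you impose on yourself: that the genus must be realized by \emph{transverse nodes of distinct image curves in general position}. Nothing requires this. The arithmetic genus is determined by the nodal structure of the \emph{domain}, and the components are allowed to have identical images. The paper's configuration is a single degree-one curve $C_0$ carrying $pt$ and $2g$ further marked points, with $g$ additional degree-one curves each attached to $C_0$ at two of those points (each $u_i$ is the line through two points of the image of $u_0$, hence has the same image as $u_0$). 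This has degree $g+1$ and arithmetic genus $g$ for every $g$, uses only the degree-one curves already supplied by Section \ref{secdeg1} for arbitrary compatible $J$, and requires no general-position or small-genus case analysis. Your "combinatorial arrangement problem" for $g\geq 4$—which you concede would need higher-degree rational $J$-curves with prescribed self-nodes and an inductive gluing argument—simply never arises; those ingredients are not available in the paper and are not needed.

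Two further points in your sketch do not go through as stated. First, the claim that componentwise automatic transversality ``together with the surjectivity of the node-matching and point-incidence constraints'' gives surjectivity of $D^{pt}_u$ is sensitive to how many constraint points each component must carry: the criterion of Theorem \ref{theorem:auto} for a line needs $c_1(u_i^*T\p)=3 > |D|+|Z_i|$, so a line can absorb at most two node constraints. In your $g=3$ example (four lines in general position, dual graph $K_4$) some component necessarily carries three matching constraints, and the criterion fails there; by contrast the paper's star-shaped configuration is chosen precisely so that each peripheral line carries exactly two node constraints ($3>2$) and the central line only the constraint at $pt$ ($3>1$), so the Type 5/6-style argument of Proposition \ref{theorem:deg2orbifold} applies verbatim. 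Second, your small-genus cases are shaky: for $g=2$, $d=3$, adjunction forces any such map to be a triple cover of a line, which has $|D|=8$ by Riemann--Hurwitz, and then $c_1(N)=3d+2g-2-|D|=3$ is \emph{not} greater than $2g-2+1=3$, so the numerical criterion you set up in your first paragraph fails on the nose; the paper avoids this by never verifying transversality on the smoothed curve directly—surjectivity of $\tilde D^{pt}_w$ is inherited from the nodal curve through Corollary \ref{corollary:surjectiveopen}. (For the same reason your opening reduction to $|D|<3g+2$, while numerically correct, is not actually usable: for $g\geq 1$ there are no embedded curves of degree $g+1$ and genus $g$, so you have no a priori control of $|D|$ for the curve you are trying to produce.)
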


\begin{proof}

Take a collection of rational curves $\bigsqcup \limits_{i=0}^g C_i$. Let $x_{i1},x_{i2} \in C_i$ for $i>0$ be distinct points. Likewise let $pt,y_{11},y_{12}, \ldots, y_{g1},y_{g2} \in C_0$ be distinct. Let $C$ be the nodal curve formed by identifying $x_{ij}$ with $y_{ij}$. This is depicted in Figure \ref{figure:2g1}. Note that $C \in \bar{ \mathcal{M}_g}$. Let $p_0 \in \p$ be the basepoint and pick a degree one $J$-holomorphic map $u_0:C_0 \m \p$ with $u_0(pt)=p_0$. For $i>0$, let $u_i:C_i \m \p$ be a degree one $J$-holomorphic map with $u_i(x_{ij})=u_0(y_{ij})$. By a positivity of intersections argument, the $u_i$ will have the same image. We can in fact construct such $u_i$ for $i>0$ by precomposition of $u_0$ with a degree one holomorphic map from $\s$ to itself. Observe that $c_1(u_i^*T\p)=3$. Since $3>1$ and $3>0$,  Corollary \ref{corollary:DuOnto} implies that $D^{pt}_{u_0}$ is onto. Since $3>2$ and $2>0$, Corollary \ref{corollary:DuOnto} implies that $D^{\{x_{i1},x_{i2}\}}_{u_i}$ is also onto. Let $u:C \m \p$ be the map induced by the maps $u_i$. Using the techniques of Section \ref{secdeg}, we can use this to show $D^{pt}_u$ is onto. This implies that $\tilde D^{pt}_u$ is also onto and so we can glue. 

Let $C_t$ be the associated smooth genus $g$ curve and $w : C_t \m \p$ be the $J$-holomorphic map resulting from correcting the approximate gluing map corresponding to $u$ (see Section \ref{secapp}). By Corollary \ref{corollary:surjectiveopen}, $D_{w}^{pt}$ and $\tilde D_{w}^{pt}$ are onto for sufficiently small $t$. Since each $u_i$ is degree $1$ and there are $g+1$ of these maps, $w$ will have degree $g+1$.

\end{proof}

\begin{figure}[!ht]
\begin{center}\scalebox{.4}{\includegraphics{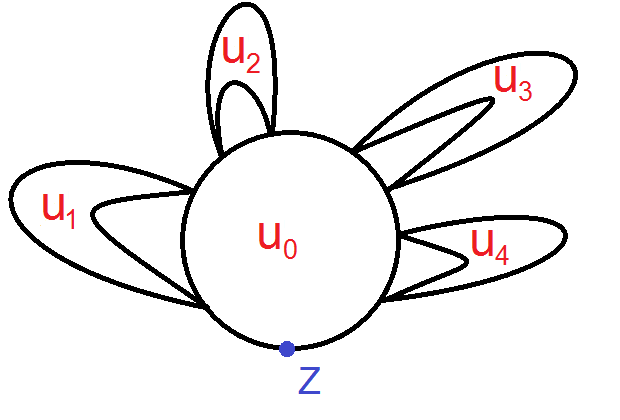}}\end{center}

\caption{The nodal curve $C$}
\label{figure:2g1}
\end{figure}

From now on, take $w: \Sigma_g \m \p$ as in the above lemma. Likewise, let $\bold j$ be the almost complex structure on $\Sigma_g$ resulting from the above gluing construction. If we could improve the degree bounds in the above lemma, we could accordingly improve the homological surjectivity range of Theorem \ref{theorem:big}.

\begin{lemma}
For any $\kappa \in \int\limits^{hol}_{\Sigma_g -pt} (R_2,\bold j)$, $$\tilde D^{pt}_{g_a(\kappa)} : \Omega^{0,1}(\Sigma_g) \times \Gamma(g^f_a(\kappa)^*T \p)_{pt} \m \Omega^{0,1}(g^f_a(\kappa)^*T\p)$$ is onto for $t$ sufficiently small.
\label{lemma:onto}
\end{lemma}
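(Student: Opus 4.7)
The plan is to combine Theorem \ref{theorem:aprxcoker} with automatic transversality to transfer surjectivity from an associated nodal curve to the approximate gluing $g_a(\kappa)$.

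First I would identify the nodal curve $u_\kappa$ whose approximate desingularization produces $g_a(\kappa)$: the root component is a cutoff perturbation $w_{\vec s}:\Sigma_g \m \p$ of $w$, and at each embedded disk $e_i$ we attach a rational component $u_i$ that is either a genuine degree one or degree two $J$-holomorphic map (when $\kappa$ lies on a relevant face such as $d_0(T)$), or an intermediate map $h^f_{s_i}(e_i)$ arising from the simplicial homotopy.

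Next I would prove that $\tilde D^{pt}_{u_\kappa}$ is surjective, arguing component by component exactly as in the Type 6 case of Proposition \ref{theorem:deg2orbifold}. For the root component, $\tilde D^{pt}_w$ is surjective by the construction of $w$ in the preceding lemma, and since $w_{\vec s} \m w$ in $W^{1,p}$ by Lemma \ref{lemma:cutoffconverg}, a Neumann series argument (in the spirit of Corollary \ref{corollary:surjectiveopen}) transfers surjectivity to $\tilde D^{pt}_{w_{\vec s}}$ for $t$ small. For each attached rational map $v: \s \m \p$ of degree $d \in \{1,2\}$, the Chern number satisfies $c_1(v^*T\p) = 3d$, so Theorem \ref{theorem:auto} applied to the normal bundle $N$ with $Z = \{\infty\}$ gives surjectivity of $\n^{\{\infty\}}$; combined with surjectivity of $\dbar_\tau^{\{\infty\}}$ on $T\s$ via the diagram of Theorem \ref{theorem:commuteOperators}, this yields surjectivity of $D^{\{\infty\}}_v$ and hence of $\tilde D^{\{\infty\}}_v$. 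For the intermediate maps $h^f_{s_i}(e_i)$, which are not themselves $J$-holomorphic, the same conclusion follows by continuity of $\tilde D$ in the $W^{1,p}$-topology together with the Neumann series argument, since $h^f_{s_i}(e_i)$ is $W^{1,p}$-close to the genuinely $J$-holomorphic map $h^f(e_i)$ (which is degree two, and so covered by the automatic transversality argument above).

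Then a standard diagram chase combines these component-wise surjections into surjectivity of $\tilde D^{pt}_{u_\kappa}$ on the full nodal curve: given $\eta \in \Omega^{0,1}(u_\kappa^*T\p)$, first solve the equation on the root using $\tilde D^{pt}_w$, then use the freedom in the kernels of the restricted operators on each leaf to match values at the nodes, exactly as in the Type 6 argument. Once surjectivity of $\tilde D^{pt}_{u_\kappa}$ is established, Theorem \ref{theorem:aprxcoker} provides an isomorphism $\operatorname{coker} \tilde D^{pt}_{u_\kappa} \cong \operatorname{coker} \tilde D^{pt}_{g_a(\kappa)}$ for all $t$ below some threshold, completing the proof.

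The main obstacle I anticipate is the bookkeeping required to match boundary conditions during the diagram chase across all combinatorial types of $\kappa$---which may include many leaves, intermediate $h^f_s$ maps, and degree-two components obtained via the cone on $D'_2(2)/\Sigma_2$. Surjectivity at each individual component follows cleanly from automatic transversality, but ensuring the combined system is solvable on the full nodal curve requires careful ordering of the argument, precisely analogous to the case analysis of Types 1--6 in Proposition \ref{theorem:deg2orbifold}.
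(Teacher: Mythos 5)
Your proposal is correct and follows essentially the same route as the paper's proof: component-wise surjectivity (the chosen $w$ on the root, the automatic-transversality/type analysis of Proposition \ref{theorem:deg2orbifold} for the degree one and two rational leaves, and a Corollary \ref{corollary:surjectiveopen}-style perturbation for the intermediate maps $h_s$), a Type 6-style diagram chase to assemble surjectivity of $\tilde D^{pt}$ on the nodal configuration, and Theorem \ref{theorem:aprxcoker} to transfer it to $g_a(\kappa)$ for small $t$. The only notable (and harmless) variation is that the paper establishes surjectivity of $\tilde D^{pt}_{w_{\vec s}}$ by viewing $w_{\vec s}$ as the approximate gluing of $w$ with constant bubbles and invoking Theorem \ref{theorem:aprxcoker}, whereas you use $W^{1,p}$-convergence from Lemma \ref{lemma:cutoffconverg} plus a Neumann series based at $w$.
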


\begin{proof}

Let $\vec e$ be a collection of embeddings of disks into $\Sigma_g-pt$. Let $\vec s=\{s_1, \ldots, s_k\}$ with all $s_i \in (0,1)$. First we will prove that $\tilde D^{pt}_{w_{\vec e, \vec s}}$ is onto.  Let $C$ be a nodal curve which is $\Sigma_g$ with a finite number of $\s$ attached by attaching  $\infty \in \s$ to $z_i \in \Sigma_g$. Here the $z_i=e_i(0)$. Let $u$ be the map which is $w$ on $\Sigma_g$ and constant on all of the rational components. Call the constant maps $u_i : \s \m \p$.  We will show $\tilde D^{pt}_u$ is onto which will imply that  $\tilde D^{pt}_{w_{\vec e, \vec s}}$ is also onto for sufficiently small $t$ by Theorem \ref{theorem:aprxcoker}. Choose: $$(\eta_0,\eta_1, \ldots, \eta_k) \in \Omega^{0,1}( w^*T\p) \times  \prod \Omega^{0,1}( u_i^*T\p) =\Omega^{0,1}(u^*T\p).$$ Since $\tilde D_w^{pt}$ is onto, choose $(\beta_0,\xi_0) \in \Omega^{0,1}(\Sigma_g) \times \Gamma(w^*T \p)_{pt}$ with $\tilde D_w (\beta_0,\xi_0) = \eta_0$. Take $\alpha_i \in \Gamma(u_i^* T\p)$ with $\alpha(\infty) =\xi_0(z_i)$. In the proof of Theorem \ref{theorem:deg2orbifold}, we prove that $D^{\infty}_{u_i} :\Gamma(u_i^* T \p)_{\infty} \m \Omega^{0,1}(u_i^* T \p) $ is onto for $u_i$ constant. Since  $ D^{\infty}_{u_i}$ is a restriction of $\tilde D^{\infty}_{u_i} : \Omega^{0,1}(\s) \times \Gamma(u_i^* T \p)_{\infty} \m \Omega^{0,1}(u_i^* T \p)$, $\tilde D^{\infty}_{u_i}$ is also onto. Thus, we can find $(\beta_i,\xi_i) \in \Omega^{0,1}(\s) \times \Gamma(u_i^* T \p) $ with $\xi(\infty)=0$ and $\tilde D_{u_i} (\beta_i,\xi_i) = \eta_i - D_{u_i} \alpha_i$. Thus $\tilde D_{u_i} (\beta, \xi_i +\alpha_i) = \eta_i$ and $\xi_i(\infty) + \alpha_i(\infty) = \eta_i(\infty)$, $$((\beta_0,\xi_0), (\beta_1,\xi_1+ \alpha_1), \ldots , (\beta_k,\xi_k+ \alpha_k)) \in \Omega^{0,1}(T C) \times \Gamma(u^* T \p)$$ and $$\tilde D_u((\beta_0,\xi_0), (\beta_1,\xi_1+ \alpha_1), \ldots , (\beta_k,\xi_k+ \alpha_k))=(\eta_0,\eta_1, \ldots, \eta_k). $$ This proves that $\tilde D^{pt}_{w_{\vec e, \vec s}}$ is surjective for sufficiently small $t$.

Let $(e_i,e_j) \in D'_2(2)/\Sigma_2$, $s \in [0,1]$ and view $(e_i,e_j,s)$ as an element of $Cone(D'_2(s)/\Sigma_2)$. Next we observe that for sufficiently small $t$, $\tilde D^{\infty}_{h^f(e_i,e_j,s)}$ is onto. If $s\geq 1/3$ then $h^f(e_i,e_j,s)$ is a $J$-holomorphic degree two curve so $\tilde D^{\infty}_{h^fe_i,e_j,s)}$ is onto by Theorem \ref{theorem:deg2orbifold}. Let $u$ be a map from a nodal curve of Type 4 (see Section \ref{secdeg2}). These are maps $u_i: \s \m \p$ for $i=0,1,2$ with $u_0$ constant, $u_1$ and $u_2$ degree one. The two degree one curves are attached to the degree zero curve. By Theorem \ref{theorem:deg2orbifold}, $\tilde D_u^{\infty}$ is onto.  By Corollary \ref{corollary:surjectiveopen}, $D_{h^f(e_i,e_j,s)}$ is onto for $s \leq 1/3$ and $t$ sufficiently small.

There is a collection of embeddings $\hat e$ and holomorphic maps $\vec u$ such that the map $g^f_a(\kappa): \Sigma_g \m \p$ is of the form $G_{w_{\vec e,\vec s}} (\hat e; \vec u)$ with $u_i$ either degree one $J$-holomorphic, degree two $J$-holomorphic, or of the form $h^f(e_i,e_j,s)$. Thus $\tilde D_{u_i}^{\infty}$ is onto for small $t$. The same arguments as the ones used to prove $\tilde D^{pt}_{w_{\vec e,\vec s}}$ is onto for $t$ sufficiently small also can be used to show that $\tilde D^{pt}_{G_{w_{\vec e, \vec s}}(\hat e; \vec u)}$ is onto. Thus $$\tilde D^{pt}_{g^f_a(\kappa)} : \Omega^{0,1}(\Sigma_g) \times \Gamma(g^f_a(\kappa)^*T \p)_{pt} \m \Omega^{0,1}(g^f_a(\kappa)^*T\p)$$ is onto for $t$ sufficiently small.

\end{proof}

\begin{lemma}
Let $K \subset \int\limits^{hol}_{\Sigma_g -pt} (R_2,\bold j)$ be compact. There exists a bound independent of $t$ for the volume of $g^m(\kappa)$ for $\kappa \in K$. 

\end{lemma} 

\begin{proof}
The volume of $g^m(\kappa)$ can be bounded by a linear function of the number of subtrees of $\kappa$ plus the volume of $\gamma_0$. In compact families, the number of subtrees can be bounded. 
\end{proof}

Before we address a uniform bound for the right inverse, we need the following lemma. Recall that for $s$ a number and $\gamma$ a metric, $s \gamma$ denotes $\gamma$ scaled by $s$.

\begin{lemma}
\label{lemmascale}
Let $\gamma$ be a metric on $\Sigma_g$, $s \in (0,1]$, $M$  a symplectic manifold with choice of almost complex structure,  $u:\Sigma_g \m M$ a map and $Q: L^p(\Omega^{0,1}(u^*TM)) \m W^{1,p}(\Gamma(u^*TM)) $ a bounded linear transformation.  There is a bounded independent of $s$ for the norm of $Q$ with respect to the metric $s \gamma$. 

\end{lemma}

\begin{proof}
Here we use superscripts to indicate which metric we use. By definition $\lVert Q  \rVert^{s \gamma}= sup_{\eta \neq 0} \lVert Q \eta \rVert^{s \gamma}_{W^{1,p}}/ \lVert \eta \rVert^{s \gamma}_{L^p}$. Let $\nabla:W^{1,p}(\Gamma(u^*TM))  \m L^p(\Omega^{0,1}(u^*TM))$ denote the Levi-Civita connect induced by the symplectic and almost complex structure on $M$. We have: $$\lVert \eta \rVert^{s \gamma}_{L^p} = \left (\int_{\Sigma_g}  (s^{1-p/2} )|  \eta |^p   \right)^{1/p} \text{ and}$$ $$ \lVert Q \eta \rVert^{s \gamma}_{W^{1,p}} = \left (\int_{\Sigma_g}  s|Q \eta|^p+ (s^{1-p/2} )|\nabla Q \eta |^p   \right)^{1/p}.$$ Here the integrals are with respect to the metric $\gamma$ (see page 172 of \cite{MS}). Thus $ (\lVert Q \eta \rVert^{s \gamma}_{W^{1,p}}/ \lVert \eta \rVert^{s \gamma}_{L^p})^p=$ $$ (s^{p/2})(\int_{\Sigma_g} |Q \eta|^p)/(\int_{\Sigma_g} | \eta|^p)+(\int_{\Sigma_g} |\nabla Q \eta|^p)/(\int_{\Sigma_g} | \eta|^p).$$ Both terms are bounded by $(\lVert Q \eta \rVert^{\gamma})^p$ and so the claim follows. 
\end{proof}

Recall that the metric $g_a^m$ depended on a function $C:[0,1]^2 \m [0,1]$. We now require that $C(t,s)$ be a continuous function with $C(t,s)=(st)^2$ for $s \leq 1/2$. This means that for $s \leq 1/2$, as the disks in which the function $w_{\vec e,s}$ is constant shrink, their areas shrink proportionally to the square of their radii. With this added assumption, we can prove the following lemma.

\begin{lemma}
\label{rightbounded}
For any $\kappa \in  \int\limits^{hol}_{\Sigma_g -pt} (R_2,\bold j)$, there exists a bound independent of $t$ for the norm of a right inverse of $\tilde D^{pt}_{g_a(\kappa)} : \Gamma(g^f_a(\kappa)^*T \p) \m \Omega^{0,1}(g^f_a(\kappa)^*T\p)$.

\end{lemma} 

\begin{proof}

The only portion of this that does not follow from a straightforward adaptation of the methods of \cite{S} is constructing a continuously varying right inverse for $\tilde D_{w_{\vec e,s}}$ which is bounded as $t$ tends to zero. Using the construction from Remark A.4.1 of \cite{MS}, one can construct a continuously varying family of right inverses $Q_1^s$ for $\tilde D_{w_{\vec e,s}}$. Since there is a point-wise bound on the first derivative of $w_{\vec e,s}$ which is independent of $t$, the family $Q_1^s$ will have uniformly bounded norm with respect to the metric $\gamma_g$. Because of our added restriction on $C$, for $s \in [0,1/2]$, the metric $\gamma^s$ is commensurable with $\gamma_g$. Thus, on $[0,1/2]$, $Q_1^s$ is uniformly bounded with respect to $\gamma^s$ as well. Note that $\gamma^s$ depends on $t$. 

On the interval $(0,1]$, there is another procedure for producing a right inverse to $\tilde D_{w_{\vec e,s}}$. The construction of the right inverse in \cite{S}, does not use that the metrics on the irreducible components of the nodal curves are fixed. They can vary as long as the right inverses on each irreducible component stay bounded. In our case, the metrics on the leaves vary as we rescale them using the function $C$. However, by Lemma \ref{lemmascale}, this does not cause the right inverses to become unbounded. Thus, we can find a family $Q_2^s$ of right inverses of $\tilde D_{w_{\vec e,s}}$ for $s>0$ which stay bounded as $t$ goes to zero.

Let $b:[0,1] \m [0,1]$ be some function with $b(0)=1$ and $b(s)=0$ for $s \geq 1/2$. Let $Q^s=b(s)Q_1^s+(1-b(s))Q_2^s$. This gives a right for $\tilde D_{w_{\vec e,s}}$ which is defined for all $s$ and bounded as $t$ goes to zero. 

The rest of the proof involves iterating the results in \cite{S}.

\end{proof}

Using similar arguments to those of Lemma \ref{lemma:small}, we get the following. 

\begin{lemma}
\label{boundedderivlemma}
For any $\kappa \in  \int\limits^{hol}_{\Sigma_g -pt} (R_2,\bold j)$, there exists a bound independent of $t$ for the $L^p$ norm of $Dg^f_a(\kappa)$.

\end{lemma}

Combining these lemmas, we can construct the gluing map, the analogue of the scanning map for holomorphic topological chiral homology.

\begin{theorem}
For any compact set $K \subset \int\limits^{hol}_{\Sigma_g -pt} (R_2,\bold j)$, there exists vector fields $\xi_\kappa \in \Gamma(g^f_a(\kappa)^* T \p)$ continuously depending on $\kappa \in K$ such that $g(\kappa)=exp_{g^f_a(\kappa)}(\xi_k) \in \Hol^*(\Sigma_g,(\p,J))$.

\label{g}
\end{theorem}

\begin{proof}
By Lemmas \ref{lemma:small}, \ref{lemma:onto}, \ref{rightbounded} and \ref{boundedderivlemma}, the map $g^f_a(\kappa)$ satisfies the hypotheses of Theorem \ref{theorem:implicitquant} for sufficiently small $t$ depending on $\kappa \in \int\limits^{hol}_{\Sigma_g -pt} (R_2,\bold j)$. Since $K$ is compact, we can choose $t$ such that for all $\kappa \in K$, $g^f_a(\kappa)$ satisfies the hypotheses of Theorem \ref{theorem:implicitquant}. Thus we can find such a vector field $\xi_\kappa \in \Gamma(g^f_a(\kappa)^{*}T \p)$ such that $exp_{g^f_a(\kappa)}(\xi_{\kappa}) \in \Hol^*(\Sigma_g,(\p,J))$.

\end{proof}

\begin{corollary}
The image in homology of $g^f_a:\int\limits^{hol}_{\Sigma_g -pt} (R_2,\bold j) \m \Map^*(\Sigma_g,\p)$ is contained in the image in homology of $i:\Hol^*(\Sigma_g,(\p,J)) \m \Map^*(\Sigma_g,\p)$.

\label{theorem:actualgluingexists}
\end{corollary}

\begin{proof}
Let $K \subset \int\limits^{hol}_{\Sigma_g -pt} (R_2,\bold j)$ be compact and let $g:K \m \Hol^*(\Sigma_g,(\p,J))$ and $\xi_\kappa \in \Gamma(g^f_a(\kappa)^{*}T \p)$ be as in Theorem \ref{g}. Let $H: K \times [0,1] \m \Map^*(\Sigma_g,\p)$ be given by the formula $H(\kappa,s)=exp_{g^f_a(\kappa)}(s\xi_{\kappa})$. The map $H$ gives a homotopy between $i \circ g$ and $g^f_a$ restricted to $K$. Since the image of $i \circ g$ in homology is contained in the image of $i$ and every homology class is in the image of a compact set, the claim follows.

\end{proof}

 \section{Homotopy between gluing maps and scanning maps}\label{sechomotopy}

By Corollary \ref{theorem:actualgluingexists}, the image in homology of the approximate gluing map is contained in the image in homology of the inclusion of the space of $J$-holomorphic functions into the space of all continuous functions. The goal of this section is to study the image in homology of the approximate gluing map and compare it with the scanning and stabilization maps defined in Section \ref{secTCH} and further studied in Section \ref{KY}. In the first subsection, we investigate the dependence of  $g^f_a :  \int\limits^{hol}_{\Sigma_g -pt} (R_2,\bold j) \m \Map^*(\Sigma_g, (\p,J)) $ on $w$. In the second subsection, we show that $h:R_2 \m \Map^*(\s,\p)$ can be interpreted as a map of partial $D_2$-algebras up to homotopy. In the third subsection, we prove the main theorem of this paper, Theorem \ref{main}.

We denote the projection $g^f_a :  \int\limits^{hol}_{\Sigma_g -pt} (R_2,\bold j) \m \Map^*(\Sigma_g, \p) $ simply by $g_a$ as metrics will not be relevant in this section. We likewise denote $h^f:R_2 \m \Map^*(\s,\p)$ simply by $h$.

\subsection{Untwisting the approximate gluing map}\label{secuntwist}

The purpose of this subsection is to study how the map $g_a :  \int\limits^{hol}_{\Sigma_g -pt} (R_2,\bold j) \m \Map^*(\Sigma_g, \p) $ depends on the function $w : \Sigma_g \m M$.

First note that the formula for $g_a$ extends to all of $\int_{\Sigma_g -pt} R_2$ since the fact that the disks were embedded holomorphically was only relevant for correcting the approximate gluing map. Moreover, the definition of $g_a$ did not use that the map $w$ was holomorphic.

\begin{definition}

For $v:\Sigma_g \m \p$, let $g_a^v:\Sigma_g : \int_{\Sigma_g -pt} R_2 \m \Map^*(\Sigma_g, \p)$ be the function defined using the same formula as $g_a :  \int\limits^{hol}_{\Sigma_g -pt} (R_2,\bold j)  \m \Map^*(\Sigma_g, \p)$ with $w$ replaced with $v$.

\end{definition}

Recall that the name of the base point in $\Sigma_g$ is $pt$ and the name of the base point in $\p$ is $p_0$. Recall that $\pi_0(\Map^*(\Sigma_g, \p)) =\Z$ and this integer is called degree. In Section \ref{secTCH}, we defined stabilization maps which give maps $T_a : \Map^*(\Sigma_g, \p) \m \Map^*(\Sigma_g, \p)$ for every element $a \in \Map^c(B-pt,\p)$. Here $B$ is an open ball centered around $pt$. The map $T_a : \Map^*(\Sigma_g,\p) \m \Map^*(\Sigma_g, \p)$ can be defined as follows. Let $d : \Sigma_g -pt \m \Sigma_g-B$ be a diffeomorphism. For $m \in \Simga_g$ and $f\in \Map^*(\Sigma_g,\p)$, let $T_a : \Map^*(\Sigma_g,\p) \m \Map^*(\Sigma_g,\p)$ be the map defined by the following formula: $$
T_a(f)(m)=
\begin{cases}
p_0 & \mbox{if } m=pt \\
a(m)  & \mbox{if } m \in B-pt \\
f(d^{-1}(m))  & \mbox{otherwise.} \\
\end{cases}
$$ Note that $\pi_0(\Map^c(B-pt,\p)) =\Z$ and we also call this integer degree. Up to homotopy, the map $T_a$ only depends on the degree of $a$. Let $T_k : \Map_d^*(\Sigma_g, \p) \m \Map_{d+k}^*(\Sigma_g, \p)$ denote a stabilization map which raises degree by $k$. Note that topological chiral homology is functorial with respect to open embeddings. Let $e:\Sigma_g-B \m \Sigma_g -pt$ be the inclusion map.  

\begin{proposition}
Let $k$ be the degree of $w: \Sigma_g \m \p$ and let $const : \Sigma_g \m \p$ be the constant map. The following diagram commutes up to homotopy: $$
\begin{array}{ccccccccl}
 \int_{\Sigma_g -pt} R_2 &  \overset{g^w_a}{\m} &   \Map^*(\Sigma_g, \C P^2) \\

 \uparrow e_*  &&     &            \\

 \int_{\Sigma_g -B} R_2   & & \uparrow T_k \\

 \uparrow d_*  &           \\

 \int_{\Sigma_g -pt} R_2  &\overset{ g^{const}_a}{\m} & \Map^*(\Sigma_, \C P^2).\\

\end{array}$$


\label{theorem:untwisting}

\end{proposition}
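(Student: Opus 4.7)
The plan is to construct an explicit homotopy between the two composites by deforming the twisting map $w$ through continuous based maps. A key observation that makes this approach work is that while $w$ was required to be $J$-holomorphic with surjective linearized $\dbar$-operator in order to \emph{correct} the approximate map $g_a$ to one landing in $Hol^*(\Sigma_g,(\p,J))$, the map $g_a$ itself is defined for any continuous based map $w : \Sigma_g \m \p$. Thus the intermediate homotopies are not constrained by transversality considerations.

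First I would deform $w$ to a convenient representative $w'$ of the same based homotopy class. Since $\Sigma_g - B$ deformation retracts onto a wedge of $2g$ circles and $\p$ is simply connected, the restriction $w|_{\Sigma_g-B}$ is based-homotopic to the constant map at $p_0$; extending this homotopy to all of $\Sigma_g$ while keeping a collar of $\partial B$ fixed produces a based homotopy from $w$ to a map $w'$ satisfying $w'|_{\Sigma_g-B} \equiv p_0$ and such that $w'|_B$ is a degree $k$ map sending $\partial B$ to $p_0$. Since the construction of $g_a$ depends on the twisting map only through the cutoff maps $G_{w_{\vec s}}$, which vary continuously in the $W^{1,p}$-topology with $w$ by Lemma \ref{lemma:cutoffconverg}, this homotopy of $w$ induces a homotopy $g_a(\,\cdot\,;w) \simeq g_a(\,\cdot\,;w')$.

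Next I would apply $d_*$ to push the embedded disks into $\Sigma_g - B$. For $\kappa \in \int\limits^{hol}_{\Sigma_g - pt}(R_2,w')$, the map $g_a(\kappa;w') : \Sigma_g \m \p$ now has a clean decomposition: on $B$ it equals $w'|_B$, a degree $k$ map independent of $\kappa$; on $\Sigma_g - B$, where all the disks and their rational-map labels live, it coincides with $g_a(d_*\kappa;const)$ because the cutoff maps $(w')_{e,\vec s}$ agree with the corresponding $const$-twisted cutoffs in that region. This identifies $g_a(\,\cdot\,;w')$ with $T_a \circ g_a \circ d_*$ for the specific representative $a := w'|_{B - pt} \in Map^c(B-pt,\p)$, which has degree $k$. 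Since $T_a$ is homotopic to $T_k$ for any degree $k$ choice of $a$, this yields the claimed homotopy commutativity.

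The main obstacle will be threading this argument through the inductive, simplex-by-simplex construction of $g_a$ from Section \ref{secJTCH}, which involves the auxiliary parameters $\vec s$ governing cutoffs on each subtree together with the homotopies $F_s$ and $H_s$ used to build $g_a$ on higher skeleta. One must verify that the homotopy of $w$ is compatible with all these choices simultaneously; this should follow from the fact that every formula appearing in the definition of $g_a$ depends continuously on the twisting map, so the construction can be carried out coherently for the one-parameter family $w_t$ interpolating between $w$ and $w'$.
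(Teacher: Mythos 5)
Your proposal is correct and follows essentially the same route as the paper's proof: homotope $w$ through based maps to a map $w'$ that is constant off $B$, observe that the diagram then commutes on the nose with $T_k$ interpreted as $T_a$ for $a=w'|_{B}$, and use that $T_a$ depends up to homotopy only on the degree of $a$. The extra justifications you supply (nullhomotopy of $w|_{\Sigma_g-B}$ via simple connectivity of $\p$, and continuity of the construction of $g_a$ in the twisting map) are details the paper leaves implicit.
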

\begin{proof}

 The map $g^v_a$ does depend on $v$. However, up to homotopy, it only depends on the class of $v$ in $\pi_0(\Map^*(\Sigma_g, \p))$. One can easily construct maps of every degree which are constant off of $B$. Thus the map $w$ is homotopic through based maps to a map $w' : \Sigma_g \m \p$ which is constant off of $B$. Let $a \in \Map^c(B -pt , \p)$ be the restriction $w'$ to $B$. The above diagram commutes on the nose if one replaces $w$ with $w'$ and interprets $T_k$ as $T_a$. Thus, the above diagram homotopy commutes.
\end{proof}


Note that all maps $T_k$ are homotopy equivalences. Additionally, the inclusion $\int\limits^{hol}_{\Sigma_g - pt} (R_2,\bold j) \m \int_{\Sigma_g -pt } R_2 $ is a weak homotopy equivalence. Thus, to prove the main theorem, it suffices to prove that $g^{const}_a: \int_{\Sigma_g-pt} R_2 \m \Map^*(\Sigma_g,\p)$ is a stable homology equivalence. In other words, we have the following lemma.

\begin{lemma}
The image in homology of $g^{w}_a:\int\limits^{hol}_{\Sigma_g -pt} (R_2,\bold j) \m \Map^*(\Sigma_g,\p)$ contains the image in homology of: $$T_{g+1} \circ g^{const}_a : \int_{\Sigma_g - pt} R_2 \m \Map^*(\Sigma_g,\p).$$
\label{lemma:imagecontained}
\end{lemma}

\subsection{Comparison of the $D_2$ actions on $R_2$ and $\Omega^2 \p$}\label{seccompair}

The goal of this subsection is to modify the map $h : R_2 \m \Map^*(\s,\p)$ so that we may interpret it as a map of partial $D_2$-algebras to $\Omega^2 \p$. Recall that we view $Cone(D_2'(2)/\Sigma_2)$ as the quotient of $[0,1] \times D_2'(2)/\Sigma_2$ identifying all points of the form $(\kappa,1)$. Since we do not let base points vary in this subsection, we drop the $z$ subscript on the function $h:R_2 \m \Map^*(\s,\p)$.  

\paragraph{Modifying $h$}

We will define a function $\hat h : R_2 \m \Map^*(\s,\p)$ which is homotopic to $h : R_2 \m \Map^*(\s,\p)$. Recall that the definition of $h$ required fixing a function $q_1 \in \Map^*_1(\s,\p)$. Let $q_1^s$ be a path of degree one maps with $q_1^0=q_1$ and $q_1^1$ constant on the image of $e'$ in $\s$. We define $\hat h(r_1)$ to be $q_1^1$. 

Let $f_s:\s \m \s$ be a path of degree one maps with $f_0$ the identity, $f_s(\infty)=\infty$ and $f_s(e'(\D)) \subset e'(\D)$ for all $s$ and $f_1(e'(\D))=\{\infty\}$. 

For $s \geq 2/3$, we define $\hat h(e_1,e_2,s)$ to be $h(e_1,e_2,3s-2) \circ f_1$. 

Recall that $h(e_1,e_2,0)$ was an approximate gluing map denoted $h_a$. This map implicitly depended on the gluing parameter $t$ and the degree one map $q_1$. Here we indicate this dependence with a superscript. Let $\tau(s)$ be the affine function with $\tau(1/3)=1$ and $\tau(2/3)=t$. For $s \in [1/3,2/3]$, we define $\hat h(e_1,e_2,s)$ to be $h_a^{\tau(s),q_1^{2-3s}}(e_1,e_2) \circ f_1 $.

For $s \in [0,1/3]$, we define $\hat h(e_1,e_2,s)$ to be $h_a^{1,q_1^1}(e_1,e_2) \circ f_{3s} $.

\vspace{12pt}

The key features of the function $\hat h$ are that it is constant on $e'(\D)$ and we have set the gluing parameter equal to $1$ in the approximate gluing map. Normally it does not make sense to let the gluing parameter go to $1$. However, it does make sense when both functions are constant on the images of the embeddings as is the case here. 
 
The usual model of $\Omega^2 X$ used to define the $D_2$-action is $\Omega^2 X =\Map((\bar \D,\partial \bar \D), (X,pt))$. Since $\hat h$ is constant outside of $\D \subset \s$, we can view $\hat h$ as a map $\hat h:R_2 \m \Omega^2 \p$. This allows us to make sense of the following lemma.

\begin{lemma}
The map $\hat h:R_2 \m \Omega^2 \p$ is a map of partial $D_2$-algebras.

\label{lemma:moruptohomotopy}
\end{lemma}

\begin{proof}

This follows from the definition of $\hat h$. This is similar to the fact that $E^t_{ij}=e \circ e_{ij}$ when $t=1$, an observation made in Section \ref{secJTCH}

\end{proof}

\subsection{The main theorem}

As was noted in Section \ref{secuntwist}, to prove that $i: \Hol^*(\Sigma_g ,(\p,J)) \m \Map^*(\Sigma_g , \p)$ is a stable homology surjection, it suffices to show that $g^{const}_a: \int_{\Simga_g} R_2 \m \Map^*(\Sigma_g,\p)$ is a stable homology equivalence. This is the goal of this section. This will follow from the following lemma. As in Corollary \ref{KYpackage}, let $h':\int_{\Sigma_g-pt} R_2 \m \int_{\Sigma_g-pt} \Omega^2 \p$ be the map induced by $\hat h:R_2 \m \Omega^2 \p$.

\begin{lemma}\label{scanningequalsgluing}
The map $g^{const}_a: \int_{\Sigma_g-pt} R_2 \m \Map^*(\Sigma_g,\p)$ is homotopic to the map $s \circ h':\int_{\Sigma_g -pt} R_2 \m \Map^*(\Sigma_g,\p)$.
\end{lemma}

\begin{proof}
The maps are equal if one replaces $h$ with $\hat h$ in the definition of $g^{const}_a$ and sets the gluing parameter equal to $1$ everywhere except in the definition of $\hat h$. Note that we can set the gluing parameter equal to $1$ since we are gluing maps which are constant in images of the embeddings. Since $h$ is homotopic to $\hat h$, $g^{const}_a$ and $s \circ \hat h'$ are homotopic. 

\end{proof}

We can now prove the main theorem. 

\begin{theorem}
Let $J$ be any compatible almost complex structure on $\p$. The inclusion map $i: \Hol_d^*(\Sigma_g,(\p,J)) \m \Map_d^*(\Sigma_g,\p)$ induces a surjection on homology groups $H_i$ for $i \leq (d-g-1)/3$.

\end{theorem}
By Corollary \ref{theorem:actualgluingexists},  the image in homology of $i:\Hol_d^*(\Sigma_g,(\p,J)) \m \Map_d^*(\Sigma_g,\p)$ contains the image in homology of $g^w_a:(\int\limits^{hol}_{\Sigma_g -pt} (R_2,\bold j))_d \m \Map_d^*(\Sigma_g,(\p,J))$. By Lemma \ref{lemma:imagecontained}, this contains the image of $T_{g+1} \circ g^{const}_a : \int_{\Sigma_g - pt} R_2 \m \Map^*(\Sigma_g,\p)$. Note that $T_{g+1}: \Map_{d-g-1}^*(\Sigma_g,\p)\m \Map_d^*(\Sigma_g,\p)$ is a homotopy equivalence. By Lemma \ref{scanningequalsgluing}, the image in homology of $g^{const}_a: \int_{\Sigma_g-pt} R_2 \m \Map^*(\Sigma_g,\p)$ and $s \circ h':\int_{\Sigma_g -pt}R_2 \m \Map^*(\Sigma_g,\p)$ agree since the two maps are homotopic. By Corollary \ref{KYpackage}, the map $s \circ h':(\int_{\Sigma_g -pt} R_2)_{d-g-1} \m \Map_{d-g-1}^*(\Sigma_g,\p)$ induces an isomorphism on $H_i$ for $i \leq (d-g-1)/3$. Thus, $i:\Hol_d^*(\Sigma_g,(\p,J)) \m \Map_d^*(\Sigma_g,\p)$ induces a surjection in that range.

\begin{remark}

\label{oneJ}

One can also use the techniques of this paper to establish a version of Theorem \ref{theorem:big} with fixed almost complex structures on the domain. If $\bold j$ is an almost complex structure satisfying the conclusions of Lemma \ref{gplusone} with $\tilde D^{pt}_w$ replaced with $D^{pt}_w$, then $i: \Hol_d^*((\Sigma_g,\bold j),(\p,J)) \m \Map_d^*(\Sigma_g,\p)$ is a stable homology surjection. Currently the author does not know how to show that $i: \Hol_d^*((\Sigma_g,\bold j),(\p,J)) \m \Map_d^*(\Sigma_g,\p)$ is a stable homology surjection for all $\bold j$ since it is unclear how to improve Lemma \ref{gplusone} to allow arbitrary almost complex structures on  $\Sigma_g$.

\end{remark}

\bibliography{thesis2}{}
\bibliographystyle{alpha}

\end{document}